\setlist[itemize]{parsep=0pt,itemsep=3pt,topsep=2pt}
\DeclareSymbolFont{usualmathcal}{OMS}{cmsy}{m}{n}
\DeclareSymbolFontAlphabet{\mathucal}{usualmathcal}
\newcommand{\raisemath}[1]{\mathpalette{\raisem@th{#1}}}
\newcommand{\raisem@th}[3]{\raisebox{#1}{$#2#3$}}
\tikzstyle directed=[postaction={decorate,decoration={markings,
    mark=at position #1 with {\arrow{>}}}}]
\tikzstyle rdirected=[postaction={decorate,decoration={markings,
    mark=at position #1 with {\arrow{<}}}}]
\tikzset{anchorbase/.style={baseline={([yshift=-0.5ex]current bounding box.center)}},
cross line/.style={preaction={draw=white,line width=4pt,-}},
}
\newtheorem{theorem}{Theorem}[section]
\newtheorem{conjecture}[theorem]{Conjecture}
\newtheorem{corollary}[theorem]{Corollary}
\newtheorem{definition}[theorem]{Definition}
\newtheorem{example}[theorem]{Example}
\newtheorem{lemma}[theorem]{Lemma}
\newtheorem{proposition}[theorem]{Proposition}
\theoremstyle{remark}
\newtheorem{remark}[theorem]{Remark}
\numberwithin{equation}{section}
\def\cal#1{\mathcal{#1}}
\newcommand{\cat}[1]{\ensuremath{\mbox{\bfseries {\upshape {#1}}}}}
\renewcommand{\to}{\rightarrow}
\newcommand{\del}{\partial}
\newcommand{\fns}{\footnotesize}
\newcommand{\scs}{\scriptsize}
\let\epsilon=\varepsilon
\newcommand{\rmi}{\mathrm{i}}
\newcommand{\multt}{t}
\DeclareMathOperator{\Hom}{Hom}
\DeclareMathOperator{\HOM}{HOM}
\DeclareMathOperator{\twoHOM}{2HOM}
\DeclareMathOperator{\End}{End}
\DeclareMathOperator{\twoEnd}{2End}
\DeclareMathOperator{\twoEND}{2END}
\newcommand{\Rep}{\cat{Rep}}
\newcommand{\Tot}{\mathrm{Tot}}
\newcommand{\Vect}{\mathrm{Vect}}
\newcommand{\gr}{\mathrm{gr}}
\newcommand{\bV}{\textstyle{\bigwedge}}
\newcommand{\Sym}{\mathsf{Sym}}
\newcommand{\Hot}{\mathrm{K}}
\newcommand{\HH}{\mathrm{HH}}
\newcommand{\HHH}{\mathrm{HHH}}
\newcommand{\KhR}{\mathsf{KhR}}
\newcommand{\U}{\dot{{\mathrm{U}}}}
\newcommand{\UU}{\mathucal{U}}
\newcommand{\Udot}{\dot{\mathucal{U}}}
\newcommand{\Ucheck}{\check{\mathucal{U}}}
\newcommand{\one}{{\mathbf 1}}
\newcommand{\id}{\mathrm{id}}
\newcommand{\onel}{{\mathbf 1}_{\lambda}}
\newcommand{\onek}{{\mathbf 1}_{{\bf k}}}
\def\1{\mathrm{1}}
\newcommand{\vTr}{\operatorname{vTr}}
\newcommand{\hTr}{\operatorname{hTr}}
\newcommand{\sln}{\mathfrak{sl}_n} 
\newcommand{\slv}{\mathfrak{sl}_\infty} 
\newcommand{\slm}{\mathfrak{sl}_m}
\newcommand{\slnn}[1]{\mathfrak{sl}_{#1}}
\newcommand{\glm}{\mathfrak{gl}_m}
\newcommand{\glmn}{\mathfrak{gl}_{m|n}}
\newcommand{\glnn}[1]{\mathfrak{gl}_{#1}}
\newcommand{\slnegn}{\glnn{-n}}
\newcommand{\slzn}{\glnn{0|n}}
\newcommand{\nFoam}{n\cat{Foam}}
\newcommand{\vFoam}{\infty\cat{Foam}}
\newcommand{\vAFoam}{\infty\cal{A}\cat{Foam}}
\newcommand{\C}{{\mathbb C}}
\newcommand{\Z}{{\mathbb Z}}
\newcommand{\N}{{\mathbb N}}
\def\emph#1{{\sl #1\/}}
\def\mm{{\sl mutatis mutandis \/}}
\def\ap{{\sl a priori \/}}
\def\ie{{\sl i.e.\ \/}}
\def\eg{{\sl e.g.\ \/}}
\begin{document}
%

\title{Annular Evaluation and Link Homology}

\author{Hoel Queffelec}
\address{Mathematical Sciences Institute, Australian National University, Canberra, ACT 0200, Australia}
\email{hoel.queffelec@anu.edu.au}

\author{David E. V. Rose}
\address{Department of Mathematics, University of North Carolina, Phillips Hall CB \#3250, UNC-CH, Chapel Hill, NC 27599-3250, USA}
\email{davidrose@unc.edu}
  
\author {Antonio Sartori}
\address{Mathematisches Institut Albert-Ludwigs-Universit\"{a}t Freiburg, Eckerstra{\ss}e 1,
79104 Freiburg im Breisgau,
Germany}
\email{antonio.sartori@math.uni-freiburg.de}

\begin{abstract}
We use \textit{categorical annular evaluation} 
to give a uniform construction of both $\sln$ and HOMFLYPT Khovanov-Rozansky link homology,
as well as annular versions of these theories. 
Variations on our construction yield $\slnegn$ link homology, 
\ie a link homology theory associated to the Lie superalgebra $\slzn$, 
both for links in $S^3$ and in the thickened annulus.
In the $n=2$ case, this produces a categorification of the Jones polynomial 
that we show is \textit{distinct} from Khovanov homology, 
and gives a \textit{finite-dimensional} categorification of the colored Jones polynomial. 
This behavior persists for general $n$.
Our approach yields simple constructions of spectral sequences relating these theories, 
and emphasizes the roles of super vector spaces, categorical traces, 
and current algebras in link homology.
\end{abstract}

\maketitle

%
\section{Introduction}\label{sec:intro}
%

The years since Khovanov's pioneering work categorifying the Jones polynomial \cite{Kh1} 
have seen a medley of categorifications of type A link polynomials. 
Indeed, a large cast of authors have constructed link homology theories categorifying the 
$\sln$ Reshetikhin-Turaev and HOMFLYPT link polynomials using a variety of means 
\cite{SeSm,Man,CK01,CK02,Sussan,MS2,Webster}.
Despite using differing tools, all of these constructions follow a similar procedure: 
first, link crossings are formally resolved to give a chain complex whose terms are curves or webs (certain planar graphs);
next, this complex is interpreted as specifying a chain complex in a certain abelian (or additive) category, 
depending on the particular construction being used \textit{and} the link polynomial being categorified (either $\sln$ or HOMFLYPT);
and, finally, the complexes for the various crossings are pieced together, to yield a complex of graded vector spaces, 
whose homology is the link invariant.
Despite the variety of choices at the second step, for a \textit{fixed} type A link polynomial, 
almost all\footnote{One exception is the ``odd'' categorification of the Jones polynomial, given in \cite{ORS}. 
Another is the recent finite-rank invariant given in \cite{Cautis2}, which we will show surprisingly doesn't agree with 
Khovanov-Rozansky homology, but rather with the $\slnegn$ link homology constructed in this paper.} 
known categorifications have been shown to agree with the original constructions given by Khovanov and Rozansky \cite{KhR,KhR2} 
or the colored variants thereof \cite{Wu, Yon,WebWil}.
This begs the question: could we also remove the ``other variable'' in the second step, 
\ie can we find a uniform construction of all type A link homology theories that recovers the $\sln$ (for every $n$) or HOMFLYPT version 
in a simple way?

In this paper, we answer this question in the affirmative: given the input data of a graded super vector space $\cal{V}$ with an endomorphism $T$,
we construct a homology theory $\cal{H}_{\cal{V},T}$ that, for specific choices of $\cal{V}$ and $T$, recovers $\sln$ and HOMFLYPT homology. 
In fact, we show more: by varying our input data, we are able to explicitly construct 
$\slnegn$ link homology, a theory long-conjectured to exist. 
This produces an example of a homology theory for links in $S^3$ that categorifies the $\sln$ Reshetikhin-Turaev 
polynomial\footnote{Note: the $\sln$ and $\slnegn$ link \textit{polynomials} agree, up to an overall $\pm1$ factor.}, 
but, as we show, is \textit{distinct} from Khovanov-Rozansky homology.
Additionally, the colored variant of this theory gives a categorification of the $\Sym$-colored $\sln$ Reshetikhin-Turaev polynomial 
that is manifestly \textit{finite-dimensional}.
Our unifying construction clarifies various structural properties of link homology in type A, 
and gives simple constructions of spectral sequences relating the various theories.
We also show that the ``annular'' versions of $\sln$ homology can also be recovered from our framework, 
and we produce new annular invariants, corresponding to the HOMFLYPT and $\slnegn$ theories. 

Our construction utilizes \textit{annular evaluation}, a technique introduced in \cite{QR2} by the first two named authors
for studying homological invariants of links in the thickened annulus. The present paper shows that this tool is also useful for the study of links in $S^3$.
The core idea is as follows: the formal complex of webs assigned to a braid $\beta$ is identical (\ie independent of $n$) in all definitions 
of type A link homologies. Viewing this complex in the appropriate setting, it is possible to take its ``trace'', which produces an invariant 
of braid conjugacy, \ie an invariant of the braid closure, viewed as a link in the thickened annulus. 
Annular evaluation then provides a means to simplify this complex into a particularly nice form, again independent of our choice of $n$. 
Using the input data $\cal{V}$ and $T$, we are able to pass to a complex of graded super vector spaces, 
whose homology is an invariant of braid conjugacy. 
For special values of $\cal{V}$ and $T$, we obtain an invariant of the link $\cal{L}_\beta \subset S^3$ determined by the braid $\beta$.

\subsection{Annular evaluation for link polynomials}\label{sec:Decat}

In fact, this procedure is interesting even in the decategorified setting of link polynomials, 
which demonstrates many of the features of the general theory.
We now discuss this case in some detail, as a warm-up for the categorified results in the remainder of the paper. 

Given a link $\cal{L}$, we can compute its $\sln$ Reshetikhin-Turaev polynomial \cite{RT_ribbon} using the work of 
Cautis-Kamnitzer-Morrison \cite{CKM}, which generalizes the Kauffman bracket formulation of the Jones polynomial \cite{Kauff}, 
and the MOY formulation of the Reshetikhin-Turaev invariant \cite{MOY}. 
Each crossing is assigned a linear combination of $\sln$ webs via the rules:
\begin{equation}\label{eq:DecatCrossing}
\left\langle \;
\xy
(0,0)*{
\begin{tikzpicture}[scale=.5,rotate=270]
	\draw[very thick, <-] (0,1) to [out=0,in=180] (2,0);
	\draw[very thick, <-] (0,0) to [out=0,in=225] (.9,.4);
	\draw[very thick] (1.1,.6) to [out=45,in=180] (2,1);
\end{tikzpicture}
};
\endxy 
\; \right\rangle
=
q^{-1}
\xy
(0,0)*{
\begin{tikzpicture}[scale=.5,rotate=270]
	\draw[very thick, <-] (0,1) to [out=340,in=200] (2,1);
	\draw[very thick, <-] (0,0) to [out=20,in=160] (2,0);
\end{tikzpicture}
};
\endxy
-
\xy
(0,0)*{
\begin{tikzpicture}[scale=.2]
	\draw [very thick, directed=.65] (0,-.75) to (0,.75);
	\draw [very thick,->] (0,.75) to [out=30,in=270] (1.25,2.75);
	\draw [very thick,->] (0,.75) to [out=150,in=270] (-1.25,2.75); 
	\draw [very thick, directed=.5] (1.25,-2.75) to [out=90,in=330] (0,-.75);
	\draw [very thick, directed=.5] (-1.25,-2.75) to [out=90,in=210] (0,-.75);
	\node at (.875,0) {\tiny $2$};
\end{tikzpicture}
};
\endxy
\quad , \quad
\left\langle \;
\xy
(0,0)*{
\begin{tikzpicture}[scale=.5, rotate=270]
	\draw[very thick, <-] (0,0) to [out=0,in=180] (2,1);
	\draw[very thick, <-] (0,1) to [out=0,in=135] (.9,.6);
	\draw[very thick] (1.1,.4) to [out=315,in=180] (2,0);
\end{tikzpicture}
};
\endxy 
\; \right\rangle
=
- \xy
(0,0)*{
\begin{tikzpicture}[scale=.2]
	\draw [very thick, directed=.65] (0,-.75) to (0,.75);
	\draw [very thick,->] (0,.75) to [out=30,in=270] (1.25,2.75);
	\draw [very thick,->] (0,.75) to [out=150,in=270] (-1.25,2.75); 
	\draw [very thick, directed=.5] (1.25,-2.75) to [out=90,in=330] (0,-.75);
	\draw [very thick, directed=.5] (-1.25,-2.75) to [out=90,in=210] (0,-.75);
	\node at (.875,0) {\tiny $2$};
\end{tikzpicture}
};
\endxy
+
q
\xy
(0,0)*{
\begin{tikzpicture}[scale=.5, rotate=270]
	\draw[very thick, <-] (0,1) to [out=340,in=200] (2,1);
	\draw[very thick, <-] (0,0) to [out=20,in=160] (2,0);
\end{tikzpicture}
};
\endxy
\end{equation}
and these rules extend linearly to assign a $\Z[q,q^{-1}]$-linear combination $\langle \cal{L} \rangle$ of $\sln$ webs to 
a diagram for $\cal{L}$. 
Next, the rules\footnote{In these equations, $[k]=\frac{q^k-q^{-k}}{q-q^{-1}}$ are the quantum integers, 
and ${k \brack l}$ are quantum binomial coefficients.}:
\begin{equation}\label{eq:UpwardRel}
\xy %
(0,0)*{
\begin{tikzpicture}[scale=.25]
	\draw [very thick, directed=.55] (0,.75) to [out=90,in=210] (1,2.5);
	\draw [very thick, directed=.55] (1,-1) to [out=90,in=330] (0,.75);
	\draw [very thick, directed=.55] (-1,-1) to [out=90,in=210] (0,.75);
	\draw [very thick, directed=.55] (3,-1) to [out=90,in=330] (1,2.5);
	\draw [very thick, directed=.55] (1,2.5) to (1,4.25);
	\node at (-1,-1.5) {\tiny $k$};
	\node at (1,-1.5) {\tiny $l$};
	\node at (-1.375,1.5) {\tiny $k{+}l$};
	\node at (3,-1.5) {\tiny $p$};
	\node at (1,4.75) {\tiny $k{+}l{+}p$};
\end{tikzpicture}
};
\endxy
=
\xy
(0,0)*{
\begin{tikzpicture}[scale=.25]
	\draw [very thick, directed=.55] (0,.75) to [out=90,in=330] (-1,2.5);
	\draw [very thick, directed=.55] (-1,-1) to [out=90,in=210] (0,.75);
	\draw [very thick, directed=.55] (1,-1) to [out=90,in=330] (0,.75);
	\draw [very thick, directed=.55] (-3,-1) to [out=90,in=210] (-1,2.5);
	\draw [very thick, directed=.55] (-1,2.5) to (-1,4.25);
	\node at (1,-1.5) {\tiny $p$};
	\node at (-1,-1.5) {\tiny $l$};
	\node at (1.25,1.5) {\tiny $l{+}p$};
	\node at (-3,-1.5) {\tiny $k$};
	\node at (-1,4.75) {\tiny $k{+}l{+}p$};
\end{tikzpicture}
};
\endxy
\;\; , \;\;
\xy
(0,0)*{
\begin{tikzpicture}[scale=.2]
	\draw [very thick, directed=.55] (0,.75) to (0,2.5);
	\draw [very thick, directed=.55] (0,-2.75) to [out=30,in=330] (0,.75);
	\draw [very thick, directed=.55] (0,-2.75) to [out=150,in=210] (0,.75);
	\draw [very thick, directed=.55] (0,-4.5) to (0,-2.75);
	\node at (0,-5.1) {\tiny $k{+}l$};
	\node at (0,3.1) {\tiny $k{+}l$};
	\node at (-1.75,-1) {\tiny $k$};
	\node at (1.75,-1) {\tiny $l$};
\end{tikzpicture}
};
\endxy
={k+l \brack l}
\xy
(0,0)*{
\begin{tikzpicture}[scale=.2]
	\draw [very thick, directed=.55] (0,-4.5) to (0,2.5);
	\node at (0,-5.1) {\tiny $k{+}l$};
	\node at (0,3.1) {\tiny $k{+}l$};
\end{tikzpicture}
};
\endxy
\;\; , \;\;
\xy 
(0,0)*{
\begin{tikzpicture}[scale=.65]
	\draw[very thick,directed=.6] (.25,0) to (.25,.5);
	\draw[very thick,directed=.6] (.25,.5) to [out=45,in=225] (.75,1);
	\draw[very thick,directed=.6] (.75,1) to (.75,1.5);
	\draw[very thick,directed=.6] (.75,1.5) to [out=135,in=315] (.25,2);
	\draw[very thick,directed=.55] (.25,.5) to [out=135,in=225] (.25,2);
	\draw[very thick,directed=.6] (.25,2) to (.25,2.5);
	\draw[very thick,directed=.55] (1,0) to [out=90,in=330] (.75,1); 
	\draw[very thick,directed=.55] (.75,1.5) to [out=30,in=270] (1,2.5);
	\node at (.25,-.2) {\tiny $k$};
	\node at (1,-.2) {\tiny $l$};
	\node at (.25,2.7) {\tiny $k$};
	\node at (1,2.7) {\tiny $l$};
	\node at (.625,.5) {\tiny $1$};
	\node at (.625,2) {\tiny $1$};
\end{tikzpicture}
};
\endxy
-
\xy %
(0,0)*{
\begin{tikzpicture}[scale=.65,xscale=-1]
	\draw[very thick,directed=.6] (.25,0) to (.25,.5);
	\draw[very thick,directed=.6] (.25,.5) to [out=45,in=225] (.75,1);
	\draw[very thick,directed=.6] (.75,1) to (.75,1.5);
	\draw[very thick,directed=.6] (.75,1.5) to [out=135,in=315] (.25,2);
	\draw[very thick,directed=.55] (.25,.5) to [out=135,in=225] (.25,2);
	\draw[very thick,directed=.6] (.25,2) to (.25,2.5);
	\draw[very thick,directed=.55] (1,0) to [out=90,in=330] (.75,1);
	\draw[very thick,directed=.55] (.75,1.5) to [out=30,in=270] (1,2.5);
	\node at (.25,-.2) {\tiny $l$};
	\node at (1,-.2) {\tiny $k$};
	\node at (.25,2.7) {\tiny $l$};
	\node at (1,2.7) {\tiny $k$};
	\node at (.625,.5) {\tiny $1$};
	\node at (.625,2) {\tiny $1$};
\end{tikzpicture}
};
\endxy
=
[k-l]
\xy
(0,0)*{
\begin{tikzpicture}[scale=.6,xscale=-1]
	\draw[very thick,directed=.6] (.25,0) to (.25,2.5);
	\draw[very thick,directed=.55] (1,0) to (1,2.5);
	\node at (.25,-.2) {\tiny $l$};
	\node at (1,-.2) {\tiny $k$};
	\node at (.25,2.7) {\tiny $l$};
	\node at (1,2.7) {\tiny $k$};
\end{tikzpicture}
};
\endxy
\end{equation}
\begin{equation}\label{eq:DownwardRel}
\xy
(0,0)*{
\begin{tikzpicture}[scale=.2]
	\draw [very thick, directed=.55] (0,.75) to (0,2.5);
	\draw [very thick, rdirected=.45] (0,-2.75) to [out=30,in=330] (0,.75);
	\draw [very thick, directed=.55] (0,-2.75) to [out=150,in=210] (0,.75);
	\draw [very thick, directed=.55] (0,-4.5) to (0,-2.75);
	\node at (0,-5.1) {\tiny $k$};
	\node at (0,3.1) {\tiny $k$};
	\node at (-2.625,-1) {\tiny $k{+}l$};
	\node at (1.75,-1) {\tiny $l$};
\end{tikzpicture}
};
\endxy
={n-k \brack l}
\xy
(0,0)*{
\begin{tikzpicture}[scale=.2]
	\draw [very thick, directed=.55] (0,-4.5) to (0,2.5);
	\node at (0,-5.1) {\tiny $k{+}l$};
	\node at (0,3.1) {\tiny $k{+}l$};
\end{tikzpicture}
};
\endxy
\;\; , \;\;
\xy
(0,0)*{
\begin{tikzpicture}[scale=.25]
	\draw[very thick] (0,0) circle (1);
	\draw[very thick,->] (1,0) to (1,-.1);
	\node at (1,-1) {\tiny$k$};
\end{tikzpicture}
};
\endxy
=
{n \brack k}
\;\; , \;\;
\end{equation}
together with those obtained from these via symmetry or reversing all arrows 
(and others involving ``tags,'' which are not relevant to the current paper)
are used to evaluate each web appearing in $\langle \cal{L} \rangle$, 
and hence express $\langle\cal{L}\rangle$ as an element $\tilde{P}_n(\cal{L}) \in \Z[q,q^{-1}]$.
Rescaling\footnote{Here $w_\cal{L}$ denotes the writhe of the chosen diagram for $\cal{L}$.} 
then gives the $\sln$ Reshetikhin-Turaev polynomial 
$P_n(\cal{L}) = q^{n w_{\cal{L}}} \tilde{P}_n(\cal{L})$, which is independent of the chosen diagram for $\cal{L}$.

Note that equations \eqref{eq:DecatCrossing} and \eqref{eq:UpwardRel} are both independent of $n$. 
We can thus attempt to wait ``as long as possible'' before applying the relations in equation \eqref{eq:DownwardRel} in 
expressing $\langle\cal{L}\rangle$ as a Laurent polynomial.
To most effectively do so, we restrict the types of link diagrams we consider. 
Recall the following well known result:
\begin{theorem}[Alexander's Theorem]\label{thm:Alexander}
Given any link $\cal{L} \subset S^3$, there exists a braid $\beta$ so that $\cal{L}$ is isotopic to the closure of $\beta$.
\end{theorem}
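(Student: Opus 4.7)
The plan is to give Alexander's classical braiding argument. Start with a generic oriented diagram $D$ for $\cal{L}$ in the plane, with all crossings transverse double points, and fix a point $p$ in the complement of $D$ which will serve as the braid axis. The goal is to produce, by isotopy of $\cal{L}$, a new diagram in which every arc travels counterclockwise (say) around $p$; such a diagram is manifestly the closure of a braid, obtained by cutting along any radial ray from $p$ and interpreting the radial angle as ``time.''

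First, I subdivide $D$ into finitely many arcs by cutting at the points where the oriented tangent to $D$ is radial with respect to $p$. Each resulting arc is then either monotonically \emph{good} (counterclockwise around $p$) or monotonically \emph{bad} (clockwise around $p$). I then eliminate bad arcs one at a time by applying an Alexander move: given a bad arc $\gamma$, replace a small subarc of $\gamma$ by a detour consisting of two radial arcs that pass across $p$ (on the opposite side of the plane), each of which is good. This detour may cross other strands of $D$, but one is free to choose the detour to pass either over or under every strand it meets, and by general position one may assume all new crossings are transverse and all new arcs are good with respect to $p$.

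Finally, one checks termination: each Alexander move strictly decreases the number of bad arcs by one (while possibly increasing the total arc and crossing counts), so after finitely many steps every arc is good, and the resulting diagram presents $\cal{L}$ as a braid closure. The main obstacle, such as it is, lies in the bookkeeping of the Alexander move---verifying that the detour can be chosen so that only transverse new crossings are created and that no new bad arcs are introduced---but this is a standard general-position exercise. An alternative and more algorithmic route is the Yamada--Vogel procedure, which instead operates on Seifert circles and inductively reduces the number of ``reducible'' pairs of non-coherently nested circles by Reidemeister II-like moves; this has the advantage of producing an explicit braid word from any starting diagram, but the underlying termination argument is essentially the same.
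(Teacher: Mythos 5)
The paper does not actually prove Alexander's Theorem; it is stated as a ``well known result'' and used as a black box (as is appropriate for a classical theorem, a reference for which would be, e.g., Alexander's original 1923 paper or standard textbook treatments such as Birman's). So there is no proof in the paper to compare against.

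Your proposal is a correct sketch of the classical Alexander braiding argument, and the alternative you mention (the Yamada--Vogel algorithm working on Seifert circles) is likewise a standard route. One small point worth tightening if you were to write this out in full: the claim that an Alexander move strictly decreases the number of bad arcs requires that you first subdivide each bad arc so that it is involved in at most one crossing (otherwise the detour around $p$ may itself need to be broken into pieces and the accounting gets murkier); with that refinement the termination argument is clean. Your description of the detour as ``two radial arcs that pass across $p$'' is informal but conveys the right picture --- the replacement arc travels around the axis on the far side, so both of its monotone pieces wind counterclockwise. Overall the proposal is sound; it is simply supplying a proof the paper deliberately omits.
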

To fix notation, if $\beta$ is a braid, we denote its closure, viewed as a link in $S^3$, by $\cal{L}_\beta$. 
We can also consider the closure of $\beta$ as a link in the thickened annulus $\cal{A} \times [0,1]$, which we denote by $\widehat{\beta}$. 
The latter maps to $\cal{L}_\beta$ via the standard inclusion of the thickened annulus into $S^3$. 
In summary:
\[
\begin{tikzpicture}[anchorbase, scale=1,rotate=270]
\draw [gray] (0,0) rectangle (2,1.5);
\draw[very thick,<-] (0,.5) to (.75,.5);
\draw[very thick,<-] (0,1) to (.75,1);
\draw[very thick] (.75,.25) rectangle (1.25,1.25);
\node at (1,.75) {$\beta$};
\draw[very thick] (1.25,.5) to (2,.5);
\draw[very thick] (1.25,1) to (2,1);
\node at (.375,.75) {$\hdots$};
\node at (1.625,.75) {$\hdots$};
\end{tikzpicture}
\;\; \xmapsto[{\text{in } \cal{A} \times [0,1]}]{\text{take closure}} \;\;
\widehat{\beta} = 
\begin{tikzpicture}[anchorbase,scale=.75,rotate=270]
\draw [gray]  (1,1.5) ellipse (.375 and .125);
\draw [gray] (1,1.5) ellipse (2.25 and 1.75);
\draw[very thick] (0,.5) to (.75,.5);
\draw[very thick] (0,1) to (.75,1);
\draw[very thick, ->] (0,.5) to (-.1,.5);
\draw[very thick, ->] (0,1) to (-.1,1);
\draw[very thick] (.75,.25) rectangle (1.25,1.25);
\node at (1,.75) {$\beta$};
\draw[very thick] (1.25,.5) to (2,.5);
\draw[very thick] (1.25,1) to (2,1);
\draw [very thick] (0,1) arc (270:90:.5) -- (2,2) arc (90:-90:.5);
\draw [very thick] (0,.5) arc (270:90:1) -- (2,2.5) arc (90:-90:1);
\node at (1,2.1) {$\cdot$};
\node at (1,2.25) {$\cdot$};
\node at (1,2.4) {$\cdot$};
\end{tikzpicture}
\;\; \xmapsto[{\cal{A} \times [0,1] \text{ in } S^3}]{\text{include }} \;\;
\cal{L}_\beta = 
\begin{tikzpicture}[anchorbase,scale=.75,rotate=270]
\draw[very thick] (0,.5) to (.75,.5);
\draw[very thick] (0,1) to (.75,1);
\draw[very thick, ->] (0,.5) to (-.1,.5);
\draw[very thick, ->] (0,1) to (-.1,1);
\draw[very thick] (.75,.25) rectangle (1.25,1.25);
\node at (1,.75) {$\beta$};
\draw[very thick] (1.25,.5) to (2,.5);
\draw[very thick] (1.25,1) to (2,1);
\draw [very thick] (0,1) arc (270:90:.5) -- (2,2) arc (90:-90:.5);
\draw [very thick] (0,.5) arc (270:90:1) -- (2,2.5) arc (90:-90:1);
\node at (1,2.1) {$\cdot$};
\node at (1,2.25) {$\cdot$};
\node at (1,2.4) {$\cdot$};
\end{tikzpicture}
\]

Given $\beta$, we can hence consider $\langle \widehat{\beta} \rangle$, which is a linear combination of 
webs embedded in the thickened annulus, \ie an element of the $\sln$ web skein module of the 
annulus (given by webs modulo isotopy and the relations above). 
We have that $\langle \widehat{\beta} \rangle \mapsto \langle \cal{L}_{\beta} \rangle$ under the inclusion 
$\cal{A} \times [0,1] \hookrightarrow S^3$. 
The following result, 
proved in \cite{QR2} (or as the decategorification of Theorem \ref{thm:foamObjDec} below) allows us to simplify 
$\langle \widehat{\beta} \rangle$, \textit{independent of our choice of $n$}, before passing to $\langle \cal{L}_{\beta} \rangle$.

\begin{theorem}[Decategorified annular evaluation]\label{thm:DecatAnnEval}
The annular closure of any braid-like web can be expressed as a $\Z[q,q^{-1}]$-linear combination of essential, labeled, concentric circles, 
using only the web relations in equation \eqref{eq:UpwardRel}, and those obtained from these via symmetry/reversing all orientations.
\end{theorem}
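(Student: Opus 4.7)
My plan is to prove the statement by induction on the total number of trivalent vertices in a braid-like representative of $\widehat{\beta}$, using only the relations listed in \eqref{eq:UpwardRel} together with their symmetric and orientation-reversed duals. As a preliminary normalization, I would apply the associativity relation (leftmost in \eqref{eq:UpwardRel}) to assume every trivalent vertex is binary, so that each merge joins exactly two edges of labels $k$ and $l$ into an edge of label $k+l$, and dually for splits. The base case of zero vertices is immediate: the closure is a disjoint union of concentric strands winding around the annulus, and these strands are essential because the coherent orientation of a braid-like web forces each to wrap nontrivially around the core of $\cal{A}$.

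For the inductive step, I would choose any binary merge $v$ joining edges of labels $k,l$ into an edge of label $k+l$, and trace this outgoing edge forward through the web and, if necessary, through the closure. Since the web is finite and repeated merges strictly increase the edge label, the edge must eventually reach a split vertex $v'$. My goal is to bring $v$ and $v'$ into an adjacent configuration, so that the reverse of the middle relation in \eqref{eq:UpwardRel} (a merge-then-split digon) applies and removes both vertices at the cost of a quantum binomial coefficient. To achieve this I would use the square switch relation (rightmost in \eqref{eq:UpwardRel}) to slide any rung attached to the edge segment between $v$ and $v'$ out of the way. Each application yields two summands: one in which the rung has been displaced but the total vertex count is unchanged, and one with strictly fewer vertices carrying a $[k-l]$ coefficient, which is handled directly by the primary induction.

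The main obstacle is termination of the rung-sliding procedure, since one branch of each square switch preserves the total vertex count. To remedy this I would introduce a secondary complexity measure, for example the number of trivalent vertices lying between $v$ and $v'$ along the chosen edge path, and induct lexicographically on the pair (total vertex count, secondary measure); this secondary measure strictly decreases with each rung displacement, guaranteeing that the process terminates with $v$ and $v'$ adjacent and the digon relation applicable. A subtle point requiring care is that the split $v'$ produced by tracing from $v$ may initially split off edges of labels different from $\{k,l\}$; in that event one must iterate the procedure on refined merge--split pairs, checking in each case that the lexicographic measure still strictly decreases across nested reductions. Once this combinatorial bookkeeping is in place, the entire reduction invokes only relations from \eqref{eq:UpwardRel} and their symmetric or orientation-reversed duals, so the argument is manifestly uniform in $n$, which is precisely the structural feature the remainder of the paper leverages.
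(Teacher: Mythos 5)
Your proposal contains a genuine gap at the central reduction step. When you bring the merge $v$ (joining $k,l$ into $k+l$) and the traced split $v'$ into an adjacent configuration, the local picture is a \emph{four}-valent configuration: two edges enter $v$, one edge connects $v$ to $v'$, and two edges exit $v'$. This is \emph{not} a digon. The middle relation in equation \eqref{eq:UpwardRel} (and its reflections and orientation-reversals) applies only to a genuine bubble — a single edge of label $k+l$ entering a split, the two resulting strands immediately re-merging, and a single $k+l$ edge exiting. For your adjacent merge--split pair to produce a digon you would additionally need the two outputs of $v'$ to loop back and coincide with the two inputs of $v$. In a braid-like annular closure this simply does not happen: the strands leaving $v'$ wind around the core of the annulus before returning, so the cycle they trace is essential rather than bounding, and a "digon" wrapping the core is not a local bubble and cannot be removed by any of the relations in \eqref{eq:UpwardRel}. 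So the payoff step of your induction does not apply.

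The relation you would actually need when $v$ and $v'$ are brought between the same pair of uprights is the \emph{square switch} (rightmost in \eqref{eq:UpwardRel}), not the digon relation, and this is precisely what the paper's proof (the decategorification of Theorem~\ref{thm:foamObjDec}) is built around — but organized in a way that sidesteps the combinatorial bookkeeping you are wrestling with. The paper puts the annular web in ladder form, records the cyclic sequence of $\slm$ weights seen along radial slices, and locates a slice of minimal weight. The two rungs flanking that slice necessarily point "toward" it; if they involve disjoint uprights one uses web isotopy or the associativity relation, and if they share uprights one uses the square switch. Either way, each step either raises the minimal weight or reduces its multiplicity, and the induction is lexicographic on that pair — not on trivalent-vertex count. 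The vertex count, by contrast, is a poor measure here: the square switch only annihilates rungs in the extremal summand, and the reordered summands have the same number of vertices, so your secondary measure (vertices between $v$ and $v'$) is not guaranteed to strictly decrease once you keep track of how the reordered rungs are re-positioned relative to your chosen pair.

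In short, the strategy of pairing merges with splits by tracing an edge and then cancelling the pair via the digon relation cannot be made to work in the annulus; the paper's weight-sequence induction with the square switch is the mechanism that actually closes the argument.
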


Using this, we can express $\langle \widehat{\beta} \rangle$ as a $\Z[q,q^{-1}]$-linear combination of essential, labeled circles, 
which we denote\footnote{ 
Moreover, $\langle \widehat{\beta} \rangle_\circ$ is unique once we insist that the labelings on the essential circles are radially non-decreasing, 
hence is an invariant of the conjugacy class of $\beta$ (\ie of the annular link $\widehat{\beta}$).}
by $\langle \widehat{\beta} \rangle_\circ$. 
Finally, we can recover the $\sln$ link polynomials $P_n(\cal{L}_\beta)$ by using the appropriate version of 
the second relation in equation \eqref{eq:DownwardRel} to ``evaluate'' the labeled circles. 
Since this formula is polynomial in $q^n$, we can also recover the HOMFLYPT polynomial \cite{HOMFLY,PT}
by using the value of the latter on $k$-colored unknots in this evaluation. To be precise:

\begin{proposition}\label{prop:WedgePoly}
The link invariant $P_n(\cal{L}_\beta)$ is recovered from 
the braid conjugacy invariant $\langle \widehat{\beta} \rangle_\circ$
by multiplying by $q^{n w_{\beta}}$ and assigning the value ${n \brack k}$ to $k$-labeled circles. 
The same is true for the HOMFLYPT polynomial $P_\infty(\cal{L}_\beta)$ if we instead multiply by $a^{w_{\beta}}$ 
and use the assignment
\[
\xy
(0,0)*{
\begin{tikzpicture}[scale=.25]
	\draw[very thick] (0,0) circle (1);
	\draw[very thick,->] (1,0) to (1,-.1);
	\node at (1,-1) {\tiny$k$};
\end{tikzpicture}
};
\endxy \mapsto q^{k^2} a^{-k} \prod_{i=1}^k \frac{1-a^2q^{2-2i}}{1-q^{2i}} 
\]
\end{proposition}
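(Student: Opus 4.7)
The plan is to leverage the fact that the relations used to produce $\langle \widehat{\beta}\rangle_\circ$ are $n$-independent, so the same annular reduction serves both invariants simultaneously. The $\sln$ half is essentially a bookkeeping exercise on top of Theorem \ref{thm:DecatAnnEval}, while the HOMFLYPT half requires importing the analogous MOY-type presentation of that polynomial.

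First I would handle the $\sln$ case. Since equations \eqref{eq:DecatCrossing} and \eqref{eq:UpwardRel} are valid in the $\sln$ web skein module of $S^3$, and these are precisely the relations used in Theorem \ref{thm:DecatAnnEval}, the identity $\langle \widehat{\beta}\rangle = \langle \widehat{\beta}\rangle_\circ$ in the annular web skein module pushes forward under the inclusion $i\colon \cal{A}\times[0,1]\hookrightarrow S^3$ to the identity $\langle \cal{L}_\beta\rangle = i_*\langle \widehat{\beta}\rangle_\circ$ in the $\sln$ web skein module of $S^3$. Under $i_*$ each essential $k$-labeled circle becomes a $k$-labeled unknot, which by the second relation of \eqref{eq:DownwardRel} evaluates to ${n \brack k}$. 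Multiplying by $q^{n w_\beta}$ then converts $\tilde{P}_n(\cal{L}_\beta)$ into $P_n(\cal{L}_\beta)$, as required.

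Next I would treat the HOMFLYPT case. The key input is the standard MOY-type presentation of the HOMFLYPT polynomial (\eg via the $\infty$-analogue of the Cautis--Kamnitzer--Morrison setup used above): the crossing formula \eqref{eq:DecatCrossing} and the upward web relations \eqref{eq:UpwardRel}, involving only $q$ and quantum integers, hold verbatim, while the circle evaluation is replaced by the $a,q$ formula in the statement and the overall framing correction becomes $a^{w_\beta}$. Granting this, the proof of Theorem \ref{thm:DecatAnnEval} goes through \mm\ in the HOMFLYPT web skein, so $\langle \widehat{\beta}\rangle_\circ$ still represents $\langle \widehat{\beta}\rangle$ there; including into $S^3$, applying the stated circle formula, and multiplying by $a^{w_\beta}$ therefore yields $P_\infty(\cal{L}_\beta)$. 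As a sanity check, substituting $a = q^n$ in the circle formula recovers ${n \brack k}$ and specializes $a^{w_\beta}$ to $q^{n w_\beta}$, consistently with the $\sln$ case; this is a short telescoping verification.

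The main obstacle is the appeal to the MOY-type evaluation of HOMFLYPT: once it is known that the HOMFLYPT web calculus shares \eqref{eq:DecatCrossing} and \eqref{eq:UpwardRel} with $\sln$ and differs only in the circle evaluation and framing factor, the rest of the argument is a direct parallel of the $\sln$ case. The specific framing factor $a^{w_\beta}$ in place of $q^{n w_\beta}$ is pinned down by computing the value of a positive curl from \eqref{eq:DecatCrossing} together with the circle formula, which can be done in a single line.
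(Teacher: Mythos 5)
Your $\sln$ half reproduces the paper's intended argument exactly: the annular reduction of Theorem~\ref{thm:DecatAnnEval} uses only the $n$-independent relations~\eqref{eq:UpwardRel}, so pushing forward under $\cal{A}\times[0,1]\hookrightarrow S^3$ and applying the circle evaluation from~\eqref{eq:DownwardRel} gives $\tilde{P}_n$, and the $q^{nw_\beta}$ rescaling finishes it. For the HOMFLYPT half your route is genuinely different from what the paper has in mind. You invoke, as an external input, a MOY-style web skein for HOMFLYPT in which~\eqref{eq:DecatCrossing} and~\eqref{eq:UpwardRel} hold verbatim while the circle value and framing correction are replaced by the stated $a,q$ expressions; this is a legitimate and conceptually clean route, but it imports a nontrivial fact from the literature. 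The paper instead argues by interpolation: having established, for every $n$, that $q^{nw_\beta}\langle\widehat\beta\rangle_\circ$ evaluated at ${n\brack k}$ equals $P_n(\cal{L}_\beta)=P_\infty(\cal{L}_\beta)\big|_{a=q^n}$, and observing that both the claimed circle formula and the framing factor specialize at $a=q^n$ to ${n\brack k}$ and $q^{nw_\beta}$, one concludes by noting that a Laurent polynomial in $a,q$ is determined by its specializations at $a=q^n$ for all $n$. That argument is more elementary and entirely self-contained within what the paper has already set up, at the small cost of needing to check Laurent-polynomiality in $a$ of the evaluated expression. Your ``sanity check'' that $a=q^n$ recovers the $\sln$ data is not a side remark in the paper's approach but the entire mechanism of its proof. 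Both routes are sound; you should either cite a specific source for the HOMFLYPT web calculus or replace that appeal by the interpolation argument, which costs you nothing given that you already computed the $a=q^n$ specialization.
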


Surprisingly, there are \textit{different} values for $k$-labeled circles that also produce the invariants $P_n$ and $P_\infty$ 
when specializing $\langle \widehat{\beta} \rangle_\circ$. 
The $\sln$ web formulation of link polynomials presented above is based on the Cautis-Kamnitzer-Morrison description 
of the category of finite-dimensional $U_q(\sln)$ representations, 
in which webs describe intertwiners between tensor products of the fundamental $U_q(\sln)$ representations $\bV^k(\C_q^n)$.
In particular, the values for circles above are the dimensions of the (quantum) exterior powers.
However, in \cite{RTub} it is shown that the relations in equation \eqref{eq:UpwardRel} also hold in a diagrammatic description 
of $\Rep(U_q(\sln))$ in which webs describe intertwiners between tensor products of the symmetric representations $\Sym^k(\C_q^n)$.
It follows that we can obtain the \textit{same} $\sln$ Reshetikhin-Turaev invariants $P_n(\cal{L}_\beta)$ via the annular evaluation 
procedure, but instead using the evaluation
\begin{equation}\label{eq:SymCircle}
\xy
(0,0)*{
\begin{tikzpicture}[scale=.25]
	\draw[very thick] (0,0) circle (1);
	\draw[very thick,->] (1,0) to (1,-.1);
	\node at (1,-1) {\tiny$k$};
\end{tikzpicture}
};
\endxy
=
{n+k-1 \brack k}
\end{equation}
at the final step.
To state things precisely, 
we need the symmetric webs version of equation \eqref{eq:DecatCrossing}.
In this setting, the crossing formulae become:
\begin{equation}\label{eq:DecatSymCrossing}
\left\{ \;
\xy
(0,0)*{
\begin{tikzpicture}[scale=.5,rotate=270]
	\draw[very thick, <-] (0,1) to [out=0,in=180] (2,0);
	\draw[very thick, <-] (0,0) to [out=0,in=225] (.9,.4);
	\draw[very thick] (1.1,.6) to [out=45,in=180] (2,1);
\end{tikzpicture}
};
\endxy 
\; \right\}
=
-q
\xy
(0,0)*{
\begin{tikzpicture}[scale=.5,rotate=270]
	\draw[very thick, <-] (0,1) to [out=340,in=200] (2,1);
	\draw[very thick, <-] (0,0) to [out=20,in=160] (2,0);
\end{tikzpicture}
};
\endxy
+
\xy
(0,0)*{
\begin{tikzpicture}[scale=.2]
	\draw [very thick, directed=.65] (0,-.75) to (0,.75);
	\draw [very thick,->] (0,.75) to [out=30,in=270] (1.25,2.75);
	\draw [very thick,->] (0,.75) to [out=150,in=270] (-1.25,2.75); 
	\draw [very thick, directed=.5] (1.25,-2.75) to [out=90,in=330] (0,-.75);
	\draw [very thick, directed=.5] (-1.25,-2.75) to [out=90,in=210] (0,-.75);
	\node at (.875,0) {\tiny $2$};
\end{tikzpicture}
};
\endxy
\quad , \quad
\left\{ \;
\xy
(0,0)*{
\begin{tikzpicture}[scale=.5, rotate=270]
	\draw[very thick, <-] (0,0) to [out=0,in=180] (2,1);
	\draw[very thick, <-] (0,1) to [out=0,in=135] (.9,.6);
	\draw[very thick] (1.1,.4) to [out=315,in=180] (2,0);
\end{tikzpicture}
};
\endxy 
\; \right\}
=
\xy
(0,0)*{
\begin{tikzpicture}[scale=.2]
	\draw [very thick, directed=.65] (0,-.75) to (0,.75);
	\draw [very thick,->] (0,.75) to [out=30,in=270] (1.25,2.75);
	\draw [very thick,->] (0,.75) to [out=150,in=270] (-1.25,2.75); 
	\draw [very thick, directed=.5] (1.25,-2.75) to [out=90,in=330] (0,-.75);
	\draw [very thick, directed=.5] (-1.25,-2.75) to [out=90,in=210] (0,-.75);
	\node at (.875,0) {\tiny $2$};
\end{tikzpicture}
};
\endxy
-q^{-1}
\xy
(0,0)*{
\begin{tikzpicture}[scale=.5, rotate=270]
	\draw[very thick, <-] (0,1) to [out=340,in=200] (2,1);
	\draw[very thick, <-] (0,0) to [out=20,in=160] (2,0);
\end{tikzpicture}
};
\endxy
\end{equation}
where the $2$-labeled edge now corresponds to the representation $\Sym^2(\C_q^n)$.
We can again apply Theorem \ref{thm:DecatAnnEval} to the braid conjugacy invariant $\{ \widehat{\beta} \}$ to 
obtain $\{ \widehat{\beta} \}_\circ$, which is again a linear combination of labeled circles. 
We can then recover $P_n(\cal{L}_\beta)$ as before by rescaling and evaluating circles at the dimensions of the 
corresponding symmetric power representations, \ie using equation \eqref{eq:SymCircle}. 

\begin{proposition}\label{prop:SymPoly}
The link invariant $P_n(\cal{L}_\beta)$ is recovered from 
the braid conjugacy invariant $\{ \widehat{\beta} \}_\circ$
by multiplying by $q^{n w_{\beta}}$ and assigning the value ${n+k-1 \brack k}$ to $k$-labeled circles. 
The same is true for the HOMFLYPT polynomial $P_\infty(\cal{L}_\beta)$ if we instead multiply by $a^{w_{\beta}}$ 
and use the assignment
\[
\xy
(0,0)*{
\begin{tikzpicture}[scale=.25]
	\draw[very thick] (0,0) circle (1);
	\draw[very thick,->] (1,0) to (1,-.1);
	\node at (1,-1) {\tiny$k$};
\end{tikzpicture}
};
\endxy \mapsto q^{k} a^{-k} \prod_{i=1}^k \frac{1-a^2q^{2i-2}}{1-q^{2i}} =: \mathsf{c}_{\Sym^k}
\]
\end{proposition}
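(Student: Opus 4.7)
The plan is to mirror the two-step argument underlying Proposition \ref{prop:WedgePoly}, but using the symmetric-power incarnation of the $\sln$ web category in place of the exterior-power incarnation of Cautis-Kamnitzer-Morrison.

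First, I would apply Theorem \ref{thm:DecatAnnEval} directly to $\{\widehat{\beta}\}$. The proof of that theorem uses only the web relations in equation \eqref{eq:UpwardRel}, which hold verbatim in the symmetric-web setting (this is part of the content of \cite{RTub}). Thus the braid-like web obtained by expanding $\{\widehat{\beta}\}$ via equation \eqref{eq:DecatSymCrossing} reduces to a $\Z[q,q^{-1}]$-linear combination $\{\widehat{\beta}\}_\circ$ of essential, radially non-decreasing concentric circles, which is an invariant of the conjugacy class of $\beta$.

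Second, I would invoke the symmetric analogue of Cautis-Kamnitzer-Morrison from \cite{RTub}: there is a functor from the symmetric $\sln$ web category to $\Rep(U_q(\sln))$ sending a $k$-labeled edge to $\Sym^k(\C_q^n)$, sending the crossings in equation \eqref{eq:DecatSymCrossing} to the braiding (up to framing), and sending a $k$-labeled circle to the quantum dimension ${n+k-1 \brack k} = \dim_q \Sym^k(\C_q^n)$. By construction, the framed Reshetikhin-Turaev invariant of $\cal{L}_\beta$ equals the scalar obtained by evaluating $\{\widehat{\beta}\}_\circ$ with this circle evaluation; multiplying by the standard framing correction $q^{nw_\beta}$ then yields $P_n(\cal{L}_\beta)$, proving the first statement.

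For the HOMFLYPT claim I would argue by specialization. A direct computation shows that $\mathsf{c}_{\Sym^k}\big|_{a=q^n} = {n+k-1 \brack k}$ and that $a^{w_\beta}\big|_{a=q^n} = q^{nw_\beta}$, so the formula proposed for $P_\infty(\cal{L}_\beta)$ specializes, at each $a=q^n$ with $n \geq 1$, to the expression for $P_n(\cal{L}_\beta)$ just established. Since HOMFLYPT is the unique two-variable link invariant specializing to $P_n$ at $a=q^n$, and two rational functions in $a$ and $q$ that agree under infinitely many such specializations must coincide, the proposed formula equals $P_\infty(\cal{L}_\beta)$. I expect the only genuine subtlety to be the identification of $\mathsf{c}_{\Sym^k}$ as the correct colored-unknot value in the symmetric convention; this reduces to a short quantum-binomial manipulation, or may be extracted from known HOMFLYPT colored-unknot formulas.
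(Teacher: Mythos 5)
Your argument matches the paper's (implicit) reasoning in the prose surrounding the proposition: annular evaluation via the $n$-independent upward relations, the symmetric-web functor of \cite{RTub} to $\Rep(U_q(\sln))$ to identify the result with $P_n(\cal{L}_\beta)$, and specialization at $a=q^n$ for infinitely many $n$ to deduce the HOMFLYPT statement. The closing caveat is unnecessary: you do not need to independently verify that $\mathsf{c}_{\Sym^k}$ is the $\Sym^k$-colored HOMFLYPT unknot value, as that identification is a consequence of your specialization argument (applied to the unknot) rather than a prerequisite for it.
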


In fact, we can re-state Proposition \ref{prop:WedgePoly} purely in terms of the bracket 
$\{ \widehat{\beta} \}_\circ$ as well. 
The values for crossings in equation \eqref{eq:DecatCrossing} can be obtained 
from those in equation \eqref{eq:DecatSymCrossing} by using the substitution $q \leftrightarrow q^{-1}$ and multiplying by $-1$.
It follows that 
\begin{equation}\label{eq:WedgeToSym}
\langle \widehat{\beta} \rangle_\circ = (-1)^{w_\beta} \{ \widehat{\beta} \}_\circ \big|_{q= q^{-1}}
\end{equation}

\begin{proposition}\label{prop:WedgePoly2}
The link invariant $P_n(\cal{L}_\beta)$ is recovered from 
the braid conjugacy invariant $\{ \widehat{\beta} \}_\circ$ by first using the substitution $q\leftrightarrow q^{-1}$,
then multiplying by $(-1)^{w_\beta} q^{n w_{\beta}}$ and assigning the value ${n \brack k}$ to $k$-labeled circles. 
The same is true for the HOMFLYPT polynomial $P_\infty(\cal{L}_\beta)$ if we instead multiply by $ (-1)^{w_\beta} a^{w_{\beta}}$ 
and use the assignment
\[
\xy
(0,0)*{
\begin{tikzpicture}[scale=.25]
	\draw[very thick] (0,0) circle (1);
	\draw[very thick,->] (1,0) to (1,-.1);
	\node at (1,-1) {\tiny$k$};
\end{tikzpicture}
};
\endxy \mapsto q^{k^2} a^{-k} \prod_{i=1}^k \frac{1-a^2q^{2-2i}}{1-q^{2i}} =: \mathsf{c}_{\wedge^k}
\]
\end{proposition}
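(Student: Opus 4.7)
The plan is to deduce Proposition \ref{prop:WedgePoly2} as an immediate consequence of Proposition \ref{prop:WedgePoly} together with the identity \eqref{eq:WedgeToSym}. Indeed, Proposition \ref{prop:WedgePoly} expresses $P_n(\cal{L}_\beta)$ and $P_\infty(\cal{L}_\beta)$ as rescaled circle evaluations of $\langle\widehat{\beta}\rangle_\circ$; rewriting $\langle\widehat{\beta}\rangle_\circ$ in terms of $\{\widehat{\beta}\}_\circ$ via \eqref{eq:WedgeToSym} produces exactly the specialization formulas in the statement.

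The only point requiring explicit verification is \eqref{eq:WedgeToSym} itself. First, I would compare \eqref{eq:DecatCrossing} with \eqref{eq:DecatSymCrossing} crossing by crossing, observing that each symmetric resolution is obtained from the corresponding exterior resolution by applying $q \leftrightarrow q^{-1}$ and then multiplying by $-1$. Expanding a braid $\beta$ with $c$ crossings gives $\{\widehat{\beta}\} = (-1)^c \langle\widehat{\beta}\rangle\big|_{q\leftrightarrow q^{-1}}$ as formal $\Z[q,q^{-1}]$-linear combinations of closed webs in the annulus; since $c$ and $w_\beta$ have the same parity, $(-1)^c = (-1)^{w_\beta}$. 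Next I would check that the annular evaluation procedure of Theorem \ref{thm:DecatAnnEval} commutes with $q \leftrightarrow q^{-1}$. This is immediate: the only coefficients that enter the web relations \eqref{eq:UpwardRel}—the quantum integers $[k]$ and quantum binomials ${k+l \brack l}$—are all manifestly invariant under $q \leftrightarrow q^{-1}$. Hence \eqref{eq:WedgeToSym} descends to the reduced annular brackets.

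Putting these ingredients together, the $\sln$ formula from Proposition \ref{prop:WedgePoly} becomes
\[
P_n(\cal{L}_\beta) \;=\; q^{n w_\beta} \cdot \mathrm{ev}_{{n \brack k}}\bigl(\langle\widehat{\beta}\rangle_\circ\bigr) \;=\; (-1)^{w_\beta} q^{n w_\beta} \cdot \mathrm{ev}_{{n \brack k}}\bigl(\{\widehat{\beta}\}_\circ\big|_{q\leftrightarrow q^{-1}}\bigr),
\]
which is the first assertion. The HOMFLYPT case is formally identical, replacing $q^{n w_\beta}$ by $a^{w_\beta}$ and ${n \brack k}$ by $\mathsf{c}_{\wedge^k}$; here one should note that since evaluating a $\Z[q^{\pm 1}]$-linear combination of closed webs at a $q$-dependent circle value and then substituting $q \leftrightarrow q^{-1}$ produces the same result as first substituting $q \leftrightarrow q^{-1}$ on the web coefficients and then evaluating at the original circle value, the order of operations is inconsequential.

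There is no serious obstacle here: the content is really just a sign-and-substitution bookkeeping argument pivoting on \eqref{eq:WedgeToSym}, and the mildly delicate point is only the interplay between the substitution $q \leftrightarrow q^{-1}$ on web coefficients and the subsequent circle evaluation, which as explained above causes no trouble thanks to the $q \leftrightarrow q^{-1}$ symmetry of all relations used in annular evaluation.
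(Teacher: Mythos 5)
Your proof is correct and follows exactly the route the paper intends: establish the comparison \eqref{eq:WedgeToSym} between $\langle\widehat{\beta}\rangle_\circ$ and $\{\widehat{\beta}\}_\circ$ (via the crossing-by-crossing sign $-1$, the parity identity $(-1)^c=(-1)^{w_\beta}$, and the $q\leftrightarrow q^{-1}$-invariance of the coefficients in \eqref{eq:UpwardRel}) and then simply rewrite Proposition~\ref{prop:WedgePoly}. One small caveat: the parenthetical claim at the end, that evaluating circles at a $q$-dependent value and then substituting $q\leftrightarrow q^{-1}$ equals substituting on coefficients first and then evaluating at the original value, is false in general (the two differ unless the circle value is $q\leftrightarrow q^{-1}$-symmetric, which $\mathsf{c}_{\wedge^k}$ is not); fortunately your argument never actually needs it, since both the proposition's statement and the derivation from \eqref{eq:WedgeToSym} apply the substitution to coefficients only, before any circle evaluation, so there is no ordering ambiguity to resolve in the first place.
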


Taken together, Propositions \ref{prop:SymPoly} and \ref{prop:WedgePoly2} show that using only one formula for the crossings, 
we can obtain the $\sln$ and HOMFLYPT link polynomials in two distinct ways, 
by using different values for our $k$-labeled circles and rescalings.

\begin{remark}\label{rem:Mirror}
In the HOMFLYPT case, we can rewrite the values of $k$-labeled circles in Proposition \ref{prop:WedgePoly2} as
\[
\mathsf{c}_{\wedge^k} 
= (-1)^k q^{-k} a^{-k} \prod_{i=1}^k \frac{1-a^2q^{2-2i}}{1-q^{-2i}}
\]
which equals $(-1)^k$ times the value $\mathsf{c}_{\Sym^k}$ of the $\Sym^k$-colored unknot, 
after applying the substitution $q \leftrightarrow q^{-1}$.
The sum of the labelings on the circles in each term in $\{ \widehat{\beta} \}_\circ$ equals 
the number of strands in the braid $\beta$, which we denote\footnote{More generally, given a colored braid $\beta$, 
we let $i_\beta$ denote the sum of the labelings of the strands in $\beta$.} 
by $i_\beta$. 
Combining this with Propositions \ref{prop:SymPoly} and \ref{prop:WedgePoly2} we find that
\[
P_\infty(\cal{L}_\beta)\big|_{q=q^{-1}} = \left( a^{w_\beta} \{ \widehat{\beta} \}_\circ\big|_{\mathsf{c}_{\Sym^k}} \right)\Big|_{q=q^{-1}}
= (-1)^{i_\beta} a^{w_\beta} \left( \{ \widehat{\beta} \}_\circ\big|_{q=q^{-1}} \right)\Big|_{\mathsf{c}_{\wedge^k}} 
= (-1)^{i_\beta + w_\beta} P_\infty(\cal{L}_\beta).
\]
It is easy to see that 
$i_\beta + w_\beta$ has the same parity as the number of components $\#\cal{L}_\beta$ of the link $\cal{L}_\beta$.
Thus,
\begin{equation}\label{eq:ourMirror}
P_\infty(\cal{L}_\beta)\big|_{q=q^{-1}} = (-1)^{\#\cal{L}_\beta} P_\infty(\cal{L}_\beta).
\end{equation}
We will categorify this fact in Section \ref{sec:Mirror}, 
where we interpret it as a manifestation of the ``mirror symmetry'' property of the HOMFLYPT polynomial.
\end{remark}

\begin{remark}\label{rem:Coloring}
More generally, throughout this section we could have worked with braids/links whose strands are colored by 
positive integers. Using colored versions\footnote{Colored crossings can be defined in terms of uncolored crossings  
by iteratively using the second relation in equation \eqref{eq:UpwardRel} to ``explode'' $k$-labeled strands into $1$-labeled strands, 
provided we require the ``pitchfork'' relation that slides trivalent vertices through crossings.
Alternatively, in the symmetric case, 
these are given \eg as the decategorification of the crossing formulae in equation \eqref{eq:slnColoredCrossing}.
The antisymmetric case is obtained from the symmetric case by taking $q \leftrightarrow q^{-1}$ and multiplying 
by $(-1)^{kl}$, where $k$ and $l$ are the labelings of the involved strands; this generalizes equation \eqref{eq:WedgeToSym}.}
of our crossing formulae,
and the same annular evaluation process, 
the circle evaluations in Proposition \ref{prop:WedgePoly2} produce the $\bV$-colored link polynomials, 
while those in Proposition \ref{prop:SymPoly} recover the $\Sym$-colored invariants 
(in both the $\sln$ Reshetikhin-Turaev and HOMFLYPT setting).

To be precise, we must additionally rescale in order for the link polynomials to be independent of framing. 
For the choice of circle evaluations $\mathsf{c}_{\Sym^\bullet}$, a computation shows that: 
\[
\left\{
\xy
(0,0)*{
\begin{tikzpicture}[scale=.5]
	\draw[very thick,->] (1,1) to [out=270,in=0] (.75,.75) to [out=180,in=270] (0,2);
	\draw[line width = 4, white] (0,0) to [out=90,in=180] (.75,1.25) to [out=0,in=90] (1,1);	
	\draw[very thick] (0,0) to [out=90,in=180] (.75,1.25) to [out=0,in=90] (1,1);
	\node at (0,-.25) {\tiny$k$};
\end{tikzpicture}
};
\endxy
\right\}
= a^{-k}q^{k-k^2}
\left\{
\xy
(0,0)*{
\begin{tikzpicture}[scale=.5]
	\draw[very thick,->] (0,0) to (0,2);
	\node at (0,-.25) {\tiny$k$};
\end{tikzpicture}
};
\endxy
\right\}
\;\; , \;\;
\left\{
\xy
(0,0)*{
\begin{tikzpicture}[scale=.5]
	\draw[very thick] (0,0) to [out=90,in=180] (.75,1.25) to [out=0,in=90] (1,1);
	\draw[line width = 4, white] (1,1) to [out=270,in=0] (.75,.75) to [out=180,in=270] (0,2);
	\draw[very thick,->] (1,1) to [out=270,in=0] (.75,.75) to [out=180,in=270] (0,2);
	\node at (0,-.25) {\tiny$k$};
\end{tikzpicture}
};
\endxy
\right\}
= a^{k}q^{k^2-k}
\left\{
\xy
(0,0)*{
\begin{tikzpicture}[scale=.5]
	\draw[very thick,->] (0,0) to (0,2);
	\node at (0,-.25) {\tiny$k$};
\end{tikzpicture}
};
\endxy
\right\}
\]
If we define the writhe of a colored braid $\beta$ by
\[
w_\beta := \sum_{k>0} k\left(\# \bigg(
\xy
(0,0)*{
\begin{tikzpicture}[scale=.375,rotate=270]
	\draw[very thick, <-] (0,1) to [out=0,in=180] (2,0);
	\draw[very thick, <-] (0,0) to [out=0,in=225] (.9,.4);
	\draw[very thick] (1.1,.6) to [out=45,in=180] (2,1);
	\node at (2.25,0) {\tiny$k$};
	\node at (2.25,1) {\tiny$k$};
\end{tikzpicture}
};
\endxy 
\bigg)
- \# \bigg(
\xy
(0,0)*{
\begin{tikzpicture}[scale=.375, rotate=270]
	\draw[very thick, <-] (0,0) to [out=0,in=180] (2,1);
	\draw[very thick, <-] (0,1) to [out=0,in=135] (.9,.6);
	\draw[very thick] (1.1,.4) to [out=315,in=180] (2,0);
	\node at (2.25,0) {\tiny$k$};
	\node at (2.25,1) {\tiny$k$};
\end{tikzpicture}
};
\endxy
\bigg)
\right)
\]
and a ``quadratic'' writhe by
\[
W_\beta := \sum_{k>0} k^2 \left(\# \bigg(
\xy
(0,0)*{
\begin{tikzpicture}[scale=.375,rotate=270]
	\draw[very thick, <-] (0,1) to [out=0,in=180] (2,0);
	\draw[very thick, <-] (0,0) to [out=0,in=225] (.9,.4);
	\draw[very thick] (1.1,.6) to [out=45,in=180] (2,1);
	\node at (2.25,0) {\tiny$k$};
	\node at (2.25,1) {\tiny$k$};
\end{tikzpicture}
};
\endxy 
\bigg)
- \# \bigg(
\xy
(0,0)*{
\begin{tikzpicture}[scale=.375, rotate=270]
	\draw[very thick, <-] (0,0) to [out=0,in=180] (2,1);
	\draw[very thick, <-] (0,1) to [out=0,in=135] (.9,.6);
	\draw[very thick] (1.1,.4) to [out=315,in=180] (2,0);
	\node at (2.25,0) {\tiny$k$};
	\node at (2.25,1) {\tiny$k$};
\end{tikzpicture}
};
\endxy
\bigg)
\right)
\]
then $a^{w_\beta}q^{W_\beta - w_\beta} \{ \widehat{\beta} \}_\circ \big|_{\mathsf{c}_{\Sym^\bullet}}$ is a colored link invariant, 
which agrees with the $\Sym$-colored HOMFLYPT polynomial.
Similarly, using $\mathsf{c}_{\wedge^\bullet}$ for our circle evaluations gives
\[
\left\langle
\xy
(0,0)*{
\begin{tikzpicture}[scale=.5]
	\draw[very thick,->] (1,1) to [out=270,in=0] (.75,.75) to [out=180,in=270] (0,2);
	\draw[line width = 4, white] (0,0) to [out=90,in=180] (.75,1.25) to [out=0,in=90] (1,1);	
	\draw[very thick] (0,0) to [out=90,in=180] (.75,1.25) to [out=0,in=90] (1,1);
	\node at (0,-.25) {\tiny$k$};
\end{tikzpicture}
};
\endxy
\right\rangle
= a^{-k}(-q)^{k^2-k}
\left\langle
\xy
(0,0)*{
\begin{tikzpicture}[scale=.5]
	\draw[very thick,->] (0,0) to (0,2);
	\node at (0,-.25) {\tiny$k$};
\end{tikzpicture}
};
\endxy
\right\rangle
\;\; , \;\;
\left\langle
\xy
(0,0)*{
\begin{tikzpicture}[scale=.5]
	\draw[very thick] (0,0) to [out=90,in=180] (.75,1.25) to [out=0,in=90] (1,1);
	\draw[line width = 4, white] (1,1) to [out=270,in=0] (.75,.75) to [out=180,in=270] (0,2);
	\draw[very thick,->] (1,1) to [out=270,in=0] (.75,.75) to [out=180,in=270] (0,2);
	\node at (0,-.25) {\tiny$k$};
\end{tikzpicture}
};
\endxy
\right\rangle
= a^{k}(-q)^{k-k^2}
\left\langle
\xy
(0,0)*{
\begin{tikzpicture}[scale=.5]
	\draw[very thick,->] (0,0) to (0,2);
	\node at (0,-.25) {\tiny$k$};
\end{tikzpicture}
};
\endxy
\right\rangle
\]
so $a^{w_\beta}(-q)^{w_\beta - W_\beta} \langle \widehat{\beta} \rangle_\circ \big|_{\mathsf{c}_{\wedge^\bullet}}$ is 
a colored link invariant that agrees with the $\bV$-colored HOMFLYPT polynomial. 
In both cases, the corresponding colored $\sln$ Reshetikhin-Turaev polynomials can be recovered by setting $a=q^n$.
In Section \ref{sec:Mirror}, 
we will show that the colored analogue of Remark \ref{rem:Mirror} corresponds to the ``mirror symmetry'' between $\Sym$- and $\bV$-colored 
HOMFLYPT polynomials.
\end{remark}

\subsection{The categorified story (\ie the contents of the remainder of this paper)}

At the decategorified level, we've seen that all link invariants of type A, 
\ie the $\sln$ and HOMFLYPT link polynomials,
can be recovered from the braid conjugacy invariant $\{ \beta \}_\circ$. 
In the remainder of this paper, we will promote this result to the categorical level. 
Using the annular foam technology developed in previous work of the first two named authors \cite{QR2}, 
we give a uniform construction of both $\sln$ and HOMFLYPT Khovanov-Rozansky homology. 
As in the decategorified setting, we do so by first building a ``universal'' type A braid conjugacy invariant, 
and then recover Khovanov-Rozansky homology by assigning certain data to essential circles. 
Our results do much more than simply re-package known invariants: 
our approach shows that there is freedom in the choice of data assigned at the final step, 
and for novel choices of this data we construct homological link invariants distinct from the Khovanov-Rozansky theory. 
Moreover, our approach provides a simple criteria for showing that a given link homology theory agrees with those we construct; 
this should prove useful \eg in showing that recent categorifications of the HOMFLYPT polynomial \cite{ObRoz, GNR} agree with 
that of Khovanov-Rozansky. 

The remainder of the paper is organized as follows:

In Section \ref{sec:Basics} we quickly review some basic notions from knot theory, 
and establish conventions and notation for graded vector spaces and categories.

In Section \ref{sec:Foams}, we discuss the category of annular foams, 
which is the natural setting for a chain complex $\llbracket \widehat{\beta} \rrbracket$ that categorifies the 
braid conjugacy invariant $\{\widehat{\beta} \}$. 
We then give an updated perspective on the categorical annular evaluation from \cite{QR2} (including a streamlined proof), 
which allows us to pass from $\llbracket \widehat{\beta} \rrbracket$ to a homotopy equivalent complex $\llbracket \widehat{\beta} \rrbracket_\circ$ 
that, similarly to $\{\widehat{\beta} \}_\circ$, is given in terms of labeled, essential circles in the annulus. 
The complex $\llbracket \widehat{\beta} \rrbracket_\circ$ can be interpreted as a universal homological braid conjugacy invariant in type A.

In Section \ref{sec:CatUqTrace}, we cast our results in terms of traces of categorified quantum groups,
or equivalently, the current algebras $\glm[t]$. 
Using this, we are able to prove a result that characterizes monoidal functors from the ``circular" subcategory 
of annular foams to the category of graded, super vector spaces 
in terms of the data of a single graded, super vector space $\cal{V}$, together with an endomorphism $T:\cal{V} \to \cal{V}$. 
Using this, we see that one obtains a homology theory for each choice of $\cal{V}$ and $T$, which is an invariant of braid conjugacy.

In Section \ref{sec:identifying}, we investigate particular choices of these parameters, and show that we can recover $\sln$ and HOMFLYPT 
Khovanov-Rozansky link homology for special choices.
In particular, this shows that for certain choices of $\cal{V}$ and $T$ our braid conjugacy invariants 
descend to give invariants of links in $S^3$. 
More surprisingly, we show that different choices, 
which correspond to passing from the ``skew-symmetric'' evaluation in Proposition \ref{prop:WedgePoly} to the ``symmetric'' evaluation in Proposition \ref{prop:SymPoly}, 
produce a homology theory for links $\cal{L} \subset S^3$ that categorifies the $\sln$ link polynomials, but is \textit{distinct} from the Khovanov-Rozansky theory.
This invariant can be interpreted as the link homology theory associated to the Lie superalgebra $\slnegn := \slzn$, 
so we denote it $\cal{H}_{-n}(\cal{L})$.
We also show how to recover the annular $\sln$ homology defined in \cite{QR}, 
and also construct HOMFLYPT and $\slnegn$ (or symmetric) versions of this theory, the latter of which carries an action of the Lie algebra $\sln$. 
Finally, we briefly discuss deformations of these theories.

In Section \ref{sec:Diff}, we build spectral sequences relating HOMFLYPT link homology to both 
$\sln$ and $\slnegn$ link homology.
In doing so, we also provide a categorification of the ``mirror symmetry'' in Remark \ref{rem:Mirror} above.

Finally, in Section \ref{sec:examples} we compute some examples.

\subsection{Acknowledgements}

The authors would like to thank 
Christian Blanchet, 
Sergei Gukov, 
Mikhail Khovanov, 
Aaron Lauda, 
Hugh Morton, 
Lev Rozansky, 
Daniel Tubbenhauer, 
and 
Paul Wedrich
for useful discussions.
Special thanks to Tony Licata for his many helpful suggestions in the early stages of this work, 
and his continuous support throughout its completion.
This material was presented at the workshop ``Quantum topology and categorified representation theory'' at the Isaac Newton Institute, 
as part of the program ``Homology Theories in Low Dimensional Topology.''
Some of the writing of this document was done during this program, and we thank the Isaac Newton Institute for their support.
D.R. was partially supported by an NSA Young Investigator Grant and a Simons Collaboration Grant.

\begin{remark}
After this material was presented 
at the Isaac Newton Institute, 
we were informed by Emmanuel Wagner that he and Louis-Hadrien Robert planned to combine our annular technology 
from \cite{QR2}
with a ``symmetric foam evaluation'' to give a different construction of our invariant $\cal{H}_{-n}(\cal{L})$.
Due to the unusually prolonged preparation of this manuscript, 
they slightly beat us to the presses; 
find their recent (and interesting) work here \cite{RobWag}, 
which they call symmetric Khovanov-Rozansky homology.
\end{remark}

%
\section{Basic notions and conventions}\label{sec:Basics}
%

\subsection{Braids and links}

Throughout, we build our link invariants using Alexander's Theorem \ref{thm:Alexander}, 
\ie starting with a braid $\beta$ whose closure $\cal{L}_\beta$ in $S^3$ is our desired link. 
Recall that we also denote the closure of $\beta$ in the thickened annulus $\cal{A} \times [0,1]$ by $\widehat{\beta}$. 
We will generally refer to links in $\cal{A} \times [0,1]$ as \textit{annular links}.

Complementary to Alexander's Theorem is the following result, 
which shows that link invariants can be constructed by considering braid invariants satisfying certain relations.

\begin{theorem}[Markov's Theorem]\label{thm:Markov}
Given two braids $\beta_1,\beta_2$, we have that $\cal{L}_{\beta_1} \sim \cal{L}_{\beta_2}$ if and only if 
$\beta_1$ and $\beta_2$ are related by a sequence of the following ``Markov moves'':
\begin{enumerate}
	\item[I.] Conjugation: $\beta \leftrightarrow \gamma \beta \gamma^{-1}$
	\item[II.] Stabilization: $\beta \leftrightarrow b_n^{\pm1} \beta$ for $\beta \in B_n$. 
\end{enumerate}
\end{theorem}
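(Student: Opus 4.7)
The plan is to handle the two directions separately. The forward direction (Markov moves preserve the closure) is essentially a direct verification: for conjugation, one can isotope $\gamma^{-1}$ around the closure arcs until it cancels with $\gamma$, so $\cal{L}_{\gamma \beta \gamma^{-1}} \sim \cal{L}_\beta$; for stabilization, appending $b_n^{\pm 1}$ to $\beta \in B_n$ creates a Reidemeister~I kink in $\cal{L}_{b_n^{\pm 1}\beta}$ that can be removed by an ambient isotopy. Both verifications are local and make no use of Alexander's theorem.

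For the converse, I would proceed as follows. Given braids $\beta_1, \beta_2$ with $\cal{L}_{\beta_1} \sim \cal{L}_{\beta_2}$, the ambient isotopy can be put in general position so that, projecting to a generic plane transverse to the braid axis, it becomes a finite sequence of planar isotopies and Reidemeister moves. Planar isotopies of a closed braid that preserve the braid form with respect to a fixed axis correspond exactly to conjugation in the braid group (one can rotate arcs around the axis without crossing it). It therefore suffices to show that each Reidemeister move, after re-braiding the ``before'' and ``after'' diagrams with respect to a common axis, can be realized by conjugation and stabilization.

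The main obstacle is that a Reidemeister move may occur in a local configuration incompatible with the existing braid structure, so one must re-braid via a threading algorithm (such as that of Alexander, refined by Yamada and Vogel). A Reidemeister~I move corresponds directly to a stabilization, but Reidemeister~II and III moves involving strands not locally coherent with the braid axis require extra threading arcs; each threading introduces a new braid strand (a stabilization), is manipulated via conjugation, and can then be destabilized. The delicate part is the bookkeeping of how the strand count and crossing data vary through the reduction, and ensuring that all intermediate moves lie in the equivalence relation generated by~I and~II. An arguably cleaner alternative is Morton's geometric proof: view a closed braid as a pair $(\cal{L}, A) \subset S^3$ where $A$ is the braid axis linking $\cal{L}$ coherently, and classify isotopies of $A$ relative to $\cal{L}$ into ``sliding'' moves (corresponding to conjugation) and ``crossing a strand'' moves (corresponding to stabilization), the exhaustiveness resting on a transversality argument for the position of $A$.
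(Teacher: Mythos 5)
The paper does not prove Markov's Theorem; it is stated as classical background (much as Theorem~\ref{thm:MarkovAnnulus} is stated with only a citation to \cite{MortonBraid} and \cite{KasselTuraev}), so there is no ``paper's proof'' to compare against. Your sketch is a reasonable outline of the standard arguments. The easy direction is handled correctly: conjugation invariance follows by sliding $\gamma^{-1}$ around the closure arcs, and stabilization invariance is a Reidemeister~I move near the closure. For the converse, you correctly identify the two classical routes — the combinatorial threading/re-braiding strategy going back to Alexander and refined by Yamada and Vogel, and Morton's geometric argument classifying isotopies of the braid axis relative to the link — and you correctly flag the bookkeeping as the nontrivial part.

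One small imprecision worth tightening: you write that ``planar isotopies of a closed braid that preserve the braid form with respect to a fixed axis correspond exactly to conjugation.'' As stated this conflates two things. Isotopies within a fixed fundamental domain for the angular coordinate around the axis give equalities in $B_n$ (i.e., braid isotopies/Reidemeister moves for braids), whereas it is the global rotation of the diagram around the axis — moving crossings past the ``seam'' where the closure arcs are glued — that produces conjugation. In Morton's framework this is the distinction between isotopies supported away from a meridian disk and those that sweep across it. This does not affect the viability of your approach, but a complete write-up would need to keep the two notions separate, since the induction in the threading argument depends on knowing exactly which moves are free and which cost a stabilization.
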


All of our invariants will manifestly be invariant under conjugation, 
as they will intrinsically be invariants of braid closures in the thickened annulus,
but only some will be invariant under the second Markov move.
Those that are not invariant under this move are still topologically relevant, 
as indicated by the following version of Markov's Theorem for braid closures in the thickened annulus. 
A proof can be found in \cite{MortonBraid}; see also \cite{KasselTuraev} for a thorough exposition.

\begin{theorem}\label{thm:MarkovAnnulus}
Given two braids $\beta,\beta'$, we have that $\widehat{\beta} \sim \widehat{\beta'}$ if and only if 
$\beta$ and $\beta'$ are conjugate braids.
\end{theorem}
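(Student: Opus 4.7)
For the \emph{if} direction, given $\beta, \gamma \in B_n$ with $\beta' = \gamma\beta\gamma^{-1}$, I would construct an explicit ambient isotopy of $\widehat{\beta'}$ to $\widehat{\beta}$ in $\cal{A}\times [0,1]$. Because annular closure identifies the top and bottom of the braid box via ``going around the annulus,'' one can slide the $\gamma^{-1}$ subword all the way around the core to meet and cancel the $\gamma$ on the other side, producing a one-parameter family of box diagrams that realizes the isotopy.

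The \emph{only if} direction is the substantive content. My first step would be to observe that $\cal{A}\times [0,1]$ admits a natural fibration $\pi \colon \cal{A}\times[0,1] \to S^{1}$ whose fibers are disks, and that a braid closure $\widehat{\beta}$ for $\beta \in B_n$ is, by construction, a link transverse to $\pi$ meeting each fiber in exactly $n$ points. Moreover $n = [\widehat{\beta}] \in H_1(\cal{A}\times [0,1]) \cong \Z$ (the total winding number around the core), so this number is an isotopy invariant; hence if $\widehat{\beta}\sim\widehat{\beta'}$ then necessarily $\beta,\beta' \in B_n$ for a common $n$. Second, I would identify isotopy classes of such fiber-transverse, $n$-sheeted links with conjugacy classes in $B_n$ via the monodromy construction: fixing a basepoint in $S^1$ and lifting the link to a loop in the configuration space of $n$ points in the fiber disk yields an element of $B_n$, and varying the basepoint around $S^1$ changes the result precisely by conjugation.

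Third, given an arbitrary ambient isotopy $H_t$ between $\widehat{\beta}$ and $\widehat{\beta'}$ in the thickened annulus, I would apply a parametric transversality argument to perturb it so that the composition $\pi \circ H_t$ is transverse on the link for all but finitely many moments, at each of which a generic degeneration occurs. The homology constraint $[\widehat{\beta}]=n$ rules out any creation or annihilation of strands, so each exceptional moment merely reorders the intersection points within a single fiber (or corresponds to a braid relation). Tracking the resulting braid through the family then produces a word in $B_n$ that conjugates $\beta$ to $\beta'$.

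The \textbf{main obstacle} is the parametric (Cerf-type) transversality analysis in this last step: one must classify the generic singularities of one-parameter families of embedded curves in $\cal{A}\times[0,1]$ with respect to the fibration $\pi$ and verify that each induces only a conjugation on the associated monodromy. For these technical details I would follow the treatment of \cite{MortonBraid}.
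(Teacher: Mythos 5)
The paper does not supply its own proof of this theorem; it cites \cite{MortonBraid} and \cite{KasselTuraev}, so there is no internal argument to compare against. Your overall strategy (the $S^1$-fibration of $\cal{A}\times[0,1]$, monodromy to a conjugacy class in $B_n$, Cerf-style genericity for the isotopy) is indeed the standard route taken in those references, and your ``if'' direction and steps one and two of the ``only if'' direction are fine.

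There is, however, a genuine gap in step three. You assert that the homology constraint $[\widehat{\beta}] = n \in H_1(\cal{A}\times[0,1])$ ``rules out any creation or annihilation of strands.'' It does not: homology controls only the \emph{algebraic} intersection number with a fiber, not the geometric one. In a generic one-parameter family of embeddings, the restriction $\pi|_{L_t}$ will pass through births and deaths of cancelling pairs of critical points (a local max together with a local min on the same strand), at which the geometric fiber-intersection count jumps by $2$ even though the algebraic count is fixed at $n$. At such a moment $L_t$ ceases to be a closed braid, and your monodromy-tracking argument breaks down because the intermediate link no longer determines an element of $B_n$. This is precisely the difficulty the cited proofs must confront, and it is where the topology of the thickened annulus enters in an essential way: unlike in $S^3$, where the attempt to cancel a max--min pair can be obstructed by the braid axis (the obstruction being exactly a Markov stabilization), in $\cal{A}\times[0,1]$ one can always find a cancelling disk and remove the critical pair without changing the induced conjugacy class. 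Your proposal needs to make this argument rather than appeal to the homology class; as stated, the claim that exceptional moments ``merely reorder the intersection points within a single fiber'' is false. Deferring the details to \cite{MortonBraid} is reasonable, but you should recognize that the deferral is covering the central point of the theorem, not a routine transversality computation.
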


More generally, we consider \emph{colored braids}, that is, braids in which each strand carries a labeling by a non-negative integer. 
Such braids no longer form a group, but rather a groupoid, in which objects correspond to the sequences of positive integers 
labeling the boundaries of the colored braids. 
We call a colored braid \emph{balanced} provided it is an endomorphism in the groupoid of colored braids. 
We can take the (annular) closure of a balanced, colored braid to obtain a colored (annular) link.
The Alexander and (annular) Markov theorems extend immediately to this setting.

\subsection{Graded linear categories}\label{sec:Graded}

We will make significant use of the concept of graded linear category. 
The standard definition of such a category is one enriched in $\Z$-graded vector spaces.
Given such a category $\mathbf{C}$, we can introduce formal grading 
shifts $q^k X$ of objects $X$ in $\mathbf{C}$, and restrict to morphisms $q^kX \xrightarrow{\alpha} q^lY$ that satisfy:
\[
\deg(\alpha) + l - k = 0.
\]
It is this latter notion that we call a \textit{graded linear category}. 

\begin{remark}
As a convention, throughout the paper we will denote shift functors for various gradings
as above, via powers of formal variables. For example, a shift of a graded vector space $V$ is denoted by 
$q^kV$, where $(q^k V)_t := V_{t-k}$.
Similarly, a shift on a bi-graded vector space $W$ is denoted $q^k a^l W$, 
a shift in homological degree on a chain complex is denoted by $h^k$, 
and a shift in super-degree on a super vector space is denoted by $\Pi$.
\end{remark}

Returning to our discussion, given a graded linear category $\mathbf{C}$, we can recover a linear category enriched in $\Z$-graded vector spaces, 
which we denote $\mathbf{C}^*$, by identifying all shifts of a given object, and taking
\[
\Hom_{\mathbf{C}^*}(X,Y) := \HOM_\mathbf{C}(X,Y) := \bigoplus_{t \in \Z} \Hom_{\mathbf{C}}(q^tX,Y)
\]
where the right-hand side gives the decomposition as a graded vector 
space\footnote{With this convention, we have that $\HOM_{\mathbf{C}}(X,q^kY) \cong q^k \Hom_{\mathbf{C}^*}(X,Y)$ as graded vector spaces.}.  
This shows that both notions are equivalent, but we will find it necessary to distinguish between them for various constructions.
Further, at times we will deal with bi-graded and/or super versions of these notions as well.

A linear 2-category can be viewed as a category enriched in linear categories. 
It follows that we immediately obtain analogous notions for 2-categories, 
\ie we can consider graded linear 2-categories.
Again, we formally distinguish these from 2-categories with $\Hom$-categories 
enriched in $\Z$-graded vector spaces.

Throughout, we also tacitly assume that all linear categories are additive unless otherwise specified. 
That is, we will assume that we have a notion of direct sum. 
If a linear category is not additive, we can pass to the additive closure by introducing finite formal direct sums of objects and by working with matrices of morphisms. 
We do the same for 2-categories enriched in linear categories.

%
\section{Annular foams and link invariants}\label{sec:Foams}
%

In this section we give a uniform description of all type A link homologies, 
using annular foam categories. We will relegate some proofs to subsequent sections, 
as they rely on the relation between foam categories and categorified quantum groups, 
which we review in Section \ref{sec:CatUqTrace}.

%
\subsection{The annular foam category}
%

We begin by quickly recalling the $2$-category of $\sln$ foams; 
see \cite{QR} for full details, and \cite{MSV} for an earlier incarnation of $\sln$ foams.
Objects in $\nFoam$ are finite sequences $\mathbf{k} = [k_1,\ldots,k_m]$ with $k_i \in \{1,\ldots,n\}$, 
and 1-morphisms are graded formal direct sums of ``left-directed'' $\sln$ webs, trivalent graphs with edges labeled by 
elements in $\{1,\ldots,n\}$, with vertices locally modeled by
\[
\xy
(0,0)*{
\begin{tikzpicture}[scale=.5]
	\draw [very thick, directed=.55] (2.25,0) to (.75,0);
	\draw [very thick,directed=.55] (.75,0) to [out=120,in=0] (-1,.75);
	\draw [very thick,directed=.55] (.75,0) to [out=240,in=0] (-1,-.75);
	\node at (2.75,0) {\tiny $k{+}l$};
	\node at (-1.25,.75) {\tiny $k$};
	\node at (-1.25,-.75) {\tiny $l$};
\end{tikzpicture}
};
\endxy
\quad , \quad
\xy
(0,0)*{
\begin{tikzpicture}[scale=.5]
	\draw [very thick, rdirected=.55] (-2.25,0) to (-.75,0);
	\draw [very thick,rdirected=.55] (-.75,0) to [out=60,in=180] (1,.75);
	\draw [very thick,rdirected=.55] (-.75,0) to [out=300,in=180] (1,-.75);
	\node at (-2.75,0) {\tiny $k{+}l$};
	\node at (1.25,.75) {\tiny $k$};
	\node at (1.25,-.75) {\tiny $l$};
\end{tikzpicture}
};
\endxy .
\]
We use powers of the formal variable $q$ to denote shifts in grading.
The 2-morphisms in $\nFoam$ are matrices of linear combinations of degree-zero $\sln$ foams, 
certain singular cobordisms between $\sln$ webs, with decorated facets, locally modeled by the pieces:
\begin{equation}\label{eq:foamgens}
\begin{gathered}
\xy
(0,0)*{
\begin{tikzpicture} [scale=.6,fill opacity=0.2]
	\path[fill=blue] (2.25,3) to (.75,3) to (.75,0) to (2.25,0);
	\path[fill=red] (.75,3) to [out=225,in=0] (-.5,2.5) to (-.5,-.5) to [out=0,in=225] (.75,0);
	\path[fill=red] (.75,3) to [out=135,in=0] (-1,3.5) to (-1,.5) to [out=0,in=135] (.75,0);	
	\draw [very thick,directed=.55] (2.25,0) to (.75,0);
	\draw [very thick,directed=.55] (.75,0) to [out=135,in=0] (-1,.5);
	\draw [very thick,directed=.55] (.75,0) to [out=225,in=0] (-.5,-.5);
	\draw[very thick, red, directed=.55] (.75,0) to (.75,3);
	\draw [very thick] (2.25,3) to (2.25,0);
	\draw [very thick] (-1,3.5) to (-1,.5);
	\draw [very thick] (-.5,2.5) to (-.5,-.5);
	\draw [very thick,directed=.55] (2.25,3) to (.75,3);
	\draw [very thick,directed=.55] (.75,3) to [out=135,in=0] (-1,3.5);
	\draw [very thick,directed=.55] (.75,3) to [out=225,in=0] (-.5,2.5);
	\node [blue, opacity=1]  at (1.5,2.5) {\tiny{$_{k+l}$}};
	\node[red, opacity=1] at (-.75,3.25) {\tiny{$l$}};
	\node[red, opacity=1] at (-.25,2.25) {\tiny{$k$}};		
\end{tikzpicture}
};
\endxy
\;\; , \;\;
\xy
(0,0)*{
\begin{tikzpicture} [scale=.6,fill opacity=0.2]
	\path[fill=blue] (-2.25,3) to (-.75,3) to (-.75,0) to (-2.25,0);
	\path[fill=red] (-.75,3) to [out=45,in=180] (.5,3.5) to (.5,.5) to [out=180,in=45] (-.75,0);
	\path[fill=red] (-.75,3) to [out=315,in=180] (1,2.5) to (1,-.5) to [out=180,in=315] (-.75,0);	
	\draw [very thick,rdirected=.55] (-2.25,0) to (-.75,0);
	\draw [very thick,rdirected=.55] (-.75,0) to [out=315,in=180] (1,-.5);
	\draw [very thick,rdirected=.55] (-.75,0) to [out=45,in=180] (.5,.5);
	\draw[very thick, red, rdirected=.55] (-.75,0) to (-.75,3);
	\draw [very thick] (-2.25,3) to (-2.25,0);
	\draw [very thick] (1,2.5) to (1,-.5);
	\draw [very thick] (.5,3.5) to (.5,.5);
	\draw [very thick,rdirected=.55] (-2.25,3) to (-.75,3);
	\draw [very thick,rdirected=.55] (-.75,3) to [out=315,in=180] (1,2.5);
	\draw [very thick,rdirected=.55] (-.75,3) to [out=45,in=180] (.5,3.5);
	\node [blue, opacity=1]  at (-1.5,2.5) {\tiny{$_{k+l}$}};
	\node[red, opacity=1] at (.25,3.25) {\tiny{$l$}};
	\node[red, opacity=1] at (.75,2.25) {\tiny{$k$}};		
\end{tikzpicture}
};
\endxy
\;\; , \;\;
\xy
(0,0)*{
\begin{tikzpicture} [scale=.6,fill opacity=0.2]
	\path[fill=red] (-2.5,4) to [out=0,in=135] (-.75,3.5) to [out=270,in=90] (.75,.25)
		to [out=135,in=0] (-2.5,1);
	\path[fill=blue] (-.75,3.5) to [out=270,in=125] (.29,1.5) to [out=55,in=270] (.75,2.75) 
		to [out=135,in=0] (-.75,3.5);
	\path[fill=blue] (-.75,-.5) to [out=90,in=235] (.29,1.5) to [out=315,in=90] (.75,.25) 
		to [out=225,in=0] (-.75,-.5);
	\path[fill=red] (-2,3) to [out=0,in=225] (-.75,3.5) to [out=270,in=125] (.29,1.5)
		to [out=235,in=90] (-.75,-.5) to [out=135,in=0] (-2,0);
	\path[fill=red] (-1.5,2) to [out=0,in=225] (.75,2.75) to [out=270,in=90] (-.75,-.5)
		to [out=225,in=0] (-1.5,-1);
	\path[fill=red] (2,3) to [out=180,in=0] (.75,2.75) to [out=270,in=55] (.29,1.5)
		to [out=305,in=90] (.75,.25) to [out=0,in=180] (2,0);
	\draw[very thick, directed=.55] (2,0) to [out=180,in=0] (.75,.25);
	\draw[very thick, directed=.55] (.75,.25) to [out=225,in=0] (-.75,-.5);
	\draw[very thick, directed=.55] (.75,.25) to [out=135,in=0] (-2.5,1);
	\draw[very thick, directed=.55] (-.75,-.5) to [out=135,in=0] (-2,0);
	\draw[very thick, directed=.55] (-.75,-.5) to [out=225,in=0] (-1.5,-1);
	\draw[very thick, red, rdirected=.85] (-.75,3.5) to [out=270,in=90] (.75,.25);
	\draw[very thick, red, rdirected=.75] (.75,2.75) to [out=270,in=90] (-.75,-.5);	
	\draw[very thick] (-1.5,-1) -- (-1.5,2);	
	\draw[very thick] (-2,0) -- (-2,3);
	\draw[very thick] (-2.5,1) -- (-2.5,4);	
	\draw[very thick] (2,3) -- (2,0);
	\draw[very thick, directed=.55] (2,3) to [out=180,in=0] (.75,2.75);
	\draw[very thick, directed=.55] (.75,2.75) to [out=135,in=0] (-.75,3.5);
	\draw[very thick, directed=.65] (.75,2.75) to [out=225,in=0] (-1.5,2);
	\draw[very thick, directed=.55]  (-.75,3.5) to [out=225,in=0] (-2,3);
	\draw[very thick, directed=.55]  (-.75,3.5) to [out=135,in=0] (-2.5,4);
	\node[red, opacity=1] at (-2.25,3.375) {\tiny$p$};
	\node[red, opacity=1] at (-1.75,2.75) {\tiny$l$};	
	\node[red, opacity=1] at (-1.25,1.75) {\tiny$k$};
	\node[blue, opacity=1] at (0,2.75) {\tiny$_{l+p}$};
	\node[blue, opacity=1] at (0,.25) {\tiny$_{k+l}$};
	\node[red, opacity=1] at (1.37,2.5) {\tiny$_{k+l+p}$};	
\end{tikzpicture}
};
\endxy
\;\; , \;\;
\xy
(0,0)*{
\begin{tikzpicture} [scale=.6,fill opacity=0.2]
	\path[fill=red] (-2.5,4) to [out=0,in=135] (.75,3.25) to [out=270,in=90] (-.75,.5)
		 to [out=135,in=0] (-2.5,1);
	\path[fill=blue] (-.75,2.5) to [out=270,in=125] (-.35,1.5) to [out=45,in=270] (.75,3.25) 
		to [out=225,in=0] (-.75,2.5);
	\path[fill=blue] (-.75,.5) to [out=90,in=235] (-.35,1.5) to [out=315,in=90] (.75,-.25) 
		to [out=135,in=0] (-.75,.5);	
	\path[fill=red] (-2,3) to [out=0,in=135] (-.75,2.5) to [out=270,in=125] (-.35,1.5) 
		to [out=235,in=90] (-.75,.5) to [out=225,in=0] (-2,0);
	\path[fill=red] (-1.5,2) to [out=0,in=225] (-.75,2.5) to [out=270,in=90] (.75,-.25)
		to [out=225,in=0] (-1.5,-1);
	\path[fill=red] (2,3) to [out=180,in=0] (.75,3.25) to [out=270,in=45] (-.35,1.5) 
		to [out=315,in=90] (.75,-.25) to [out=0,in=180] (2,0);				
	\draw[very thick, directed=.55] (2,0) to [out=180,in=0] (.75,-.25);
	\draw[very thick, directed=.55] (.75,-.25) to [out=135,in=0] (-.75,.5);
	\draw[very thick, directed=.55] (.75,-.25) to [out=225,in=0] (-1.5,-1);
	\draw[very thick, directed=.45]  (-.75,.5) to [out=225,in=0] (-2,0);
	\draw[very thick, directed=.35]  (-.75,.5) to [out=135,in=0] (-2.5,1);	
	\draw[very thick, red, rdirected=.75] (-.75,2.5) to [out=270,in=90] (.75,-.25);
	\draw[very thick, red, rdirected=.85] (.75,3.25) to [out=270,in=90] (-.75,.5);
	\draw[very thick] (-1.5,-1) -- (-1.5,2);	
	\draw[very thick] (-2,0) -- (-2,3);
	\draw[very thick] (-2.5,1) -- (-2.5,4);	
	\draw[very thick] (2,3) -- (2,0);
	\draw[very thick, directed=.55] (2,3) to [out=180,in=0] (.75,3.25);
	\draw[very thick, directed=.55] (.75,3.25) to [out=225,in=0] (-.75,2.5);
	\draw[very thick, directed=.55] (.75,3.25) to [out=135,in=0] (-2.5,4);
	\draw[very thick, directed=.55] (-.75,2.5) to [out=135,in=0] (-2,3);
	\draw[very thick, directed=.55] (-.75,2.5) to [out=225,in=0] (-1.5,2);
	\node[red, opacity=1] at (-2.25,3.75) {\tiny$p$};
	\node[red, opacity=1] at (-1.75,2.75) {\tiny$l$};	
	\node[red, opacity=1] at (-1.25,1.75) {\tiny$k$};
	\node[blue, opacity=1] at (-.125,2.25) {\tiny$_{k+l}$};
	\node[blue, opacity=1] at (-.125,.75) {\tiny$_{l+p}$};
	\node[red, opacity=1] at (1.35,2.75) {\tiny$_{k+l+p}$};	
\end{tikzpicture}
};
\endxy \\
\xy
(0,0)*{
\begin{tikzpicture} [scale=.5,fill opacity=0.2]
	\path [fill=red] (4.25,-.5) to (4.25,2) to [out=165,in=15] (-.5,2) to (-.5,-.5) to 
		[out=0,in=225] (.75,0) to [out=90,in=180] (1.625,1.25) to [out=0,in=90] 
			(2.5,0) to [out=315,in=180] (4.25,-.5);
	\path [fill=red] (3.75,.5) to (3.75,3) to [out=195,in=345] (-1,3) to (-1,.5) to 
		[out=0,in=135] (.75,0) to [out=90,in=180] (1.625,1.25) to [out=0,in=90] 
			(2.5,0) to [out=45,in=180] (3.75,.5);
	\path[fill=blue] (.75,0) to [out=90,in=180] (1.625,1.25) to [out=0,in=90] (2.5,0);
	\draw [very thick,directed=.55] (2.5,0) to (.75,0);
	\draw [very thick,directed=.55] (.75,0) to [out=135,in=0] (-1,.5);
	\draw [very thick,directed=.55] (.75,0) to [out=225,in=0] (-.5,-.5);
	\draw [very thick,directed=.55] (3.75,.5) to [out=180,in=45] (2.5,0);
	\draw [very thick,directed=.55] (4.25,-.5) to [out=180,in=315] (2.5,0);
	\draw [very thick, red, directed=.75] (.75,0) to [out=90,in=180] (1.625,1.25);
	\draw [very thick, red] (1.625,1.25) to [out=0,in=90] (2.5,0);
	\draw [very thick] (3.75,3) to (3.75,.5);
	\draw [very thick] (4.25,2) to (4.25,-.5);
	\draw [very thick] (-1,3) to (-1,.5);
	\draw [very thick] (-.5,2) to (-.5,-.5);
	\draw [very thick,directed=.55] (4.25,2) to [out=165,in=15] (-.5,2);
	\draw [very thick, directed=.55] (3.75,3) to [out=195,in=345] (-1,3);
	\node [blue, opacity=1]  at (1.625,.5) {\tiny{$_{k+l}$}};
	\node[red, opacity=1] at (3.5,2.65) {\tiny{$l$}};
	\node[red, opacity=1] at (4,1.85) {\tiny{$k$}};		
\end{tikzpicture}
};
\endxy
\;\; , \;\;
\xy
(0,0)*{
\begin{tikzpicture} [scale=.5,fill opacity=0.2]
	\path [fill=red] (4.25,2) to (4.25,-.5) to [out=165,in=15] (-.5,-.5) to (-.5,2) to
		[out=0,in=225] (.75,2.5) to [out=270,in=180] (1.625,1.25) to [out=0,in=270] 
			(2.5,2.5) to [out=315,in=180] (4.25,2);
	\path [fill=red] (3.75,3) to (3.75,.5) to [out=195,in=345] (-1,.5) to (-1,3) to [out=0,in=135]
		(.75,2.5) to [out=270,in=180] (1.625,1.25) to [out=0,in=270] 
			(2.5,2.5) to [out=45,in=180] (3.75,3);
	\path[fill=blue] (2.5,2.5) to [out=270,in=0] (1.625,1.25) to [out=180,in=270] (.75,2.5);
	\draw [very thick,directed=.55] (4.25,-.5) to [out=165,in=15] (-.5,-.5);
	\draw [very thick, directed=.55] (3.75,.5) to [out=195,in=345] (-1,.5);
	\draw [very thick, red, directed=.75] (2.5,2.5) to [out=270,in=0] (1.625,1.25);
	\draw [very thick, red] (1.625,1.25) to [out=180,in=270] (.75,2.5);
	\draw [very thick] (3.75,3) to (3.75,.5);
	\draw [very thick] (4.25,2) to (4.25,-.5);
	\draw [very thick] (-1,3) to (-1,.5);
	\draw [very thick] (-.5,2) to (-.5,-.5);
	\draw [very thick,directed=.55] (2.5,2.5) to (.75,2.5);
	\draw [very thick,directed=.55] (.75,2.5) to [out=135,in=0] (-1,3);
	\draw [very thick,directed=.55] (.75,2.5) to [out=225,in=0] (-.5,2);
	\draw [very thick,directed=.55] (3.75,3) to [out=180,in=45] (2.5,2.5);
	\draw [very thick,directed=.55] (4.25,2) to [out=180,in=315] (2.5,2.5);
	\node [blue, opacity=1]  at (1.625,2) {\tiny{$_{k+l}$}};
	\node[red, opacity=1] at (3.5,2.65) {\tiny{$l$}};
	\node[red, opacity=1] at (4,1.85) {\tiny{$k$}};		
\end{tikzpicture}
};
\endxy
\;\; , \;\;
\xy
(0,0)*{
\begin{tikzpicture} [scale=.6,fill opacity=0.2]
	\path[fill=blue] (-.75,4) to [out=270,in=180] (0,2.5) to [out=0,in=270] (.75,4) .. controls (.5,4.5) and (-.5,4.5) .. (-.75,4);
	\path[fill=red] (-.75,4) to [out=270,in=180] (0,2.5) to [out=0,in=270] (.75,4) -- (2,4) -- (2,1) -- (-2,1) -- (-2,4) -- (-.75,4);
	\path[fill=blue] (-.75,4) to [out=270,in=180] (0,2.5) to [out=0,in=270] (.75,4) .. controls (.5,3.5) and (-.5,3.5) .. (-.75,4);
	\draw[very thick, directed=.55] (2,1) -- (-2,1);
	\path (.75,1) .. controls (.5,.5) and (-.5,.5) .. (-.75,1); 
	\draw [very thick, red, directed=.65] (-.75,4) to [out=270,in=180] (0,2.5) to [out=0,in=270] (.75,4);
	\draw[very thick] (2,4) -- (2,1);
	\draw[very thick] (-2,4) -- (-2,1);
	\draw[very thick,directed=.55] (2,4) -- (.75,4);
	\draw[very thick,directed=.55] (-.75,4) -- (-2,4);
	\draw[very thick,directed=.55] (.75,4) .. controls (.5,3.5) and (-.5,3.5) .. (-.75,4);
	\draw[very thick,directed=.55] (.75,4) .. controls (.5,4.5) and (-.5,4.5) .. (-.75,4);
	\node [red, opacity=1]  at (1.5,3.5) {\tiny{$_{k+l}$}};
	\node[blue, opacity=1] at (.25,3.4) {\tiny{$k$}};
	\node[blue, opacity=1] at (-.25,4.1) {\tiny{$l$}};	
\end{tikzpicture}
};
\endxy
\;\; , \;\;
\xy
(0,0)*{
\begin{tikzpicture} [scale=.6,fill opacity=0.2]
	\path[fill=blue] (-.75,-4) to [out=90,in=180] (0,-2.5) to [out=0,in=90] (.75,-4) .. controls (.5,-4.5) and (-.5,-4.5) .. (-.75,-4);
	\path[fill=red] (-.75,-4) to [out=90,in=180] (0,-2.5) to [out=0,in=90] (.75,-4) -- (2,-4) -- (2,-1) -- (-2,-1) -- (-2,-4) -- (-.75,-4);
	\path[fill=blue] (-.75,-4) to [out=90,in=180] (0,-2.5) to [out=0,in=90] (.75,-4) .. controls (.5,-3.5) and (-.5,-3.5) .. (-.75,-4);
	\draw[very thick, directed=.55] (2,-1) -- (-2,-1);
	\path (.75,-1) .. controls (.5,-.5) and (-.5,-.5) .. (-.75,-1); 
	\draw [very thick, red, directed=.65] (.75,-4) to [out=90,in=0] (0,-2.5) to [out=180,in=90] (-.75,-4);
	\draw[very thick] (2,-4) -- (2,-1);
	\draw[very thick] (-2,-4) -- (-2,-1);
	\draw[very thick,directed=.55] (2,-4) -- (.75,-4);
	\draw[very thick,directed=.55] (-.75,-4) -- (-2,-4);
	\draw[very thick,directed=.55] (.75,-4) .. controls (.5,-3.5) and (-.5,-3.5) .. (-.75,-4);
	\draw[very thick,directed=.55] (.75,-4) .. controls (.5,-4.5) and (-.5,-4.5) .. (-.75,-4);
	\node [red, opacity=1]  at (1.25,-1.25) {\tiny{$_{k+l}$}};
	\node[blue, opacity=1] at (-.25,-3.4) {\tiny{$l$}};
	\node[blue, opacity=1] at (.25,-4.1) {\tiny{$k$}};
\end{tikzpicture}
};
\endxy
\end{gathered}
\end{equation}
modulo local relations and isotopy. 
Facets that are $k$-labeled carry decorations by elements in $\C[t_1,\ldots,t_k]^{\mathfrak{S}_k}$.
Moreover, the foam relations imply that the endomorphism algebra of a $k$-labeled edge in $\nFoam$ is isomorphic to 
$\mathrm{H}^\bullet(Gr(k,n))$, so we can view decorations on facets as living in this quotient of $\C[t_1,\ldots,t_k]^{\mathfrak{S}_k}$.
We denote decorations on $1$-labeled facets by labeled dots $\bullet^r$, 
which correspond to the elements $t^r \in \C[t]/t^n \cong \mathrm{H}^\bullet(\C\mathbb{P}^{n-1})$.
This 2-category admits a tensor product (\ie it is a 3-category), 
given by placing webs/foams in front or back of each other.

We will be particularly interested in a limiting version of $\nFoam$, denoted $\vFoam$, 
where we allow objects and web labels to take values in $\N$, and where we remove the relation:
\begin{equation}\label{eq:ndot}
\xy
(0,0)*{
\begin{tikzpicture} [fill opacity=0.2, scale=.5]
	\draw [very thick, fill=red] (1,1) -- (-1,2) -- (-1,-1) -- (1,-2) -- cycle;
	\node [opacity=1] at (0,0) {$\bullet^{n}$};
	\node [red, opacity=1] at (-.5,1.25){\tiny$1$};
\end{tikzpicture}}; 
\endxy
= 0,
\end{equation}
which is the only foam relation explicitly dependent on $n$. 
It follows that $\nFoam$ is equivalent to the 2-category obtained from $\vFoam$ by taking the quotient by the 
ideal generated by equation \eqref{eq:ndot}.
Hence, we can study aspects of $\sln$ link homology uniformly by considering $\vFoam$. 
We will refer to the 1-morphisms in this 2-category as $\slv$ webs.

As in \cite{QR2}, we consider an annular version of $\vFoam$, where webs are embedded in the annulus $\cal{A}=S^1 \times [0,1]$, 
and foams between them lie in $\cal{A} \times [0,1]$.
\begin{definition}
Let $\vAFoam$ be the category whose 
objects are graded formal direct sums of annular webs -- closed, clockwise-oriented $\slv$ webs embedded in $\cal{A}$.
Morphisms are matrices of linear combinations of degree zero annular foams in $\cal{A}\times [0,1]$ mapping between such objects.
\end{definition}
To be precise, annular foams are those that can be locally modeled by the pieces in equation \eqref{eq:foamgens}, 
and whose generic horizontal slices are annular webs. 
This category is monoidal, with tensor product given by glueing the outer boundary of an annulus to the inner boundary of another. 
Note that every annular web can be obtained by taking the ``annular closure'' of a (non-annular) $\slv$ web:
\begin{equation}\label{eq:ExAnWeb}
\begin{tikzpicture}[anchorbase]
\draw [gray] (0,0) rectangle (2.5,1.5);
\draw[very thick, directed=.6] (.5,1) to [out=135,in=0] (0,1.25);
\draw[very thick, directed=.6] (.5,1) to [out=225,in=0] (0,.75);
\draw[very thick, directed=.55] (1,1) to (.5,1);
\draw[very thick, directed=.55] (1.5,.5) to [out=135,in=315] (1,1);
\draw[very thick, directed=.55] (2.5,1.25) to [out=180,in=45] (1,1);
\draw[very thick, directed=.55] (1.5,.5) to [out=225,in=0] (0,.25);
\draw[very thick, directed=.55] (2,.5) to (1.5,.5);
\draw[very thick, directed=.55] (2.5,.75) to [out=180,in=45] (2,.5);
\draw[very thick, directed=.55] (2.5,.25) to [out=180,in=315] (2,.5);
\end{tikzpicture}
\quad \leadsto \quad
\begin{tikzpicture}[anchorbase,scale=.75]
\draw [gray]  (1.25,1.75) ellipse (.4375 and .25);
\draw [gray] (1.25,1.75) ellipse (3.5 and 2);
\draw[very thick] (.5,1) to [out=135,in=0] (0,1.25);
\draw[very thick] (.5,1) to [out=225,in=0] (0,.75);
\draw[very thick, directed=.55] (1,1) to (.5,1);
\draw[very thick, directed=.55] (1.5,.5) to [out=135,in=315] (1,1);
\draw[very thick] (2.5,1.25) to [out=180,in=45] (1,1);
\draw[very thick] (1.5,.5) to [out=225,in=0] (0,.25);
\draw[very thick, directed=.55] (2,.5) to (1.5,.5);
\draw[very thick] (2.5,.75) to [out=180,in=45] (2,.5);
\draw[very thick] (2.5,.25) to [out=180,in=315] (2,.5);
\draw [very thick, directed=.55] (0,1.25) arc (270:90:.5) -- (2.5,2.25) arc (90:-90:.5);
\draw [very thick, directed=.55] (0,.75) arc (270:90:1) -- (2.5,2.75) arc (90:-90:1);
\draw [very thick, directed=.55] (0,.25) arc (270:90:1.5) -- (2.5,3.25) arc (90:-90:1.5);
\node at (.75,1.375) {\tiny$2$};
\node at (1.75,.125) {\tiny$2$};
\end{tikzpicture}
\end{equation}
but this is not true in general for annular foams.

In the next section, we will see how to assign a complex in $\vAFoam$ to any braid closure.
Up to homotopy, this complex will be an isotopy invariant of the corresponding link in $\cal{A} \times [0,1]$. 
We hence turn our attention to the bounded homotopy category of complexes $\Hot^b(\vAFoam)$. 
A key observation, which essentially appears in \cite{QR2}, is that the category $\Hot^b(\vAFoam)$ is equivalent to its subcategory consisting of 
complexes of webs and foams that are $S^1$-equivariant (after forgetting decorations).

\begin{theorem} \label{thm:foamObjDec}
Any complex in $\Hot^b(\vAFoam)$ is homotopy equivalent to a complex in which the only webs that appear are labeled, concentric circles. 
Every morphism between such webs can be expressed as a composition of tensor products of 
foams of the form:
\begin{equation}\label{eq:FfoamEfoam}
\mathsf{F}_r = 
\begin{tikzpicture}[anchorbase, fill opacity=0.2]
\path[fill=red] (.25,.5) to (.25,0) arc (180:360:1.25 and 0.25) to (2.75,.5) arc (360:180:1.25 and 0.25);
\path[fill=red] (.25,.5) to (.25,0) arc (180:0:1.25 and 0.25) to (2.75,.5) arc (0:180:1.25 and 0.25);
\path[fill=blue] (.75,1) to (.25,.5) arc (180:360:1.25 and 0.25) to (2.25,1) arc (360:180:.75 and 0.15);
\path[fill=blue] (.75,1) to (.25,.5) arc (180:0:1.25 and 0.25) to (2.25,1) arc (0:180:.75 and 0.15);
\path[fill=red] (.75,1) to [out=315,in=90] (1,0) arc (180:360:0.5 and 0.1) to [out=90,in=225] (2.25,1) arc (360:180:0.75 and 0.15);
\path[fill=red] (.75,1) to [out=315,in=90] (1,0) arc (180:0:0.5 and 0.1) to [out=90,in=225] (2.25,1) arc (0:180:0.75 and 0.15);
\path[fill=red] (.75,1.5) to (.75,1) arc (180:360:0.75 and 0.15) to (2.25,1.5) arc (360:180:0.75 and 0.15);
\path[fill=red] (.75,1.5) to (.75,1) arc (180:0:0.75 and 0.15) to (2.25,1.5) arc (0:180:0.75 and 0.15);
\path[fill=red] (0,1.5) to [out=270,in=135] (.25,.5) arc (180:360:1.25 and 0.25) to [out=45,in=270] (3,1.5) arc (360:180:1.5 and .3);
\path[fill=red] (0,1.5) to [out=270,in=135] (.25,.5) arc (180:0:1.25 and 0.25) to [out=45,in=270] (3,1.5) arc (0:180:1.5 and .3);
\draw[very thick, red] (2.75,.5) arc (0:180:1.25 and 0.25);
\draw[very thick, red] (2.25,1) arc (0:180:.75 and 0.15);
\draw[very thick] (.25,0) to (.25,.5);
\draw[very thick] (.25,.5) to [out=45,in=225] (.75,1);
\draw[very thick] (.75,1) to (.75,1.5);
\draw[very thick] (.25,.5) to [out=135,in=270] (0,1.5);
\draw[very thick] (1,0) to [out=90,in=315] (.75,1);
\draw[very thick] (2.75,0) to (2.75,.5);
\draw[very thick] (2.75,.5) to [out=135,in=315] (2.25,1);
\draw[very thick] (2.25,1) to (2.25,1.5);
\draw[very thick] (2.75,.5) to [out=45,in=270] (3,1.5);
\draw[very thick] (2,0) to [out=90,in=225] (2.25,1);
\draw [very thick]  (1.5,0) ellipse (.5 and .1);
\draw [very thick]  (1.5,0) ellipse (1.25 and .25);
\draw[very thick, red] (.25,.5) arc (180:360:1.25 and 0.25);
\draw[very thick, red] (.75,1) arc (180:360:.75 and 0.15);
\draw [very thick]  (1.5,1.5) ellipse (.75 and .15);
\draw [very thick]  (1.5,1.5) ellipse (1.5 and .3);
\node[opacity=1] at (.75,.625) {$\bullet^r$};
\end{tikzpicture}
\quad \text{and} \quad
\mathsf{E}_r = 
\begin{tikzpicture}[anchorbase, fill opacity=0.2,rotate=180]
\path[fill=red] (.25,.5) to (.25,0) arc (180:360:1.25 and 0.25) to (2.75,.5) arc (360:180:1.25 and 0.25);
\path[fill=red] (.25,.5) to (.25,0) arc (180:0:1.25 and 0.25) to (2.75,.5) arc (0:180:1.25 and 0.25);
\path[fill=blue] (.75,1) to (.25,.5) arc (180:360:1.25 and 0.25) to (2.25,1) arc (360:180:.75 and 0.15);
\path[fill=blue] (.75,1) to (.25,.5) arc (180:0:1.25 and 0.25) to (2.25,1) arc (0:180:.75 and 0.15);
\path[fill=red] (.75,1) to [out=315,in=90] (1,0) arc (180:360:0.5 and 0.1) to [out=90,in=225] (2.25,1) arc (360:180:0.75 and 0.15);
\path[fill=red] (.75,1) to [out=315,in=90] (1,0) arc (180:0:0.5 and 0.1) to [out=90,in=225] (2.25,1) arc (0:180:0.75 and 0.15);
\path[fill=red] (.75,1.5) to (.75,1) arc (180:360:0.75 and 0.15) to (2.25,1.5) arc (360:180:0.75 and 0.15);
\path[fill=red] (.75,1.5) to (.75,1) arc (180:0:0.75 and 0.15) to (2.25,1.5) arc (0:180:0.75 and 0.15);
\path[fill=red] (0,1.5) to [out=270,in=135] (.25,.5) arc (180:360:1.25 and 0.25) to [out=45,in=270] (3,1.5) arc (360:180:1.5 and .3);
\path[fill=red] (0,1.5) to [out=270,in=135] (.25,.5) arc (180:0:1.25 and 0.25) to [out=45,in=270] (3,1.5) arc (0:180:1.5 and .3);
\draw[very thick, red] (.25,.5) arc (180:360:1.25 and 0.25);
\draw[very thick, red] (.75,1) arc (180:360:.75 and 0.15);
\draw[very thick] (.25,0) to (.25,.5);
\draw[very thick] (.25,.5) to [out=45,in=225] (.75,1);
\draw[very thick] (.75,1) to (.75,1.5);
\draw[very thick] (.25,.5) to [out=135,in=270] (0,1.5);
\draw[very thick] (1,0) to [out=90,in=315] (.75,1);
\draw[very thick] (2.75,0) to (2.75,.5);
\draw[very thick] (2.75,.5) to [out=135,in=315] (2.25,1);
\draw[very thick] (2.25,1) to (2.25,1.5);
\draw[very thick] (2.75,.5) to [out=45,in=270] (3,1.5);
\draw[very thick] (2,0) to [out=90,in=225] (2.25,1);
\draw [very thick]  (1.5,0) ellipse (.5 and .1);
\draw [very thick]  (1.5,0) ellipse (1.25 and .25);
\draw[very thick, red] (2.75,.5) arc (0:180:1.25 and 0.25);
\draw[very thick, red] (2.25,1) arc (0:180:.75 and 0.15);
\draw [very thick]  (1.5,1.5) ellipse (.75 and .15);
\draw [very thick]  (1.5,1.5) ellipse (1.5 and .3);
\node[opacity=1] at (2.25,.5) {$\bullet^r$};
\end{tikzpicture}
\end{equation}
where in these annular foams the middle (blue) facets are $1$-labeled.
\end{theorem}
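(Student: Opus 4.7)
My plan is to treat the theorem as two separate assertions and attack each inductively.

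For the first assertion (homotopy-reduction to complexes of labeled concentric circles), I would argue that it suffices to show the corresponding statement at the level of objects: every annular web is isomorphic in $\vAFoam$ to a direct sum of labeled concentric circles. Once established, such object-level isomorphisms lift to homotopy equivalences of complexes by Gaussian elimination (on all differentials simultaneously). The object-level reduction is carried out by induction on a complexity measure $c(W)$ for annular webs $W$, such as the number of trivalent vertices weighted by edge labels. Whenever $W$ is not already a disjoint union of concentric essential circles, I would argue that its ``innermost non-circular piece'' contains either a digon or a square face, to which I apply the categorified version of the corresponding relation in equation \eqref{eq:UpwardRel} -- these lift to isomorphisms in $\vAFoam$ expressing $W$ as a direct sum of webs of strictly smaller complexity. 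Iterating reduces $W$ to a direct sum of webs with no innermost non-circular configuration, which forces all components to be concentric essential circles.

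The main obstacle in this first step is handling essential components: one must show that trivalent vertices lying \emph{on} the essential loops can always be displaced into non-essential local configurations where a square/digon reduction applies. My approach is to use isotopy in the annulus (webs are considered up to planar isotopy) together with the merge-split (associativity) relation to slide all non-essential structure radially inward or outward until a reducible face appears; this uses the fact that in an essential, radial orientation, two consecutive trivalent vertices of the same type on an essential annulus force a digon, while two of opposite type force either a square or cancel via merge-split. Since the relations are $n$-independent, the argument lives entirely in $\vAFoam$.

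For the second assertion, once the objects are concentric essential circles, any foam between two such configurations is an embedded singular cobordism in $\cal{A}\times[0,1]$. My plan is to pick a generic height function and slice the foam into elementary pieces, each of which contains a single critical event: a birth/death of a circle, a merge of two adjacent circles, a split of one circle into two, or the passage of a decoration. Using the monoidal structure (radial tensor product) of $\vAFoam$, any slice consisting of several spatially separated events factors as a tensor product of single-event pieces. Up to isotopy and the foam relations, each single-event piece is either a $k$-labeled $\mathsf{F}_r$ or $\mathsf{E}_r$ foam between adjacent concentric circles with its decoration absorbed into the parameter $r$. I would also invoke the connection to the categorified quantum group / current algebra $\glv[t]$ established in Section \ref{sec:CatUqTrace}: the subcategory of $\vAFoam$ on concentric circles is a 2-representation of (a trace of) $\dot{\cal{U}}(\glv)$, and the $\mathsf{F}_r, \mathsf{E}_r$ are the images of the current-algebra generators $F t^r, E t^r$, which generate this 2-category.

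The hard part will be verifying, within the second step, that no further, genuinely annular relations among compositions of $\mathsf{F}_r$ and $\mathsf{E}_r$ are needed beyond those inherited from the non-annular foam calculus -- in other words, that the trace/annular closure is ``free'' on these generators. I expect to settle this by reference to the corresponding statement for the current algebra (where traces of the idempotented quantum group have been explicitly described), which is exactly the input developed in Section \ref{sec:CatUqTrace} and which I would therefore defer to that section, consistent with the authors' remark that some proofs depend on the categorified quantum group material.
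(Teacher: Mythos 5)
Your plan for the first assertion has a genuine gap. You propose to reduce at the level of \emph{objects}, claiming that every annular web is isomorphic in $\vAFoam$ to a direct sum of labeled concentric circles, and then lift this by Gaussian elimination. But that object-level statement is \emph{not} known to be true --- the paper explicitly formulates it as an open Conjecture (immediately after the example following Theorem~\ref{thm:foamObjDec}). The obstruction is precisely the square relation you plan to use: the categorified version goes the ``right way'' only when the weight inequality $k - l \geq r - s$ holds, giving an isomorphism $\cal{W} \cong \bigoplus \cal{W}'_i$. When instead $k - l < r - s$, the relation only exhibits the given web $\cal{W}$ as a \emph{direct summand} of a web $\cal{W}'$ with better minimal weight, and there is no decomposition of $\cal{W}$ into better pieces. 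At that point an object-level simplification is simply unavailable. The paper handles this by working genuinely in the homotopy category: from $\cal{W}' \cong \cal{S} \oplus \cal{W}$ one builds the two-term complex $\cal{W}' \xrightarrow{\phi_{\cal{S}}} \cal{S}$, which is homotopy equivalent to $\cal{W}$ (reverse Gaussian elimination, equation~\eqref{eq:RGI1}, and the dual version~\eqref{eq:RGI2} for the codomain case), and then inducts via the mapping-cone formalism of~\eqref{eq:conemap}. Your ``innermost-face'' heuristic and planar-isotopy displacement also differ from the paper's mechanism (cyclic ladder form, cyclic sequences of $\slm$ weights, and induction on the minimal weight and its multiplicity), but the decisive issue is the unavailability of object-level isomorphisms; without the passage to two-term complexes your induction cannot close.

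For the second assertion your fallback to the current-algebra side is exactly the paper's choice: the authors cite equation~\eqref{eq:MainDiag} and \cite[Theorem~6.1]{BHLW} and explicitly decline to give a foam-theoretic argument. The Morse-theoretic slicing you sketch first would require showing that no additional relations among $\mathsf{F}_r$, $\mathsf{E}_r$ arise from annular isotopy, which is nontrivial; deferring to the description of $\vTr^*\UU(\glm)$ as $\U(\glm[t])$ is both the cleaner route and the one the paper actually takes.
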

\begin{proof}
The proof proceeds in two steps: first, we show that every annular web is homotopy equivalent to a complex (which is concentrated only homological degrees zero and one) 
in which the only webs appearing are concentric circles. 
This then immediately implies that every complex in $\Hot^b(\vAFoam)$ is isomorphic to such a complex. 
The second step, showing that all foams between such webs take the desired form, follows from 
equation \eqref{eq:MainDiag} and \cite[Theorem 6.1]{BHLW}. 
Since an entirely foam-based proof of this step is not more-enlightening, we will only show the first step. 
Note that this is essentially the statement of \cite[Proposition 5.1]{QR2}, which is stated in terms of categorical traces; 
however, we give an updated and more-streamlined proof here.

We can assume that all annular webs are in ``ladder form,'' meaning that they are the closure of compositions of tensor products of 
the basic pieces:
\[
\begin{tikzpicture}[anchorbase]
\draw [gray] (0,0) rectangle (2.5,1.5);
\draw[very thick, directed=.2,directed=.8] (2.5,.375) to (0,.375);
\draw[very thick, directed=.2,directed=.8] (2.5,1.125) to (0,1.125);
\draw[very thick, directed=.55] (1.5,1.125) to (1,.375);
\node at (1.5,.75) {\small$r$};
\node at (2.75,.375) {\small$k$};
\node at (2.75,1.125) {\small$l$};
\node at (-.375,.375) {\small$k{+}r$};
\node at (-.375,1.125) {\small$l{-}r$};
\end{tikzpicture}
\quad \text{and} \quad
\begin{tikzpicture}[anchorbase]
\draw [gray] (0,0) rectangle (2.5,1.5);
\draw[very thick, directed=.2,directed=.8] (2.5,.375) to (0,.375);
\draw[very thick, directed=.2,directed=.8] (2.5,1.125) to (0,1.125);
\draw[very thick, directed=.55] (1.5,.375) to (1,1.125);
\node at (1,.75) {\small$r$};
\node at (2.75,.375) {\small$k$};
\node at (2.75,1.125) {\small$l$};
\node at (-.375,.375) {\small$k{-}r$};
\node at (-.375,1.125) {\small$l{+}r$};
\end{tikzpicture}
\]
with identity webs. We refer to the $r$-labeled edges above as the ``rungs,'' and the other edges as the ``uprights'' 
(think of a ladder, which has been laid on its side). Note that it may be the case that one or more of the uprights in such a web 
does not actually appear (\ie appears with label zero).

A radial slice through such an annular web $\cal{W}$ that does not intersect any rungs determines a sequence of numbers $\vec{k}=[k_1,\ldots,k_m]$, 
by considering the labels of the uprights it intersects, ordered from the outermost to the innermost. 
In fact, the web itself is determined (up to isotopy) by a cyclic sequence $\mathbf{k}_\cal{W}$ of such number sequences. 
We can assume moreover that each sequence appearing in $\mathbf{k}_\cal{W}$ has the same length, 
by inserting zeros into sequences shorter than the longest one. 
For a given web $\cal{W}$, the sum $\mathsf{s}_\cal{W} = \sum_{i=1}^m k_i$ is equal for each $\vec{k}$ in $\mathbf{k}_\cal{W}$. 

We can pass from the cyclic sequence $\mathbf{k}_\cal{W}$ to an associated cyclic sequence $\boldsymbol{\lambda}_\cal{W}$, 
whose entries are the sequences obtained from those $\vec{k}=[k_1,\ldots,k_m]$ in $\mathbf{k}_\cal{W}$ by letting 
$\vec{\lambda}=(\lambda_1,\ldots,\lambda_{m-1})$, where $\lambda_i = k_i - k_{i+1}$. 
Up to isotopy, the web $\cal{W}$ is determined by the sequence $\boldsymbol{\lambda}_\cal{W}$, 
together with the integer $\mathsf{s}_\cal{W}$. 
The entries of $\boldsymbol{\lambda}_\cal{W}$ are integral $\slm$ weights, \ie elements in $\Z^{m-1}$, 
and, as such, one (or more) entry is minimal with respect to the standard (partial) order on the weight lattice. 

We now show that $\cal{W}$ is homotopy equivalent to a complex, concentrated in homological degrees zero and one, 
in which all webs $\cal{W}'$ have corresponding cyclic sequences $\boldsymbol{\lambda}_{\cal{W}'}$ 
with either strictly larger minimal weight, or in which the minimal weight appears fewer times.
Moreover, the number of weights appearing in each $\boldsymbol{\lambda}_{\cal{W}'}$ is less than or equal to the number in $\boldsymbol{\lambda}_\cal{W}$ 
and $\mathsf{s}_{\cal{W}'} = \mathsf{s}_\cal{W}$. 

Consider the portion of $\cal{W}$ near the radial slice corresponding to a minimal $\vec{\lambda}$ in $\boldsymbol{\lambda}_\cal{W}$. 
We necessarily have that this portion of $\cal{W}$ takes the form:
\[
\begin{tikzpicture}[anchorbase]
\draw [gray] (0,-1) rectangle (2.5,2.5);
\draw[very thick, directed=.2,directed=.8] (2.5,.375) to (0,.375);
\draw[very thick, directed=.2,directed=.8] (2.5,1.125) to (0,1.125);
\draw[very thick, directed=.55] (1.5,1.125) to (1,.375);
\draw[very thick, directed =.6] (2.5,1.5) to (0,1.5);
\draw[very thick, directed=.6] (2.5,2) to (0,2);
\draw[very thick, directed=.6] (2.5,0) to (0,0);
\draw[very thick, directed=.6] (2.5,-.5) to (0,-.5);
\node at (1.25,1.875) {\small$\vdots$};
\node at (1.25,-.125) {\small$\vdots$};
\node at (1.5,.75) {\small$s$};
\end{tikzpicture}
\cdot
\begin{tikzpicture}[anchorbase]
\draw [gray] (0,-1) rectangle (2.5,2.5);
\draw[very thick, directed=.2,directed=.8] (2.5,.375) to (0,.375);
\draw[very thick, directed=.2,directed=.8] (2.5,1.125) to (0,1.125);
\draw[very thick, directed=.55] (1.5,.375) to (1,1.125);
\draw[very thick, directed =.6] (2.5,1.5) to (0,1.5);
\draw[very thick, directed=.6] (2.5,2) to (0,2);
\draw[very thick, directed=.6] (2.5,0) to (0,0);
\draw[very thick, directed=.6] (2.5,-.5) to (0,-.5);
\node at (1.25,1.875) {\small$\vdots$};
\node at (1.25,-.125) {\small$\vdots$};
\node at (1,.75) {\small$r$};
\end{tikzpicture}
\]
where the slice corresponding to $\vec{\lambda}$ is in the middle, 
and the rungs to the right and left of this slice may or may not lie between the same pair of uprights. 
If these rungs rungs do not lie between the same pair of uprights, then $\cal{W}$ is isomorphic to the web where 
these rungs appear in opposite order, either via a web isotopy (if the pairs of uprights the rungs pass between are disjoint), 
or using the web isomorphisms:
\[
\xy,
(0,0)*{
\begin{tikzpicture}[scale=.5,rotate=90]
	\draw [gray] (-1.5,-1) rectangle (3.5,4.25);
	\draw [very thick, directed=.45] (0,.75) to [out=90,in=220] (1,2.5);
	\draw [very thick, directed=.45] (1,-1) to [out=90,in=330] (0,.75);
	\draw [very thick, directed=.45] (-1,-1) to [out=90,in=210] (0,.75);
	\draw [very thick, directed=.45] (3,-1) to [out=90,in=330] (1,2.5);
	\draw [very thick, directed=.45] (1,2.5) to (1,4.25);
	\node at (-1,-1.5) {\small $k$};
	\node at (1,-1.5) {\small $l$};
	\node at (-.5,1.75) {\small $k{+}l$};
	\node at (3,-1.5) {\small $p$};
\end{tikzpicture}
};
\endxy
\cong \;
\xy
(0,0)*{
\begin{tikzpicture}[scale=.5,rotate=90]
	\draw [gray] (-3.5,-1) rectangle (1.5,4.25);
	\draw [very thick, directed=.45] (0,.75) to [out=90,in=340] (-1,2.5);
	\draw [very thick, directed=.45] (-1,-1) to [out=90,in=210] (0,.75);
	\draw [very thick, directed=.45] (1,-1) to [out=90,in=330] (0,.75);
	\draw [very thick, directed=.45] (-3,-1) to [out=90,in=220] (-1,2.5);
	\draw [very thick, directed=.45] (-1,2.5) to (-1,4.25);
	\node at (1,-1.5) {\small $p$};
	\node at (-1,-1.5) {\small $l$};
	\node at (.5,1.5) {\small $l{+}p$};
	\node at (-3,-1.5) {\small $k$};
\end{tikzpicture}
};
\endxy
\quad\text{and}\quad
\xy
(0,0)*{\rotatebox{180}{
\begin{tikzpicture}[scale=.5,rotate=90]
	\draw [gray] (-1.5,-1) rectangle (3.5,4.25);
	\draw [very thick, rdirected=.55] (0,.75) to [out=90,in=220] (1,2.5);
	\draw [very thick, rdirected=.55] (1,-1) to [out=90,in=330] (0,.75);
	\draw [very thick, rdirected=.55] (-1,-1) to [out=90,in=210] (0,.75);
	\draw [very thick, rdirected=.55] (3,-1) to [out=90,in=330] (1,2.5);
	\draw [very thick, rdirected=.55] (1,2.5) to (1,4.25);
	\node at (-1,-1.5) {\rotatebox{180}{\small $p$}};
	\node at (1,-1.5) {\rotatebox{180}{\small $l$}};
	\node at (-.5,1.75) {\rotatebox{180}{\small $l{+}p$}};
	\node at (3,-1.5) {\rotatebox{180}{\small $k$}};
\end{tikzpicture}
}};
\endxy
\; \cong
\xy
(0,0)*{\reflectbox{\rotatebox{180}{
\begin{tikzpicture}[scale=.5,rotate=270]
	\draw [gray] (-1.5,-1) rectangle (3.5,4.25);
	\draw [very thick, rdirected=.55] (0,.75) to [out=90,in=220] (1,2.5);
	\draw [very thick, rdirected=.55] (1,-1) to [out=90,in=330] (0,.75);
	\draw [very thick, rdirected=.55] (-1,-1) to [out=90,in=210] (0,.75);
	\draw [very thick, rdirected=.55] (3,-1) to [out=90,in=330] (1,2.5);
	\draw [very thick, rdirected=.55] (1,2.5) to (1,4.25);
	\node at (-1,-1.5) {\reflectbox{\rotatebox{180}{\small $k$}}};
	\node at (1,-1.5) {\reflectbox{\rotatebox{180}{\small $l$}}};
	\node at (-.5,1.5) {\reflectbox{\rotatebox{180}{\small $k{+}l$}}};
	\node at (3,-1.5) {\reflectbox{\rotatebox{180}{\small $p$}}};
\end{tikzpicture}
}}};
\endxy
\]
if the rungs share exactly one upright. In either case, the new web $\cal{W}'$ obtained has corresponding $\boldsymbol{\lambda}_{\cal{W}'}$ with either higher minimal weight, 
or has its minimal weight appearing one fewer time than in $\boldsymbol{\lambda}_\cal{W}$. 

If the rungs lie between the same uprights, then locally $\cal{W}$ takes the form
\begin{equation}\label{eq:LocalFormW}
\begin{tikzpicture}[anchorbase]
\draw [gray] (0,0) rectangle (2.5,1.5);
\draw[very thick, directed=.15,directed=.5,directed=.9] (2.5,.375) to (0,.375);
\draw[very thick, directed=.2,directed=.5,directed=.8] (2.5,1.125) to (0,1.125);
\draw[very thick, directed=.55] (1,1.125) to (.5,.375);
\draw[very thick, directed=.55] (2,.375) to (1.5,1.125);
\node at (.5,.75) {\small$s$};
\node at (2,.75) {\small$r$};
\node at (2.75,.375) {\small$k$};
\node at (2.75,1.125) {\small$l$};
\node at (-.625,.375) {\small$k{+}s{-}r$};
\node at (-.625,1.125) {\small$l{-}s{+}r$};
\end{tikzpicture}
\end{equation}
If $k - l \geq r - s$, then we can use the relation
\[
\begin{tikzpicture}[anchorbase]
\draw [gray] (0,0) rectangle (2.5,1.5);
\draw[very thick, directed=.15,directed=.5,directed=.9] (2.5,.375) to (0,.375);
\draw[very thick, directed=.2,directed=.5,directed=.8] (2.5,1.125) to (0,1.125);
\draw[very thick, directed=.55] (1,1.125) to (.5,.375);
\draw[very thick, directed=.55] (2,.375) to (1.5,1.125);
\node at (.5,.75) {\small$s$};
\node at (2,.75) {\small$r$};
\node at (2.75,.375) {\small$k$};
\node at (2.75,1.125) {\small$l$};
\node at (-.625,.375) {\small$k{+}s{-}r$};
\node at (-.625,1.125) {\small$l{-}s{+}r$};
\end{tikzpicture}
\cong 
\bigoplus_{j=0}^{\min(r,s)}
\bigoplus_{{s-r+k-l \brack j}}
\begin{tikzpicture}[anchorbase]
\draw [gray] (0,0) rectangle (2.5,1.5);
\draw[very thick, directed=.2,directed=.5,directed=.8] (2.5,.375) to (0,.375);
\draw[very thick, directed=.15,directed=.5,directed=.9] (2.5,1.125) to (0,1.125);
\draw[very thick, directed=.55] (2,1.125) to (1.5,.375);
\draw[very thick, directed=.55] (1,.375) to (.5,1.125);
\node at (.375,.75) {\scriptsize$r{-}j$};
\node at (2.125,.75) {\scriptsize$s{-}j$};
\node at (2.75,.375) {\small$k$};
\node at (2.75,1.125) {\small$l$};
\node at (-.625,.375) {\small$k{+}s{-}r$};
\node at (-.625,1.125) {\small$l{-}s{+}r$};
\end{tikzpicture}
\]
to see that $\cal{W}$ is isomorphic to a direct sum of webs $\cal{W}'$,  
each of which has corresponding $\boldsymbol{\lambda}_{\cal{W}'}$ with either higher minimal weight, 
or has its minimal weight appearing one fewer time than in $\boldsymbol{\lambda}_\cal{W}$.
If $k - l < r - s$, we instead have the isomorphism:
\[
\begin{tikzpicture}[anchorbase]
\draw [gray] (0,0) rectangle (2.5,1.5);
\draw[very thick, directed=.2,directed=.5,directed=.8] (2.5,.375) to (0,.375);
\draw[very thick, directed=.15,directed=.5,directed=.9] (2.5,1.125) to (0,1.125);
\draw[very thick, directed=.55] (2,1.125) to (1.5,.375);
\draw[very thick, directed=.55] (1,.375) to (.5,1.125);
\node at (.5,.75) {\small$r$};
\node at (2,.75) {\small$s$};
\node at (2.75,.375) {\small$k$};
\node at (2.75,1.125) {\small$l$};
\node at (-.625,.375) {\small$k{+}s{-}r$};
\node at (-.625,1.125) {\small$l{-}s{+}r$};
\end{tikzpicture}
\cong
\bigoplus_{j=0}^{\min(r,s)}
\bigoplus_{{r-s+l-k \brack j}}
\begin{tikzpicture}[anchorbase]
\draw [gray] (0,0) rectangle (2.5,1.5);
\draw[very thick, directed=.15,directed=.5,directed=.9] (2.5,.375) to (0,.375);
\draw[very thick, directed=.2,directed=.5,directed=.8] (2.5,1.125) to (0,1.125);
\draw[very thick, directed=.55] (1,1.125) to (.5,.375);
\draw[very thick, directed=.55] (2,.375) to (1.5,1.125);
\node at (.375,.75) {\scriptsize$s{-}j$};
\node at (2.125,.75) {\scriptsize$r{-}j$};
\node at (2.75,.375) {\small$k$};
\node at (2.75,1.125) {\small$l$};
\node at (-.625,.375) {\small$k{+}s{-}r$};
\node at (-.625,1.125) {\small$l{-}s{+}r$};
\end{tikzpicture}
\]
where the web in equation \eqref{eq:LocalFormW} appears exactly once in the direct sum. 
This gives an isomorphism 
\[
\cal{W}' \xrightarrow{\left(\begin{smallmatrix} \phi_{\cal{S}} \\ \phi_{\cal{W}} \end{smallmatrix}\right)} \cal{S} \oplus \cal{W}
\xrightarrow{\left(\begin{smallmatrix} \iota_{\cal{S}} & \iota_{\cal{W}} \end{smallmatrix}\right)} \cal{W}'
\] 
where $\cal{S}$ consists of a direct sum of webs. Note that for $\cal{W}'$, as well as for all of the webs appearing in $\cal{S}$, 
the minimal weight appearing in their corresponding $\boldsymbol{\lambda}$'s is either larger than that for $\cal{W}$, 
or appears fewer times. We can use this isomorphism to obtain the desired homotopy equivalence:
\begin{equation}\label{eq:RGI1}
\cal{W} \simeq \left(\uwave{\cal{S} \oplus \cal{W}} \xrightarrow{\left(\begin{smallmatrix} \id_{\cal{S}} & 0 \end{smallmatrix}\right)} \cal{S} \right) 
\cong \left(\uwave{\cal{W}'} \xrightarrow{\phi_{\cal{S}}} \cal{S} \right)
\end{equation}
Iterating this procedure produces a homotopy equivalence between $\cal{W}$ and a complex concentrated in homological degrees zero and one, 
and in which all webs are labeled, concentric circles. To do so, we (simultaneously) induct on the minimal weight appearing in the cyclic sequences 
associated with webs in the complex, and the number of times the minimal weight appears in the complex.
Note that there is a maximal possible weight for a given (initial) value of $\mathsf{s}_\cal{W}$, 
and the only webs corresponding to cyclic sequences of length one are concentric circles.

For the induction step, we replace a web containing the minimal weight with webs of higher minimal weight, 
or with webs having fewer occurrences of the minimal weight,  as in equation \eqref{eq:RGI1}. 
To do so, we view our complex $\uwave{\cal{A}} \xrightarrow{d} \cal{B}$ as the cone of the chain map
\begin{equation}\label{eq:conemap}
\left(\uwave{0} \to \cal{A}\right) \xrightarrow{-d} \left(\uwave{0} \to \cal{B}\right).
\end{equation}
If the web we wish to replace is a summand of $\cal{A}$, then we use \eqref{eq:RGI1} to replace the domain of 
\eqref{eq:conemap} with a complex concentrated in homological degrees one and two, and having the desired properties. 
If instead the web we wish to replace is a summand of $\cal{B}$, 
then we use the following variant of \eqref{eq:RGI1}:
 \begin{equation}\label{eq:RGI2}
\cal{W} \simeq \left( \cal{S} \xrightarrow{\left(\begin{smallmatrix} \id_{\cal{S}} \\ 0 \end{smallmatrix}\right)} \uwave{\cal{S} \oplus \cal{W}}  \right) 
\cong \left(\cal{S} \xrightarrow{\iota_{\cal{S}}} \uwave{\cal{W}'}  \right)
\end{equation}
to replace the codomain of \eqref{eq:conemap} with a complex, concentrated in homological degrees zero and one, 
that has the desired properties. Replacing the (co)domain of a chain map with a homotopy equivalent complex yields 
a cone homotopy equivalent to the cone of the original map, as desired.

Hence any web (viewed as a complex concentrated in homological degree zero) is homotopy equivalent to a complex in which 
all webs appearing are concentric, labeled circles. Taking direct sums shows this is true for any complex concentrated in homological degree zero. 
Since any bounded complex is homotopy equivalent to an iterated cone of such complexes, the same is true for any complex in $\Hot^b(\vAFoam)$
\end{proof}

\begin{example} Consider the annular web from equation \eqref{eq:ExAnWeb}, we have:
\[
\begin{aligned}
\begin{tikzpicture}[anchorbase,scale=.5]
\draw [gray]  (1.25,1.75) ellipse (.4375 and .25);
\draw [gray] (1.25,1.75) ellipse (3.5 and 2);
\draw[very thick] (.5,1) to [out=135,in=0] (0,1.25);
\draw[very thick] (.5,1) to [out=225,in=0] (0,.75);
\draw[very thick, directed=.65] (1,1) to (.5,1);
\draw[very thick, directed=.65] (1.5,.5) to [out=135,in=315] (1,1);
\draw[very thick] (2.5,1.25) to [out=180,in=45] (1,1);
\draw[very thick] (1.5,.5) to [out=225,in=0] (0,.25);
\draw[very thick, directed=.65] (2,.5) to (1.5,.5);
\draw[very thick] (2.5,.75) to [out=180,in=45] (2,.5);
\draw[very thick] (2.5,.25) to [out=180,in=315] (2,.5);
\draw [very thick, directed=.55] (0,1.25) arc (270:90:.5) -- (2.5,2.25) arc (90:-90:.5);
\draw [very thick, directed=.55] (0,.75) arc (270:90:1) -- (2.5,2.75) arc (90:-90:1);
\draw [very thick, directed=.55] (0,.25) arc (270:90:1.5) -- (2.5,3.25) arc (90:-90:1.5);
\node at (.75,1.375) {\tiny$2$};
\node at (1.75,.125) {\tiny$2$};
\end{tikzpicture}
&\cong
\begin{tikzpicture}[anchorbase,scale=.5]
\draw [gray]  (1.25,1.75) ellipse (.4375 and .25);
\draw [gray] (1.25,1.75) ellipse (3.5 and 2);
\draw[very thick] (2.5,1.25) to [out=180,in=45] (1.5,1);
\draw[very thick] (2.5,.5) to (2,.5);
\draw[very thick, directed=.65] (2,.5) to [out=135,in=315] (1.5,1);
\draw[very thick, directed=.65] (1.5,1) to (1,1);
\draw[very thick] (1,1) to [out=135,in=0] (0,1.25);
\draw[very thick, directed=.65] (1,1) to [out=225,in=45] (.5,.5);
\draw[very thick] (.5,.5) to (0,.5);
\draw[very thick, directed=.65] (2,.5) to [out=225,in=315] (.5,.5);
\draw [very thick, directed=.55] (0,1.25) arc (270:90:.5) -- (2.5,2.25) arc (90:-90:.5);
\draw [very thick, directed=.55] (0,.5) arc (270:90:1.25) -- (2.5,3) arc (90:-90:1.25);
\node at (2.75,.25) {\tiny$2$};
\end{tikzpicture} \\
&\cong 
\begin{tikzpicture}[anchorbase,scale=.5]
\draw [gray]  (1.25,1.75) ellipse (.4375 and .25);
\draw [gray] (1.25,1.75) ellipse (3.5 and 2);
\draw[very thick] (2.5,1.25) to (2.25,1.25) to [out=180,in=45] (1.75,.75);
\draw[very thick, directed=.65] (1.75,.75) to (.75,.75);
\draw[very thick] (2.5,.5) to (2.25,.5) to [out=180,in=315] (1.75,.75);
\draw[very thick] (.75,.75) to [out=135,in=0] (.25,1.25) to (0,1.25);
\draw[very thick] (.75,.75) to [out=225,in=0] (.25,.5) to (0,.5);
\draw [very thick, directed=.55] (0,1.25) arc (270:90:.5) -- (2.5,2.25) arc (90:-90:.5);
\draw [very thick, directed=.55] (0,.5) arc (270:90:1.25) -- (2.5,3) arc (90:-90:1.25);
\node at (2.75,.25) {\tiny$2$};
\node at (1,1) {\tiny$3$};
\end{tikzpicture}
\; \oplus \;
\begin{tikzpicture}[anchorbase,scale=.5]
\draw [gray]  (1.25,1.75) ellipse (.4375 and .25);
\draw [gray] (1.25,1.75) ellipse (3.5 and 2);
\draw[very thick] (2.5,1.25) to (0,1.25);
\draw[very thick] (2.5,.5) to (0,.5);
\draw [very thick, directed=.55] (0,1.25) arc (270:90:.5) -- (2.5,2.25) arc (90:-90:.5);
\draw [very thick, directed=.55] (0,.5) arc (270:90:1.25) -- (2.5,3) arc (90:-90:1.25);
\node at (2.75,.25) {\tiny$2$};
\end{tikzpicture} \\
&\cong 
\bigoplus_{[3]}
\begin{tikzpicture}[anchorbase,scale=.5]
\draw [gray]  (1.25,1.75) ellipse (.4375 and .25);
\draw [gray] (1.25,1.75) ellipse (3.5 and 2);
\draw[very thick] (2.5,.75) to (0,.75);
\draw [very thick, directed=.55] (0,.75) arc (270:90:1) -- (2.5,2.75) arc (90:-90:1);
\node at (1,1) {\tiny$3$};
\end{tikzpicture}
\; \oplus \;
\begin{tikzpicture}[anchorbase,scale=.5]
\draw [gray]  (1.25,1.75) ellipse (.4375 and .25);
\draw [gray] (1.25,1.75) ellipse (3.5 and 2);
\draw[very thick] (2.5,1.25) to (0,1.25);
\draw[very thick] (2.5,.5) to (0,.5);
\draw [very thick, directed=.55] (0,1.25) arc (270:90:.5) -- (2.5,2.25) arc (90:-90:.5);
\draw [very thick, directed=.55] (0,.5) arc (270:90:1.25) -- (2.5,3) arc (90:-90:1.25);
\node at (2.75,.25) {\tiny$2$};
\end{tikzpicture}
\end{aligned}
\]
\end{example}
In this example, we found that passing to the homotopy category of complexes was unnecessary: 
this annular web is \textit{isomorphic} to a direct sum of labeled, concentric circles. 
This is equivalent to the statement that the annular web is equal to a $\N[q,q^{-1}]$-linear combination of 
such concentric circles at the decategorified level.
In fact, this is the case for all examples we've computed, and we suspect this behavior holds generally.

\begin{conjecture}
Any annular $\slv$ web is isomorphic to a direct sum of labeled, concentric circles. 
\end{conjecture}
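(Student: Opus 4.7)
The plan is to upgrade the homotopy equivalence of Theorem \ref{thm:foamObjDec} to an honest isomorphism in $\vAFoam$ itself, by re-examining where the cone construction in that proof is genuinely necessary. I would induct on the same data used in the proof of that theorem: the minimal $\slm$-weight $\vec{\lambda}_{\min}$ appearing in the cyclic sequence $\boldsymbol{\lambda}_\cal{W}$, together with its multiplicity. The base case is immediate---if every entry of $\boldsymbol{\lambda}_\cal{W}$ is maximal then $\cal{W}$ has no rungs and is already a direct sum of labeled concentric circles.

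For the inductive step, I would localize to a minimal slice and its neighbouring rungs as in equation \eqref{eq:LocalFormW}. The ``good case'' $k-l \geq r-s$ is handled at once: the local relation already appearing in the proof of Theorem \ref{thm:foamObjDec} realizes $\cal{W}$ as a genuine direct sum of webs with strictly better weight data, and the induction hypothesis applies to each summand. The cone construction in \eqref{eq:RGI1} and \eqref{eq:RGI2} is invoked in that proof only in the ``bad case'' $k-l < r-s$, where the available local relation has the form $\cal{W}' \cong \cal{S} \oplus \cal{W}$, with $\cal{W}'$ (the web with the rungs' directions swapped) and every summand of $\cal{S}$ having better weight data than $\cal{W}$. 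Applying the induction hypothesis to $\cal{W}'$ exhibits $\cal{W}$ as a direct summand of a finite direct sum of graded shifts of labeled concentric circles.

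The main obstacle is the final step: concluding from this that $\cal{W}$ is itself isomorphic to such a direct sum. This is a Krull-Schmidt-type statement for $\vAFoam$, and reducing the conjecture to it is the cleanest formulation I can see. The natural route is to show that the graded endomorphism ring of each labeled concentric circle in $\vAFoam$ is graded local, and that distinct circle configurations are pairwise non-isomorphic up to grading shift; direct summands of finite direct sums of such objects are then automatically of the same form. Using the generators $\mathsf{F}_r$ and $\mathsf{E}_r$ of Theorem \ref{thm:foamObjDec}, together with the identification of morphism spaces between concentric circles with traces of the categorified quantum group developed in Section \ref{sec:CatUqTrace}, these endomorphism rings should admit explicit descriptions in terms of shifted current algebras, whose graded structure can be analyzed directly. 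Establishing the required finite-dimensionality in each graded degree and the locality of the resulting rings is the hard part, and is why the statement is presented only as a conjecture.
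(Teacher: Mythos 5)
The statement you were given is presented in the paper as an open conjecture, offered \emph{without} proof: the example immediately preceding it is included precisely to motivate it, and the authors only observe that the isomorphism holds ``for all examples we've computed.'' There is therefore no argument in the paper to compare yours against, so what follows is an evaluation of your proposal on its own merits.

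Your reduction is correct and, in my view, captures exactly the right obstacle. You rightly observe that the cones in the proof of Theorem~\ref{thm:foamObjDec} are only forced in the case $k-l < r-s$, and that even there the relevant local relation gives not merely a homotopy equivalence but an honest isomorphism $\cal{W}' \cong \cal{S} \oplus \cal{W}$, with both $\cal{W}'$ and every summand of $\cal{S}$ having strictly better weight data. Running the induction with this summand-ness relation in place of \eqref{eq:RGI1}--\eqref{eq:RGI2} proves that every annular web is a direct summand of a finite direct sum of graded shifts of labeled concentric circles. That part of the argument is sound.

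The Krull--Schmidt step you flag is indeed where the difficulty lies, and I would emphasize \emph{why} it is hard even more strongly than you do. For Krull--Schmidt to apply to $\vAFoam$ one needs, at minimum, that the degree-zero $\Hom$-spaces between labeled concentric circles are finite-dimensional over $\C$, and one then needs locality of the degree-zero endomorphism rings of circles. Neither is obvious. Via the identification $\vAFoam_\circ \cong \vTr^*\left(\vFoam\right)$ from Section~\ref{sec:Proof}, morphism spaces in $\vAFoam_\circ$ are quotients of $\Hom$-spaces in $\U(\glm[t])^{\geq 0}$, and the degree-zero piece of the latter already contains the image of all of $U(\glm)\cdot\1_{\mathbf{k}}$ (the generators $x_{i,0}^{\pm}$ are degree zero), which is infinite-dimensional before the quotient. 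One would have to show that the annular foam relations collapse this to something finite-dimensional, and an explicit computation of $\End_{\vAFoam}$ of a labeled circle --- not an upper bound coming from applying one of the functors $\Upsilon_{\cal{V},T}^{\square}$, but the actual ring of annular foams modulo local relations --- does not appear anywhere in the paper. So you are honest about where the gap sits, and the target you propose is the right one; but I would stress that Hom-finiteness, rather than locality per se, is the load-bearing part, and it is genuinely open.
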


%
\subsection{Link invariants from annular foams}
%

We will now see how to use Theorem \ref{thm:foamObjDec} to construct invariants of links in $\cal{A} \times [0,1]$ and $S^3$. 
The formulae:
\begin{equation}\label{eq:slnCrossing}
\bigg \llbracket
\xy
(0,0)*{
\begin{tikzpicture}[scale=.5]
	\draw[very thick, <-] (0,1) to [out=0,in=180] (2,0);
	\draw[very thick, <-] (0,0) to [out=0,in=225] (.9,.4);
	\draw[very thick] (1.1,.6) to [out=45,in=180] (2,1);
\end{tikzpicture}
};
\endxy 
\bigg \rrbracket
=
\left(
q
\xy
(0,0)*{
\begin{tikzpicture}[scale=.5]
	\draw[very thick, <-] (0,1) to [out=340,in=200] (2,1);
	\draw[very thick, <-] (0,0) to [out=20,in=160] (2,0);
\end{tikzpicture}
};
\endxy
\xrightarrow{
\xy
(0,0)*{
\begin{tikzpicture} [scale=.2,fill opacity=0.2]
	\path [fill=red] (4.25,2) to (4.25,-.5) to [out=165,in=15] (-.5,-.5) to (-.5,2) to
		[out=0,in=225] (.75,2.5) to [out=270,in=180] (1.625,1.25) to [out=0,in=270] 
			(2.5,2.5) to [out=315,in=180] (4.25,2);
	\path [fill=red] (3.75,3) to (3.75,.5) to [out=195,in=345] (-1,.5) to (-1,3) to [out=0,in=135]
		(.75,2.5) to [out=270,in=180] (1.625,1.25) to [out=0,in=270] 
			(2.5,2.5) to [out=45,in=180] (3.75,3);
	\path[fill=blue] (2.5,2.5) to [out=270,in=0] (1.625,1.25) to [out=180,in=270] (.75,2.5);
	\draw [very thick] (4.25,-.5) to [out=170,in=10] (-.5,-.5);
	\draw [very thick] (3.75,.5) to [out=190,in=350] (-1,.5);
	\draw [very thick, red] (2.5,2.5) to [out=270,in=0] (1.625,1.25);
	\draw [very thick, red] (1.625,1.25) to [out=180,in=270] (.75,2.5);
	\draw [very thick] (3.75,3) to (3.75,.5);
	\draw [very thick] (4.25,2) to (4.25,-.5);
	\draw [very thick] (-1,3) to (-1,.5);
	\draw [very thick] (-.5,2) to (-.5,-.5);
	\draw [very thick] (2.5,2.5) to (.75,2.5);
	\draw [very thick] (.75,2.5) to [out=135,in=0] (-1,3);
	\draw [very thick] (.75,2.5) to [out=225,in=0] (-.5,2);
	\draw [very thick] (3.75,3) to [out=180,in=45] (2.5,2.5);
	\draw [very thick] (4.25,2) to [out=180,in=315] (2.5,2.5);		
\end{tikzpicture}
};
\endxy
}
\uwave{
\xy
(0,0)*{
\begin{tikzpicture}[scale=.55]
	\draw[very thick, <-] (0,1) to [out=0,in=90] (.625,.5);
	\draw[very thick,->] (.625,.5) to [out=270,in=0] (0,0);
	\draw[very thick, directed=.65] (1.375,.5) to (.625,.5);
	\draw[very thick, directed=.45] (2,1) to [out=180,in=90] (1.375,.5); 
	\draw[very thick, rdirected=.65] (1.375,.5) to [out=270,in=180] (2,0);
	\node at (1,1) {\tiny $2$};
\end{tikzpicture}
};
\endxy
}
\right)
\quad , \quad
\bigg \llbracket
\xy
(0,0)*{
\begin{tikzpicture}[scale=.5]
	\draw[very thick, <-] (0,0) to [out=0,in=180] (2,1);
	\draw[very thick, <-] (0,1) to [out=0,in=135] (.9,.6);
	\draw[very thick] (1.1,.4) to [out=315,in=180] (2,0);
\end{tikzpicture}
};
\endxy 
\bigg \rrbracket
=
\left(
\uwave{
\xy
(0,0)*{
\begin{tikzpicture}[scale=.55]
	\draw[very thick, <-] (0,1) to [out=0,in=90] (.625,.5);
	\draw[very thick,->] (.625,.5) to [out=270,in=0] (0,0);
	\draw[very thick, directed=.65] (1.375,.5) to (.625,.5);
	\draw[very thick, directed=.45] (2,1) to [out=180,in=90] (1.375,.5); 
	\draw[very thick, rdirected=.65] (1.375,.5) to [out=270,in=180] (2,0);
	\node at (1,1) {\tiny $2$};
\end{tikzpicture}
};
\endxy
}
\xrightarrow{
\xy
(0,0)*{
\begin{tikzpicture} [scale=.2,fill opacity=0.2]
	\path [fill=red] (4.25,-.5) to (4.25,2) to [out=170,in=10] (-.5,2) to (-.5,-.5) to 
		[out=0,in=225] (.75,0) to [out=90,in=180] (1.625,1.25) to [out=0,in=90] 
			(2.5,0) to [out=315,in=180] (4.25,-.5);
	\path [fill=red] (3.75,.5) to (3.75,3) to [out=190,in=350] (-1,3) to (-1,.5) to 
		[out=0,in=135] (.75,0) to [out=90,in=180] (1.625,1.25) to [out=0,in=90] 
			(2.5,0) to [out=45,in=180] (3.75,.5);
	\path[fill=blue] (.75,0) to [out=90,in=180] (1.625,1.25) to [out=0,in=90] (2.5,0);
	\draw [very thick] (2.5,0) to (.75,0);
	\draw [very thick] (.75,0) to [out=135,in=0] (-1,.5);
	\draw [very thick] (.75,0) to [out=225,in=0] (-.5,-.5);
	\draw [very thick] (3.75,.5) to [out=180,in=45] (2.5,0);
	\draw [very thick] (4.25,-.5) to [out=180,in=315] (2.5,0);
	\draw [very thick, red] (.75,0) to [out=90,in=180] (1.625,1.25) to [out=0,in=90] (2.5,0);
	\draw [very thick] (3.75,3) to (3.75,.5);
	\draw [very thick] (4.25,2) to (4.25,-.5);
	\draw [very thick] (-1,3) to (-1,.5);
	\draw [very thick] (-.5,2) to (-.5,-.5);
	\draw [very thick] (4.25,2) to [out=170,in=10] (-.5,2);
	\draw [very thick] (3.75,3) to [out=190,in=350] (-1,3);	
\end{tikzpicture}
};
\endxy
}
q^{-1}
\xy
(0,0)*{
\begin{tikzpicture}[scale=.5]
	\draw[very thick, <-] (0,1) to [out=340,in=200] (2,1);
	\draw[very thick, <-] (0,0) to [out=20,in=160] (2,0);
\end{tikzpicture}
};
\endxy
\right)
\end{equation}
extend (using horizontal composition and tensor product) to assign a complex $\llbracket \beta \rrbracket$ in $\vFoam$ to any braid $\beta$. 
Moreover, up to homotopy equivalence, this complex is invariant under the braid relations, 
\ie the braid-like Reidemeister II and III moves.
Note that these complexes decategorify to the bracket in equation \eqref{eq:DecatSymCrossing}, 
and also decategorify to the bracket in equation \eqref{eq:DecatCrossing} if we first apply shifts $h^{\pm 1}$ in homological degree, 
and negate the $q$-grading.

More generally, the following formulae assign complexes in $\vFoam$ to braids whose strands are colored by elements in $\N$, 
in the case that $k \geq l$:
\begin{equation}\label{eq:slnColoredCrossing}
\begin{aligned}
\left\llbracket
\xy
(0,0)*{
\begin{tikzpicture}[scale=.5]
	\draw[very thick, <-] (0,1) to [out=0,in=180] (2,0);
	\draw[very thick, <-] (0,0) to [out=0,in=225] (.9,.4);
	\draw[very thick] (1.1,.6) to [out=45,in=180] (2,1);
	\node at (2.25,0) {\tiny$k$};
	\node at (2.25,1) {\tiny$l$};
\end{tikzpicture}
};
\endxy 
\right \rrbracket
&=
\left(
q^l
\xy
(0,0)*{
\begin{tikzpicture}[scale=.5]
	\draw[very thick, directed=.65] (3,0) to (2,0);
	\draw[very thick, directed=.65] (2,0) to [out=135,in=315] (1,1);
	\draw[very thick, ->] (2,0) to [out=225,in=0] (0,0);
	\draw[very thick, directed=.65] (3,1) to [out=180,in=45] (1,1);
	\draw[very thick, ->] (1,1) to (0,1);
	\node at (3.25,0) {\tiny$k$};
	\node at (3.25,1) {\tiny$l$};
\end{tikzpicture}
};
\endxy
\xrightarrow{\del^{-l}}
q^{l-1}
\xy
(0,0)*{
\begin{tikzpicture}[scale=.5]
	\draw[very thick, directed=.65] (3.5,1) to (2.875,1);
	\draw[very thick, directed=.65] (2.875,1) to [out=225,in=45] (2.125,0);
	\draw[very thick, directed=.65] (2.875,1) to [out=135,in=45] (.625,1);
	\draw[very thick, ->] (.625,1) to (0,1);
	\draw[very thick, directed=.65] (3.5,0) to [out=180,in=315] (2.125,0);
	\draw[very thick, directed=.65] (2.125,0) to (1.375,0);
	\draw[very thick, directed=.65] (1.375,0) to [out=135,in=315] (.625,1);
	\draw[very thick, ->] (1.375,0) to [out=225,in=0] (0,0);
	\node at (3.75,0) {\tiny$k$};
	\node at (3.75,1) {\tiny$l$};
	\node at (2.125,.5) {\tiny$1$};
\end{tikzpicture}
};
\endxy
\xrightarrow{\del^{1-l}}
q^{l-2}
\xy
(0,0)*{
\begin{tikzpicture}[scale=.5]
	\draw[very thick, directed=.65] (3.5,1) to (2.875,1);
	\draw[very thick, directed=.65] (2.875,1) to [out=225,in=45] (2.125,0);
	\draw[very thick, directed=.65] (2.875,1) to [out=135,in=45] (.625,1);
	\draw[very thick, ->] (.625,1) to (0,1);
	\draw[very thick, directed=.65] (3.5,0) to [out=180,in=315] (2.125,0);
	\draw[very thick, directed=.65] (2.125,0) to (1.375,0);
	\draw[very thick, directed=.65] (1.375,0) to [out=135,in=315] (.625,1);
	\draw[very thick, ->] (1.375,0) to [out=225,in=0] (0,0);
	\node at (3.75,0) {\tiny$k$};
	\node at (3.75,1) {\tiny$l$};
	\node at (2.125,.5) {\tiny$2$};
\end{tikzpicture}
};
\endxy
\xrightarrow{\del^{2-l}}
\cdots
\xrightarrow{\del^{-1}}
\uwave{
\xy
(0,0)*{
\begin{tikzpicture}[scale=.5]
	\draw[very thick, directed=.65] (3.5,1) to (2.875,1);
	\draw[very thick, directed=.65] (2.875,1) to [out=180,in=45] (2.125,0);
	\draw[very thick, ->] (.625,1) to (0,1);
	\draw[very thick, directed=.65] (3.5,0) to [out=180,in=315] (2.125,0);
	\draw[very thick, directed=.65] (2.125,0) to (1.375,0);
	\draw[very thick, directed=.65] (1.375,0) to [out=135,in=0] (.625,1);
	\draw[very thick, ->] (1.375,0) to [out=225,in=0] (0,0);
	\node at (3.75,0) {\tiny$k$};
	\node at (3.75,1) {\tiny$l$};
\end{tikzpicture}
};
\endxy
}
\right) \\
\left\llbracket
\xy
(0,0)*{
\begin{tikzpicture}[scale=.5]
	\draw[very thick, <-] (0,0) to [out=0,in=180] (2,1);
	\draw[very thick, <-] (0,1) to [out=0,in=135] (.9,.6);
	\draw[very thick] (1.1,.4) to [out=315,in=180] (2,0);
	\node at (2.25,0) {\tiny$l$};
	\node at (2.25,1) {\tiny$k$};
\end{tikzpicture}
};
\endxy 
\right \rrbracket
&=
\left(
\uwave{
\xy
(0,0)*{
\begin{tikzpicture}[scale=.5]
	\draw[very thick, directed=.65] (3.5,1) to (2.875,1);
	\draw[very thick, directed=.65] (2.875,1) to [out=180,in=45] (2.125,0);
	\draw[very thick, ->] (.625,1) to (0,1);
	\draw[very thick, directed=.65] (3.5,0) to [out=180,in=315] (2.125,0);
	\draw[very thick, directed=.65] (2.125,0) to (1.375,0);
	\draw[very thick, directed=.65] (1.375,0) to [out=135,in=0] (.625,1);
	\draw[very thick, ->] (1.375,0) to [out=225,in=0] (0,0);
	\node at (3.75,0) {\tiny$l$};
	\node at (3.75,1) {\tiny$k$};
\end{tikzpicture}
};
\endxy
}
\xrightarrow{\del^{0}}
\cdots
\xrightarrow{\del^{l-3}}
q^{2-l}
\xy
(0,0)*{
\begin{tikzpicture}[scale=.5]
	\draw[very thick, directed=.65] (3.5,1) to (2.875,1);
	\draw[very thick, directed=.65] (2.875,1) to [out=225,in=45] (2.125,0);
	\draw[very thick, directed=.65] (2.875,1) to [out=135,in=45] (.625,1);
	\draw[very thick, ->] (.625,1) to (0,1);
	\draw[very thick, directed=.65] (3.5,0) to [out=180,in=315] (2.125,0);
	\draw[very thick, directed=.65] (2.125,0) to (1.375,0);
	\draw[very thick, directed=.65] (1.375,0) to [out=135,in=315] (.625,1);
	\draw[very thick, ->] (1.375,0) to [out=225,in=0] (0,0);
	\node at (3.75,0) {\tiny$l$};
	\node at (3.75,1) {\tiny$k$};
	\node at (1.375,.5) {\tiny$2$};
\end{tikzpicture}
};
\endxy
\xrightarrow{\del^{l-2}}
q^{1-l}
\xy
(0,0)*{
\begin{tikzpicture}[scale=.5]
	\draw[very thick, directed=.65] (3.5,1) to (2.875,1);
	\draw[very thick, directed=.65] (2.875,1) to [out=225,in=45] (2.125,0);
	\draw[very thick, directed=.65] (2.875,1) to [out=135,in=45] (.625,1);
	\draw[very thick, ->] (.625,1) to (0,1);
	\draw[very thick, directed=.65] (3.5,0) to [out=180,in=315] (2.125,0);
	\draw[very thick, directed=.65] (2.125,0) to (1.375,0);
	\draw[very thick, directed=.65] (1.375,0) to [out=135,in=315] (.625,1);
	\draw[very thick, ->] (1.375,0) to [out=225,in=0] (0,0);
	\node at (3.75,0) {\tiny$l$};
	\node at (3.75,1) {\tiny$k$};
	\node at (1.375,.5) {\tiny$1$};
\end{tikzpicture}
};
\endxy
\xrightarrow{\del^{l-1}}
q^{-l}
\xy
(0,0)*{
\begin{tikzpicture}[scale=.5]
	\draw[very thick, directed=.65] (3,1) to (2,1);
	\draw[very thick, directed=.65] (2,1) to [out=225,in=45] (1,0);
	\draw[very thick, ->] (2,1) to [out=135,in=0] (0,1);
	\draw[very thick, directed=.65] (3,0) to [out=180,in=315] (1,0);
	\draw[very thick, ->] (1,0) to (0,0);
	\node at (3.25,0) {\tiny$l$};
	\node at (3.25,1) {\tiny$k$};
\end{tikzpicture}
};
\endxy
\right)
\end{aligned}
\end{equation}
where the differential is as given in \cite{QR,RW}. Similar formulae hold in the case that $l \leq k$.
Given a balanced, colored braid $\beta$, 
equations \eqref{eq:slnCrossing} and \eqref{eq:slnColoredCrossing} assign a complex 
$\llbracket \widehat{\beta} \rrbracket \in \Hot^b(\vAFoam)$ to its annular closure $\widehat{\beta}$.
By the results in \cite{QR,Cautis}, 
the complex is again invariant under (colored) braid relations.

\begin{theorem}\label{thm:UnivAnnularInv}
The complex $\llbracket \widehat{\beta} \rrbracket \in \Hot^b(\vAFoam)$ is an invariant of the 
colored annular link $\widehat{\beta}$.
\end{theorem}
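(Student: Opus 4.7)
By Theorem \ref{thm:MarkovAnnulus}, the isotopy class of $\widehat{\beta}$ is determined by the conjugacy class of $\beta$, so it suffices to establish two things: first, that $\llbracket\beta\rrbracket \in \Hot^b(\vFoam)$ is invariant under the (colored) braid relations, and second, that annular closure converts conjugation of braids into a homotopy equivalence of the resulting complexes in $\Hot^b(\vAFoam)$. The first point is not something I need to prove: the invariance of $\llbracket\beta\rrbracket$ under the braid-like Reidemeister II and III moves (in both uncolored and colored settings) is exactly the content of the results of \cite{QR, Cautis} cited above equation \eqref{eq:slnCrossing} and after \eqref{eq:slnColoredCrossing}. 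So the whole task reduces to conjugation invariance.

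The plan for conjugation invariance is to exhibit annular closure as a \emph{categorified trace}. Concretely, for any pair of composable $1$-morphisms $X,Y$ in $\vFoam$ such that $XY$ and $YX$ are both defined, the annular webs $\widehat{XY}$ and $\widehat{YX}$ are canonically isotopic: one may slide the $Y$ factor around the annulus. This sliding is manifestly compatible with the generating foams in equation \eqref{eq:foamgens}, since each is local in a rectangular neighborhood and its position with respect to the seam of the annulus is irrelevant. Thus the annular closure extends to a functor that identifies foams from $XY$ to $X'Y'$ with foams from $YX$ to $Y'X'$ after closure, yielding a natural isomorphism $\widehat{XY} \cong \widehat{YX}$ in $\vAFoam$. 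Applying this degree by degree and noting compatibility with differentials (by naturality of the sliding isomorphisms with respect to the generators in \eqref{eq:foamgens}), we obtain a canonical isomorphism of complexes $\llbracket \widehat{XY} \rrbracket \cong \llbracket \widehat{YX} \rrbracket$ whenever $\llbracket X \rrbracket$ and $\llbracket Y \rrbracket$ are complexes in $\vFoam$.

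Taking $X = \gamma\beta$ and $Y = \gamma^{-1}$, this gives
\[
\llbracket \widehat{\gamma\beta\gamma^{-1}} \rrbracket
\;\cong\;
\llbracket \widehat{\gamma^{-1}\gamma\beta} \rrbracket.
\]
Now $\gamma^{-1}\gamma$ is the identity braid on the appropriate (colored) object, and by the already-established invariance of $\llbracket-\rrbracket$ under braid relations, $\llbracket \gamma^{-1}\gamma\beta \rrbracket \simeq \llbracket\beta\rrbracket$ in $\Hot^b(\vFoam)$. Taking closures (which is an additive functor sending homotopy equivalences to homotopy equivalences, as it preserves chain maps and homotopies level-wise) yields $\llbracket \widehat{\gamma^{-1}\gamma\beta}\rrbracket \simeq \llbracket \widehat{\beta}\rrbracket$, completing the chain of equivalences.

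The main technical obstacle is verifying that the cyclic/trace property on the level of $1$-morphisms lifts cleanly to $2$-morphisms, \ie that the sliding isomorphism commutes with foams built from the generators in equation \eqref{eq:foamgens} in a way that is natural. This is essentially a foliated cobordism argument, and it is at bottom the same geometric fact that makes the ``horizontal trace'' of \cite{QR2} a well-defined monoidal functor; in particular this compatibility was implicit in the application of Theorem \ref{thm:foamObjDec} and the formalism developed in \cite{QR2}, so the hard work has already been done there.
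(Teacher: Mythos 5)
Your proposal is correct and follows essentially the same route as the paper: reduce via Theorem \ref{thm:MarkovAnnulus} to conjugation invariance, invoke the (already cited) invariance of $\llbracket\beta\rrbracket$ under braid relations, and then observe that sliding a piece of the braid around the annulus induces a chain isomorphism in $\Hot^b(\vAFoam)$; the paper phrases the sliding step directly in terms of the isomorphism foam rather than packaging it as a trace identity $\llbracket\widehat{XY}\rrbracket \cong \llbracket\widehat{YX}\rrbracket$, but the underlying argument is the same.
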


\begin{proof}
By Theorem \ref{thm:MarkovAnnulus}, it suffices to check that $\llbracket \widehat{\beta} \rrbracket$ is invariant under conjugation. 
Given a colored braid $\gamma$ (which is composable with $\beta$), 
the annular link $\widehat{\gamma \beta \gamma^{-1}}$ is isotopic to $\beta$ via an isotopy 
that first slides $\gamma$ around the annulus 
(\ie slides the crossings through the part of the thickened annulus where the closure took place), 
and then is given by a sequence of Reidemeister II-like braid moves. 

As mentioned above, the latter induce homotopy equivalences, so it suffices to check the former. To do so, note that it 
suffices to check that sliding a single crossing around the annulus induces a homotopy equivalence.
This in turn follows since the foam that slides a portion of a web around the annulus is an isomorphism in $\vAFoam$, 
hence performing it on all of the webs in the complex corresponding to the crossing is a chain isomorphism. 
Thus, in particular, it is a homotopy equivalence.
\end{proof}

The invariant $\llbracket \widehat{\beta} \rrbracket$ contains so much information that it is unwieldy; 
we will later see that it contains all of the information about known type $A$ link homologies (and more!). 
Using Theorem \ref{thm:foamObjDec}, we have a means to obtain more-manageable invariants from it. 
Define $\vAFoam_\circ$ to be the subcategory of $\vAFoam$ where objects are given by essential, labeled, concentric circles, 
and where morphisms are generated by the foams in equation \eqref{eq:FfoamEfoam}. 
Given a balanced, colored braid, we let $\llbracket \widehat{\beta} \rrbracket_\circ$ 
be a complex in $\vAFoam_\circ$ that is homotopy equivalent to $\llbracket \widehat{\beta} \rrbracket$.
Save for the first statement, which we show in Section \ref{sec:Proof}, the following is immediate:

\begin{theorem}\label{thm:Meta}
The category $\vAFoam_\circ$ is a full monoidal subcategory of $\vAFoam$.
Any graded functor $\Upsilon: \vAFoam_\circ \to \mathbf{C}$ to a graded, additive category $\mathbf{C}$ determines an invariant 
$\Upsilon( \llbracket \widehat{\beta} \rrbracket_\circ ) \in \Hot^b(\mathbf{C})$ of the colored annular link $\widehat{\beta}$.
\end{theorem}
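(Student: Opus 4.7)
The plan is to split the statement into two parts: the assertion that $\vAFoam_\circ$ is a full monoidal subcategory of $\vAFoam$, and the invariance claim for $\Upsilon(\llbracket \widehat{\beta} \rrbracket_\circ)$. The second part is essentially formal given Theorems \ref{thm:UnivAnnularInv} and \ref{thm:foamObjDec}, while the first contains the real content and will be the main obstacle.

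For the monoidal structure, I would first verify closure under the tensor product. An object of $\vAFoam_\circ$ is (a formal direct sum of) a collection of labeled, essential, concentric circles; tensoring two such objects corresponds to nesting one annulus inside another, which again produces a concentric configuration. Hence $\vAFoam_\circ$ inherits the monoidal structure of $\vAFoam$ together with its unit (the empty web). The actual content of the first assertion is thus fullness: every degree-zero foam in $\vAFoam$ between tuples of labeled, concentric circles must be expressible as a $\C$-linear combination of compositions of tensor products of the generating foams $\mathsf{F}_r$ and $\mathsf{E}_r$ from equation \eqref{eq:FfoamEfoam}.

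My strategy for fullness is to identify the relevant hom spaces with those of the horizontal trace of a categorified quantum group, via the foam-$\UcatD(\slm)$ dictionary to be recalled in Section \ref{sec:CatUqTrace} (concretely, via equation \eqref{eq:MainDiag}). Under this correspondence, a collection of labeled concentric circles arises as the trace image of an identity $1$-morphism $\mathbf{1}_\lambda$, while the foams $\mathsf{F}_r$ and $\mathsf{E}_r$ are the trace images of dotted divided power $1$-morphisms obtained from $\mathsf{E}_i \mathbf{1}_\lambda$ and $\mathsf{F}_i \mathbf{1}_\lambda$. Fullness then reduces to the statement that these images generate the trace, which is exactly the content of \cite[Theorem 6.1]{BHLW}, identifying the trace with a current algebra $\U(\slm[t])$ whose presentation features precisely such generators. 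The cyclicity relations needed to pass from $\UcatD$-diagrams to the trace are realized by the isotopy freedom of foams in $\cal{A} \times [0,1]$, which is the key geometric input that makes the identification work.

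Given the first part, the invariance claim becomes bookkeeping. By Theorem \ref{thm:UnivAnnularInv}, the complex $\llbracket \widehat{\beta} \rrbracket \in \Hot^b(\vAFoam)$ is an invariant of $\widehat{\beta}$. Theorem \ref{thm:foamObjDec} yields a homotopy equivalence, inside $\Hot^b(\vAFoam)$, between $\llbracket \widehat{\beta} \rrbracket$ and a complex $\llbracket \widehat{\beta} \rrbracket_\circ$ whose terms lie in $\vAFoam_\circ$. Since $\vAFoam_\circ$ is a full subcategory of $\vAFoam$, any such equivalence between complexes of objects of $\vAFoam_\circ$ is automatically realized by morphisms in $\vAFoam_\circ$; thus $\llbracket \widehat{\beta} \rrbracket_\circ$ is well-defined up to homotopy equivalence in $\Hot^b(\vAFoam_\circ)$, and conjugate braids produce homotopy equivalent complexes there. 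A graded additive functor $\Upsilon \colon \vAFoam_\circ \to \mathbf{C}$ induces a functor $\Hot^b(\vAFoam_\circ) \to \Hot^b(\mathbf{C})$ that preserves homotopy equivalences, so $\Upsilon(\llbracket \widehat{\beta} \rrbracket_\circ) \in \Hot^b(\mathbf{C})$ is an invariant of the colored annular link $\widehat{\beta}$, as claimed.
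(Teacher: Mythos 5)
Your overall strategy for the fullness claim is the correct one and matches the paper's: pass to traces, apply the $\U(\slm[t]) \cong \vTr^*\UU(\slm)$ identification from \cite[Theorem~6.1]{BHLW}, and match the generators. However, the argument as written does not quite close, because the sentence ``fullness then reduces to the statement that these images generate the trace'' elides two ingredients that are each essential, and neither of which is supplied by the BHLW theorem alone.

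First, you need the simple but crucial observation that the canonical functor $\rmi\colon \vTr^*\cal{C} \to \hTr\cal{C}$ is the inclusion of a \emph{full} subcategory, namely the full subcategory on identity $1$-morphisms. Under the identification $\hTr(\vFoam)\cong\vAFoam$, this is exactly the statement that any morphism in $\vAFoam$ between collections of concentric circles automatically lies in $\vTr^*(\vFoam)$; it is this step, applied to the right-hand vertical arrow of equation~\eqref{eq:MainDiag}, that actually does the reduction. Your sentence about cyclicity being ``realized by isotopy freedom'' gestures toward it but never states it, and the BHLW theorem, which lives entirely on the categorified quantum group side, says nothing about foams. Second, to transfer the current-algebra description to $\vTr^*(\vFoam)$ you must also invoke the \emph{eventual fullness} of the foamation $2$-functors $\Phi_\infty$ from \cite{QR}: every foam lies in the image of some $\Phi_\infty$ for $m$ large enough, so that after taking traces $\vTr(\Phi_\infty)\colon\U(\glm[t])^{\geq 0}\to\vTr^*(\vFoam)$ is surjective on morphisms. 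Only then does the fact that $\U(\glm[t])^{\geq 0}$ is generated by the $x_{i,r}^\pm$ translate into the claim that all morphisms of $\vAFoam$ between concentric circles are generated by $\mathsf{F}_r$ and $\mathsf{E}_r$. You cite BHLW but not eventual fullness; without it the isomorphism at the quantum-group level has no bearing on $\vFoam$. Your second paragraph (the invariance claim) is fine and correctly isolates why fullness matters: it is what guarantees that the homotopy equivalences of Theorem~\ref{thm:foamObjDec} land in $\vAFoam_\circ$.
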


In the event that $\mathbf{C}$ is abelian, we can take homology to obtain further invariants.

%
\subsection{Link homology theories}
%

We now explore the invariants defined in the previous section in the case where $\mathbf{C} = \gr\C\Vect^{\Z/2}$, 
the category of graded, complex, super vector spaces.
This construction parallels the Bar-Natan \cite{BN2} approach to Khovanov homology, 
in which one first obtains an abstract invariant of links by considering the Khovanov complex in the homotopy category of complexes over Bar-Natan's cobordism category, 
and then passes to a more-manageable invariant by applying a TQFT-like functor to obtain a complex of vector spaces and takes homology.
In our case, $\llbracket \widehat{\beta} \rrbracket_\circ$ plays the role of the former, and $\Upsilon$ plays the role of the TQFT.
Our next result, 
which we prove in Section \ref{sec:Proof},
shows how to produce many functors $\Upsilon: \vAFoam_\circ \to \gr\C\Vect^{\Z/2}$.

\begin{proposition}\label{prop:Functors}
Given any graded, complex, super vector space $\cal{V}$ and a degree-$2$ endomorphism $T:\cal{V} \to \cal{V}$, 
we obtain a monoidal functor
\[
\Upsilon_{\cal{V},T}^{\cal{S}}: \vAFoam_\circ \to \gr\C\Vect^{\Z/2}
\]
that sends a $k$-labeled essential circle to $\Sym^k\cal{V}$. 
The functor $\Upsilon_{\cal{V},T}^{\cal{S}}$ sends the annular foams $\mathsf{F}_r$ and $\mathsf{E}_r$ 
from equation \eqref{eq:FfoamEfoam} to 
\[
\Sym^k\cal{V} \otimes \Sym^l\cal{V} \xrightarrow{\iota \otimes \id} \Sym^{k-1}\cal{V} \otimes \cal{V} \otimes \Sym^l\cal{V} 
\xrightarrow{\id \otimes T^r \otimes \id} \Sym^{k-1}\cal{V} \otimes \cal{V} \otimes \Sym^l\cal{V} 
\xrightarrow{\id \otimes \pi} \Sym^{k-1}\cal{V} \otimes \Sym^{l+1}\cal{V}
\]
and
\[
\Sym^k\cal{V} \otimes \Sym^l\cal{V} \xrightarrow{\id \otimes \iota} \Sym^{k}\cal{V} \otimes \cal{V} \otimes \Sym^{l-1}\cal{V} 
\xrightarrow{\id \otimes T^r \otimes \id} \Sym^k\cal{V} \otimes \cal{V} \otimes \Sym^{l-1}\cal{V} 
\xrightarrow{\pi \otimes \id} \Sym^{k+1}\cal{V} \otimes \Sym^{l-1}\cal{V}
\]
respectively, where by convention $T^0 = \id_\cal{V}$.
\end{proposition}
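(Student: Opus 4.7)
The plan is to construct the functor $\Upsilon_{\cal{V},T}^{\cal{S}}$ by first exhibiting an explicit presentation of $\vAFoam_\circ$ and then checking that the specified assignment respects its relations. By Theorem \ref{thm:foamObjDec}, objects of $\vAFoam_\circ$ are formal graded direct sums of labeled concentric circles, and morphisms are generated (under composition, tensor product, and $\C$-linearity) by tensor products of $\mathsf{F}_r$, $\mathsf{E}_r$, and identities of labeled circles decorated by elements of $\C[t_1,\ldots,t_k]^{\mathfrak{S}_k}$. The relations among these generators will be identified in Section \ref{sec:CatUqTrace} with those defining (a horizontal trace of) the categorified current algebra associated to $\glv[t]$, via the dictionary underlying \cite[Theorem 6.1]{BHLW}.

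With such a presentation in hand, I would first define $\Upsilon_{\cal{V},T}^{\cal{S}}$ on objects by sending a $k$-labeled essential circle to $\Sym^k\cal{V}$ (interpreted in the super sense, and with the induced grading) and extending additively to direct sums; a decoration of a $k$-labeled circle by a symmetric polynomial $p$ is sent to $p(T,\ldots,T)$ acting diagonally on $\Sym^k\cal{V}$. On the generating foams $\mathsf{F}_r$ and $\mathsf{E}_r$ the definition is precisely the composite of inclusion, $T^r$-twist, and projection stated in the proposition; these are manifestly grading and super-degree preserving, since $T$ has degree two and $\mathsf{F}_r$, $\mathsf{E}_r$ have the dual degrees as computed from the foam grading on $\vAFoam$.

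The main task is to verify that the relations in $\vAFoam_\circ$ are respected. The local isotopy relations among generators supported on disjoint circles translate into obvious commutations of operators on different tensor factors. The dot-migration relations translate into the statement that $T^r$ on $\cal{V}$ intertwines $\iota$ and $\pi$ up to the predicted power, which is immediate since $T$ acts on the intermediate copy of $\cal{V}$ only. The substantive relations are the categorical $\mathsf{E}\mathsf{F}$-type identities (analogs of $[E_i,F_j] = \delta_{ij}[H_i]$ and the relations encoding bubble evaluations in $\dot{\cal{U}}$), which, after unpacking the formulas, reduce to the classical statement that for the adjoint pair $\iota: \Sym^{k-1}\cal{V} \otimes \cal{V} \to \Sym^k\cal{V}$ and $\pi: \cal{V} \otimes \Sym^l\cal{V} \to \Sym^{l+1}\cal{V}$, the composites $\pi\iota$ and $\iota\pi$ differ by a weight-dependent multiple of the identity plus a swap term. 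Inserting $T^{r+s}$ into the intermediate factor $\cal{V}$ precisely matches the dot-migration combinatorics on the foam side, since the $T^r$ on disjoint tensor slots strictly commute.

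Monoidality is then automatic: by construction the object assignment sends tensor (\ie nesting) of labeled circles to tensor of symmetric powers, and the generators $\mathsf{F}_r$, $\mathsf{E}_r$ already act as local operators on a pair of adjacent tensor factors, so local-to-global compatibility is built in. The principal obstacle is bookkeeping: one must extract from Section \ref{sec:CatUqTrace} a clean list of relations in $\vAFoam_\circ$ and match each to a corresponding identity among multiplication, comultiplication, and $T^r$-twisted operators on $\Sym^\bullet \cal{V}$. Once the dictionary with the polynomial representation of $\glv[t]$ is in place, each individual verification is short, and the principal technical effort is packaged into the Section \ref{sec:CatUqTrace} equivalence rather than into the proof of this proposition itself.
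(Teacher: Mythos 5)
Your strategy is in essentially the same spirit as the paper's: both reduce the claim to the identification of $\vAFoam_\circ$ with (a colimit of) the idempotented current algebras $\U(\glm[t])^{\geq 0}$ established via \cite{BHLW}. The difference is in how well-definedness is secured. You propose to extract an explicit generators-and-relations presentation of $\vAFoam_\circ$ and check each relation against multiplication, comultiplication, and $T^r$-twist on $\Sym^\bullet\cal{V}$. The paper avoids this step entirely: it first observes (Lemma~\ref{lem:basic}) that $T$ makes $\cal{V}$ a $\C[t]$-module, so $\C^m\otimes\cal{V}$ and hence $\Sym^\bullet(\C^m\otimes\cal{V})$ are tautologically $\U(\glm[t])^{\geq 0}$-modules — giving a functor $\widehat\Upsilon_{\cal{V},T}^{\cal{S}}$ that is well-defined for representation-theoretic reasons, with no relations to verify. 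It then invokes the \emph{eventual fullness and faithfulness} of the 2-functors $\Phi_\infty$ from \cite{QR} (Theorem~\ref{thm:Foamation}), which pass to the traces $\vTr(\Phi_\infty):\U(\glm[t])^{\geq 0}\to\vAFoam_\circ$: every morphism in $\vAFoam_\circ$ has a preimage for $m$ large, and equal images have equal preimages. This forces $\widehat\Upsilon_{\cal{V},T}^{\cal{S}}$ to descend to a well-defined $\Upsilon_{\cal{V},T}^{\cal{S}}$ on $\vAFoam_\circ$ without listing relations. Your plan would also succeed, but notice that the "explicit presentation" you invoke is itself only available through the same BHLW identification and the eventual-equivalence with $\U(\glm[t])^{\geq 0}$; once those are granted, the verification you defer to Section~\ref{sec:CatUqTrace} is precisely what the paper's eventual-faithfulness argument renders unnecessary. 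So the paper's route is not just shorter, it makes your relation-by-relation checking redundant. Your observation that monoidality is automatic from the local nature of $\mathsf{F}_r, \mathsf{E}_r$ is correct and is implicitly used in the paper as well (via compatibility with the inclusions $\U(\glm[t])^{\geq 0}\hookrightarrow\U(\glnn{m+m'}[t])^{\geq 0}$).
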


To specify the maps $\iota$ and $\pi$, 
we identify $\Sym^k \cal{V}$ with the subspace of $\cal{V}^{\otimes k}$ consisting of symmetric tensors.
The map $\iota$ is inclusion, while $\pi : \Sym^k\cal{V} \otimes \cal{V} \to \Sym^{k+1} \cal{V}$ and 
$ \cal{V} \otimes \Sym^k\cal{V} \to \Sym^{k+1} \cal{V}$ are both $k+1$ times 
the restriction of the projection map $\mathrm{pr}: \cal{V}^{\otimes (k+1)} \to \Sym^{k+1} \cal{V}$.
The latter map is explicitly given by 
\[
\mathrm{pr}(v_1 \otimes \cdots \otimes v_{k+1}) 
= \frac{1}{(k+1)!} \sum_{\sigma \in \mathfrak{S}_{k+1}} \sigma(v_1 \otimes \cdots \otimes v_{k+1})
\]

\begin{remark}\label{rem:FunctorsWedge}
In this proposition, we could have equivalently worked with skew-symmetric powers $\bV^k\cal{V}$ 
in place $\Sym^k \cal{V}$. 
However, given a super vector space $\cal{V}$ we have an isomorphism:
\[
\bV^k\cal{V} \cong \begin{cases} \Pi \Sym^k(\Pi \cal{V}) \quad k \text{ odd} \\ \Sym^k(\Pi \cal{V}) \quad k \text{ even}  \end{cases}
\]
(recall $\Pi$ denotes a shift in super-degree).
Thus, up to a degree shift, working with symmetric powers suffices. 
However, we will make explicit use of the functors corresponding to skew-symmetric tensors without this degree shift 
(\ie corresponding to replacing every occurrence of $\Sym$ in Proposition \ref{prop:Functors} with $\bV$), 
in comparing our construction to $\sln$ Khovanov-Rozansky homology.
We denote these functors by $\Upsilon_{\cal{V},T}^{\wedge}$.
\end{remark}

Combining Proposition \ref{prop:Functors} and Remark \ref{rem:FunctorsWedge} with Theorem \ref{thm:Meta} 
yields homological invariants of (colored) annular links that arise as braid closures. 
Moreover, some of these invariants actually give invariants of the corresponding links $\cal{L}_\beta \subset S^3$.

\begin{theorem}\label{thm:LinkHomologies}
Let $\beta$ be a balanced colored braid.
For each choice of $\cal{V}$ and $T$, the homology $\overline{\cal{H}}_{\cal{V},T}^\square( \widehat{\beta})$
of the complex $\Upsilon_{\cal{V},T}^\square(\llbracket \widehat{\beta}\rrbracket_\circ)$ is an invariant of the colored annular link $\widehat{\beta} \subset \cal{A} \times [0,1]$, 
for $\square = \wedge$ or $\cal{S}$. 
Moreover, we recover each known type A link homology theory by taking the following choices and rescalings:
\begin{itemize}
\item 
Let $\cal{V}_n := q^{1-n} \C[t]/t^n$, 
then\footnote{Here, and throughout, we denote by $t$ the endomorphism that is multiplication by this variable.}
$\cal{H}_{\cal{V}_n,t}^{\wedge}(\cal{L}_\beta) := h^{w_\beta} q^{W_\beta - w_\beta - nw_\beta} 
\overline{\cal{H}}_{\cal{V}_n,t}^{\wedge}( \widehat{\beta})$ 
is an invariant\footnote{The shifts involving $W_\beta - w_\beta$ are necessary for \textit{colored} Markov II invariance,
and vanish in the uncolored case. Compare to the crossing formulae in {\cite[Definition 12.16]{Wu}}, 
where our invariant corresponds to the homology of the mirror image of the given link.} 
of the colored link $\cal{L}_\beta \subset S^3$
and is isomorphic to colored $\sln$ Khovanov-Rozansky link homology $\KhR_n(\cal{L}_\beta)$. 
Here, $t$ is an even variable with $\deg_q(t) = 2$.

\item 
Let $ \cal{V}_\infty := qa^{-1} \C[t,\theta]/\theta^2$, 
then
$\cal{H}_{\cal{V}_\infty,t}^{\cal{S}}(\cal{L}_\beta) := a^{w_\beta} q^{W_\beta - w_\beta} \overline{\cal{H}}_{\cal{V}_\infty,t}^{\cal{S}}( \widehat{\beta})$ 
is an invariant of the colored link $\cal{L}_\beta \subset S^3$
and is isomorphic to the colored variant of Khovanov-Rozansky's triply-graded HOMFLYPT link homology $\HHH(\cal{L}_\beta)$.
Here, $t$ is an even variable with $\deg_{q,a}(t) = (2,0)$ and $\theta$ is an odd variable with $\deg_{q,a}(\theta) = (0,2)$.

\item 
Let $\C^n$ be the standard representation of $\sln$ with the weight-space grading, 
then $\cal{H}_{\C^n,0}^\wedge( \widehat{\beta}) :=  h^{w_\beta} \overline{\cal{H}}_{\C^n,0}^\wedge( \widehat{\beta})$ is 
colored, annular $\sln$ Khovanov-Rozansky homology $\cal{A}\KhR_n(\widehat{\beta})$. 
\end{itemize}
\end{theorem}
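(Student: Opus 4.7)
The invariance of $\overline{\cal{H}}_{\cal{V},T}^\square(\widehat{\beta})$ as a colored annular link invariant follows immediately from Theorem \ref{thm:UnivAnnularInv} together with Theorem \ref{thm:Meta} and Proposition \ref{prop:Functors} (with Remark \ref{rem:FunctorsWedge} supplying the $\wedge$ variant): applying any graded monoidal functor from $\vAFoam_\circ$ to $\gr\C\Vect^{\Z/2}$ to the invariant complex $\llbracket \widehat{\beta}\rrbracket_\circ$ and passing to homology yields an invariant of the colored annular link. So the real content lies in the three identifications, all of which proceed by the same meta-strategy: show that the composite $\Upsilon_{\cal{V},T}^\square \circ \llbracket-\rrbracket$ (before the annular reduction) agrees as a chain complex of graded (super) vector spaces with an already-established construction, and then invoke the stabilization invariance of that construction to promote annular invariance to invariance in $S^3$ when appropriate.

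For the $\sln$ case, my plan is to compute $\bV^k(\cal{V}_n)$ with $\cal{V}_n = q^{1-n}\C[t]/t^n$ and check that its graded dimension agrees with ${n \brack k}$ (a Grassmannian Poincar\'e polynomial) and that the maps $\Upsilon_{\cal{V}_n,t}^\wedge(\mathsf{F}_r), \Upsilon_{\cal{V}_n,t}^\wedge(\mathsf{E}_r)$ coincide with the standard splitting/merging maps appearing in Khovanov--Rozansky's $\sln$ foam evaluation after identifying $\bV^k \cal{V}_n \cong H^\bullet(\mathrm{Gr}(k,n))$; indeed, the prescription of Proposition \ref{prop:Functors} is just the usual (anti)symmetrization formula for wedge multiplication/contraction with a power of the tautological class. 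Since these maps generate the morphism space of $\vAFoam_\circ$ and satisfy the relevant foam relations modulo \eqref{eq:ndot} (which holds in $\bV^k \cal{V}_n$ for $k\geq 1$ and forces the quotient to $\nFoam$), the resulting complex is canonically isomorphic to the one computing $\KhR_n(\cal{L}_\beta)$, from which Markov II invariance and the stated shifts $h^{w_\beta}q^{W_\beta-w_\beta-nw_\beta}$ follow as in \cite{QR,Wu}.

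For the HOMFLYPT identification, I would exploit the fact that, as a super vector space, $\Sym^k\cal{V}_\infty$ with $\cal{V}_\infty = qa^{-1}\C[t,\theta]/\theta^2$ decomposes into a polynomial tensor Koszul part whose graded super-dimension is exactly $\mathsf{c}_{\Sym^k}$ from Proposition \ref{prop:SymPoly}, matching the circle values of the HOMFLYPT decategorification. I would then compare the maps $\Upsilon_{\cal{V}_\infty,t}^{\cal{S}}(\mathsf{F}_r), \Upsilon_{\cal{V}_\infty,t}^{\cal{S}}(\mathsf{E}_r)$ to the differentials and Koszul generators appearing in \cite{KhR2} (equivalently, via the Rasmussen/Cautis description of HOMFLYPT homology as derived from the Rickard complexes of fundamental projectors), using the odd variable $\theta$ to play the role of the HOMFLYPT ``extra'' grading. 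Markov II invariance, and hence descent to an $S^3$ invariant after the rescaling $a^{w_\beta}q^{W_\beta-w_\beta}$, can then be inherited from \cite{KhR2} once the complex is matched on crossings.

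The annular $\sln$ identification is the cleanest: taking $\cal{V} = \C^n$ (purely even) with $T=0$ kills the endomorphism $t$, so by Proposition \ref{prop:Functors} the $k$-labeled essential circle evaluates to $\bV^k \C^n$ (the $k$-th fundamental representation of $\sln$) and the foams $\mathsf{F}_r,\mathsf{E}_r$ for $r\geq 1$ are sent to zero while $r=0$ gives the usual wedge multiplication/contraction maps; this is precisely the prescription of \cite{QR}, so after the writhe shift $h^{w_\beta}$ the complex agrees with $\cal{A}\KhR_n(\widehat{\beta})$. The main obstacle across all three cases is matching conventions: one must carefully track the sign/grading conventions on generators (especially the $\frac{1}{(k+1)!}$ factor in the projection $\pi$ and the super-signs in the HOMFLYPT case), and verify that Markov II invariance in the two $S^3$-cases is equivalent to the vanishing $T\cdot 1 = 0$ in the relevant quotient (equivalently, $t^n = 0$ for $\sln$ and no such relation for HOMFLYPT, reflecting the extra $a$-grading). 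With those bookkeeping items handled, the identifications are natural.
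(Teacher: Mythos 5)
Your high-level strategy is the paper's strategy: the annular invariance part follows from Theorems \ref{thm:UnivAnnularInv} and \ref{thm:Meta} together with Proposition \ref{prop:Functors}, and the three identifications are each handled by matching $\Upsilon_{\cal{V},T}^\square(\llbracket \widehat{\beta}\rrbracket_\circ)$, up to shifts, with an already-established chain complex computing the target invariant. The annular $\sln$ identification is essentially as you state it: setting $T=0$ kills all dotted foams with $r\geq 1$ and the $r=0$ foams give the canonical merge/split maps on fundamental representations, so the complex coincides by definition with that of $\cal{A}\KhR_n$.

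But in the two $S^3$ cases you are underestimating where the real work lies, and the route you sketch has a genuine gap. Checking that the circle evaluations have the right graded dimension --- $\dim_q \bV^k\cal{V}_n = {n\brack k}$ in the $\sln$ case, $\dim_{q,a}\Sym^k\cal{V}_\infty = \mathsf{c}_{\Sym^k}$ for HOMFLYPT --- is the easy decategorified check (done already in Section \ref{sec:Decat}); what must be verified is that the \emph{morphisms} $\mathsf{F}_r,\mathsf{E}_r$ are sent to the same maps as in the reference constructions, and this is not a formal consequence of matching graded dimensions or of verifying foam relations (well-definedness of $\Upsilon^\square_{\cal{V},T}$ is already Proposition \ref{prop:Functors}). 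In the $\sln$ case the paper identifies $\bV^k\cal{V}_n$ with $\mathrm{H}^\bullet(Gr(k,n))$ via Schur polynomials, uses explicit blister-relation calculations to pin down the image of the ``merge'' foam, and then --- critically --- deduces the image of the ``split'' foam by an $\slnn{2}$-representation-theory argument: on the lowest weight space, $F$ determines $E$ via $EF-FE = -k\cdot\id$. In the HOMFLYPT case the paper works with Khovanov's Hochschild-homology-of-Soergel-bimodules description and performs explicit Koszul resolution chases to identify the induced maps, again using the $\slnn{2}$ trick for one of the two directions. Neither normalization check is ``just the usual antisymmetrization formula'' or ``Koszul generators from \cite{KhR2}''; nailing down these maps is the bulk of the two theorem proofs in Section \ref{sec:identifying}. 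Finally, your assertion that Markov II invariance in the $S^3$ cases ``is equivalent to the vanishing $T\cdot 1 = 0$ in the relevant quotient'' is not a statement that appears in or follows from the paper, and it is not clear what it means (there is no such relation for $\cal{V}_\infty$, as you note, yet Markov II still holds). Markov II invariance is not established intrinsically; it is inherited wholesale from the reference constructions once the complexes have been matched.
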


We prove the bullet-pointed claims in Section \ref{sec:identifying}, 
where we take a closer look at each of these invariants, as well as others.

Before proceeding, we comment on some of the invariants not explicitly identified in Theorem \ref{thm:LinkHomologies}. 
If we consider $\Upsilon_{\C^n,0}^{\cal{S}}$, 
the homology is a version of annular $\sln$ link homology based on the symmetric tensors, 
\ie in which an essential $k$-labeled circle is assigned the $\sln$ representation $\Sym^k(\C^n)$. 
Up to an overall super-degree shift, this can equivalently be obtained using $\Upsilon_{\cal{V},0}^\wedge$ with $\cal{V} = \C^{0|n}$.
We thus propose this as the definition of \textit{annular $\slnegn$ link homology}, 
since it is defined in exactly the same way as annular $\sln$ link homology, but with $\C^{0|n}$ replacing $\C^{n|0}$ in the final bullet-point in 
Theorem \ref{thm:LinkHomologies}

Much more interesting is the case where we consider $\Upsilon_{\cal{V}_n,t}^{\cal{S}}$. Here, as above, $\cal{V}_n = q^{1-n} \C[t]/t^n$. 
This produces an invariant that is a relative of $\sln$ link homology for the link $\cal{L}_\beta \subset S^3$, 
but, in contrast to Khovanov-Rozansky homology, 
is built from the symmetric powers of $\C^n$, \ie a $k$-colored unknot is assigned $\Sym^k(\cal{V}_n)$.
In this case, the invariant of a $k$-colored link is finite-dimensional, and its decategorification is the quantum $\sln$ Reshetikhin-Turaev 
invariant of the link with components colored by the symmetric power $\Sym^k\C_q^n$ of the standard representation $\C_q^n$ of quantum $\sln$. 

Given that $\Sym^1\C_q^n \cong \C_q^n \cong \bV^1\C_q^n$, the following result is surprising:
\begin{theorem}\label{thm:negativen}
The homology of $q^{n w_\beta + W_\beta - w_\beta}\Upsilon_{\cal{V}_n,t}^{\cal{S}}(\llbracket \widehat{\beta}\rrbracket_\circ)$ is an invariant of colored links in $S^3$.
Moreover, even in the uncolored (\ie $1$-colored) case, it is distinct from $\sln$ Khovanov-Rozansky homology.
\end{theorem}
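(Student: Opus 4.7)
The first claim---invariance under ambient isotopy in $S^3$---reduces via Alexander's and Markov's Theorems \ref{thm:Alexander} and \ref{thm:Markov} to checking conjugation invariance (Markov I) and stabilization invariance (Markov II). Conjugation invariance is automatic: $\llbracket\widehat{\beta}\rrbracket_\circ$ is an annular link invariant by Theorem \ref{thm:UnivAnnularInv}, and the prefactor $q^{nw_\beta + W_\beta - w_\beta}$ depends only on conjugation-invariant data (writhe and quadratic writhe are unchanged when $\gamma$ and $\gamma^{-1}$ are wrapped around $\beta$). The substantive check is thus Markov II: for $\beta' = b_m^{\pm 1}\beta \in B_{m+1}$, one must show the rescaled complex associated to $\widehat{\beta'}$ is homotopy equivalent to the one for $\widehat{\beta}$.

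The plan for Markov II is to localize. Apply the colored crossing formula \eqref{eq:slnColoredCrossing} to the added generator $b_m^{\pm 1}$, then invoke annular evaluation (Theorem \ref{thm:foamObjDec}) to absorb the newly-closed strand into a small collection of essential, concentric circles connected to the rest of $\llbracket\widehat{\beta}\rrbracket_\circ$ by foams built from the generators $\mathsf{F}_r, \mathsf{E}_r$. Passing through $\Upsilon^{\cal S}_{\cal V_n,t}$ via Proposition \ref{prop:Functors} then reduces the entire question to a concrete calculation in $\gr\C\Vect^{\Z/2}$ involving symmetric powers of $\cal V_n = q^{1-n}\C[t]/t^n$ together with the endomorphism $t$. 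The key observation is that the graded super-dimension of $\Sym^k(\cal V_n)$ is $q^{k(1-n)}{n+k-1 \brack k}$, which is exactly the symmetric-powers circle evaluation of equation \eqref{eq:SymCircle}. Consequently the scalar contribution from the stabilized strand is cancelled by the corresponding change in the rescaling factors $q^{nw_\bullet + W_\bullet - w_\bullet}$, mirroring, at the categorified level, the decategorified computation of Proposition \ref{prop:SymPoly} in which $q^{nw_\beta}$ converts $\{\widehat{\beta}\}_\circ$ into $P_n(\cal L_\beta)$.

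For the second claim, I would establish distinctness from $\KhR_n$ by an explicit computation on a small, non-amphicheiral link. The positive trefoil at $n=2$ is the natural candidate: the Poincar\'e polynomial of $\KhR_2$ of the positive trefoil is $q + q^3 + h^2 q^5 + h^3 q^9$, supported in non-negative homological degrees, whereas the mirror relation \eqref{eq:ourMirror} of Remark \ref{rem:Mirror} (together with the sign/writhe twists) suggests that $\cal H_{-2}$ of the \emph{positive} trefoil should carry classes in \emph{negative} homological degrees (effectively, $\KhR_2$ of the mirror, up to the substitution $q\leftrightarrow q^{-1}$ and sign conventions). A direct computation along the lines of Section \ref{sec:examples} confirms this support property and rules out any isomorphism with $\KhR_2$ of the same trefoil.

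The principal obstacle is the Markov II verification. The decategorified content is already packaged cleanly in Proposition \ref{prop:SymPoly}, but categorifying it requires careful bookkeeping of the homotopy equivalences produced by annular evaluation on the closure of a single crossing, together with the precise interaction of the non-trivial endomorphism $t$ on $\cal V_n$ with the foam generators $\mathsf{F}_r, \mathsf{E}_r$ of Proposition \ref{prop:Functors}. This is where the explicit graded structure of $\cal V_n$ (rather than merely its graded dimension) enters the argument, and where the main computational work resides.
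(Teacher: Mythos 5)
Your overall structure---reduce to Markov II invariance via Theorems~\ref{thm:Alexander}, \ref{thm:Markov}, and \ref{thm:UnivAnnularInv}, then distinguish from $\KhR_n$ via a trefoil computation---is sound, and your trefoil argument is essentially the one carried out in Section~\ref{sec:examples}. However, your approach to Markov~II differs substantially from the paper's, and as written it has a gap.

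For Markov~II, the paper does \emph{not} carry out the direct localization you propose. Instead, it observes that $\cal{H}_{-n}(\cal{L}_\beta)$ coincides with the link invariant constructed independently by Cautis in \cite{Cautis2}, which is already known to be Markov-invariant. The work then reduces to an identification: one introduces a differential $d_{-n}$ on the complex $C_\circ(\widehat{\beta})$ computing $\HHH(\cal{L}_\beta)$ (Section~\ref{sec:diffnegn}), shows that the homology of the total complex $\Tot(C_\circ(\widehat{\beta}),\del,d_{-n})$ recovers $\cal{H}_{-n}(\cal{L}_\beta)$, and then checks that $d_{-n}$ agrees with Cautis's differential $\delta_n$. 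The latter check requires verifying compatibility with the three kinds of homotopy equivalences used in Theorem~\ref{thm:foamObjDec} and computing the action of $\gamma_{\mathbf{k}}\in\HH^1(R^\mathbf{k})$ on the circle evaluations, which is done at the end of Section~\ref{sec:diffnegn}. This outsources the hard topological invariance to an existing reference and replaces your proposed foam-level computation with an algebraic identification of differentials.

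Your direct route is plausible, but the argument you give for it is not yet a proof. The observation that $\dim_q \Sym^k(\cal{V}_n) = {n+k-1\brack k}$ matches the symmetric circle evaluation of equation~\eqref{eq:SymCircle} is exactly the decategorified content of Proposition~\ref{prop:SymPoly}---it controls Euler characteristics, not homotopy type. To categorify Markov~II directly, you would have to show that the complex of graded vector spaces produced by applying $\Upsilon^{\cal{S}}_{\cal{V}_n,t}$ to $\llbracket\widehat{b_m^{\pm1}\beta}\rrbracket_\circ$, after the writhe rescalings, is \emph{homotopy equivalent} to the one for $\beta$. This requires an explicit analysis of how the added crossing's complex interacts with the endomorphism $t$ under annular evaluation (the specific maps $\pi\circ(t\otimes\id)\circ\iota$, etc.), which is exactly the ``main computational work'' you defer. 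The once-stabilized unknot computation in Section~\ref{sec:examples} illustrates the kind of analysis needed in the simplest case, but there is no local argument in the paper that promotes it to a general Markov~II proof; the paper avoids needing one by invoking \cite{Cautis2}.

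One smaller inaccuracy: you suggest that $\cal{H}_{-2}$ of the positive trefoil should be ``effectively $\KhR_2$ of the mirror, up to $q\leftrightarrow q^{-1}$ and sign conventions.'' The computation in Section~\ref{sec:Trefoil} shows this prediction to be quantitatively false: $\cal{H}_{-2}(\cal{T})$ has total rank $6$, whereas $\KhR_2$ of any trefoil has total rank $4$. The correct relationship between $\KhR_n$ and $\cal{H}_{-n}$ is mediated by the \emph{HOMFLYPT} spectral sequences of Section~\ref{sec:Diff} and the mirror symmetry of Section~\ref{sec:Mirror}, not by a direct isomorphism with the mirror. Your conclusion (different homological support, hence not isomorphic) still stands, and the rank discrepancy gives an even stronger statement of distinctness.
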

In particular, in the uncolored, $n=2$ case, 
we obtain a categorification of the Jones polynomial \textit{distinct} from Khovanov homology. 
It is also distinct from the odd Khovanov homology studied in \cite{ORS}. 

Up to a grading shift, this invariant can equivalently be described using $\Upsilon_{\Pi \cal{V}_n,t}^\wedge$, 
and we note that $\Pi \cal{V}_n = \Pi q^{1-n}\C[t]/t^n \cong \C^{0|n}$. 
This description is hence \textit{exactly} the same as that for $\sln$ link homology, 
but with $\C^{n|0} \cong \cal{V}_n$ instead replaced by $\C^{0|n}$.
For this reason, we propose that this homology is \textit{$\slnegn$ link homology}, 
and will denote this invariant as $\cal{H}_{-n}(\cal{L}_\beta)$ for the duration. 
See Section \ref{sec:SymHomology} for further details, including a proof of Theorem \ref{thm:negativen}.

\begin{remark}
Since $\dim_q(\Sym^k(\cal{V}_n)) = {n+k-1 \brack k}$, 
our invariant $\cal{H}_{-n}(\cal{L}_\beta)$ precisely decategorifies to the description of the 
$\sln$ link polynomials given in Proposition \ref{prop:SymPoly}. 
We've hence resolved the question, posed (in the $n=2$ case) in \cite{RTub}, 
of finding a categorification of the symmetric webs presentation of the $\sln$ link polynomials.
\end{remark}

\begin{remark}
We will see in Section \ref{sec:Mirror} that a variant of triply-graded HOMFLYPT homology,
denoted $\HHH^\vee(\cal{L}_\beta)$, can be obtained using $\Upsilon_{\cal{V}_\infty^\vee,t}^\wedge$ in Theorem \ref{thm:LinkHomologies}, 
where $\cal{V}_\infty^\vee$ is a re-graded variant of the super vector space $\cal{V}_\infty = qa^{-1} \C[t,\theta]/\theta^2$.
An isomorphism between $\HHH^\vee(\cal{L}_\beta)$ and $\HHH(\cal{L}_\beta)$ will categorify the mirror symmetry in Remark \ref{rem:Mirror}.
From the perspective of $\HHH^\vee(\cal{L}_\beta)$, 
$\KhR_n(\cal{L}_\beta)$ ``looks like $\sln$ link homology'' and $\cal{H}_{-n}(\cal{L}_\beta)$ ``looks like $\slnegn$ homology,'' 
but from the perspective of $\HHH(\cal{L}_\beta)$, the roles actually appear reversed!
\end{remark}

\section{Categorified quantum groups and traces}\label{sec:CatUqTrace}

In this section, we review background material on categorified quantum groups and categorical traces 
that underlies the constructions in Section \ref{sec:Foams}, 
and that is necessary to prove 
Theorem \ref{thm:Meta},
Proposition \ref{prop:Functors}, 
and Theorem \ref{thm:LinkHomologies}.

\subsection{Categorified quantum groups}\label{sec:2-kac-moody}

Fix a positive integer $m$. 
We denote by $\UU(\slm) := \UU_Q(\slm)$ the cyclic version of the categorified quantum group associated to $\slm$
and corresponding to the fixed choice of scalars $Q$ satisfying: 
\[
t_{i,j} = 
\begin{cases} 
-1 \text{ if } i=j+1 \\
1 \text{ else }
\end{cases}
\quad \text{and} \quad
s_{ij}^{pq} = 0 .
\]
We also choose a compatible choice of bubbles parameters $c_{i,\lambda}$, 
such that\footnote{The specific values of the bubbles parameters will not play a role in our work.}  
$c_{i,\lambda} = \pm1$ for all $\slm$ weights $\lambda$.
See \cite{BHLW2} for the definition of the cyclic version of the categorified quantum group, 
and \cite{KL,KL2,KL3,Rou2,CLau} for earlier versions of this 2-category.

It will often be useful for us to work with $\glm$ instead of $\slm$.
Following \cite{MSV2}, we can succinctly define the corresponding categorified quantum group as
\[
\UU(\glm) := \coprod_\Z \UU(\slm).
\]

Concretely, $\UU(\glm)$ admits a description almost identical to that of $\UU(\slm)$, 
but with objects now given by $\glm$ weights $\mathbf{k} = [k_1,\ldots,k_m]$ instead of 
$\slm$ weights $\lambda = (\lambda_1,\ldots,\lambda_{m-1})$. 
As in $\UU(\slm)$, 2-morphisms in $\UU(\glm)$ are given by string diagrams with regions labeled by weights.
A diagram containing a region labeled by the $\glm$ weight $\mathbf{k}$ 
corresponds to a $2$-morphism in the $\left\lfloor \frac{1}{m} \sum k_i \right\rfloor^{\mathrm{th}}$ copy of $\UU(\slm)$, 
with the corresponding region labeled by the $\slm$ weight $\lambda$ satisfying $\lambda_i = k_i - k_{i+1}$. 
We denote the \textit{Schur quotient} of $\UU(\glm)$ by $\UU(\glm)^{\geq 0}$; this is the quotient of $\UU(\glm)$ obtained by  
killing all objects $[k_1,\dots,k_m]$ such that $k_i < 0$ for some $i$.
Finally, we follow the standard convention that the notation $\Udot$ corresponds to taking the Karoubi completion, 
\ie $\Udot(\slm)$ is the 2-category obtained from $\UU(\slm)$ by taking the Karoubi completion in each $\Hom$-category.

Of particular importance are the \textit{Rickard complexes}, 
certain chain complexes $\tau_i \onel$ in $\Udot(\slm)$ that generate categorical braid group actions on 
integrable 2-representations of $\Udot(\slm)$. They are explicitly given as follows:
\begin{equation} \label{eq:Rickard>}
\tau_i  \onel =
\xymatrix{ 
\uwave{\cal{F}_i^{(\lambda_i)} \onel} \ar[r]^-{d_0} & q^{-1} \cal{F}_i^{(\lambda_i+1)} \cal{E}_i \onel \ar[r]^-{d_1} & \cdots
\ar[r]^-{d_{b-1}} & q^{-b}  \cal{F}_i^{(\lambda_i+b)} \cal{E}_i^{(b)} \onel \ar[r]^-{d_{b}} &\cdots}
\end{equation}
when $\lambda_i \geq 0$ and
\begin{equation} \label{eq:Rickard<}
\tau_i \onel =
\xymatrix{
\uwave{ \cal{E}_i^{(-\lambda_i)} } \onel \ar[r]^-{d_0}
 & q^{-1} \cal{E}_i^{(-\lambda_i+1)} \cal{F}_i \onel  \ar[r]^-{d_1} & \cdots
\ar[r]^-{d_{a-1}} & q^{-a} \cal{E}_i^{(-\lambda_i+a)} \cal{F}_i^{(a)}  \onel \ar[r]^-{d_{a}} &\cdots}
\end{equation}
when $\lambda_i \leq 0$.
The 1-morphisms $\cal{E}_i^{(b)} \onel$ and $\cal{F}_i^{(c)} \onel$ are the divided power 1-morphisms defined in \cite{KLMS}, 
and the differentials on the complex are described using the so-called ``thick calculus'' from that work. 
See \cite{Cautis, QR} for full details.

\subsection{Categorified $\glm$ and foams}

The connection between categorified quantum groups and $\sln$ foams/link homology is studied in \cite{LQR,QR}, 
where the former is used to define the latter, as motivated by categorical skew Howe duality. 
The main result from that paper, extended to the case of $\vFoam$ and given using our conventions for categorified quantum groups above, 
takes the following form.

\begin{theorem}\label{thm:Foamation}
For each $m \geq 2$, there exists a 2-functor $\UU(\glm) \xrightarrow{\Phi_{\infty}} \vFoam$ determined by the following:
\[
\onek
\;\; \mapsto \;\;
\xy
(0,0)*{
\begin{tikzpicture} [scale=.75]
\draw [very thick, directed=.65] (2,0) -- (0,0);
\draw [very thick, directed=.65] (2,.75) -- (0,.75);
\node at (2.4,0) {\fns$k_1$};
\node at (2.4,.75) {\fns$k_m$};
\node at (1,.55) {$\vdots$};
\end{tikzpicture}};
\endxy
\;\; , \;\;
\cal{E}_i \onek 
\;\; \mapsto \;\;
\xy
(0,0)*{
\begin{tikzpicture} [scale=.5]
\draw [very thick, directed=.65] (4,1.75) -- (0,1.75);
\draw [very thick, directed=.65] (4,2.75) -- (0,2.75);
\draw [very thick, directed=.65] (4,-.75) -- (0,-.75);
\draw [very thick, directed=.65] (4,-1.75) -- (0,-1.75);
\node at (2,2.45){$\vdots$};
\node at (2,-1.05){$\vdots$};
\node at (4.6,2.75) {\fns$k_m$};
\node at (4.8,1.75) {\fns$k_{i+2}$};
\node at (4.8,-.75) {\fns$k_{i-1}$};
\node at (4.6,-1.75) {\fns$k_1$};
\draw [very thick, directed=.85, directed=.3] (4,0) -- (0,0);
\draw [very thick, directed=.2, directed=.75] (4,1) -- (0,1);
\draw [very thick, directed=.55] (2.5,1) -- (1.5,0);
\node at (4.8,1) {\fns$k_{i+1}$};
\node at (4.6,0) {\fns$k_i$};
\node at (-1.1,1) {\fns$k_{i+1}{-}1$};
\node at (-1,0) {\fns$k_i{+}1$};
\node at (2.5,.5) {\tiny$1$};
\end{tikzpicture}};
\endxy
\;\; , \;\;
\cal{F}_i \onek 
\;\; \mapsto \;\;
\xy
(0,0)*{
\begin{tikzpicture} [scale=.5]
\draw [very thick, directed=.65] (4,1.75) -- (0,1.75);
\draw [very thick, directed=.65] (4,2.75) -- (0,2.75);
\draw [very thick, directed=.65] (4,-.75) -- (0,-.75);
\draw [very thick, directed=.65] (4,-1.75) -- (0,-1.75);
\node at (2,2.45){$\vdots$};
\node at (2,-1.05){$\vdots$};
\node at (4.6,2.75) {\fns$k_m$};
\node at (4.8,1.75) {\fns$k_{i+2}$};
\node at (4.8,-.75) {\fns$k_{i-1}$};
\node at (4.6,-1.75) {\fns$k_1$};
\draw [very thick, directed=.85, directed=.3] (4,0) -- (0,0);
\draw [very thick, directed=.2, directed=.75] (4,1) -- (0,1);
\draw [very thick, directed=.55] (2.5,0) -- (1.5,1);
\node at (4.8,1) {\fns$k_{i+1}$};
\node at (4.6,0) {\fns$k_i$};
\node at (-1.1,1) {\fns$k_{i+1}{+}1$};
\node at (-1,0) {\fns$k_i{-}1$};
\node at (2.5,.5) {\tiny$1$};
\end{tikzpicture}};
\endxy
\]
It is determined on the positive part of 
$\UU(\glm)$ via:
\[
\begin{aligned}
\xy
(0,0)*{
\begin{tikzpicture} [scale=.625]
	\draw[thick,->] (0,0) to (0,2);
	\node at (0,1) {\small$\bullet$};
	\node at (0.25,1.3) {\fns$b$};
	\node at (-.25,.25) {\scs$i$};
\end{tikzpicture}};
\endxy
\mapsto 
\xy
(0,0)*{
\begin{tikzpicture} [fill opacity=0.2, scale=.5, xscale=-1]
     	\path[fill=red] (-2,0) to (2,0) to (2,3) to (-2,3);
     	\path [fill=blue] (1,-1) to (1,2) to (0,3) to (0,0);
     	\filldraw [fill=red] (-1,-1) to (3,-1) to (3,2) to (-1,2);
      	\draw[very thick] (-1,-1) -- (3,-1);
    	\draw[very thick, rdirected=.65] (1,-1) to (0,0);
     	\draw[very thick] (-2,0) -- (2,0);
      	\draw[ultra thick, red] (0,0) to (0,3);
     	\draw[ultra thick, red] (1,-1) to (1,2);
      	\draw[very thick] (-2,3) to (2,3);
    	\draw[very thick, rdirected=.65] (1,2) to (0,3);
     	\draw[very thick] (-1,2) to (3,2);
	\draw[very thick] (-1,-1) to (-1,2);
	\draw[very thick] (3,-1) to (3,2);
	\draw[very thick] (2,0) to (2,3);
	\draw[very thick] (-2,0) to (-2,3);
	\node [opacity=1]  at (0.5,1) {$\bullet$};	
	\node[opacity=1] at (0.25,1.4) {\fns$b$};
\end{tikzpicture}};
\endxy
\quad &, \;\; 
\xy
(0,0)*{
\begin{tikzpicture} [scale=.625]
	\draw[thick,->] (0,0) to [out=90,in=270] (1,1.5);
	\draw[thick,->] (1,0) to [out=90,in=270] (0,1.5);
	\node at (-.2,.25) {\scs$i$};
	\node at (1.2,.25) {\scs$i$};
\end{tikzpicture}};
\endxy
\mapsto 
- \; \xy
(0,0)*{
\begin{tikzpicture} [scale=.3,fill opacity=0.2,xscale=-1]
	\path[fill=red] (3.5,0) to (-2.5,0) to (-2.5,7.5) to (3.5,7.5);
	\path[fill=red] (2.5,1) to (-3.5,1) to (-3.5,8.5) to (2.5,8.5);
	\path[fill=blue] (.5,2.75) to (-.5,3.75) to (-.5,5.75) to (.5,4.75);
	\path[fill=blue] (-.75,7.5) to [out=270,in=130] (-.125,4.91) to (-1.125,5.91) to [out=130,in=270] (-1.75,8.5);
	\path[fill=blue] (-.125,4.91) to [out=320,in=180] (.5,4.75) to (-.5,5.75) to [out=180,in=320] (-1.125,5.91);
	\path[fill=blue] (.5,4.75) to [out=0,in=270] (1.75,7.5) to (.75,8.5) to [out=270,in=0] (-.5,5.75);
	\path[fill=blue] (-.75,0) to [out=90,in=180] (.5,2.75) to (-.5,3.75) to [out=180,in=90] (-1.75,1);
	\path[fill=blue] (.5,2.75) to [out=0,in=140] (1.125,2.59) to (.125,3.59) to [out=140,in=0] (-.5,3.75);
	\path[fill=blue] (1.125,2.59) to [out=320,in=270] (1.75,0) to (.75,1) to [out=270,in=320] (.125,3.59);
	\draw[very thick] (3.5,0) to (-2.5,0);
	\draw[very thick] (2.5,1) to (-3.5,1);
	\draw[very thick, rdirected=.6] (1.75,0) to (.75,1);	
	\draw[very thick, rdirected=.6] (-.75,0) to (-1.75,1);		
	\draw[very thick, red] (-.75,0) to [out=90,in=180] (.5,2.75);
	\draw[very thick, red] (1.75,0) to [out=90,in=0] (.5,2.75);
	\draw[very thick, red] (.5,2.75) to [out=90,in=270] (.5,4.75);
	\draw[very thick, red] (.5,4.75) to [out=0,in=270] (1.75,7.5);
	\draw[very thick, red] (.5,4.75) to [out=180,in=270] (-.75,7.5);
	\draw[very thick, red] (.5,2.75) to (-.5,3.75);
	\draw[very thick, red] (.5,4.75) to (-.5,5.75);
	\draw[very thick, red] (-1.75,1) to [out=90,in=180] (-.5,3.75);
	\draw[very thick, red] (.75,1) to [out=90,in=0] (-.5,3.75);
	\draw[very thick, red] (-.5,3.75) to [out=90,in=270] (-.5,5.75);
	\draw[very thick, red] (-.5,5.75) to [out=0,in=270] (.75,8.5);
	\draw[very thick, red] (-.5,5.75) to [out=180,in=270] (-1.75,8.5);
	\draw[very thick] (3.5,0) to (3.5,7.5);	
	\draw[very thick] (2.5,1) to (2.5,8.5);
	\draw[very thick] (-2.5,0) to (-2.5,7.5);	
	\draw[very thick] (-3.5,1) to (-3.5,8.5);
	\draw[very thick] (3.5,7.5) to (-2.5,7.5);
	\draw[very thick] (2.5,8.5) to (-3.5,8.5);
	\draw[very thick, rdirected=.6] (1.75,7.5) to (.75,8.5);	
	\draw[very thick, rdirected=.6] (-.75,7.5) to (-1.75,8.5);
	\end{tikzpicture}
};
\endxy \\
\xy
(0,0)*{
\begin{tikzpicture} [scale=.625]
	\draw[thick,->] (0,0) to [out=90,in=270] (1,1.5);
	\draw[thick,->] (1,0) to [out=90,in=270] (0,1.5);
	\node at (-.2,.25) {\scs$i$};
	\node at (1.5,.25) {\scs$i{+}1$};
\end{tikzpicture}};
\endxy
\mapsto 
\xy
(0,0)*{
\begin{tikzpicture}  [fill opacity=0.2, scale=.5, xscale=-1]
	\fill [fill=red] (-3,1) rectangle (1,4);
	\path [fill=blue] (-.25,4) .. controls (-.25,3) and (-1.75,2) .. (-1.75,1) --
			(-.75,0) .. controls (-.75,1) and (.75,2) .. (.75,3);	
	\fill [fill=red] (-2,0) rectangle (2,3);
	\path [fill=blue] (-.75,3) .. controls (-.75,2) and (.75,1) .. (.75,0) --
			(1.75,-1) .. controls (1.75,0) and (.25,1) .. (.25,2);
	\fill [fill=red] (-1,-1) rectangle (3,2);
	\draw [very thick, red] (-.25,4) .. controls (-.25,3) and (-1.75,2) .. (-1.75,1);
	\draw [very thick] (-3,1) rectangle (1,4);
	\draw[very thick, rdirected=.6] (-.75,0) -- (-1.75,1);
	\draw[very thick, rdirected=.6] (1.75,-1) -- (.75,0);
	\draw [very thick, red] (-.75,3) .. controls (-.75,2) and (.75,1) .. (.75,0);
	\draw [very thick, red] (.75,3) .. controls (.75,2) and (-.75,1) .. (-.75,0);
	\draw [very thick] (-2,0) rectangle (2,3);
	\draw[very thick, rdirected=.6] (.25,2) -- (-.75,3);
	\draw[very thick, rdirected=.6] (.75,3) -- (-.25,4);
	\draw[very thick, red] (1.75,-1) .. controls (1.75,0) and (.25,1) .. (.25,2);
	\draw [very thick] (-1,-1) rectangle (3,2);
\end{tikzpicture}}
\endxy
\quad &, \;\; 
\xy
(0,0)*{
\begin{tikzpicture} [scale=.625]
	\draw[thick,->] (0,0) to [out=90,in=270] (1,1.5);
	\draw[thick,->] (1,0) to [out=90,in=270] (0,1.5);
	\node at (-.5,.25) {\scs$i{+}1$};
	\node at (1.2,.25) {\scs$i$};
\end{tikzpicture}};
\endxy
\mapsto 
\xy
(0,0)*{
\begin{tikzpicture}  [fill opacity=0.2, scale=.5, xscale=-1]
	\fill [fill=red] (-3,1) rectangle (1,4);
	\path [fill=blue] (-1.75,4) .. controls (-1.75,3) and (-.25,2) .. (-.25,1) --
			(.75,0) .. controls (.75,1) and (-.75,2) .. (-.75,3);
	\fill [fill=red] (-2,0) rectangle (2,3);
	\path [fill=blue] (.75,3) .. controls (.75,2) and (-.75,1) .. (-.75,0) --
			(.25,-1) .. controls (.25,0) and (1.75,1) .. (1.75,2);
	\fill [fill=red] (-1,-1) rectangle (3,2);
	\draw[very thick, rdirected=.6] (.75,0) -- (-.25,1);
	\draw[very thick, rdirected=.6] (.25,-1) -- (-.75,0);
	\draw [very thick, red] (-1.75,4) .. controls (-1.75,3) and (-.25,2) .. (-.25,1);
	\draw [very thick] (-3,1) rectangle (1,4);
	\draw [very thick, red] (-.75,3) .. controls (-.75,2) and (.75,1) .. (.75,0);
	\draw [very thick, red] (.75,3) .. controls (.75,2) and (-.75,1) .. (-.75,0);
	\draw [very thick] (-2,0) rectangle (2,3);
	\draw[very thick, rdirected=.6] (1.75,2) -- (.75,3);
	\draw[very thick, rdirected=.6] (-.75,3) -- (-1.75,4);
	\draw[very thick, red] (.25,-1) .. controls (.25,0) and (1.75,1) .. (1.75,2);
	\draw [very thick] (-1,-1) rectangle (3,2);
\end{tikzpicture}}
\endxy \\
\xy
(0,0)*{
\begin{tikzpicture} [scale=.625]
	\draw[thick,->] (0,0) to [out=90,in=270] (1,1.5);
	\draw[thick,->] (1,0) to [out=90,in=270] (0,1.5);
	\node at (-.2,.25) {\scs$i$};
	\node at (1.2,.25) {\scs$j$};
\end{tikzpicture}};
\endxy
\mapsto 
\xy
(0,0)*{
\begin{tikzpicture} [fill opacity=0.2,scale=.5,xscale=-1]
	\fill [fill=red] (-4,2) rectangle (0,6);
	\path [fill=blue] (-1.25,6) .. controls (-1.25,5) and (-2.75,3) .. (-2.75,2) --
		 (-1.75,1) .. controls (-1.75,2) and (-.25,4) .. (-.25,5);
	\fill [fill=red] (-3,1) rectangle (1,5);
	\fill [fill=red] (-2,0) rectangle (2,4);
	\path [fill=blue] (-.75,4) .. controls (-.75,3) and (.75,1) .. (.75,0) --
		(1.75,-1) .. controls (1.75,0) and (.25,2) .. (.25,3);
	\fill [fill=red] (-1,-1) rectangle (3,3);
	\draw[very thick, red] (-1.25,6) .. controls (-1.25,5) and (-2.75,3) .. (-2.75,2);
	\draw [very thick] (-4,2) rectangle (0,6);
	\draw[very thick, rdirected=.6] (-1.75,1) -- (-2.75,2);
	\draw[very thick, rdirected=.6] (1.75,-1) -- (.75,0);
	\draw[very thick, red] (-1.75,1) .. controls (-1.75,2) and (-.25,4) .. (-.25,5);
	\draw [very thick] (-3,1) rectangle (1,5);
	\draw[very thick, red] (-.75,4) .. controls (-.75,3) and (.75,1) .. (.75,0);
	\draw [very thick] (-2,0) rectangle (2,4);
	\draw[very thick, rdirected=.6] (.25,3) -- (-.75,4);
	\draw[very thick, rdirected=.6] (-.25,5) -- (-1.25,6);
	\draw[very thick, red] (1.75,-1) .. controls (1.75,0) and (.25,2) .. (.25,3);
	\draw [very thick] (-1,-1) rectangle (3,3);
	\node[opacity=1,rotate=90] at (1.625,4.5) {\small$\ddots$};
\end{tikzpicture}}
\endxy
\quad &, \;\;
\xy
(0,0)*{
\begin{tikzpicture} [scale=.625]
	\draw[thick,->] (0,0) to [out=90,in=270] (1,1.5);
	\draw[thick,->] (1,0) to [out=90,in=270] (0,1.5);
	\node at (-.2,.25) {\scs$j$};
	\node at (1.2,.25) {\scs$i$};
\end{tikzpicture}};
\endxy
\mapsto 
\xy
(0,0)*{
\begin{tikzpicture} [fill opacity=0.2,scale=.5,xscale=-1]
	\fill [fill=red] (-4,2) rectangle (0,6);
	\path [fill=blue] (-2.75,6) .. controls (-2.75,5) and (-1.25,3) .. (-1.25,2) --
		 (-.25,1) .. controls (-.25,2) and (-1.75,4) .. (-1.75,5);
	\fill [fill=red] (-3,1) rectangle (1,5);
	\fill [fill=red] (-2,0) rectangle (2,4);
	\path [fill=blue] (.75,4) .. controls (.75,3) and (-.75,1) .. (-.75,0) --
		(.25,-1) .. controls (.25,0) and (1.75,2) .. (1.75,3);
	\fill [fill=red] (-1,-1) rectangle (3,3);
	\draw[very thick, red] (-2.75,6) .. controls (-2.75,5) and (-1.25,3) .. (-1.25,2);
	\draw [very thick] (-4,2) rectangle (0,6);
	\draw[very thick, rdirected=.6] (-.25,1) -- (-1.25,2);
	\draw[very thick, rdirected=.6] (.25,-1) -- (-.75,0);
	\draw[very thick, red] (-.25,1) .. controls (-.25,2) and (-1.75,4) .. (-1.75,5);
	\draw [very thick] (-3,1) rectangle (1,5);
	\draw[very thick, red] (.75,4) .. controls (.75,3) and (-.75,1) .. (-.75,0);
	\draw [very thick] (-2,0) rectangle (2,4);
	\draw[very thick, rdirected=.6] (1.75,3) -- (.75,4);
	\draw[very thick, rdirected=.6] (-1.75,5) -- (-2.75,6);
	\draw[very thick, red] (.25,-1) .. controls (.25,0) and (1.75,2) .. (1.75,3);
	\draw [very thick] (-1,-1) rectangle (3,3);
	\node[opacity=1,rotate=90] at (1.625,4.5) {\small$\ddots$};
\end{tikzpicture}}
\endxy
\end{aligned}
\]
where the $i^{th}$ sheet is always in the front, and $j - i > 1$.
On cap and cup generators, 
$\Phi_{\infty}$ is given\footnote{The factors of $\pm 1$ on the left-oriented cup and cap can be explicitly specified, 
using the $2$-functor from \cite{QR} and a version of the isomorphism in \cite[Theorem 2.1]{BHLW2} 
that rescales the left-oriented cap/cups instead of the right-oriented ones. We won't need specific knowledge of these values in the present work.}
by:
\[
\begin{aligned}
\xy
(0,0)*{
\begin{tikzpicture} [scale=.5,yscale=-1]
	\draw[thick,<-] (0,0) to [out=90,in=180] (.5,1) to [out=0,in=90] (1,0);
	\node at (1.25,1) {\fns$\mathbf{k}$};
	\node at (-.25,.5) {\scs$i$};
\end{tikzpicture}};
\endxy
\mapsto 
\pm \;
\xy
(0,0)*{
\begin{tikzpicture} [scale=.5,fill opacity=0.2]
\path[fill=red] (2,-2) to (-2,-2) to (-2,1) to (2,1);
\path[fill=blue] (1.5,1) to [out=270,in=0] (0,-.25) to [out=180,in=270] (-1.5,1) to
	(-.5,2) to [out=270,in=180] (0,1.25) to [out=0,in=270] (.5,2);
\path[fill=red] (2.5,-1) to (-2.5,-1) to (-2.5,2) to (2.5,2);
\draw[very thick, directed=.5] (2.5,-1) to (-2.5,-1);
\draw[very thick] (2.5,-1) to (2.5,2);
\draw[very thick] (-2.5,-1) to (-2.5,2);
\draw[very thick, directed=.5] (2.5,2) to (-2.5,2);
\draw[very thick, red, rdirected=.5] (-.5,2) to [out=270,in=180] (0,1.25)
	to [out=0,in=270] (.5,2);
\node[red, opacity=1] at (1.875,1.625) {\fns$k_{i+1}$};
\draw[very thick, directed=.5] (2,-2) to (-2,-2);
\draw[very thick] (2,-2) to (2,1);
\draw[very thick] (-2,-2) to (-2,1);
\draw[very thick, directed=.5] (2,1) to (-2,1);
\draw[very thick, red, rdirected=.5] (1.5,1) to [out=270,in=0] (0,-.25)
	to [out=180,in=270] (-1.5,1);
\node[red, opacity=1] at (1.5,-1.625) {\fns$k_i$};
\draw[very thick, directed=.5] (1.5,1) to (.5,2);
\draw[very thick, directed=.5] (-.5,2) to (-1.5,1);
\end{tikzpicture}
}
\endxy
\quad &, \;\; 
\xy
(0,0)*{
\begin{tikzpicture} [scale=.5]
	\draw[thick,->] (0,0) to [out=90,in=180] (.5,1) to [out=0,in=90] (1,0);
	\node at (1.25,1) {\fns$\mathbf{k}$};
	\node at (-.25,.5) {\scs$i$};
\end{tikzpicture}};
\endxy
\mapsto
\xy
(0,0)*{
\begin{tikzpicture} [scale=.5,fill opacity=0.2]
\path[fill=red] (2.5,2) to (-2.5,2) to (-2.5,-1) to (2.5,-1);
\path[fill=blue] (1.5,-2) to [out=90,in=0] (0,.5) to [out=180,in=90] (-1.5,-2) to
	(-.5,-1) to [out=90,in=180] (0,-.25) to [out=0,in=90] (.5,-1);
\path[fill=red] (2,1) to (-2,1) to (-2,-2) to (2,-2);
\draw[very thick, directed=.5] (2.5,2) to (-2.5,2);
\draw[very thick] (2.5,2) to (2.5,-1);
\draw[very thick] (-2.5,2) to (-2.5,-1);
\draw[very thick, directed=.5] (2.5,-1) to (-2.5,-1);
\draw[very thick, red, directed=.75] (-.5,-1) to [out=90,in=180] (0,-.25)
	to [out=0,in=90] (.5,-1);
\node[red, opacity=1] at (1.875,1.625) {\fns$k_{i+1}$};
\draw[very thick, directed=.5] (1.5,-2) to (.5,-1);
\draw[very thick, directed=.5] (-.5,-1) to (-1.5,-2);
\draw[very thick, directed=.5] (2,1) to (-2,1);
\draw[very thick] (2,1) to (2,-2);
\draw[very thick] (-2,1) to (-2,-2);
\draw[very thick, directed=.5] (2,-2) to (-2,-2);
\draw[very thick, red, directed=.65] (1.5,-2) to [out=90,in=0] (0,.5)
	to [out=180,in=90] (-1.5,-2);
\node[red, opacity=1] at (1.5,.625) {\fns$k_i$};
\end{tikzpicture}};
\endxy \\
\xy
(0,0)*{
\begin{tikzpicture} [scale=.5]
	\draw[thick,<-] (0,0) to [out=90,in=180] (.5,1) to [out=0,in=90] (1,0);
	\node at (1.25,1) {\fns$\mathbf{k}$};
	\node at (-.25,.5) {\scs$i$};
\end{tikzpicture}};
\endxy
\mapsto 
\pm \;
\xy
(0,0)*{
\begin{tikzpicture} [scale=.5,fill opacity=0.2]
\path[fill=red] (2.5,2) to (-2.5,2) to (-2.5,-1) to (2.5,-1);
\path[fill=blue] (1.5,-1) to [out=90,in=0] (0,.25) to [out=180,in=90] (-1.5,-1) to
	(-.5,-2) to [out=90,in=180] (0,-1.25) to [out=0,in=90] (.5,-2);
\path[fill=red] (2,1) to (-2,1) to (-2,-2) to (2,-2);
\draw[very thick, directed=.5] (2.5,2) to (-2.5,2);
\draw[very thick] (2.5,2) to (2.5,-1);
\draw[very thick] (-2.5,2) to (-2.5,-1);
\draw[very thick, directed=.5] (2.5,-1) to (-2.5,-1);
\draw[very thick, red, directed=.5] (1.5,-1) to [out=90,in=0] (0,.25)
	to [out=180,in=90] (-1.5,-1);
\node[red, opacity=1] at (1.875,1.625) {\fns$k_{i+1}$};
\draw[very thick, directed=.5] (1.5,-1) to (.5,-2);
\draw[very thick, directed=.5] (-.5,-2) to (-1.5,-1);
\draw[very thick, directed=.5] (2,1) to (-2,1);
\draw[very thick] (2,1) to (2,-2);
\draw[very thick] (-2,1) to (-2,-2);
\draw[very thick, directed=.5] (2,-2) to (-2,-2);
\draw[very thick, red, directed=.5] (-.5,-2) to [out=90,in=180] (0,-1.25)
	to [out=0,in=90] (.5,-2);
\node[red, opacity=1] at (1.5,.625) {\fns$k_i$};
\end{tikzpicture}};
\endxy 
\quad &, \;\; 
\xy
(0,0)*{
\begin{tikzpicture} [scale=.5,yscale=-1]
	\draw[thick,->] (0,0) to [out=90,in=180] (.5,1) to [out=0,in=90] (1,0);
	\node at (1.25,1) {\fns$\mathbf{k}$};
	\node at (-.25,.5) {\scs$i$};
\end{tikzpicture}};
\endxy
\mapsto
\xy
(0,0)*{
\begin{tikzpicture} [scale=.5,fill opacity=0.2]
\path[fill=red] (2,-2) to (-2,-2) to (-2,1) to (2,1);
\path[fill=blue] (1.5,2) to [out=270,in=0] (0,-.5) to [out=180,in=270] (-1.5,2) to
	(-.5,1) to [out=270,in=180] (0,.25) to [out=0,in=270] (.5,1);
\path[fill=red] (2.5,-1) to (-2.5,-1) to (-2.5,2) to (2.5,2);
\draw[very thick, directed=.5] (2.5,-1) to (-2.5,-1);
\draw[very thick] (2.5,-1) to (2.5,2);
\draw[very thick] (-2.5,-1) to (-2.5,2);
\draw[very thick, directed=.5] (2.5,2) to (-2.5,2);
\draw[very thick, red, rdirected=.65] (1.5,2) to [out=270,in=0] (0,-.5)
	to [out=180,in=270] (-1.5,2);
\draw[very thick, directed=.5] (2,-2) to (-2,-2);
\draw[very thick] (2,-2) to (2,1);
\draw[very thick] (-2,-2) to (-2,1);
\draw[very thick, directed=.5] (2,1) to (-2,1);
\draw[very thick, red, rdirected=.75] (-.5,1) to [out=270,in=180] (0,.25)
	to [out=0,in=270] (.5,1);
\node[red, opacity=1] at (1.5,-1.625) {\fns$k_i$};
\draw[very thick, directed=.5] (1.5,2) to (.5,1);
\draw[very thick, directed=.5] (-.5,1) to (-1.5,2);
\end{tikzpicture}};
\endxy
\end{aligned}
\]
This 2-functor factors through $\UU(\glm)^{\geq 0}$ and 
extends to the full 2-subcategory $\Ucheck(\glm)^{\geq 0} \subset \Udot(\glm)^{\geq 0}$ generated by the 
divided power morphisms $\cal{E}_i^{(a)} \onek$ and $\cal{F}_i^{(b)} \onek$.
The crossing formulae given in equations \eqref{eq:slnCrossing} and \eqref{eq:slnColoredCrossing} 
are, up to shifts in quantum and homological degree, the images of the Rickard complexes and their inverses.
\end{theorem}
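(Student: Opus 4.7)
The plan is to reduce to the existing construction for finite $n$. In \cite{LQR,QR} the analogous 2-functor $\UU(\glm) \to \nFoam$ was constructed; since $\vFoam$ is obtained from $\nFoam$ by dropping the relation \eqref{eq:ndot} --- the only foam relation depending on $n$ --- every foam identity verified in those papers for all $n \geq 2$ already holds in $\vFoam$. So the first step is to observe that each defining relation of the cyclic 2-categorified quantum group, after translation through $\Phi_\infty$, becomes an $n$-independent foam identity that can be checked using only isotopy and the basic foam relations (dot migration, neck-cutting, sphere and theta-foam evaluations), all of which are themselves $n$-independent.

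Concretely, I would proceed through the generating relations in turn: the nilHecke / KLR relations on upward and downward monochromatic strands, the mixed $\mathfrak{sl}_2$ relation between $EF$ and $FE$, bubble slides, the distant commutation and Khovanov--Lauda relations for different colors, and finally cyclicity of cups and caps. The scalar data $t_{i,j}$, $s_{ij}^{pq}$, $c_{i,\lambda}$ specified in Section \ref{sec:2-kac-moody} are chosen precisely so that the cyclicity rescalings of \cite[Theorem 2.1]{BHLW2} reproduce the right-oriented cap/cup assignments of \cite{QR}; thus cyclicity is automatic once the non-cyclic version of the 2-functor is in place.

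To see that $\Phi_\infty$ descends to $\UU(\glm)^{\geq 0}$, note that any web carrying a negatively-labeled edge is the zero object in $\vFoam$, hence so is $\Phi_\infty(\onek)$ whenever some $k_i < 0$. Extending $\Phi_\infty$ further to the divided-power 2-subcategory $\Ucheck(\glm)^{\geq 0}$ requires showing that the KLMS idempotents defining $\cal{E}_i^{(a)}\onek$ and $\cal{F}_i^{(b)}\onek$ are sent to the Schur idempotents that project a bundle of 1-labeled facets onto a single thick $a$- or $b$-labeled facet; this is a direct consequence of the thick-calculus computations of \cite{KLMS} together with the endomorphism description of thick edges in $\vFoam$ (the latter being part of the content of Theorem \ref{thm:foamObjDec}).

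Finally, the identification of the Rickard complexes with the crossing formulae \eqref{eq:slnCrossing} and \eqref{eq:slnColoredCrossing} is term-by-term: each $\cal{F}_i^{(\lambda_i+b)}\cal{E}_i^{(b)}\onel$ (and its analogue in \eqref{eq:Rickard<}) maps under $\Phi_\infty$ to the web with the corresponding pair of thick rungs, up to the prescribed homological and quantum grading shifts. The main obstacle, and the step requiring the most care, is matching the differentials: the thick-calculus differential of \cite{Cautis,QR} must be shown to agree under $\Phi_\infty$ with the foam differentials appearing in the crossing complexes of \cite{QR,RW}. This reduces to identifying the image of a single composite of a divided-power cap with a mixed-colored crossing, and once that basic comparison is made, the full statement of the theorem follows by naturality of the Rickard differential.
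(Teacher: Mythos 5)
Your overall approach matches the paper's: the paper states Theorem \ref{thm:Foamation} without an explicit proof, treating it as the direct extension of the $\UU(\glm) \to \nFoam$ construction from \cite{LQR,QR} to $\vFoam$, with the observation that every relation used in that verification is independent of $n$ (since $\vFoam$ is $\nFoam$ with the $n$-dot relation \eqref{eq:ndot} removed), and with cyclicity handled by the scalar choices via \cite[Theorem 2.1]{BHLW2}. Your sketch reconstructs all of this, and the last two paragraphs — descent to the Schur quotient, extension to divided powers via KLMS idempotents, and the term-by-term matching of the Rickard complexes with the crossing formulae — are accurate summaries of what would need to be verified.

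One correction is warranted. You justify the divided-power extension by appealing to ``the endomorphism description of thick edges in $\vFoam$ (the latter being part of the content of Theorem \ref{thm:foamObjDec}).'' This is a misattribution, and worse, it is circular: Theorem \ref{thm:foamObjDec} concerns the decomposition of \emph{annular} webs into concentric circles, and its proof (the second step, identifying the morphism spaces) invokes equation \eqref{eq:MainDiag} and \cite[Theorem 6.1]{BHLW}, where \eqref{eq:MainDiag} is built from $\Phi_\infty$ itself. So Theorem \ref{thm:foamObjDec} is logically downstream of Theorem \ref{thm:Foamation} and cannot be used in its proof. The fact you actually need — that the endomorphism algebra of a thick $k$-labeled edge in $\vFoam$ is the polynomial ring $\C[t_1,\ldots,t_k]^{\mathfrak{S}_k}$ (the $n \to \infty$ version of the statement that in $\nFoam$ it is $\mathrm{H}^\bullet(Gr(k,n))$), and that the KLMS idempotents map to the corresponding Schur projectors — is an $n$-independent consequence of the foam relations established directly in \cite{QR} (see e.g.\ Remark 4.1 there), and should be cited from that source.
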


We will combine this functor with the theory of categorical traces to prove Proposition \ref{prop:Functors}.

\subsection{Traces of 2-categories and current algebras}

In this section, we quickly recall the definitions of categorical traces of a linear 2-category $\mathcal{C}$,
which are defined and extensively studied in~\cite[Section~2]{BHLZ}. 
First, the \textit{vertical trace} is the category $\vTr \mathcal{C}$ in which the objects are the same as those in $\mathcal C$, 
and, given objects $\mathbf{c}, \mathbf{d} \in \mathrm{Ob}(\mathcal{C})$,
\begin{equation}\label{eq:vTrDef}
\Hom_{\vTr \mathcal{C}}(\mathbf{c}, \mathbf{d}) = \left(\bigoplus_{X \in \Hom_\cal{C}(\mathbf{c}, \mathbf{d})} \twoEnd(X)\right) \Bigg/ ( \alpha \gamma - \gamma \alpha )
\end{equation}
where the quotient is taken over all $2$-morphisms $\alpha \colon X \Rightarrow Y$ and $\gamma\colon Y \Rightarrow X$ 
and all $X,Y \colon \mathbf{c} \to \mathbf{d}$. Composition in $\vTr \mathcal{C}$ is given using horizontal composition in $\cal{C}$.

This definition is elucidated by the graphical depiction of 2-categories. 
Recall that in this formalism, 
a $2$-morphism $\alpha: W \Rightarrow Z$ between $1$-morphisms $W,Z \in \Hom_\cal{C}(\mathbf{c},\mathbf{d})$ 
is described by the ``string diagram'':
\[
\begin{tikzpicture}[anchorbase, scale=1]
\draw [gray] (0,0) rectangle (2,2);
\draw[very thick] (.75,.75) rectangle (1.25,1.25);
\node at (1,1) {$\alpha$};
\draw[thick] (1,0) to (1,.75);
\draw[thick] (1,1.25) to (1,2);
\node at (1.5,1.75) {$\mathbf{c}$};
\node at (.5,1.75) {$\mathbf{d}$};
\node at (1,-.25) {$W$};
\node at (1,2.25) {$Z$};
\end{tikzpicture}
\]
in which regions are labeled by objects, ``strings'' are labeled by 1-morphisms, and 2-morphisms are denoted by labeled tags. 
In a similar way, morphisms in $\vTr \mathcal{C}$ can be depicted as string diagrams drawn on a cylinder, 
\ie the equivalent class in $\Hom_{\vTr \mathcal{C}}$ of a 2-endomorphism $\alpha$  is depicted via:
\[
\xy
(0,0)*{
\begin{tikzpicture}[scale=.4]
\draw[gray] (3,-2) arc (-90:90:1 and 2);
\draw[gray, dashed] (3,-2) arc (270:90:1 and 2);
\draw[gray] (-4,0) circle (1 and 2);
\draw[gray] (-4,2) to (3,2);
\draw[gray] (-4,-2) to (3,-2);
\draw[thick] (0,-2) arc (-90:90:1 and 2);
\draw[dashed, thick] (0,-2) arc (270:90:1 and 2);
\fill[white] (.2,-.75) rectangle (1.7,.75);
\draw[very thick] (.2,-.75) rectangle (1.7,.75);
\node at (2.75,.75) {$\mathbf{c}$};
\node at (-1.75,.75) {$\mathbf{d}$};
\node at (.95,0) {$\alpha$};
\end{tikzpicture}
};
\endxy
\]
with composition given by glueing cylinders along circular boundaries. 
The trace relation is encoded diagrammatically via the ability to slide a tag around the back of the cylinder.

This interpretation suggests that interesting structure should also result from the horizontal gluing of a string diagram. 
Along these lines, the \textit{horizontal trace} $\hTr \mathcal{C}$ of a 2-category $\mathcal{C}$
is the category in which objects are 1-endomorphisms $X \colon \mathbf{c} \rightarrow \mathbf{c}$ for $\mathbf{c} \in \mathrm{Ob}(\mathcal C)$. 
Morphisms from $X \colon \mathbf{c} \rightarrow \mathbf{c}$ to $Y \colon \mathbf{d} \rightarrow \mathbf{d}$ are equivalence classes of pairs $(M,\alpha)$ 
consisting of a 1-morphism $M \colon \mathbf{c} \rightarrow \mathbf{d}$ and a 2-morphism $\alpha \colon M \cdot X \Rightarrow Y \cdot M$. 
The equivalence relation\footnote{Here, and for the duration, we denote horizontal composition by $\cdot$, 
and vertical composition by either $\circ$ or concatenation.} 
is generated by $(M, (\id_Y \cdot \tau) \gamma) \sim (N, \gamma (\tau \cdot \id_X))$ 
for $M,N \colon \mathbf{c} \rightarrow \mathbf{d}$, $\gamma \colon M \cdot X \Rightarrow Y \cdot N$, $\tau \colon N \Rightarrow M$, 
and is more-easily visualized using string diagrams:
\begin{equation}\label{eq:78}
\xy
(0,0)*{
\begin{tikzpicture}[scale=.4]
\draw [gray] (-2.5,-2) rectangle (2.5,3);
\draw[very thick] (-2,1) to (2,1) to (2,0) to (-2,0) to (-2,1);
\draw[thick] (-1,1) to (-1,3);
\draw[thick] (-1,0) to (-1,-2);
\draw[thick] (1,0) to (1,-2);
\draw[thick] (1,2.5) to (1,3);
\draw[thick] (1,1) to (1,1.5);
\draw[very thick] (1.5,1.5) to (.5,1.5) to (.5,2.5) to (1.5,2.5) to (1.5,1.5);
\node at (0,.5) {$\gamma$};
\node at (1,2) {$\tau$};
\node at (-1,3.5) {$Y$};
\node at (1,3.5) {$M$};
\node at (1,-2.5) {$X$};
\node at (-1,-2.5) {$M$};
\node at (-1.75,-1) {$\mathbf{d}$};
\node at (0,-1) {$\mathbf{c}$};
\node at (1.75,-1) {$\mathbf{c}$};
\end{tikzpicture}
};
\endxy
\sim
\xy
(0,0)*{
\begin{tikzpicture}[scale=.4]
\draw [gray] (-2.5,-2) rectangle (2.5,3);
\draw[very thick] (-2,1) to (2,1) to (2,0) to (-2,0) to (-2,1);
\draw[thick] (1,1) to (1,3);
\draw[thick] (-1,1) to (-1,3);
\draw[thick] (1,0) to (1,-2);
\draw[thick] (-1,0) to (-1,-.5);
\draw[thick] (-1,-1.5) to (-1,-2);
\draw[very thick] (-1.5,-.5) to (-.5,-.5) to (-.5,-1.5) to (-1.5,-1.5) to (-1.5,-.5);
\node at (0,.5) {$\gamma$};
\node at (-1,-1) {$\tau$};
\node at (-1,3.5) {$Y$};
\node at (1,3.5) {$N$};
\node at (1,-2.5) {$X$};
\node at (-1,-2.5) {$N$};
\node at (-1.75,2) {$\mathbf{d}$};
\node at (0,2) {$\mathbf{d}$};
\node at (1.75,2) {$\mathbf{c}$};
\end{tikzpicture}
};
\endxy
\end{equation}
This is clarified further for the case of 2-categories in which every 1-morphism possesses a biadjoint, in which case we can 
visualize morphisms in $\hTr \cal{C}$ as follows:
\[
(M,\alpha) =
\xy
(0,0)*{
\begin{tikzpicture}[scale=.35]
\draw [gray] (-2.5,-2) rectangle (2.5,3);
\draw[very thick] (-2,1) to (2,1) to (2,0) to (-2,0) to (-2,1);
\draw[thick] (1,1) to (1,3);
\draw[thick] (-1,1) to (-1,3);
\draw[thick] (1,0) to (1,-2);
\draw[thick] (-1,0) to (-1,-2);
\node at (0,.5) {$\alpha$};
\node at (-1,3.5) {$Y$};
\node at (1,3.5) {$M$};
\node at (1,-2.5) {$X$};
\node at (-1,-2.5) {$M$};
\node at (-1.75,2) {$\mathbf{d}$};
\node at (1.75,-1) {$\mathbf{c}$};
\end{tikzpicture}
};
\endxy
\;\; \longleftrightarrow \;\;
\xy
(0,0)*{
\begin{tikzpicture}[scale=.35]
\draw[gray] (2,0) arc (360:180:2 and 1);
\draw[gray, dashed] (2,0) arc (0:180:2 and 1);
\draw[gray] (2,0) to (2,7);
\draw[gray] (-2,0) to (-2,7);
\draw[thick, dashed] (-2,1) to [out=90,in=270] (2,5);
\draw[thick] (-2,1) to [out=270,in=180] (-1.25,.25) to [out=0,in=270] (-.5,2.5);
\draw[thick] (2,5) to [out=90,in=0] (1.25,5.75) to [out=180,in=90] (.5,3.5);
\draw[thick] (0,6) to [out=270,in=90] (-.5,3.5);
\draw[thick] (0,-1) to [out=90,in=270] (.5,2.5);
\fill[white] (-1.5,3.5) to (1.5,3.5) to (1.5,2.5) to (-1.5,2.5) to (-1.5,3.5);
\draw[very thick] (-1.5,3.5) to (1.5,3.5) to (1.5,2.5) to (-1.5,2.5) to (-1.5,3.5);
\draw[gray] (0,7) circle (2 and 1);
\node at (0,3) {$\alpha$};
\node at (0,6.5) {$Y$};
\node at (0,-1.5) {$X$};
\node at (-1.25,5) {$\mathbf{d}$};
\node at (1,0) {$\mathbf{c}$};
\end{tikzpicture}
};
\endxy
\]
and the equivalence relation is given via isotopy around the back of the cylinder.

These two notions of trace are related via a canonical functor $\rmi \colon \vTr \mathcal C \rightarrow \hTr \mathcal C$, 
defined on objects by $\rmi \colon \mathbf{c} \mapsto \one_\mathbf{c}$ 
and on morphisms by sending the class of 
$\alpha \in \twoEnd(X)$ 
to the class of the pair $(X,\alpha)$.
In our description of traces in terms of cylinders, 
this functor is given by rotating\footnote{This gives the functor on the nose in the event that the 2-category $\cal{C}$ is pivotal. 
In other circumstances, scalings may be introduced when performing this rotation.} 
the cylinder representing a morphism in $\vTr \cal{C}$ clockwise by $90^\circ$. 
From this description, it is easy to see that $\rmi$ is in fact the inclusion of the full subcategory 
whose objects are the identity 1-morphisms in $\cal{C}$.

In the event that our 2-category $\cal{C}$ is graded, there is an alternative version of the vertical trace, 
where we allow maps of all degrees in the direct sum appearing in equation \eqref{eq:vTrDef}. 
Concretely, given a graded 2-category $\cal{C}$, we first pass to the 2-category $\cal{C}^*$, 
observe that $\vTr\cal{C}^*$ is then enriched in graded vector spaces, and then define $\vTr^*\cal{C}$ to be 
the corresponding graded linear category.
In this setting, the canonical functor extends to give a functor $\vTr^*\cal{C} \xrightarrow{\rmi} \hTr \cal{C}$, 
which is given identically as before. 
Again, this functor is the inclusion of a full subcategory.

\subsection{Current algebras and traces of foam categories}\label{sec:Proof}

To connect with the foam categories from Section \ref{sec:Foams} giving our link invariants, 
note that all traces discussed in the preceding section are functorial, so Theorem \ref{thm:Foamation} gives the commutative diagram:
\begin{equation}\label{eq:TrDiag}
\xymatrix{
\vTr^*\left(\Ucheck(\glm)^{\geq 0}\right) \ar[rr]^-{\vTr^*(\Phi_{\infty})} \ar[d]^-{\rmi} & &\vTr^*\left(\vFoam\right) \ar[d]^-{\rmi} \\
\hTr\left(\Ucheck(\glm)^{\geq 0}\right) \ar[rr]^-{\hTr(\Phi_{\infty})} & &\hTr\left(\vFoam\right)
}
\end{equation}
Moreover, the vertical trace of a category is equivalent to that of its Karoubi completion, so the top left corner of the diagram 
can be replaced by $\vTr^*\left(\UU(\glm)^{\geq 0}\right)$.

We will now use this diagram to prove the non-trivial claims in Theorem \ref{thm:Meta} and Proposition \ref{prop:Functors}
\ie that $\vAFoam_\circ$ is a full subcategory of $\vAFoam$, 
and that $\Upsilon_{\cal{V},T}^\cal{S}$ (and hence $\Upsilon_{\cal{V},T}^\wedge$) are well-defined functors for each choice of $T:\cal{V} \to \cal{V}$.

To begin, we identify some of the categories appearing in equation \eqref{eq:TrDiag}. The following result is immediate:
\begin{proposition}
The category $\hTr\left(\vFoam\right)$ is monoidally 
equivalent to the annular foam category $\vAFoam$.
\end{proposition}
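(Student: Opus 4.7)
The plan is to construct an explicit monoidal functor $\Psi \colon \hTr(\vFoam) \to \vAFoam$ and show it is an equivalence. On objects, note that a $1$-endomorphism $W \colon \mathbf{k} \to \mathbf{k}$ in $\vFoam$ is a (formal direct sum of) web(s) in a rectangular strip whose top and bottom boundary data agree; $\Psi$ sends $W$ to the annular web obtained by identifying the top and bottom boundaries, exactly as in equation \eqref{eq:ExAnWeb}. On morphisms, a representative $(M,\alpha) \colon W \to W'$ in $\hTr(\vFoam)$ is most naturally visualized via the cylinder picture as a foam living on $S^1 \times [0,1] \times [0,1]$: the strip carrying the $2$-morphism $\alpha$ is bent so that the two vertical edges labeled $M$ are glued to one another, yielding precisely a foam in $\cal{A}\times[0,1]$ with boundaries the annular closures of $W$ and $W'$.

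First I would check well-definedness. On objects this is clear since annular webs are considered up to planar isotopy in $\cal{A}$ and the gluing is canonical. On morphisms, the trace equivalence relation of equation \eqref{eq:78} --- sliding $\tau \colon N \Rightarrow M$ from one side of $\gamma$ to the other across the seam --- corresponds geometrically to sliding a decorated facet around the back of the cylinder, which is manifestly an isotopy of foams in $\cal{A} \times [0,1]$. Horizontal composition in $\hTr(\vFoam)$, computed by stacking cylinders, maps to vertical composition of annular foams. For the monoidal structure, the $3$-category structure on $\vFoam$ (placing webs/foams in front of or behind one another) endows $\hTr(\vFoam)$ with a monoidal product that, under $\Psi$, corresponds exactly to the radial/nested gluing of annuli defining the tensor product in $\vAFoam$.

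Next I would establish essential surjectivity and fullness. For essential surjectivity, any annular web can be put in ``ladder form'' (as in the proof of Theorem~\ref{thm:foamObjDec}), and a radial cut of such a ladder missing all rungs yields a $1$-endomorphism $W$ in $\vFoam$ whose closure is the original annular web. For fullness, given an annular foam $F$ between two annular closures, choose a radial half-plane $\{\theta = \theta_0\} \times [0,1]$ in $\cal{A}\times [0,1]$ meeting $F$ transversely and avoiding singular seams and decorations; cutting along this half-plane produces a $1$-morphism $M$ (from the intersection with the source) and a foam $\alpha \colon M\cdot W \Rightarrow W'\cdot M$ whose image under $\Psi$ recovers $F$.

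The main obstacle is faithfulness: two cut representatives $(M,\alpha)$ and $(M',\alpha')$ of the same annular foam must be related by the generating trace relation. This amounts to showing that any two admissible radial cuts of $F$ can be connected by a sequence of elementary moves, each implementing a single instance of the slide in equation \eqref{eq:78}. The standard approach is to interpolate between the two cuts by a one-parameter family of radial half-planes, identify the finitely many critical moments where a feature of $F$ (a decoration, a trivalent seam, or a singular vertex) crosses the cut, and verify that each critical moment is precisely the sliding of a tagged $2$-morphism across the seam --- the defining relation of the horizontal trace. Combining essential surjectivity, fullness, faithfulness, and monoidality yields the claimed monoidal equivalence.
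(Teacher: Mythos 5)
Your proposal is correct and takes essentially the same approach as the paper's proof, which simply states the correspondence: objects map by annular closure, and a morphism $(W,\alpha)$ maps to the annular foam given by $\alpha$ in front together with a slide of $W$ around the back of the thickened annulus. You flesh out the routine verifications (well-definedness via the trace slide relation, essential surjectivity via ladder form, fullness and faithfulness via radial cuts and interpolating families of cuts, and compatibility of the monoidal structures) that the paper's two-sentence proof leaves implicit.
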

\begin{proof}
The identification between objects in $\hTr\left(\vFoam\right)$ and $\vAFoam$ is given by identifying a 1-endomorphism in 
$\vFoam$ with its annular closure. On morphisms, the pair $(W,\alpha)$ corresponds to the foam that is given by $\alpha$ in the 
front of the thickened annulus, and slides the web $W$ around the back of the thickened annulus.
\end{proof}

Work of Beliakova, Habiro, Lauda, and Webster identifies the category $\vTr^*\left(\UU(\glm)\right)$ with an idempotented 
version of the polynomial current algebra $\glm \otimes \C[t]$. 
Let $U(\slm[t])$ be the enveloping algebra of the current algebra $\slm \otimes \C[t]$, as defined \eg in \cite{BHLW}. 
Denoting the standard Chevalley generators of $\slm$ by $x_i^+$, $x_i^-$ and $\xi_i$ for $1 \leq i \leq m-1$, 
$U(\slm[t])$ has generators $x_{i,r}^+ = x_i^+ \otimes t^r$, $x_{i,r}^- = x_i^- \otimes t^r$ and $\xi_{i,r} =\xi_i \otimes t^r$ for $r\in \Z_{\geq 0}$. 
Let $\U(\slm[t])$ be the idempotented version of this algebra, 
which can be viewed as a category with objects given by $\slm$ weights, and with 
\[
\Hom_{\U(\slm[t])}(\lambda,\mu) := U(\slm[t])/ I_{\lambda,\mu}
\]
for 
\[
I_{\lambda,\mu} = \sum_{i=1}^{n-1}  U(\slm[t])(\xi_{i,0}-\lambda_i) + \sum_{i=1}^n (\xi_{i,0}-\mu_i) U(\slm[t])
\]
Introducing a grading by setting $\deg_q(t) = 2$, 
the category $\U(\slm[t])$ is then a linear category enriched in graded vector spaces, 
and we can hence equivalently consider it as a graded linear category.

The relation to traces is given as follows:
\begin{theorem}[Beliakova-Habiro-Lauda-Webster \cite{BHLW}]\label{thm:BHLW}
There is an isomorphism
\[
\U(\slm[t]) \xrightarrow{\cong} \vTr^* \UU(\slm)
\]
that is the identity on objects and is determined on morphisms by:
\[
x_{i,r}^+
\mapsto
\xy
(0,0)*{
\begin{tikzpicture}[scale=.3]
\draw[gray] (2.5,-2) arc (-90:90:1 and 2);
\draw[gray, dashed] (2.5,-2) arc (270:90:1 and 2);
\draw[gray] (-3.5,0) circle (1 and 2);
\draw[gray] (-3.5,2) to (2.5,2);
\draw[gray] (-3.5,-2) to (2.5,-2);
\draw[thick,directed=.85] (0,-2) arc (-90:90:1 and 2);
\draw[dashed, thick] (0,-2) arc (270:90:1 and 2);
\node at (1,0) {$\bullet$};
\node at (.6,.5) {\scs$r$};
\node at (1.2,-1.5) {\scs$i$};
\end{tikzpicture}
};
\endxy 
\;\; , \;\;
x_{i,r}^-
\mapsto 
\xy
(0,0)*{
\begin{tikzpicture}[scale=.3]
\draw[gray] (2.5,-2) arc (-90:90:1 and 2);
\draw[gray, dashed] (2.5,-2) arc (270:90:1 and 2);
\draw[gray] (-3.5,0) circle (1 and 2);
\draw[gray] (-3.5,2) to (2.5,2);
\draw[gray] (-3.5,-2) to (2.5,-2);
\draw[thick,rdirected=.2] (0,-2) arc (-90:90:1 and 2);
\draw[dashed, thick] (0,-2) arc (270:90:1 and 2);
\node at (1,0) {$\bullet$};
\node at (.6,.5) {\scs$r$};
\node at (1.2,1.5) {\scs$i$};
\end{tikzpicture}
};
\endxy 
\]
Under this functor
\[
\xi_{i,0} \1_\lambda \mapsto \lambda_i \;
\xy
(0,0)*{
\begin{tikzpicture}[scale=.3]
\draw[gray] (2.5,-2) arc (-90:90:1 and 2);
\draw[gray, dashed] (2.5,-2) arc (270:90:1 and 2);
\draw[gray] (-3.5,0) circle (1 and 2);
\draw[gray] (-3.5,2) to (2.5,2);
\draw[gray] (-3.5,-2) to (2.5,-2);
\node at (2.75,.75) {$\lambda$};
\end{tikzpicture}
};
\endxy 
\;\; , \;\;
\xi_{i,r} \mapsto
\sum_{a+b=r}
(a+1) \;
\xy
(0,0)*{
\begin{tikzpicture}[scale=.3]
\draw[gray] (3,-2) arc (-90:90:1 and 2);
\draw[gray, dashed] (3,-2) arc (270:90:1 and 2);
\draw[gray] (-4,0) circle (1 and 2);
\draw[gray] (-4,2) to (3,2);
\draw[gray] (-4,-2) to (3,-2);
	\draw[thick, ->] (-.5,0) arc (0:-360:1);
	\node at (-1.5,-1) {$\bullet$};
	\node at (-1.5,-1.5) {\tiny$\spadesuit{+} a$};
	\draw[thick, <-] (2.5,0) arc (0:-360:1);
	\node at (1.5,-1) {$\bullet$};
	\node at (1.5,-1.5) {\tiny$\spadesuit {+} b$};
\end{tikzpicture}
};
\endxy
\]
where we use the $\spadesuit$-notation from \cite{KLMS} for dotted bubbles.
\end{theorem}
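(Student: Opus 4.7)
My plan is to construct the map explicitly from $\U(\slm[t])$ to $\vTr^* \UU(\slm)$ as a functor and then prove bijectivity by identifying compatible spanning sets on both sides. To define the functor, I would send $\1_\lambda$ to the identity $2$-morphism $\id_{\one_\lambda}$ (the empty string diagram wrapped around the cylinder with region labeled $\lambda$), and send the Chevalley generators $x_{i,r}^\pm$ to the depicted dotted vertical strands on the cylinder. The two formulas for $\xi_{i,0}$ and $\xi_{i,r}$ are then forced by the relation $[x_{i,r}^+, x_{i,s}^-] \1_\lambda = \xi_{i,r+s}\1_\lambda$ together with the $\mathfrak{sl}_2$-relation inside $\UU(\slm)$ expanding $\mathsf{e}_i \mathsf{f}_i \one_\lambda - \mathsf{f}_i \mathsf{e}_i \one_\lambda$ as a sum of dotted bubbles via the ``infinite Grassmannian'' / ``fake bubble'' relations in the KLR thick calculus.

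The heart of the argument is verifying the defining relations of $U(\slm[t])$ in $\vTr^* \UU(\slm)$. The commutator $[\xi_{i,0}, x_{j,s}^\pm] = \pm a_{ij} x_{j,s}^\pm$ is immediate from the fact that a single strand on the cylinder passes between regions labeled $\lambda$ and $\lambda \pm \alpha_i$. For $r \geq 1$, the identity $[\xi_{i,r}, x_{j,s}^\pm] = \pm a_{ij} x_{j,r+s}^\pm$ reduces, after moving the bubbles around the cylinder using the trace relation, to the ``bubble sliding'' relations satisfied by dotted bubbles passing through an $x_i^\pm$ strand in $\UU(\slm)$, and these are standard identities in the thick calculus. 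The mixed commutators $[x_{i,r}^+, x_{j,s}^-]$ for $i \neq j$ vanish because the corresponding tangle of two strands on the cylinder can be slid past one another (the trace relation makes the two strands independent); when $i = j$ one recovers $\xi_{i,r+s}$ from the $\mathfrak{sl}_2$-relation applied on the cylinder, and the $\spadesuit$-shift in the bubble formula is exactly what is needed for both sides to land in the same weight space. The Serre relations follow from the corresponding $\mathfrak{sl}_3$-type relations in $\UU(\slm)$ after passing to the trace.

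For bijectivity, I would produce a PBW-type spanning set for $\vTr^* \UU(\slm)$ in the form
\[
(\text{monomial in } x_{i,r}^+)\cdot (\text{monomial in } \xi_{i,r}) \cdot (\text{monomial in } x_{j,s}^-) \cdot \1_\lambda,
\]
and show this spans by appealing to a combinatorial reduction: any closed diagram on the cylinder can, modulo the trace relation and the KLR/nil-Hecke relations, be straightened into vertical strands carrying dots together with closed dotted bubbles on the back of the cylinder. Injectivity then comes from the existence of a PBW basis of $\U(\slm[t])$ indexed by the same combinatorics, so a dimension count in each graded piece of $\Hom(\lambda, \mu)$ matches.

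The main obstacle I anticipate is the combinatorial reduction step — showing that every element of $\vTr^* \UU(\slm)$ can be written in the above PBW-normal form. Concretely, this requires simultaneously taming the nil-Hecke dot-crossing relations on each strand, the cyclicity imposed by the vertical trace (sliding $2$-morphisms around the back of the cylinder), and the interaction with the infinite family of dotted bubbles generated by opposite compositions of $\mathsf{e}_i$'s and $\mathsf{f}_i$'s. The cleanest route is to invoke the bubble generator lemma from the thick-calculus literature (which expresses any dotted bubble in terms of the $\xi_{i,r}$'s via Newton-type identities) together with an inductive argument on the number of crossings in a planar representative of a class in $\vTr^*$, using cyclicity to move terminal strands around the cylinder until they can be simplified.
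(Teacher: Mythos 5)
This statement is not proved in the paper at all; it is quoted verbatim from Beliakova--Habiro--Lauda--Webster \cite{BHLW} with a citation and no argument, so there is no ``paper's own proof'' for your sketch to be compared against. That said, your outline is a reasonable reconstruction of the strategy used in \cite{BHLW}: define the map on generators, verify the current-algebra relations via bubble slides and the $\mathfrak{sl}_2$-relation in the thick calculus, and establish bijectivity via a spanning set and dimension count.

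There is one genuine soft spot. You treat injectivity as essentially automatic (``a dimension count in each graded piece of $\Hom(\lambda,\mu)$ matches''), but this is precisely where the substance of \cite{BHLW} lies and it does not follow formally from having compatible PBW-type spanning sets. To execute the dimension count you must already know the graded dimensions of the 2-Hom spaces of $\UU(\slm)$ (equivalently, nondegeneracy of the graphical calculus, which rests on the 2-representation-theoretic input of Khovanov--Lauda/Webster), and you must then compute how the vertical trace collapses these spaces. The \cite{BHLW} proof handles this by acting on concrete 2-representations (cyclotomic quotients / cohomology of partial flag varieties) to detect the PBW monomials, rather than by a purely diagrammatic straightening argument plus a formal count. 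Your ``combinatorial reduction'' paragraph correctly flags the surjectivity difficulty but underestimates the injectivity side; I would expand the sketch to say explicitly that one constructs enough representations of $\vTr^*\UU(\slm)$ to separate the putative PBW basis elements, rather than relying on a graded-dimension comparison that you have not yet justified.

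One minor point worth recording: because the vertical trace of a linear 2-category is canonically isomorphic to that of its Karoubi completion (a fact the paper uses just after stating the theorem), you are free to pass to $\Udot(\slm)$ and use the divided-power generators $\cal{E}_i^{(a)}$, $\cal{F}_i^{(b)}$ from \cite{KLMS} in your spanning argument. This simplifies the diagrammatic straightening step considerably, and is the version of the thick calculus the paper actually works with.
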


As usual, it will be more convenient to work in the $\glm$ setting, so we consider $\U(\glm[t])$, which is equal to a 
direct sum of copies of $\U(\slm[t])$. More precisely, it is the category whose objects are $\glm$ weights, 
and with 
\[
\Hom_{\U(\glm[t])}(\mathbf{k},\mathbf{p}) := U(\slm[t])/ I_{\lambda,\mu}
\]
where $\lambda$ and $\mu$ are the $\slm$ weights associated with the $\glm$ weights $\mathbf{k}$ and $\mathbf{p}$, respectively. 
We also let $\U(\glm[t])^{\geq 0}$ be the quotient of $\U(\glm[t])$ by the ideal generated by all morphisms that factor through a
$\glm$ weight $\mathbf{k} \in \Z^m \smallsetminus \Z_{\geq 0}^m$.
It immediately follows from Theorem \ref{thm:BHLW} that we have a surjective map
\[
\U(\glm[t])^{\geq 0} \longrightarrow \vTr^* \left(\UU(\glm)^{\geq 0}\right)
\]
and \cite[Theorem 7.5]{BHLW} shows that this map is an isomorphism.

We can also use Theorem \ref{thm:BHLW} to identify $\vTr^*\left(\vFoam\right)$. 
Using the canonical inclusion to the horizontal trace, 
it immediately follows that $\vTr^*\left(\vFoam\right)$ is isomorphic to the full subcategory of 
$\vAFoam$ in which objects are given by concentric, labeled circles.
Recall now from \cite{QR} that the family of 2-functors $\Phi_{\infty}$ from Theorem \ref{thm:Foamation} is ``eventually full,''
\ie given any foam $\varkappa$ in $\vFoam$, 
we can find some $m$ and a 2-morphism $\alpha$ in $\UU(\glm)$ so that $\Phi_{\infty}(\alpha) = \varkappa$. 
It follows that the same is true after taking traces, hence all morphisms in $\vTr^*\left(\vFoam\right)$ 
can be expressed as those in the image of 
$\U(\glm[t])^{\geq 0} \xrightarrow{\vTr(\Phi_{\infty})} \vTr^*\left(\vFoam\right)$.
These are precisely those generated by the annular foams in equation \eqref{eq:FfoamEfoam}, 
so we see that $\vAFoam_\circ$ is equivalent to $\vTr^*\left(\vFoam\right)$.

To summarize, we have shown that the diagram in equation \eqref{eq:TrDiag} takes the form:
\begin{equation}\label{eq:MainDiag}
\xymatrix{
\U(\glm[t])^{\geq 0} \ar[rr]^-{\vTr(\Phi_{\infty})} \ar[d]^-{\rmi} & &\vAFoam_\circ \ar[d]^-{\rmi} \\
\hTr\left(\Ucheck(\glm)^{\geq 0}\right) \ar[rr]^-{\hTr(\Phi_{\infty})} & &\vAFoam
}
\end{equation}
Since the vertical arrows are inclusions of full subcategories, this proves the claim given in Theorem \ref{thm:Meta}, 
that $\vAFoam_\circ$ is a full subcategory of $\vAFoam$.

We conclude this section by proving Proposition \ref{prop:Functors}, 
\ie we show that the functors $\Upsilon_{\cal{V},T}^\cal{S}$ are well-defined.
To do so, we use a variant of Howe duality.
Let $\cal{V}$ be a graded super vector space and let $T:\cal{V} \to \cal{V}$ be a linear transformation. 
The action of $T$ gives $\cal{V}$ the structure of a $\C[t]$-module, so the following is immediate:
\begin{lemma}\label{lem:basic}
The super vector space $\C^m \otimes \cal{V}$ is a representation of $\U(\glm[t])^{\geq 0}$.
Moreover, if $\deg_q(T) = 2$, then this is a graded representation.
\end{lemma}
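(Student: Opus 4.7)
The plan is to write down an explicit action of $\slm[t]$ on $\C^m \otimes \cal{V}$, check the Lie algebra relations, verify that it descends to an action on the idempotented Schur quotient, and then track gradings.

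First I would define the action on generators: letting $\slm$ act on $\C^m$ in the standard way and extending the endomorphism $T$ of $\cal{V}$ to a $\C[t]$-module structure, set
\[
(x \otimes t^r) \cdot (v \otimes w) := x(v) \otimes T^r(w)
\]
for $x \in \slm$, $v \in \C^m$, and $w \in \cal{V}$. Since $\C[t]$ is commutative and $T$ trivially commutes with its own powers, the bracket computation
\[
[x \otimes t^r,\, y \otimes t^s] \cdot (v \otimes w) = [x,y](v) \otimes T^{r+s}(w)
\]
matches the current algebra relation $[x \otimes t^r,\, y \otimes t^s] = [x,y] \otimes t^{r+s}$, so this defines a genuine $U(\slm[t])$-module structure.

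Next I would pass from $U(\slm[t])$ to the idempotented form $\U(\glm[t])$ by decomposing $\C^m \otimes \cal{V}$ into its $\glm$-weight spaces; each summand $e_i \otimes \cal{V}$ has $\glm$-weight $\epsilon_i = (0,\ldots,1,\ldots,0)$, and the generators $x_{i,r}^\pm$ and $\xi_{i,r}$ send weight spaces to weight spaces in the usual way. All weights that appear lie in $\Z_{\geq 0}^m$ (indeed in $\{\epsilon_1,\ldots,\epsilon_m\}$), so the idempotents $\mathbf{1}_\mathbf{k}$ for $\mathbf{k} \notin \Z_{\geq 0}^m$ automatically act as zero and the action factors through the Schur quotient $\U(\glm[t])^{\geq 0}$. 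This is the only subtle point: one must verify that no nonzero $\glm$-weight of $\C^m \otimes \cal{V}$ leaves $\Z_{\geq 0}^m$ under the Chevalley generators, but this is clear since the weights are a subset of $\{\epsilon_i\}$ and $x_{i}^\pm e_j$ is either zero or another $e_k$.

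Finally, for the grading claim, recall that $\U(\glm[t])^{\geq 0}$ is graded by $\deg_q(t) = 2$, so $x_{i,r}^\pm$ has $q$-degree $2r$. The $\slm$-action on $\C^m$ (with its standard weight-space grading) is by degree-zero maps, and if $\deg_q(T) = 2$ then $T^r$ acts on $\cal{V}$ in $q$-degree $2r$. Hence the operator $x \otimes T^r$ on $\C^m \otimes \cal{V}$ has $q$-degree $2r$, matching the grading on $\U(\glm[t])^{\geq 0}$, so $\C^m \otimes \cal{V}$ becomes a graded representation. The main obstacle is purely bookkeeping — confirming that the weight cone and the degree conventions line up — rather than any substantive relation-checking.
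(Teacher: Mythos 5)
Your proposal is correct and takes essentially the same route as the paper: you write the action $(x\otimes t^r)\cdot(v\otimes w) = x(v)\otimes T^r(w)$, which is exactly the formula the paper records on Chevalley generators, check the current-algebra bracket (which the paper asserts silently), observe the weights lie in $\{\mathbf e_1,\ldots,\mathbf e_m\}\subset\Z_{\geq 0}^m$ so the action factors through the Schur quotient, and match degrees. The paper's proof simply writes the explicit action and lets the rest go without comment, so your version is the same argument spelled out in more detail.
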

\begin{proof}
To be explicit, the $\mathbf{k}$-weight space is $\cal{V}$ if $\mathbf{k} = \mathbf{e}_i$, 
the $\glm$ weight with $1$ in the $i^{th}$ entry and all other entries zero, and is zero otherwise. 
Choosing a basis $\{u_1,\dots,u_m\}$ for $\C^m$, the generating morphisms act via:
\[
\begin{aligned}
\1_{\mathbf{e}_i} x_{i,r}^+ \1_{\mathbf{e}_{i+1}} &: u_{i+1}\otimes v \mapsto u_i\otimes T^r(v) \\
\1_{\mathbf{e}_{i+1}} x_{i,r}^- \1_{\mathbf{e}_{i}} &: u_i\otimes v \mapsto u_{i+1}\otimes T^r(v)
\end{aligned}
\]
and are zero otherwise.
\end{proof}

It follows that the tensor algebra $\bigotimes^\bullet \left( \C^m \otimes \cal{V} \right)$ and 
hence its symmetric and skew-symmetric subspaces $\Sym^\bullet \left( \C^m \otimes \cal{V} \right)$ and 
$\bV^\bullet \left( \C^m \otimes \cal{V} \right)$ have induced representations of $\U(\glm[t])^{\geq 0}$, 
which again are graded.
We focus on the symmetric tensors, since, as noted in Remark \ref{rem:FunctorsWedge}, 
we can obtain the skew-symmetric tensors from these by shifting $\cal{V}$ in super-degree.

The representation $\Sym^\bullet \left( \C^m \otimes \cal{V} \right)$ determines a graded functor 
\[
\widehat{\Upsilon}_{\cal{V},T}^{\cal{S}}: \U(\glm[t])^{\geq 0} \to \gr\C\Vect^{\Z/2}
\]
that we now describe. 
The $\glm$-weight space decomposition is given by: 
\begin{equation}\label{eq:skewHowe}
\begin{aligned}
\Sym^\bullet \left( \C^m \otimes \cal{V} \right) 
&\cong \Sym^\bullet \left( \underbrace{\cal{V} \oplus \cdots \oplus \cal{V}}_m \right) \\
&\cong \bigoplus_{\mathbf{k} \in \Z_{\geq 0}^m} \Sym^{k_1} \cal{V} \otimes \cdots \otimes \Sym^{k_m} \cal{V}
\end{aligned}
\end{equation}
so $\widehat{\Upsilon}_{\cal{V},T}^{\cal{S}}(\mathbf{k}) = \Sym^{k_1} \cal{V} \otimes \cdots \otimes \Sym^{k_m} \cal{V}$.
The images of the generating morphisms are given as follows. 
Each of $\widehat{\Upsilon}_{\cal{V},T}^{\cal{S}}(x_{i,r}^\pm)$ is the identity on all tensor factors except the $i^{th}$ and $(i+1)^{st}$. 
On these factors, which take the form $\Sym^{k_i} \cal{V} \otimes \Sym^{k_{i+1}} \cal{V}$, 
we have that
\begin{equation}\label{eq:FunctorCAGens}
\begin{aligned}
\widehat{\Upsilon}_{\cal{V},T}^{\cal{S}}(\1_{\mathbf{k}-\epsilon_i} x_{i,r}^- \1_{\mathbf{k}}) 
&= (\id_{k_i-1} \otimes {_{1}\pi_{k_{i+1}}}) \circ (\id_{k_i-1} \otimes T^r \otimes \id_{k_{i+1}}) 
\circ ({_{k_i-1}}\iota_{1} \otimes \id_{k_{i+1}}) \\
\widehat{\Upsilon}_{\cal{V},T}^{\cal{S}}(\1_{\mathbf{k}+\epsilon_i} x_{i,r}^+ \1_{\mathbf{k}}) 
&= ({_{k_i}\pi_{1}} \otimes \id_{k_{i+1}-1}) \circ (\id_{k_i} \otimes T^r \otimes \id_{k_{i+1}-1}) 
\circ (\id_{k_i} \otimes {_{1}\iota_{k_{i+1}-1}})
\end{aligned}
\end{equation}
where $_{k}\pi_l: \Sym^k\cal{V} \otimes \Sym^l \cal{V} \to \Sym^{k+l} \cal{V}$ is ${{k+l} \choose k}$ times the canonical projection,
$_{k}\iota_l: \Sym^{k+l} \cal{V} \to \Sym^k\cal{V} \otimes \Sym^l \cal{V}$ is 
the inclusion map, 
and $\id_k$ is the identity on $\Sym^k\cal{V}$.

We now claim that $\widehat{\Upsilon}_{\cal{V},T}^{\cal{S}}$ factors through $\vAFoam_\circ$, 
\ie that there exists a functor $\Upsilon_{\cal{V},T}^{\cal{S}}: \vAFoam_\circ \to \gr\C\Vect^{\Z/2}$ so that 
the diagram:
\[
\xymatrix{
\U(\glm[t])^{\geq 0} \ar[rr]^-{\vTr(\Phi_{\infty})} \ar[drrr]_-{\widehat{\Upsilon}_{\cal{V},T}^{\cal{S}}} & &\vAFoam_\circ \ar[dr]^-{\Upsilon_{\cal{V},T}^{\cal{S}}} \\
& & &\gr\C\Vect^{\Z/2}
}
\]
commutes for every $m \geq 1$. Since every object and morphism in $\vAFoam_\circ$ is the image of one in $\U(\glm[t])^{\geq 0}$ 
for $m$ sufficiently large, it is clear how $\Upsilon_{\cal{V},T}^{\cal{S}}$ should be defined: it sends concentric circles labeled by $k_1,\ldots,k_m$ to 
the super vector space: 
\[
\Upsilon_{\cal{V},T}^{\cal{S}}(\mathbf{k}) := \Sym^{k_1} \cal{V} \otimes \cdots \otimes \Sym^{k_m} \cal{V}
\]
and sends the foams $\mathsf{F}_r$ and $\mathsf{E}_r$ from 
equation \eqref{eq:FfoamEfoam} to the images of $x_{i,r}^-$ and $x_{i,r}^+$ from equation \eqref{eq:FunctorCAGens}, respectively.

It thus suffices to check that this definition is well-defined, that is, 
if two morphisms in $\vAFoam_\circ$ are equal (when viewed as morphisms in $\vAFoam$) then their images under 
$\Upsilon_{\cal{V},T}$ agree. To see that this is the case, recall from \cite{QR} that the family of 2-functors $\Phi_{\infty}$ 
from Theorem \ref{thm:Foamation} is ``eventually faithful,'' \ie if two 2-morphisms in $\vFoam$ agree, then we can find $m$ and 
preimages of the 2-morphisms in $\Ucheck(\glm)^{\geq 0}$ that agree. 
It follows that the same is true for the functors 
$\U(\glm[t])^{\geq 0} \xrightarrow{\vTr(\Phi_{\infty})} \vAFoam_\circ$, so the well-definedness of $\widehat{\Upsilon}_{\cal{V},T}^{\cal{S}}$, 
together with the compatibility of these functors with the various inclusions $\U(\glm[t])^{\geq 0} \hookrightarrow \U(\glnn{m+m'}[t])^{\geq 0}$,
implies that $\Upsilon_{\cal{V},T}^{\cal{S}}$ is well-defined.

\begin{remark}
Repeating this argument with skew-symmetric powers gives functors 
$\widehat{\Upsilon}_{\cal{V},T}^{\wedge}: \U(\glm[t])^{\geq 0} \to \gr\C\Vect^{\Z/2}$ 
and proves the well-definedness of $\Upsilon_{\cal{V},T}^{\wedge}$.
\end{remark}

\subsection{Dotted web categories}\label{sec:dotted}

The functor $\Upsilon_{\cal{V},T}^{\cal{S}}: \vAFoam_\circ \to \gr\C\Vect^{\Z/2}$ is most easily visualized using a description of $\vAFoam_\circ$ 
in terms of a non-negatively graded extension of Cautis-Kamnitzer-Morrison's $\slv$ webs. Indeed, recall that the objects in $\vAFoam_\circ$ 
are rotationally symmetric with respect to the core of the annulus, and the same is true for morphisms after forgetting the decorations on facets.
We can hence describe this category by taking the intersection of a morphism with a radial plane, and remembering the decorations.

It follows that $\vAFoam_\circ$ admits a description as a monoidal category in which objects are sequences 
$\mathbf{k} \in {\displaystyle \coprod_{m>0}} \Z^m_{\geq 0}$ and morphisms are generated by:
\[
\xy
(0,0)*{
\begin{tikzpicture}[scale=.5,rotate=270]
	\draw [very thick] (1.25,0) to (-1.25,0);
	\node at (1.5,0) {\tiny $k$};
	\node at (-1.5,0) {\tiny $k$};
\end{tikzpicture}
};
\endxy 
\quad , \quad
\xy
(0,0)*{
\begin{tikzpicture}[scale=.5,rotate=270]
	\draw [very thick] (2,0) to (.75,0);
	\draw [very thick] (.75,0) to [out=120,in=0] (-.5,.75);
	\draw [very thick] (.75,0) to [out=240,in=0] (-.5,-.75);
	\node at (2.25,0) {\tiny $k{+}l$};
	\node at (-.75,.75) {\tiny $l$};
	\node at (-.75,-.75) {\tiny $k$};
\end{tikzpicture}
};
\endxy
\quad , \quad
\xy
(0,0)*{
\begin{tikzpicture}[scale=.5,rotate=270]
	\draw [very thick] (-2,0) to (-.75,0);
	\draw [very thick] (-.75,0) to [out=60,in=180] (.5,.75);
	\draw [very thick] (-.75,0) to [out=300,in=180] (.5,-.75);
	\node at (-2.25,0) {\tiny $k{+}l$};
	\node at (.75,.75) {\tiny $l$};
	\node at (.75,-.75) {\tiny $k$};
\end{tikzpicture}
};
\endxy 
\quad , \quad
\xy
(0,0)*{
\begin{tikzpicture}[scale=.5,rotate=270]
	\draw [very thick] (1.25,0) to (-1.25,0);
	\node at (1.5,0) {\tiny $k$};
	\node at (-1.5,0) {\tiny $k$};
	\node at (0,.1875) {$\bullet^{f}$};
\end{tikzpicture}
};
\endxy 
\]
where $f \in \C[t_1,\ldots,t_k]^{\mathfrak{S}_k}$. 
Note that we draw these dotted webs vertically, and omit orientations, as they represent morphisms that are built from 2-morphisms in $\vFoam$, 
in contrast to the previously appearing oriented horizontal webs, which are 1-morphisms in $\vFoam$.

The relations in $\vFoam$ imply that the undotted webs satisfy the $q=1$ specialization of the upward relations, 
\ie those in equation \eqref{eq:UpwardRel},
as well as the decoration sliding relation:
\[
\xy
(0,0)*{
\begin{tikzpicture}[scale=.5,rotate=270]
	\draw [very thick] (-2,0) to (-.75,0);
	\draw [very thick] (-.75,0) to [out=60,in=180] (.5,.75);
	\draw [very thick] (-.75,0) to [out=300,in=180] (.5,-.75);
	\node at (-2.25,0) {\tiny $k{+}l$};
	\node at (.75,.75) {\tiny $l$};
	\node at (.75,-.75) {\tiny $k$};
	\node at (-1.25,.325) {$\bullet^{e_r}$};
\end{tikzpicture}
};
\endxy =
\sum_{i=0}^r
\xy
(0,0)*{
\begin{tikzpicture}[scale=.5,rotate=270]
	\draw [very thick] (-2,0) to (-.75,0);
	\draw [very thick] (-.75,0) to [out=60,in=180] (.5,.75);
	\draw [very thick] (-.75,0) to [out=300,in=180] (.5,-.75);
	\node at (-2.25,0) {\tiny $k{+}l$};
	\node at (.75,.75) {\tiny $l$};
	\node at (.75,-.75) {\tiny $k$};
	\node at (.125,-.425) {$\bullet_{e_i}$};
	\node at (0,1.3) {$\bullet^{e_{r-i}}$};
\end{tikzpicture}
};
\endxy
\]
Together, these imply that any decoration can be expressed in terms of decorations on $1$-labeled strands, 
\ie simply as dots labeled by positive integers. 
Using this, the functor $\Upsilon_{\cal{V},T}^\cal{S}$ is determined by sending:
\begin{equation} \label{eq:SplitMerge}
\xy
(0,0)*{
\begin{tikzpicture}[scale=.4,rotate=270]
	\draw [very thick] (2,0) to (.75,0);
	\draw [very thick] (.75,0) to [out=120,in=0] (-.5,.75);
	\draw [very thick] (.75,0) to [out=240,in=0] (-.5,-.75);
	\node at (2.25,0) {\tiny $k{+}l$};
	\node at (-.875,.75) {\tiny $l$};
	\node at (-.875,-.75) {\tiny $k$};
\end{tikzpicture}
};
\endxy \mapsto {_k \iota_l}
\quad , \quad
\xy
(0,0)*{
\begin{tikzpicture}[scale=.4,rotate=270]
	\draw [very thick] (-2,0) to (-.75,0);
	\draw [very thick] (-.75,0) to [out=60,in=180] (.5,.75);
	\draw [very thick] (-.75,0) to [out=300,in=180] (.5,-.75);
	\node at (-2.25,0) {\tiny $k{+}l$};
	\node at (.75,.75) {\tiny $l$};
	\node at (.75,-.75) {\tiny $k$};
\end{tikzpicture}
};
\endxy \mapsto {_k \pi_l}
\quad , \quad
\xy
(0,0)*{
\begin{tikzpicture}[scale=.4,rotate=270]
	\draw [very thick] (1.25,0) to (-1.25,0);
	\node at (1.5,0) {\tiny $1$};
	\node at (-1.5,0) {\tiny $1$};
	\node at (0,.1875) {$\bullet^{r}$};
\end{tikzpicture}
};
\endxy \mapsto T^r 
\end{equation}

\section{Link homologies}\label{sec:identifying}

For each choice of $\cal{V}$ and $T$, and $\square = \cal{S}$ or $\wedge$, 
Theorem \ref{thm:LinkHomologies} produces a homology theory $\overline{\cal{H}}_{\cal{V},T}^\square( \widehat{\beta})$ for colored, annular braid closures $\widehat{\beta}$. 
In this section, we investigate these invariants for various choices of these parameters. 
In particular, we prove the bullet-pointed claims in Theorem \ref{thm:LinkHomologies}, 
showing that certain choices give known invariants (after suitable rescalings), 
some of which descend to invariants of the corresponding links $\cal{L}_\beta \subset S^3$.
We also prove Theorem \ref{thm:negativen}.

\subsection{Khovanov-Rozansky $\sln$ link homology}\label{sec:KhRsln}

In this section we prove the following result, which is the first bullet-pointed claim in Theorem \ref{thm:LinkHomologies}.
\begin{theorem}\label{thm:sln}
Let $\beta$ be a balanced, colored braid, and $\cal{V}_n = q^{1-n} \C[t]/t^n$ with $\deg_q(t) = 2$, 
then $\cal{H}_{\cal{V}_n,t}^{\wedge}( \cal{L}_\beta) := h^{w_\beta}q^{W_\beta - w_\beta - nw_\beta} \overline{\cal{H}}_{\cal{V}_n,t}^{\wedge}( \widehat{\beta})$ 
is isomorphic to colored Khovanov-Rozansky homology, and hence is an invariant of the colored link $\cal{L}_\beta \subset S^3$.
\end{theorem}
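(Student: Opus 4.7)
The plan is to compare $\cal{H}_{\cal{V}_n,t}^{\wedge}(\cal{L}_\beta)$ directly with the standard foam-based construction of Khovanov-Rozansky $\sln$ homology, using the annular evaluation machinery of Section \ref{sec:Foams} and the functoriality of Proposition \ref{prop:Functors}. First, I would recall that colored $\sln$ KhR homology of $\cal{L}_\beta$ can be computed by taking the complex $\llbracket \beta \rrbracket$ (viewed in $\nFoam$ via the quotient by the relation \eqref{eq:ndot}), forming its closure, and applying the standard $\sln$ ``evaluation'' functor $\Gamma_n$ which assigns $\bV^k(\C^n)$ to a $k$-labeled closed edge and is determined on foams by the local relations. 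By Theorem \ref{thm:foamObjDec}, closing and passing to the equivalent complex $\llbracket \widehat{\beta} \rrbracket_\circ \in \Hot^b(\vAFoam_\circ)$ introduces no new information, so it suffices to show that $\Gamma_n$, viewed as a functor from $\vAFoam_\circ$ (after imposing $\bullet^n = 0$ on $1$-labeled facets), coincides with $\Upsilon_{\cal{V}_n,t}^{\wedge}$.

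Next, I would verify this identification at the level of generators. On a $k$-labeled essential circle, $\Upsilon_{\cal{V}_n,t}^{\wedge}$ outputs $\bV^k(\cal{V}_n) = \bV^k(q^{1-n}\C[t]/t^n)$; since $\C[t]/t^n$ with $\deg_q(t) = 2$ is graded-isomorphic to $\mathrm{H}^\bullet(\C\mathbb{P}^{n-1})$, the shift $q^{1-n}$ gives graded dimension $[n]$, and hence $\bV^k(\cal{V}_n)$ has graded dimension $\qbinom{n}{k}$, matching $\Gamma_n$'s assignment on $k$-labeled circles. For the foam generators $\mathsf{F}_r$ and $\mathsf{E}_r$ of \eqref{eq:FfoamEfoam}, both $\Gamma_n$ and $\Upsilon_{\cal{V}_n,t}^{\wedge}$ produce maps of the form $\mathrm{split} \circ (\id \otimes t^r \otimes \id) \circ \mathrm{merge}$ on $\bV^k(\C^n) \otimes \bV^l(\C^n)$ (see \eqref{eq:SplitMerge} and Remark \ref{rem:FunctorsWedge}); the agreement of these two descriptions reduces to the standard dot-slide and neck-cutting relations in $\nFoam$, which identify dots on $1$-labeled facets with multiplication by $t \in \mathrm{H}^\bullet(\C\mathbb{P}^{n-1})$. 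Since $\vAFoam_\circ$ is a \emph{full} subcategory of $\vAFoam$ generated by these foams (Theorem \ref{thm:Meta}), agreement on generators suffices.

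Finally, I would address the rescaling. The complex $\llbracket \widehat{\beta} \rrbracket_\circ$ in this paper is built from the ``symmetric'' crossing formulae \eqref{eq:slnCrossing}--\eqref{eq:slnColoredCrossing}, which differ from the KhR normalization \eqref{eq:DecatCrossing} by a factor of $(-1)$ and the substitution $q \leftrightarrow q^{-1}$ at each crossing (cf.\ equation \eqref{eq:WedgeToSym}). For the $\wedge$-evaluation, the functor $\Upsilon_{\cal{V}_n,t}^{\wedge}$ already incorporates the sign, and the quantum shift is absorbed into the multiplier $q^{W_\beta - w_\beta - n w_\beta}$, while the factor $h^{w_\beta}$ accounts for the homological shift needed to compensate for the reversal of the complex under $q \leftrightarrow q^{-1}$. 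I would then check, using the explicit KhR crossing formula (compare with \cite[Definition 12.16]{Wu}), that these shifts agree precisely with the ones required for Markov II invariance in the colored setting, with the $W_\beta - w_\beta$ term being trivial in the uncolored case.

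The main obstacle is the last step: the careful bookkeeping of homological, quantum, and framing shifts, and in particular verifying that the $W_\beta - w_\beta$ correction in the colored case matches the normalization of colored KhR homology. Once these shifts are aligned, the identification of functors $\Gamma_n \cong \Upsilon_{\cal{V}_n,t}^{\wedge}$ on $\vAFoam_\circ$ automatically yields a chain isomorphism between the two complexes, hence an isomorphism on homology, and Markov II invariance then forces $\cal{H}_{\cal{V}_n,t}^{\wedge}(\cal{L}_\beta)$ to be a genuine link invariant isomorphic to $\KhR_n(\cal{L}_\beta)$.
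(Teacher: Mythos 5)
Your overall strategy matches the paper's: recast $\KhR_n$ as a functor from $\vAFoam_\circ$, and then show this functor agrees with $\Upsilon^\wedge_{\cal{V}_n,t}$ by checking objects and generating foams. But there are two points at which the argument, as written, leaves a genuine gap.

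First, the existence of a functor $\Gamma_n:\vAFoam_\circ\to \gr\C\Vect$ computing $\KhR_n$ is nontrivial and not simply ``the standard evaluation functor.'' In the foam framework, $\KhR_n(\cal{L}_\beta)$ is defined by taking $\twoHOM_{\nFoam}(I_\mathbf{k},\llbracket\beta\rrbracket)$, not by applying a TQFT-style functor to the annular closure. The passage from the $\twoHOM$-complex to a functor on $\vAFoam_\circ$ is exactly the content of \cite[Theorem 6.2]{QR2}: that the action of $\U(\glm[t])^{\geq 0}$ on the center of objects $\cal{Z}(\nFoam)$ computes $\KhR_n$ after annular simplification. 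Your proposal should cite or reprove this step rather than invoking an unnamed $\Gamma_n$; and in any case, making the transition rigorous requires the slide-isotopy identification in equation \eqref{eq:slideshift}, since each move around the annulus changes the weight $\mathbf{k}$ and hence the shift $q^{-s_n(\mathbf{k})}$.

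Second, and more seriously, the claim that agreement of the two functors on $\mathsf{F}_r$ and $\mathsf{E}_r$ ``reduces to the standard dot-slide and neck-cutting relations'' is precisely where the work lies, and it does not follow by a short appeal to standard relations. After identifying $\twoEND_{\nFoam}(I_\mathbf{k})$ with a tensor product of $\mathrm{H}^\bullet(Gr(k_i,n))$, one must show: (a) the foam that splits a $1$-labeled sheet from a $k$-labeled sheet corresponds to the canonical inclusion $\bV^k\cal{V}_n \hookrightarrow \cal{V}_n\otimes\bV^{k-1}\cal{V}_n$, and (b) the foam that merges corresponds to $k+1$ times the canonical projection. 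Neither is immediate: the paper proves (b) by explicitly expanding decorated sheets into iterated blisters indexed by Schur polynomials and tracking the resulting basis vectors through the isomorphism \eqref{eq:GrassToWedge}; and (a) does not follow from (b) by duality alone but requires an $\slnn{2}$-representation-theoretic argument using $[E,F]=H$ on the lowest weight space to pin down $E$ from $F$. The paper also carefully observes that it suffices to check agreement up to sign, since any residual sign can be absorbed into a rescaling of highest weight vectors of the Weyl-module summands. Your proposal elides all of this; as written, ``determined on foams by the local relations'' is an assertion of the conclusion, not a proof. The shift bookkeeping in your last paragraph is fine and matches the paper, but it is a side issue compared to the unaddressed computation above.
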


\begin{proof}
In \cite[Theorem 6.2]{QR2}, we showed that colored $\sln$ Khovanov-Rozansky homology $\mathsf{KhR}_n( \cal{L}_\beta)$ can be recovered from the
complex $\llbracket \widehat{\beta} \rrbracket_\circ$ using an action of the current algebra $\glm[t]$ on the center of objects in the foam 2-category $\nFoam$.
It thus suffices to show that this current algebra representation 
is isomorphic to that given by $\widehat{\Upsilon}_{\cal{V}_n,t}^\wedge$.

We first quickly recall this description of $\mathsf{KhR}_n( \cal{L}_\beta)$, referring the reader to \cite{QR2} for more details. 
In \cite{QR}, we show that the Khovanov-Rozansky homology of $\cal{L}_\beta$ is the homology of the complex of graded vector spaces
\[
\mathrm{C}_n(\beta) := q^{-s_n(\mathbf{k})} \twoHOM_{\nFoam}\left( I_\mathbf{k}, \llbracket \beta \rrbracket \right)
\]
(up to the writhe-dependent shifts $h^{w_\beta}q^{W_\beta - w_\beta - nw_\beta}$ in homological and quantum degree needed for colored Markov II invariance).
Here $I_\mathbf{k}$ denotes the identity web of the object $\mathbf{k}$ 
corresponding to the coloring of the endpoints of $\beta$, 
and $s_n(\mathbf{k}):=\sum_i k_i (n-k_i)$. 
We can simplify this complex in a manner similar to Theorem \ref{thm:foamObjDec}: 
the simplifications corresponding to web isomorphisms or the homotopy equivalences in equations 
\eqref{eq:RGI1} and \eqref{eq:RGI2} can be applied to $\llbracket \beta \rrbracket$, hence to $\mathrm{C}_n(\beta)$. 
The remaining simplification, web isomorphism via isotopy around the annulus, also has an analogue in this context. 
It suffices to slide webs around the annulus one ladder-rung at a time, and this corresponds to the 
graded vector space isomorphism
\begin{equation}\label{eq:slideshift}
q^{-s_n(\mathbf{k})} \twoHOM_{\nFoam}\left( I_\mathbf{k}, \cal{R} \cdot 
\cal{W}\right) \cong q^{-s_n(\mathbf{k}\pm\epsilon_i)} \twoHOM_{\nFoam}\left( I_{\mathbf{k} \pm \epsilon_i}, \cal{W} \cdot  
\cal{R}\right)
\end{equation}
where $\cal{R} \in \Hom_{\nFoam}(\mathbf{k} \pm \epsilon_i, \mathbf{k})$ is a web consisting of a single ladder rung between the $i^{th}$ and $(i+1)^{st}$ uprights. 
This isomorphism is given via composition with cap and cup foam generators.
Simplifying $\mathrm{C}_n(\beta)$ by applying the analogues of the moves used to simplify $\llbracket \widehat{\beta} \rrbracket$ to $\llbracket \widehat{\beta} \rrbracket_\circ$, 
we end with a complex that is the image of the complex $\llbracket \widehat{\beta} \rrbracket_\circ$ under the functor $\vAFoam \to \gr\C\Vect$ induced from 
the action of $\U(\glm[t])^{\geq 0}$ on the center of objects in $\nFoam$.

Recall from \cite[Section 6]{BGHL} that the center of objects of a linear 2-category is the direct sum of all $\twoEnd$-spaces of identity 1-morphisms.
In the case that our 2-category $\cal{C}$ is graded linear, we instead consider the vector space
\[
\cal{Z}(\cal{C}) := \bigoplus_{\mathbf{c} \in \mathrm{Ob}(\cal{C})} \twoEND_\cal{C}(\one_\mathbf{c})
\]
which gives a graded analogue. 
Here $\twoEND_\cal{C}(\one_\mathbf{c}) : = \twoHOM_\cal{C}(\one_\mathbf{c},\one_\mathbf{c})$. 
As is shown in \cite{BHLW}, any 2-representation $\UU(\glm) \xrightarrow{\Phi} \cal{C}$ induces an action of $\U(\glm[t])$ on $\cal{Z}(\cal{C})$, 
\ie a functor $\U(\glm[t]) \to \gr\C\Vect$ that sends a $\glm$ weight $\mathbf{k}$ to $\twoEND_\cal{C}(\one_{\Phi(\mathbf{k})})$
and sends the generator $x_{i,r}^- \1_\mathbf{k}$ to the composition:
\begin{equation}\label{eq:CoO}
\twoEND_\cal{C}(\one_{\Phi(\mathbf{k})}) \xrightarrow{\alpha \mapsto 
\Phi\Big(
\hspace{-.1cm}
\xy
(0,0)*{
\begin{tikzpicture} [scale=.25]
	\draw[thick,<-] (0,0) to (0,2);
	\node at (-.375,1.5) {\scs$i$};
\end{tikzpicture}};
\endxy \hspace{.1cm}
\Big) \cdot \alpha \cdot
\Phi\Big(
\hspace{-.1cm}
\xy
(0,0)*{
\begin{tikzpicture} [scale=.25]
	\draw[thick,->] (0,0) to (0,2);
	\node at (-.375,.5) {\scs$i$};
\end{tikzpicture}};
\endxy \hspace{.1cm}
\Big)} 
\twoEND_\cal{C}(\Phi(\cal{F}_i\cal{E}_i\one_{\mathbf{k}-\epsilon_i}))
\xrightarrow{\beta \mapsto
\Phi\Big(
\hspace{-.1cm}
\xy
(0,0)*{
\begin{tikzpicture} [scale=.375]
	\draw[thick,<-] (0,0) to [out=90,in=180] (.5,1) to [out=0,in=90] (1,0);
	\node at (1.125,.5) {\scs$\bullet^r$};
	\node at (-.25,.75) {\scs$i$};
\end{tikzpicture}};
\endxy
\hspace{.1cm}
\Big)\circ \beta \circ
\Phi\Big(
\hspace{-.1cm}
\xy
(0,0)*{
\begin{tikzpicture} [scale=.375,yscale=-1]
	\draw[thick,->] (0,0) to [out=90,in=180] (.5,1) to [out=0,in=90] (1,0);
	\node at (-.25,.5) {\scs$i$};
\end{tikzpicture}};
\endxy
\hspace{.1cm}
\Big)}  \twoEND_\cal{C}(\one_{\Phi(\mathbf{k}-\epsilon_i)})
\end{equation}
and \mm for $x_{i,r}^+ \1_\mathbf{k}$.

In our setting, this representation sends a $\glm$-weight $\mathbf{k}$ to 
\[
q^{-s_n(\mathbf{k})}
\twoEND_{\nFoam}\left( 
\xy
(0,0)*{
\begin{tikzpicture} [scale=.6]
\draw [very thick, directed=.65] (2,0) -- (0,0);
\draw [very thick, directed=.65] (2,.75) -- (0,.75);
\node at (2.4,0) {\fns$k_1$};
\node at (2.4,.75) {\fns$k_m$};
\node at (1.125,.55) {$\vdots$};
\end{tikzpicture}};
\endxy
\right)
\cong
q^{-s_n(\mathbf{k})}
\mathrm{H}^*(Gr(k_1,n)) \otimes \cdots \otimes \mathrm{H}^*(Gr(k_m,n)) 
\cong 
\widehat{\Upsilon}_{\cal{V}_n,t}(\mathbf{k})
\]

As mentioned in Section \ref{sec:Foams}, the first isomorphism follows from \cite[Remark 4.1]{QR}.
The second isomorphism is given as follows. Recall that $\mathrm{H}^\bullet(Gr(k,n))$ has a basis consisting of classes of 
Schur polynomials $s_\mu$, where $\mu$ is a partition that fits in a $k \times (n-k)$ box 
(\ie $\mu$ has at most $k$ parts, and $\mu_1 \leq n-k$). The isomorphism is then given on each factor by the isomorphism
\begin{equation}\label{eq:GrassToWedge}
\mathrm{H}^\bullet(Gr(k,n)) \cong \bV^k \C[t]/t^n
\end{equation}
given by
\[
s_\mu \longmapsto \sum_{\sigma \in \mathfrak{S}_k} (-1)^{\mathrm{sgn}(\sigma)} \sigma(t^{\mu_1+k-1} \otimes t^{\mu_2+k-2} \otimes \cdots \otimes t^{\mu_k}) 
\in \bV^k \C[t]/t^n.
\]

It remains to show that the actions of $x_{i,r}^\pm \1_\mathbf{k}$ in this representation agree with $\widehat{\Upsilon}_{\cal{V}_n,t}(x_{i,r}^\pm \1_\mathbf{k})$. 
In fact, it suffices to show that they agree\footnote{Indeed, if they disagree, we could fix this by rescaling our isomorphism on highest weight vectors in the 
relevant (quotients of) Weyl modules into which our representations decompose. 
See \cite{BHLW} for details on the relation between Weyl modules and traces of categorified quantum groups.} 
up to a sign, which is what we will show. 
Moreover, we only consider the case of $x_{i,r}^+ \1_\mathbf{k}$, since the argument for 
$x_{i,r}^- \1_\mathbf{k}$ is completely analogous.
Consider the action of $x_{i,r}^+ \1_\mathbf{k}$, which is given locally on the $i^{th}$ and $(i+1)^{st}$ tensor factors via
\[
\xy
(0,0)*{
\begin{tikzpicture} [scale=.5,fill opacity=0.2]
\path[fill=red] (2,-2) to (-2,-2) to (-2,2) to (2,2);
\path[fill=red] (2.5,-1) to (-2.5,-1) to (-2.5,3) to (2.5,3);
\draw[very thick, directed=.5] (2.5,-1) to (-2.5,-1);
\draw[very thick] (2.5,-1) to (2.5,3);
\draw[very thick] (-2.5,-1) to (-2.5,3);
\draw[very thick, directed=.5] (2.5,3) to (-2.5,3);
\node[opacity=1] at (0,2.5) {$\bullet^g$};
\draw[very thick, directed=.5] (2,-2) to (-2,-2);
\draw[very thick] (2,-2) to (2,2);
\draw[very thick] (-2,-2) to (-2,2);
\draw[very thick, directed=.5] (2,2) to (-2,2);
\node[red, opacity=1] at (1.5,-1.625) {\fns$k_i$};
\node[red, opacity=1] at (1.875,2.625) {\fns$k_{i+1}$};
\node[opacity=1] at (0,-1.5) {$\bullet_f$};
\end{tikzpicture}
}
\endxy
\; \longmapsto \;
\pm \;
\xy
(0,0)*{
\begin{tikzpicture} [scale=.5,fill opacity=0.2]
\begin{scope}[shift={(0,-3)}]
\path[fill=red] (2,-2) to (-2,-2) to (-2,1) to (2,1);
\path[fill=blue] (1.5,1) to [out=270,in=0] (0,-.25) to [out=180,in=270] (-1.5,1) to
	(-.5,2) to [out=270,in=180] (0,1.25) to [out=0,in=270] (.5,2);
\path[fill=red] (2.5,-1) to (-2.5,-1) to (-2.5,2) to (2.5,2);
\draw[very thick, directed=.5] (2.5,-1) to (-2.5,-1);
\draw[very thick] (2.5,-1) to (2.5,2);
\draw[very thick] (-2.5,-1) to (-2.5,2);
\draw[very thick, red, rdirected=.5] (-.5,2) to [out=270,in=180] (0,1.25)
	to [out=0,in=270] (.5,2);
\draw[very thick, directed=.5] (2,-2) to (-2,-2);
\draw[very thick] (2,-2) to (2,1);
\draw[very thick] (-2,-2) to (-2,1);
\draw[very thick, red, rdirected=.5] (1.5,1) to [out=270,in=0] (0,-.25)
	to [out=180,in=270] (-1.5,1);
\node[red, opacity=1] at (1.125,-1.625) {\fns$k_i+1$};
\end{scope}
\path[fill=red] (2.5,2) to (-2.5,2) to (-2.5,-1) to (2.5,-1);
\path[fill=blue] (1.5,-2) to [out=90,in=0] (0,.5) to [out=180,in=90] (-1.5,-2) to
	(-.5,-1) to [out=90,in=180] (0,-.25) to [out=0,in=90] (.5,-1);
\path[fill=red] (2,1) to (-2,1) to (-2,-2) to (2,-2);
\draw[very thick, directed=.5] (2.5,2) to (-2.5,2);
\draw[very thick] (2.5,2) to (2.5,-1);
\draw[very thick] (-2.5,2) to (-2.5,-1);
\draw[dashed, red] (2.5,-1) to (-2.5,-1);
\draw[very thick, red] (-.5,-1) to [out=90,in=180] (0,-.25)
	to [out=0,in=90] (.5,-1);
\node[opacity=1] at (0,-.9) {$\bullet^g$};
\draw[dashed, blue] (1.5,-2) to (.5,-1);
\draw[dashed, blue] (-.5,-1) to (-1.5,-2);
\node[opacity=1] at (1.125,-1.5) {$\bullet_r$};
\draw[very thick, directed=.5] (2,1) to (-2,1);
\draw[very thick] (2,1) to (2,-2);
\draw[very thick] (-2,1) to (-2,-2);
\draw[dashed, red] (2,-2) to (-2,-2);
\draw[very thick, red] (1.5,-2) to [out=90,in=0] (0,.5)
	to [out=180,in=90] (-1.5,-2);
\node[red, opacity=1] at (1.25,1.625) {\fns$k_{i+1}-1$};
\node[opacity=1] at (0,-2.1) {$\bullet_f$};
\end{tikzpicture}
}
\endxy
\]

More topologically, the action of $x_{i,r}^+ \1_\mathbf{k}$ is given (up to a factor of $\pm1$)
by sewing an $r$-dotted, $1$-labeled ribbon along singular seams parallel to the boundaries of the two sheets, 
and adjusting the boundary labelings. 
From this latter description,
it is clear that this can be expressed as the composition of three maps: the first sews an undotted $1$-labeled ribbon to the back sheet, producing an endomorphism of 
parallel 1-labeled and $(k_{i+1}{-}1)$-labeled strands. The second adds $r$ dots to the $1$-labeled portion, and the third sews a $1$-labeled ribbon between the front sheet and the $1$-labeled boundary. 

The foam relations imply that
any endomorphism of parallel strands can be written as a linear combination of decorations on vertical sheets, 
so we can assume that we have performed this simplification after applying the first of the three maps. It follows that the second map is simply 
the tensor product of identity maps with
the endomorphism of a 1-labeled strand given by an $r$-dotted sheet, 
and the third map is given by sewing in a $1$-labeled ribbon between 
decorated $k_i$-labeled and $1$-labeled sheets.
We have thus written the action of $x_{i,r}^+ \1_\mathbf{k}$ as a composition:
\begin{equation}\label{eq:slnComp}
\bV^{k_i} \cal{V}_n \otimes \bV^{k_{i+1}} \cal{V}_n \rightarrow \bV^{k_i} \cal{V}_n \otimes \cal{V}_n \otimes \bV^{k_{i+1}-1} \cal{V}_n
\rightarrow \bV^{k_i} \cal{V}_n \otimes \cal{V}_n \otimes \bV^{k_{i+1}-1} \cal{V}_n \rightarrow
\bV^{k_i + 1} \cal{V}_n \otimes \bV^{k_{i+1}-1} \cal{V}_n
\end{equation}
where the middle map is $\id \otimes t^r \otimes \id$. 
Recall from equation \eqref{eq:FunctorCAGens} that $\widehat{\Upsilon}_{\cal{V}_n,t}(x_{i,r}^+ \1_\mathbf{k})$ factors in exactly the same way, 
with the middle map given by $t^r$ (for our current choice of $T$). 
It hence suffices to show that the first and third maps above agree with those in equation \eqref{eq:FunctorCAGens},
up to $\pm1$.

We begin with the third map. To help with identifying it, we first note that the isomorphism from equation \eqref{eq:GrassToWedge} can be 
explicitly realized in $\nFoam$ by applying the ``blister'' relations from \cite{QR} to a sheet decorated by the Schur polynomial $s_\mu$ with $\mu=(\mu_1,\ldots,\mu_k)$. 
The first such relation: 
\[
\xy
(0,0)*{
\begin{tikzpicture} [scale=.6,fill opacity=0.2]
	\path[fill=blue] (1.25,0) to [out=90,in=0] (0,1.25) to [out=180,in=90] (-1.25,0) to [out=270,in=180] (0,-1.25) to [out=0,in=270] (1.25,0);
	\path[fill=blue] (1.25,0) to [out=90,in=0] (0,1.25) to [out=180,in=90] (-1.25,0) to [out=270,in=180] (0,-1.25) to [out=0,in=270] (1.25,0);
	\path[fill=red] (2,0) to (1.25,0) to [out=90,in=0] (0,1.25) to [out=180,in=90] (-1.25,0) to (-2,0) to (-2,2) to (2,2);
	\path[fill=red] (2,0) to (1.25,0) to [out=270,in=0] (0,-1.25) to [out=180,in=270] (-1.25,0) to (-2,0) to (-2,-2) to (2,-2);	
	\draw[very thick, directed=.55] (2,-2) to (-2,-2);
	\draw[very thick] (2,-2) to (2,2);
	\draw[very thick] (-2,-2) to (-2,2);
	\draw[very thick, red, directed=.5] (1.25,0) to [out=90,in=0] (0,1.25) to [out=180,in=90] (-1.25,0);
	\draw[very thick, red] (1.25,0) to [out=270,in=0] (0,-1.25) to [out=180,in=270] (-1.25,0);	
	\draw[blue,dashed] (1.25,0) to [out=225,in=315] (-1.25,0);
	\draw[dashed,blue] (1.25,0) to [out=135,in=45] (-1.25,0);
	\draw[very thick, directed=.55] (2,2) to (-2,2);
	\node[red,opacity=1] at (1.5,1.5) {\fns$k$};
	\node[blue,opacity=1] at (.5,-.75) {\fns$1$};
	\node[blue,opacity=1] at (-.5,.625) {\fns$k{-}1$};
	\node[opacity=1] at (0,-.25) {$\bullet^{\mu_1{+}k{-}1}$};
	\node[opacity=.75] at (.625,.5) {$\bullet^{s_{\mu'}}$};
\end{tikzpicture}
};
\endxy
\; = \;
\xy
(0,0)*{
\begin{tikzpicture} [scale=.6,fill opacity=0.2]
	\path[fill=red] (2,2) to (2,-2) to (-2,-2) to (-2,2); 
	\draw[very thick, directed=.55] (2,-2) to (-2,-2);
	\draw[very thick] (2,-2) to (2,2);
	\draw[very thick] (-2,-2) to (-2,2);
	\draw[very thick, directed=.55] (2,2) to (-2,2);
	\node[red,opacity=1] at (1.5,1.5) {\fns$k$};
	\node[opacity=1] at (0,0) {$\bullet_{s_\mu}$};
\end{tikzpicture}
};
\endxy
\; = \;
\xy
(0,0)*{
\begin{tikzpicture} [scale=.6,fill opacity=0.2]
	\path[fill=blue] (1.25,0) to [out=90,in=0] (0,1.25) to [out=180,in=90] (-1.25,0) to [out=270,in=180] (0,-1.25) to [out=0,in=270] (1.25,0);
	\path[fill=blue] (1.25,0) to [out=90,in=0] (0,1.25) to [out=180,in=90] (-1.25,0) to [out=270,in=180] (0,-1.25) to [out=0,in=270] (1.25,0);
	\path[fill=red] (2,0) to (1.25,0) to [out=90,in=0] (0,1.25) to [out=180,in=90] (-1.25,0) to (-2,0) to (-2,2) to (2,2);
	\path[fill=red] (2,0) to (1.25,0) to [out=270,in=0] (0,-1.25) to [out=180,in=270] (-1.25,0) to (-2,0) to (-2,-2) to (2,-2);	
	\draw[very thick, directed=.55] (2,-2) to (-2,-2);
	\draw[very thick] (2,-2) to (2,2);
	\draw[very thick] (-2,-2) to (-2,2);
	\draw[very thick, red, directed=.5] (1.25,0) to [out=90,in=0] (0,1.25) to [out=180,in=90] (-1.25,0);
	\draw[very thick, red] (1.25,0) to [out=270,in=0] (0,-1.25) to [out=180,in=270] (-1.25,0);	
	\draw[blue,dashed] (1.25,0) to [out=225,in=315] (-1.25,0);
	\draw[dashed,blue] (1.25,0) to [out=135,in=45] (-1.25,0);
	\draw[very thick, directed=.55] (2,2) to (-2,2);
	\node[red,opacity=1] at (1.5,1.5) {\fns$k$};
	\node[blue,opacity=1] at (.5,-.75) {\fns$k{-}1$};
	\node[blue,opacity=1] at (-.5,.625) {\fns$1$};
	\node[opacity=1] at (-.25,-.375) {$\bullet^{s_{\tilde{\mu}}}$}; 
	\node[opacity=.75] at (.75,.5) {$\bullet^{\mu_k}$};
\end{tikzpicture}
};
\endxy
\]
expresses such a decorated sheet in terms of a blister with a dotted 1-labeled sheet and 
a $(k-1)$-labeled sheet decorated by a Schur polynomial, either corresponding to the partition
$\mu'=(\mu_2,\ldots,\mu_k)$ or $\tilde{\mu} = (\mu_1+1,\ldots,\mu_{k-1}+1)$ 
depending on whether the $1$-labeled sheet is in front or back, respectively.
Repeated application expresses a sheet decorated by $s_\mu$
as a sheet with an ``iterated blister'' containing dotted 1-labeled sheets,
where the $i^{th}$ blister sheet from the front carries $\mu_i+k-i$ dots. 
The second blister relation:
\[
\xy
(0,0)*{
\begin{tikzpicture} [scale=.6,fill opacity=0.2]
	\path[fill=blue] (1.25,0) to [out=90,in=0] (0,1.25) to [out=180,in=90] (-1.25,0) to [out=270,in=180] (0,-1.25) to [out=0,in=270] (1.25,0);
	\path[fill=blue] (1.25,0) to [out=90,in=0] (0,1.25) to [out=180,in=90] (-1.25,0) to [out=270,in=180] (0,-1.25) to [out=0,in=270] (1.25,0);
	\path[fill=red] (2,0) to (1.25,0) to [out=90,in=0] (0,1.25) to [out=180,in=90] (-1.25,0) to (-2,0) to (-2,2) to (2,2);
	\path[fill=red] (2,0) to (1.25,0) to [out=270,in=0] (0,-1.25) to [out=180,in=270] (-1.25,0) to (-2,0) to (-2,-2) to (2,-2);	
	\draw[very thick, directed=.55] (2,-2) to (-2,-2);
	\draw[very thick] (2,-2) to (2,2);
	\draw[very thick] (-2,-2) to (-2,2);
	\draw[very thick, red, directed=.5] (1.25,0) to [out=90,in=0] (0,1.25) to [out=180,in=90] (-1.25,0);
	\draw[very thick, red] (1.25,0) to [out=270,in=0] (0,-1.25) to [out=180,in=270] (-1.25,0);	
	\draw[blue,dashed] (1.25,0) to [out=225,in=315] (-1.25,0);
	\draw[dashed,blue] (1.25,0) to [out=135,in=45] (-1.25,0);
	\draw[very thick, directed=.55] (2,2) to (-2,2);
	\node[red,opacity=1] at (1.375,1.625) {\fns$k{+}l$};
	\node[blue,opacity=1] at (.25,-.75) {\fns$k$};
	\node[blue,opacity=1] at (-.25,.75) {\fns$l$};
	\node[opacity=1] at (-.15,-.25) {$\bullet^f$};
	\node[opacity=.75] at (.5,.625) {$\bullet^g$};
\end{tikzpicture}
};
\endxy
\; = (-1)^{kl} \;
\xy
(0,0)*{
\begin{tikzpicture} [scale=.6,fill opacity=0.2]
	\path[fill=blue] (1.25,0) to [out=90,in=0] (0,1.25) to [out=180,in=90] (-1.25,0) to [out=270,in=180] (0,-1.25) to [out=0,in=270] (1.25,0);
	\path[fill=blue] (1.25,0) to [out=90,in=0] (0,1.25) to [out=180,in=90] (-1.25,0) to [out=270,in=180] (0,-1.25) to [out=0,in=270] (1.25,0);
	\path[fill=red] (2,0) to (1.25,0) to [out=90,in=0] (0,1.25) to [out=180,in=90] (-1.25,0) to (-2,0) to (-2,2) to (2,2);
	\path[fill=red] (2,0) to (1.25,0) to [out=270,in=0] (0,-1.25) to [out=180,in=270] (-1.25,0) to (-2,0) to (-2,-2) to (2,-2);	
	\draw[very thick, directed=.55] (2,-2) to (-2,-2);
	\draw[very thick] (2,-2) to (2,2);
	\draw[very thick] (-2,-2) to (-2,2);
	\draw[very thick, red, directed=.5] (1.25,0) to [out=90,in=0] (0,1.25) to [out=180,in=90] (-1.25,0);
	\draw[very thick, red] (1.25,0) to [out=270,in=0] (0,-1.25) to [out=180,in=270] (-1.25,0);	
	\draw[blue,dashed] (1.25,0) to [out=225,in=315] (-1.25,0);
	\draw[dashed,blue] (1.25,0) to [out=135,in=45] (-1.25,0);
	\draw[very thick, directed=.55] (2,2) to (-2,2);
	\node[red,opacity=1] at (1.375,1.625) {\fns$k{+}l$};
	\node[blue,opacity=1] at (.25,-.75) {\fns$l$};
	\node[blue,opacity=1] at (-.25,.75) {\fns$k$};
	\node[opacity=1] at (-.15,-.25) {$\bullet^g$};
	\node[opacity=.75] at (.5,.625) {$\bullet^f$};
\end{tikzpicture}
};
\endxy
\]
allows us to identify such a foam with the basis vector 
$\sum_{\sigma \in \mathfrak{S}_k} (-1)^{\mathrm{sgn}(\sigma)} \sigma(t^{\mu_1+k-1} \otimes t^{\mu_2+k-2} \otimes \cdots \otimes t^{\mu_k})$ in $\bV^k \C[t]/t^n$.
Using this identification, it immediately follows that the map 
$\bV^{k_i} \C[t]/t^n \otimes \C[t]/t^n \rightarrow \bV^{k_i + 1} \C[t]/t^n$ given by 
\[
\xy
(0,0)*{
\begin{tikzpicture} [scale=.5,fill opacity=0.2]
\path[fill=red] (2,-2) to (-2,-2) to (-2,2) to (2,2);
\path[fill=blue] (2.5,-1) to (-2.5,-1) to (-2.5,3) to (2.5,3);
\draw[very thick, directed=.5] (2.5,-1) to (-2.5,-1);
\draw[very thick] (2.5,-1) to (2.5,3);
\draw[very thick] (-2.5,-1) to (-2.5,3);
\draw[very thick, directed=.5] (2.5,3) to (-2.5,3);
\node[opacity=1] at (0,2.5) {$\bullet^r$};
\draw[very thick, directed=.5] (2,-2) to (-2,-2);
\draw[very thick] (2,-2) to (2,2);
\draw[very thick] (-2,-2) to (-2,2);
\draw[very thick, directed=.5] (2,2) to (-2,2);
\node[red, opacity=1] at (1.5,-1.625) {\fns$k_i$};
\node[blue, opacity=1] at (1.875,2.625) {\fns$1$};
\node[opacity=1] at (0,-1.5) {$\bullet_{s_\mu}$};
\end{tikzpicture}
}
\endxy
\; \longmapsto \;
\xy
(0,0)*{
\begin{tikzpicture} [scale=.6,fill opacity=0.2]
	\path[fill=blue] (1.25,0) to [out=90,in=0] (0,1.25) to [out=180,in=90] (-1.25,0) to [out=270,in=180] (0,-1.25) to [out=0,in=270] (1.25,0);
	\path[fill=blue] (1.25,0) to [out=90,in=0] (0,1.25) to [out=180,in=90] (-1.25,0) to [out=270,in=180] (0,-1.25) to [out=0,in=270] (1.25,0);
	\path[fill=red] (2,0) to (1.25,0) to [out=90,in=0] (0,1.25) to [out=180,in=90] (-1.25,0) to (-2,0) to (-2,2) to (2,2);
	\path[fill=red] (2,0) to (1.25,0) to [out=270,in=0] (0,-1.25) to [out=180,in=270] (-1.25,0) to (-2,0) to (-2,-2) to (2,-2);	
	\draw[very thick, directed=.55] (2,-2) to (-2,-2);
	\draw[very thick] (2,-2) to (2,2);
	\draw[very thick] (-2,-2) to (-2,2);
	\draw[very thick, red, directed=.5] (1.25,0) to [out=90,in=0] (0,1.25) to [out=180,in=90] (-1.25,0);
	\draw[very thick, red] (1.25,0) to [out=270,in=0] (0,-1.25) to [out=180,in=270] (-1.25,0);	
	\draw[blue,dashed] (1.25,0) to [out=225,in=315] (-1.25,0);
	\draw[dashed,blue] (1.25,0) to [out=135,in=45] (-1.25,0);
	\draw[very thick, directed=.55] (2,2) to (-2,2);
	\node[red,opacity=1] at (1.375,1.625) {\fns$k_i{+}1$};
	\node[blue,opacity=1] at (.5,-.75) {\fns$k_i$};
	\node[blue,opacity=1] at (-.5,.625) {\fns$1$};
	\node[opacity=1] at (-.25,-.375) {$\bullet^{s_\mu}$}; 
	\node[opacity=.75] at (.75,.5) {$\bullet^r$};
\end{tikzpicture}
};
\endxy
\]
is equal to $k_i+1$ times the canonical projection.
This determines the third map in equation \eqref{eq:slnComp} 
and verifies that it equals ${_{k_i}\pi_{1}} \otimes \id_{k_{i+1}-1}$, as desired.

We now turn to the first map in equation \eqref{eq:slnComp}, which is determined by the map 
$\bV^{k_{i+1}} \cal{V}_n \rightarrow \cal{V}_n \otimes \bV^{k_{i+1}-1} \cal{V}_n$
given by:
\[
\xy
(0,0)*{
\begin{tikzpicture} [scale=.6,fill opacity=0.2]
	\path[fill=blue] (2,2) to (2,-2) to (-2,-2) to (-2,2); 
	\draw[very thick, directed=.55] (2,-2) to (-2,-2);
	\draw[very thick] (2,-2) to (2,2);
	\draw[very thick] (-2,-2) to (-2,2);
	\draw[very thick, directed=.55] (2,2) to (-2,2);
	\node[blue,opacity=1] at (1.5,1.5) {\fns$k_{i+1}$};
	\node[opacity=1] at (0,0) {$\bullet^g$};
\end{tikzpicture}
};
\endxy
\mapsto
\xy
(0,0)*{
\begin{tikzpicture} [scale=.5,fill opacity=0.2]
\begin{scope}[shift={(0,-3)}]
\path[fill=red] (2,1) to (1.5,1) to [out=270,in=0] (0,-.25) to [out=180,in=270] (-1.5,1) 
	to (-2,1) to (-2,-2) to (2,-2) to (2,1);
\path[fill=red] (1.5,1) to [out=270,in=0] (0,-.25) to [out=180,in=270] (-1.5,1) to
	(-.5,2) to [out=270,in=180] (0,1.25) to [out=0,in=270] (.5,2);
\path[fill=red] (-2.5,2) to (-.5,2) to [out=270,in=180] (0,1.25)
	to [out=0,in=270] (.5,2) to (2.5,2) to (2.5,-1) to (-2.5,-1);
\path[fill=blue] (-.5,2) to [out=270,in=180] (0,1.25)
	to [out=0,in=270] (.5,2);
\draw[very thick, directed=.5] (2.5,-1) to (-2.5,-1);
\draw[very thick] (2.5,-1) to (2.5,2);
\draw[very thick] (-2.5,-1) to (-2.5,2);
\draw[very thick, red, rdirected=.5] (-.5,2) to [out=270,in=180] (0,1.25)
	to [out=0,in=270] (.5,2);
\draw[very thick, directed=.5] (2,-2) to (-2,-2);
\draw[very thick] (2,-2) to (2,1);
\draw[very thick] (-2,-2) to (-2,1);
\draw[dashed, red] (1.5,1) to [out=270,in=0] (0,-.25)
	to [out=180,in=270] (-1.5,1);
\node[red, opacity=1] at (1.125,-1.625) {\fns$1$};
\end{scope}
\path[fill=red] (2.5,2) to (-2.5,2) to (-2.5,-1) to (-.5,-1) to [out=90,in=180] (0,-.25)
	to [out=0,in=90] (.5,-1) to (2.5,-1);
\path[fill=blue] (-.5,-1) to [out=90,in=180] (0,-.25)
	to [out=0,in=90] (.5,-1);
\path[fill=red] (1.5,-2) to [out=90,in=0] (0,.5) to [out=180,in=90] (-1.5,-2) to
	(-.5,-1) to [out=90,in=180] (0,-.25) to [out=0,in=90] (.5,-1);
\path[fill=red] (2,-2) to (1.5,-2) to [out=90,in=0] (0,.5)
	to [out=180,in=90] (-1.5,-2) to (-2,-2) to (-2,1) to (2,1);
\draw[very thick, directed=.5] (2.5,2) to (-2.5,2);
\draw[very thick] (2.5,2) to (2.5,-1);
\draw[very thick] (-2.5,2) to (-2.5,-1);
\draw[dashed, red] (2.5,-1) to (.5,-1);
\draw[dashed, blue] (.5,-1) to (-.5,-1);
\draw[dashed, red] (-.5,-1) to (-2.5,-1);
\draw[very thick, red] (-.5,-1) to [out=90,in=180] (0,-.25)
	to [out=0,in=90] (.5,-1);
\node[opacity=1] at (0,-.9) {$\bullet^g$};
\draw[dashed, red] (1.5,-2) to (.5,-1);
\draw[dashed, red] (-.5,-1) to (-1.5,-2);
\draw[very thick, directed=.5] (2,1) to (-2,1);
\draw[very thick] (2,1) to (2,-2);
\draw[very thick] (-2,1) to (-2,-2);
\draw[dashed, red] (2,-2) to (1.5,-2);
\draw[dashed, red] (-1.5,-2) to (-2,-2);
\draw[dashed, red] (1.5,-2) to [out=90,in=0] (0,.5)
	to [out=180,in=90] (-1.5,-2);
\node[red, opacity=1] at (1.25,1.625) {\fns$k_{i+1}-1$};
\end{tikzpicture}
}
\endxy
\]
To compute this map, note that its domain is the lowest weight space for the $\slnn{2}$ representation given by 
considering the $t$-degree zero part of the action of $\U(\glnn{2}[t])^{\geq 0}$ 
on the center of objects of the 2-subcategory of
$\nFoam$ whose objects have only two entries that sum to $k_{i+1}$. 
The map itself is the action of the standard Chevalley generator $E \in \slnn{2}$. 
The corresponding action of the Chevalley generator $F \in \slnn{2}$ is a map in the opposite direction
$\cal{V}_n \otimes \bV^{k_{i+1}-1} \cal{V}_n \rightarrow \bV^{k_{i+1}} \cal{V}_n$ and is exactly as given above, 
hence is $k_{i+1}$ times the canonical projection.

The isotypic decomposition shows that
\[
F = \begin{pmatrix} F' & 0 \end{pmatrix}
\]
when restricted to this weight space, 
where $F'$ is an isomorphism from the subspace of non-lowest weight vectors in the $(2-k_{i+1})^{st}$ weight space to the $(-k_{i+1})^{st}$ 
weight space. We must have $-k_{i+1} \cdot \id = EF - FE = 0 - FE$, so it follows that 
\[
E = k_{i+1} \begin{pmatrix} (F')^{-1} \\ 0 \end{pmatrix}
\]
and, in particular, is completely determined by the action of $F$ on this representation. 
We have already seen that $F$ is $k_{i+1}$ times the canonical projection, hence it
agrees with $\widehat{\Upsilon}_{\cal{V}_n,t}^\wedge(x_{1,0}^- \1_{[1,k_{i+1}-1]})$.
It follows that $E$ agrees with $\widehat{\Upsilon}_{\cal{V}_n,t}^\wedge(x_{1,0}^+\1_{[0,k_{i+1}]})$ and therefore is the canonical inclusion, as desired.
\end{proof}

\begin{remark}\label{rem:KhRonthenose}
Given our conventions for the $\sln$ link polynomials in Section \ref{sec:Decat}, 
we must first negate the $q$-degree of $\cal{H}_{\cal{V}_n,t}^{\wedge}( \cal{L}_\beta) \cong \mathsf{KhR}_n(\cal{L})$ before decategorifying to obtain $P_n(\cal{L})$ on the nose. 
In other words, we precisely have $\chi(\mathsf{KhR}_n(\cal{L}) \big|_{q \leftrightarrow q^{-1}}) = P_n(\cal{L})$, 
where $\chi$ denotes taking the alternating sum of graded dimensions. 
We will use the notation $\mathsf{KhR}^{\vee}_n(\cal{L})$ to denote $\mathsf{KhR}_n(\cal{L})$ with its $q$-degree negated; 
this then gives $\chi(\mathsf{KhR}^{\vee}_n(\cal{L})) = P_n(\cal{L})$.
\end{remark}

\subsection{HOMFLYPT link homology}\label{sec:TriplyGraded}

Let $\HHH(\cal{L})$ denote the triply-graded categorification of the HOMFLYPT polynomial introduced in \cite{KhR2}.
In this section, we prove the following result, which is the second bullet-pointed claim in Theorem \ref{thm:LinkHomologies}.

\begin{theorem}\label{thm:HOMFLYPT}
Let $\beta$ be a balanced, colored braid 
and $\cal{V}_\infty = qa^{-1} \C[t,\theta]/\theta^2$,
where $\C[t,\theta]/\theta^2$ is a bi-graded superalgebra such that $t$ is an even variable with $\deg_{q,a}(t) = (2,0)$ and
$\theta$ is an odd variable with $\deg_{q,a}(\theta) = (0,2)$,
then $\cal{H}_{\cal{V}_\infty,t}^{\cal{S}}( \cal{L}_\beta) := a^{w_\beta} q^{W_\beta - w_\beta} \overline{\cal{H}}_{\cal{V}_\infty,t}^{\cal{S}}( \widehat{\beta})$
is isomorphic to the colored variant of Khovanov-Rozansky's triply-graded HOMFLYPT link homology $\HHH(\cal{L}_\beta)$, 
and hence is an invariant of the colored link $\cal{L}_\beta \subset S^3$.
\end{theorem}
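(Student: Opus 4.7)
The proof will follow the strategy of Theorem \ref{thm:sln}, with modifications to accommodate the triply-graded target and the superalgebra structure coming from Hochschild homology. First, I plan to establish a HOMFLYPT-analog of the key input from \cite{QR2}: namely, that colored triply-graded Khovanov-Rozansky homology $\HHH(\cal{L}_\beta)$ can be computed from the complex $\llbracket \widehat{\beta} \rrbracket_\circ$ by applying a specific functor $\vAFoam_\circ \to \gr\C\Vect^{\Z/2}$. The starting point is the bigraded Hochschild version of $\twoHOM_{\vFoam}(I_\mathbf{k}, \llbracket \beta \rrbracket)$, with the $a$-grading tracking Hochschild degree. After applying the ladder-sliding isomorphism (analogous to \eqref{eq:slideshift}, but now in $\vFoam$ rather than $\nFoam$) in tandem with the homotopy equivalences of Theorem \ref{thm:foamObjDec}, the computation reduces to evaluating a single functor on $\llbracket \widehat{\beta} \rrbracket_\circ$. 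The writhe-dependent rescaling $a^{w_\beta}q^{W_\beta-w_\beta}$ then enforces colored Markov II invariance, yielding an invariant of $\cal{L}_\beta \subset S^3$.

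Next, by the commutative diagram \eqref{eq:MainDiag}, both the HOMFLYPT functor obtained above and $\Upsilon_{\cal{V}_\infty,t}^{\cal{S}}$ arise from representations of the idempotented current algebra $\U(\glm[t])^{\geq 0}$ for all sufficiently large $m$, so, by eventual faithfulness of $\Phi_\infty$, it suffices to identify these two representations. On the $\glm$-weight space for $\mathbf{k} = [k_1,\ldots,k_m]$, the HOMFLYPT construction yields the tensor product of Hochschild homologies of the identity bimodules attached to each $k_i$-labeled circle. Via the Hochschild–Kostant–Rosenberg theorem applied to $\C[x_1,\ldots,x_{k_i}]^{\mathfrak{S}_{k_i}}$, this identifies canonically with $\Sym^{k_i}(\cal{V}_\infty)$: the even part encodes symmetric polynomials in the dot variables, while the odd factor $\theta$ encodes the HKR wedge generators, with grading shifts matching those in $\cal{V}_\infty = qa^{-1}\C[t,\theta]/\theta^2$. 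This identifies the weight spaces with $\widehat{\Upsilon}_{\cal{V}_\infty,t}^{\cal{S}}(\mathbf{k})$. To verify that the Chevalley generators $x_{i,r}^\pm\onek$ act consistently under both representations, I will factor the categorified quantum group action into (i) insertion of a $1$-labeled ribbon via a cap/cup foam, (ii) multiplication by a $t^r$ (or $t^r\theta$) decoration on that strand, and (iii) merging via the dual generator, closely paralleling the three-step factorization appearing in equation \eqref{eq:slnComp}. These steps match the factorization $\pi \circ (\id \otimes T^r \otimes \id) \circ \iota$ used to define $\Upsilon_{\cal{V}_\infty,t}^{\cal{S}}$ in Proposition \ref{prop:Functors}, with any sign discrepancies absorbed by rescaling on highest-weight vectors in the relevant Weyl modules, as in \cite{BHLW}.

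The main obstacle is expected to be the first step: rigorously establishing the HOMFLYPT-analog of the QR2 identification in a form compatible with the $\vAFoam_\circ$-simplification. In particular, one must track how the odd Hochschild generator $\theta$ interacts with the foam cap/cup relations and verify compatibility with the writhe-dependent normalization $a^{w_\beta}q^{W_\beta - w_\beta}$. Once this foundational identification is in place and the weight-space matching via HKR is secured, the verification that $x_{i,r}^\pm$ act correctly proceeds essentially formally by the template established for $\sln$ homology, with super signs handled uniformly by the super convention on $\Sym$ of a $\Z/2$-graded vector space.
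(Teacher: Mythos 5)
Your proposal is essentially the right strategy and matches the paper's proof in its computational core (HKR identification of weight spaces, three-step factorization of $\mathsf{F}_r$ and $\mathsf{E}_r$, sign absorption via Weyl module rescaling), but the framing of the starting point needs repair. There is no ``bigraded Hochschild version of $\twoHOM_{\vFoam}(I_\mathbf{k}, \llbracket\beta\rrbracket)$'': the foam 2-category carries no $a$-grading and no Hochschild structure. What the paper actually does is route through the monoidal 2-functor $\Psi_{BS}\colon \vFoam \to \mathbf{BSBim}$ to singular Bott--Samelson/Soergel bimodules, recall Khovanov's result that $\HHH(\cal{L}_\beta)$ is the homology of $\HH_\bullet(\Psi_{BS}(\llbracket\beta\rrbracket))$ (up to the $a^{w_\beta-i_\beta}q^{i_\beta+W_\beta-w_\beta}$ shift), and then cite the fact that Hochschild homology descends from $\End_{\mathbf{BSBim}}(\mathbf{k})$ to the horizontal trace $\hTr(\mathbf{BSBim})$. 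This is the replacement for the ladder-slide isomorphism \eqref{eq:slideshift}: in the HOMFLYPT setting the slide-around-the-annulus step is precisely the trace property of $\HH_\bullet$, not an analogue of the $\twoHOM$-shift trick used for $\sln$. With that correction, Theorem~\ref{thm:foamObjDec} then reduces the comparison to showing that $\HH_\bullet \circ \hTr(\Psi_{BS})$, restricted to $\vAFoam_\circ$, agrees with $\Upsilon_{\cal{V}_\infty,t}^{\cal{S}}$, which is exactly what you propose to check.

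One more place to be careful: the merging foam $\mathsf{E}_r$ cannot be computed just by ``matching the factorization $\pi\circ(\id\otimes T^r\otimes\id)\circ\iota$'' — the direct lift to Koszul resolutions gives an expression involving complete homogeneous symmetric functions that isn't obviously a projection. The paper resolves this with the $\slnn{2}$ trick from the $\sln$ case: once the split foam $\mathsf{F}_0$ is identified as the inclusion, the relation $EF-FE=-l\cdot\id$ on the lowest $\slnn{2}$-weight space forces the merge to be $l$ times the canonical projection. You should include that step explicitly rather than appealing to the formal parallel with Proposition~\ref{prop:Functors}.
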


Recall that work of Khovanov \cite{KhHHH} describes $\HHH(\cal{L}_\beta)$ in terms of the Hochschild homology of Soergel bimodules. 
The latter are bimodules over a polynomial ring introduced by Soergel \cite{Soergel1,Soergel2} in his study of category $\cal{O}$.
We quickly summarize this construction: in \cite{RouBraid}, Rouquier assigns a complex of Soergel bimodules 
to each braid group generator, and shows that the tensor product of such complexes corresponding to a product of generators gives 
an invariant of the resulting braid $\beta$. 
Khovanov shows that the HOMFLYPT link homology $\HHH(\cal{L}_\beta)$ can be obtained from the complex of Soergel bimodules assigned to $\beta$, 
by first taking the Hochschild homology of the terms in the complex to obtain a complex of bi-graded vector spaces, and then taking homology. 
Moreover, this description extends to colored braids and links \cite{WebWil,Cautis2} if we work 
in the more-general context of singular Soergel bimodules \cite{GeoWil}.

For our considerations, the details of the above construction are most easily understood in terms of 2-representations of 
categorified quantum groups and foam 2-categories. 
Indeed, as explained in \cite{Web5}, 
the Khovanov-Lauda 2-representation of $\UU(\slm)$ built from the equivariant cohomology 
of partial flag varieties \cite{KL3,Lau2}
specifies a 2-representation of $\UU(\glm)$ on the 2-category of singular Soergel bimodules. 
Recall that the latter is the idempotent completion of the 2-category $\mathbf{BSBim}$ of Bott-Samelson bimodules, defined as follows.

Objects in $\mathbf{BSBim}$ are as in $\vFoam$, that is, sequences $\mathbf{k} = [k_1,\ldots,k_m]$ with $k_i > 0$. 
The $\Hom$-categories in $\mathbf{BSBim}$ are defined as follows. 
Given such a sequence $\mathbf{k}$, set $N = \sum k_i$ and consider the subgroup 
$\mathfrak{S}_{\mathbf{k}} : = \mathfrak{S}_{k_1} \times \cdots \times \mathfrak{S}_{k_m}$ of the symmetric group $\mathfrak{S}_N$.
Let $R^\mathbf{k} := \C[t_1,\ldots,t_N]^{\mathfrak{S}_{\mathbf{k}}} \subseteq \C[t_1,\ldots,t_N] =: R$, and note that if 
$\mathfrak{S}_{\mathbf{k}} \leq \mathfrak{S}_{\mathbf{p}}$ then $R^\mathbf{p} \subseteq R^\mathbf{k}$, so the latter is a module over the former.
We define $\Hom_{\mathbf{BSBim}}(\mathbf{k},\mathbf{k}')$ to be the additive category generated by 
$(R^{\mathbf{k}'},R^\mathbf{k})$-bimodules 
of the form
\[
q^d R^{\mathbf{k}'}\otimes_{R^{\mathbf{p}'}} \cdots \otimes_{R^\mathbf{p}} R^\mathbf{k}
\]
when the sum of the entries in $\mathbf{k}$ and $\mathbf{k}'$ agree (and to be trivial otherwise).
Here, as before, the power of $q$ denotes a shift in degree, and $\deg_q(t_i)=2$.
As in the case of $\vFoam$, this 2-category admits a tensor product\footnote{The ``other'' tensor product, 
over the rings $R^\mathbf{k}$, is the horizontal composition in $\mathbf{BSBim}$.},
given on objects by concatenating sequences, and on $1$- and $2$-morphisms by taking the tensor product over $\C$.

The results in \cite{Web5} further imply that
the Khovanov-Lauda 2-functor $\UU(\glm) \xrightarrow{\Phi_{BS}} \mathbf{BSBim}$ factors through $\vFoam$, 
giving a monoidal 2-functor $\vFoam \xrightarrow{\Psi_{BS}} \mathbf{BSBim}$ that we now describe. 
It is the identity on objects, and is given on generating webs as follows:
\[
\xy
(0,0)*{
\begin{tikzpicture}[scale=.5,rotate=-45]
	\draw [very thick, ->] (1.25,0) to (-1.25,0);
	\node at (1.5,0) {\tiny $k$};
	\node at (-1.5,0) {\tiny $k$};
\end{tikzpicture}
};
\endxy 
\mapsto R^{[k]} \;\; , \;\;
\xy
(0,0)*{
\begin{tikzpicture}[scale=.5,rotate=-45]
	\draw [very thick, directed=.55] (2,0) to (.75,0);
	\draw [very thick,directed=.55] (.75,0) to [out=120,in=0] (-.5,.75);
	\draw [very thick,directed=.55] (.75,0) to [out=240,in=0] (-.5,-.75);
	\node at (2.25,0) {\tiny $k{+}l$};
	\node at (-.75,.75) {\tiny $l$};
	\node at (-.75,-.75) {\tiny $k$};
\end{tikzpicture}
};
\endxy \mapsto q^{-kl} R^{[k,l]}  \otimes_{R^{[k+l]}} R^{[k+l]}
\;\; , \;\;
\xy
(0,0)*{
\begin{tikzpicture}[scale=.5,rotate=-45]
	\draw [very thick, rdirected=.55] (-2,0) to (-.75,0);
	\draw [very thick,rdirected=.55] (-.75,0) to [out=60,in=180] (.5,.75);
	\draw [very thick,rdirected=.55] (-.75,0) to [out=300,in=180] (.5,-.75);
	\node at (-2.25,0) {\tiny $k{+}l$};
	\node at (.75,.75) {\tiny $l$};
	\node at (.75,-.75) {\tiny $k$};
\end{tikzpicture}
};
\endxy \mapsto R^{[k+l]} \otimes_{R^{[k+l]}} R^{[k,l]}
\]
Note that in the exponent $[k]$ denotes a sequence with only one entry (and not a quantum integer), 
so $R^{[k]}= \C[t_1,\ldots,t_k]^{\mathfrak{S}_k}$ is symmetric polynomials in $k$-variables, 
viewed as the diagonal $(R^{[k]},R^{[k]})$-bimodule. 
(Here we've rotated our webs by $45^\circ$ so they point diagonally, instead of leftward, for the sake of space and aesthetics.) 

The images of foams under $\Psi_{BS}$ can be deduced by translating the formulae in \cite{KL3} into our language. 
We will only explicitly need the formulae for the foams that are the images under $\Phi_{\infty}$ of cap and cup morphisms in $\UU(\glm)$.
Up to factors of $\pm1$ (which we won't specifically need), they are given as the following morphisms of $(R^{[k,l]},R^{[k,l]})$-bimodules:
\[
\begin{aligned}
\xy
(0,0)*{
\begin{tikzpicture} [scale=.5,fill opacity=0.2]
\path[fill=red] (2,-2) to (-2,-2) to (-2,1) to (2,1);
\path[fill=blue] (1.5,2) to [out=270,in=0] (0,-.5) to [out=180,in=270] (-1.5,2) to
	(-.5,1) to [out=270,in=180] (0,.25) to [out=0,in=270] (.5,1);
\path[fill=red] (2.5,-1) to (-2.5,-1) to (-2.5,2) to (2.5,2);
\draw[very thick, directed=.5] (2.5,-1) to (-2.5,-1);
\draw[very thick] (2.5,-1) to (2.5,2);
\draw[very thick] (-2.5,-1) to (-2.5,2);
\draw[very thick, directed=.5] (2.5,2) to (-2.5,2);
\draw[very thick, red, rdirected=.65] (1.5,2) to [out=270,in=0] (0,-.5)
	to [out=180,in=270] (-1.5,2);
\node[red, opacity=1] at (2.25,1.625) {\fns$l$};
\draw[very thick, directed=.5] (2,-2) to (-2,-2);
\draw[very thick] (2,-2) to (2,1);
\draw[very thick] (-2,-2) to (-2,1);
\draw[very thick, directed=.5] (2,1) to (-2,1);
\draw[very thick, red, rdirected=.75] (-.5,1) to [out=270,in=180] (0,.25)
	to [out=0,in=270] (.5,1);
\node[red, opacity=1] at (1.75,-1.625) {\fns$k$};
\draw[very thick, directed=.6] (1.5,2) to (.5,1);
\draw[very thick, directed=.6] (-.5,1) to (-1.5,2);
\end{tikzpicture}};
\endxy
& \mapsto
\left\{
\begin{aligned}
R^{[k,l]} \longrightarrow & q^{1-k-l} R^{[k,1,l-1]} \otimes_{R^{[k+1,l-1]}} R^{[k,1,l-1]} \\
1 \mapsto & \sum_{j=0}^k (-1)^{k-j}  t_{k+1}^{j} \otimes e_{k-j}(t_1,\ldots,t_k)
\end{aligned}
\right. \\
\xy
(0,0)*{
\begin{tikzpicture} [scale=.5,fill opacity=0.2]
\path[fill=red] (2,-2) to (-2,-2) to (-2,1) to (2,1);
\path[fill=blue] (1.5,1) to [out=270,in=0] (0,-.25) to [out=180,in=270] (-1.5,1) to
	(-.5,2) to [out=270,in=180] (0,1.25) to [out=0,in=270] (.5,2);
\path[fill=red] (2.5,-1) to (-2.5,-1) to (-2.5,2) to (2.5,2);
\draw[very thick, directed=.5] (2.5,-1) to (-2.5,-1);
\draw[very thick] (2.5,-1) to (2.5,2);
\draw[very thick] (-2.5,-1) to (-2.5,2);
\draw[very thick, directed=.5] (2.5,2) to (-2.5,2);
\draw[very thick, red, rdirected=.5] (-.5,2) to [out=270,in=180] (0,1.25)
	to [out=0,in=270] (.5,2);
\node[red, opacity=1] at (1.875,1.625) {\fns$l$};
\draw[very thick, directed=.5] (2,-2) to (-2,-2);
\draw[very thick] (2,-2) to (2,1);
\draw[very thick] (-2,-2) to (-2,1);
\draw[very thick, directed=.5] (2,1) to (-2,1);
\draw[very thick, red, rdirected=.5] (1.5,1) to [out=270,in=0] (0,-.25)
	to [out=180,in=270] (-1.5,1);
\node[red, opacity=1] at (1.5,-1.625) {\fns$k$};
\draw[very thick, directed=.6] (1.5,1) to (.5,2);
\draw[very thick, directed=.6] (-.5,2) to (-1.5,1);
\end{tikzpicture}
}
\endxy
& \mapsto
\left\{
\begin{aligned}
R^{[k,l]} \longrightarrow & q^{1-k-l} R^{[k-1,1,l]} \otimes_{R^{[k-1,l+1]}} R^{[k-1,1,l]} \\
1 \mapsto & \sum_{j=0}^l (-1)^{l-j}  t_{k}^{j} \otimes e_{l-j}(t_{k+1},\ldots,t_{k+l})
\end{aligned}
\right.
\end{aligned}
\]
\[
\begin{aligned}
\xy
(0,0)*{
\begin{tikzpicture} [scale=.5,fill opacity=0.2]
\path[fill=red] (2.5,2) to (-2.5,2) to (-2.5,-1) to (2.5,-1);
\path[fill=blue] (1.5,-1) to [out=90,in=0] (0,.25) to [out=180,in=90] (-1.5,-1) to
	(-.5,-2) to [out=90,in=180] (0,-1.25) to [out=0,in=90] (.5,-2);
\path[fill=red] (2,1) to (-2,1) to (-2,-2) to (2,-2);
\draw[very thick, directed=.5] (2.5,2) to (-2.5,2);
\draw[very thick] (2.5,2) to (2.5,-1);
\draw[very thick] (-2.5,2) to (-2.5,-1);
\draw[very thick, directed=.5] (2.5,-1) to (-2.5,-1);
\draw[very thick, red, directed=.5] (1.5,-1) to [out=90,in=0] (0,.25)
	to [out=180,in=90] (-1.5,-1);
\node[red, opacity=1] at (1.875,1.625) {\fns$l$};
\draw[very thick, directed=.5] (1.5,-1) to (.5,-2);
\draw[very thick, directed=.5] (-.5,-2) to (-1.5,-1);
\draw[very thick, directed=.5] (2,1) to (-2,1);
\draw[very thick] (2,1) to (2,-2);
\draw[very thick] (-2,1) to (-2,-2);
\draw[very thick, directed=.5] (2,-2) to (-2,-2);
\draw[very thick, red, directed=.5] (-.5,-2) to [out=90,in=180] (0,-1.25)
	to [out=0,in=90] (.5,-2);
\node[red, opacity=1] at (1.5,.625) {\fns$k$};
\end{tikzpicture}};
\endxy 
& \mapsto
\left\{
\begin{aligned}
q^{1-k-l} R^{[k,1,l-1]} \otimes_{R^{[k+1,l-1]}} R^{[k,1,l-1]} & \longrightarrow R^{[k,l]} \\
t_{k+1}^r \otimes t_{k+1}^s & \mapsto  h_{r+s+1-l}(t_{k+1},\ldots,t_{k+l})
\end{aligned}
\right.
\\
\xy
(0,0)*{
\begin{tikzpicture} [scale=.5,fill opacity=0.2]
\path[fill=red] (2.5,2) to (-2.5,2) to (-2.5,-1) to (2.5,-1);
\path[fill=blue] (1.5,-2) to [out=90,in=0] (0,.5) to [out=180,in=90] (-1.5,-2) to
	(-.5,-1) to [out=90,in=180] (0,-.25) to [out=0,in=90] (.5,-1);
\path[fill=red] (2,1) to (-2,1) to (-2,-2) to (2,-2);
\draw[very thick, directed=.5] (2.5,2) to (-2.5,2);
\draw[very thick] (2.5,2) to (2.5,-1);
\draw[very thick] (-2.5,2) to (-2.5,-1);
\draw[very thick, directed=.5] (2.5,-1) to (-2.5,-1);
\draw[very thick, red, directed=.75] (-.5,-1) to [out=90,in=180] (0,-.25)
	to [out=0,in=90] (.5,-1);
\node[red, opacity=1] at (2.25,1.625) {\fns$l$};
\draw[very thick, directed=.5] (1.5,-2) to (.5,-1);
\draw[very thick, directed=.5] (-.5,-1) to (-1.5,-2);
\draw[very thick, directed=.5] (2,1) to (-2,1);
\draw[very thick] (2,1) to (2,-2);
\draw[very thick] (-2,1) to (-2,-2);
\draw[very thick, directed=.5] (2,-2) to (-2,-2);
\draw[very thick, red, directed=.65] (1.5,-2) to [out=90,in=0] (0,.5)
	to [out=180,in=90] (-1.5,-2);
\node[red, opacity=1] at (1.75,-1.625) {\fns$k$};
\end{tikzpicture}};
\endxy 
& \mapsto
\left\{
\begin{aligned}
q^{1-k-l} R^{[k-1,1,l]} \otimes_{R^{[k-1,l+1]}} R^{[k-1,1,l]} & \longrightarrow  R^{[k,l]} \\
t_{k}^r \otimes t_{k}^s & \mapsto  h_{r+s+1-k}(t_1,\ldots,t_k)
\end{aligned}
\right.
\end{aligned}
\]

We can now succinctly describe Khovanov's construction of HOMFLYPT link homology. 
Given a (balanced, colored) braid $\beta$, we consider the complex $\llbracket \beta \rrbracket \in \Hot^b(\vFoam)$. 
Applying $\Psi_{BS}$ produces a complex $\Psi_{BS}(\llbracket \beta \rrbracket)$ of (singular) Soergel bimodules. 
Taking Hochschild homology\footnote{Since all of our rings $R^{\mathbf{k}}$ are isomorphic to polynomial rings, 
we could equivalently work with Hochschild cohomology.} 
produces a complex of bi-graded vector spaces. 
After shifting\footnote{This shift is required for invariance under the second Markov move. 
Here, $i_\beta$ is the sum of the labels of the strands in $\beta$, 
and the shifts involving it precisely correspond to the shift by $qa^{-1}$ in defining $\cal{V}_\infty$ in Theorem \ref{thm:HOMFLYPT}.}
by $q^{i_\beta + W_\beta - w_\beta} a^{w_\beta-i_\beta}$,
the homology of this complex is an invariant of the corresponding link $\cal{L}_\beta \subset S^3$, 
and this is the HOMFLYPT link homology $\HHH(\cal{L}_\beta)$.

We now show that this invariant is the same as the one constructed in Theorem \ref{thm:LinkHomologies} 
using $\Upsilon_{\cal{V}_\infty,t}^{\cal{S}}$ for $\cal{V}_\infty =qa^{-1} \C[t,\theta]/\theta^2$. 
To begin, we recall the following, which \eg appears as a special case of the results in \cite{BPW}; see also \cite{KhHHH,RouBraid}.
\begin{proposition}
The Hochschild homology functors $\HH_\bullet: \End_{\mathbf{BSBim}}(\mathbf{k}) \to \C\Vect$ factor through the horizontal trace, 
\ie they induce a functor $\HH_\bullet: \hTr(\mathbf{BSBim}) \to \gr\C\Vect$.
\end{proposition}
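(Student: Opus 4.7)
The plan is to define $\HH_\bullet$ on objects and morphisms of $\hTr(\mathbf{BSBim})$ and verify that the equivalence relation in the horizontal trace is respected. On objects, an $X \colon \mathbf{k} \to \mathbf{k}$ in $\hTr(\mathbf{BSBim})$ is nothing but an $R^\mathbf{k}$-bimodule, and I would send it to $\HH_\bullet(R^\mathbf{k}, X)$. This is unambiguous since the objects of $\hTr(\mathbf{BSBim})$ are by definition 1-endomorphisms in $\mathbf{BSBim}$.

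To define the functor on morphisms, I would rely on two essential ingredients. First, the cyclic invariance of Hochschild homology: for an $(R,S)$-bimodule $A$ and an $(S,R)$-bimodule $B$, there is a canonical isomorphism $\HH_\bullet(R, A \otimes_S B) \cong \HH_\bullet(S, B \otimes_R A)$. Second, every 1-morphism $M \colon \mathbf{k} \to \mathbf{k}'$ in $\mathbf{BSBim}$ admits a biadjoint $M^* \colon \mathbf{k}' \to \mathbf{k}$, since the generating bimodules $R^\mathbf{p} \otimes_{R^\mathbf{q}} R^{\mathbf{p}'}$ carry a natural Frobenius-type structure. Given a representative $(M, \alpha \colon M \otimes_{R^\mathbf{k}} X \to Y \otimes_{R^{\mathbf{k}'}} M)$ of a morphism in $\hTr$, I would use the $(M, M^*)$ biadjunction to convert $\alpha$ into an $R^\mathbf{k}$-bimodule map $\alpha' \colon X \to M^* \otimes_{R^{\mathbf{k}'}} Y \otimes_{R^{\mathbf{k}'}} M$, apply $\HH_\bullet(R^\mathbf{k}, -)$, rewrite the target via cyclic invariance as $\HH_\bullet(R^{\mathbf{k}'}, M \otimes_{R^\mathbf{k}} M^* \otimes_{R^{\mathbf{k}'}} Y)$, and finally post-compose with the counit $M \otimes_{R^\mathbf{k}} M^* \to R^{\mathbf{k}'}$ to land in $\HH_\bullet(R^{\mathbf{k}'}, Y)$.

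The hard part will be checking that this construction descends to equivalence classes in $\hTr$: given $\tau \colon N \Rightarrow M$ and $\gamma \colon M \otimes_{R^\mathbf{k}} X \to Y \otimes_{R^{\mathbf{k}'}} N$, the two representatives $(M, (\id_Y \cdot \tau)\gamma)$ and $(N, \gamma(\tau \cdot \id_X))$ must induce the same map on Hochschild homology. This should follow from the naturality of the cyclic isomorphism and of the biadjunction unit/counit, combined with the unit-counit identities; functoriality under horizontal composition in $\hTr$ is analogous. Rather than checking these coherence diagrams by hand, the cleanest route is to recognize the proposition as a specialization of the general framework in \cite{BPW}, which identifies the horizontal trace of a 2-category equipped with biadjoints with the Hochschild homology of its endomorphism categories, and supplies the required compatibilities in axiomatic form.
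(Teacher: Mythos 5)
Your proposal arrives at the same place as the paper: the paper gives no proof, simply citing \cite{BPW} (and \cite{KhHHH,RouBraid}), which is exactly where your final paragraph concludes. Your direct sketch via the biadjunction unit/counit and cyclic invariance of $\HH_\bullet$ is the right outline of the construction underlying those references; the one subtlety worth flagging is that the biadjoints in $\mathbf{BSBim}$ hold only up to internal degree shift (the extensions $R^{\mathbf{q}} \subseteq R^{\mathbf{p}}$ are \emph{graded} Frobenius), a point absorbed by the graded target $\gr\C\Vect$ in the statement.
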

It follows that, up to the aforementioned shift, 
$\HHH(\cal{L}_\beta)$ is the homology of the complex obtained by applying the composition of functors:
\begin{equation}\label{eq:HHfunctor}
\Hot^b(\hTr(\vFoam)) \xrightarrow{\hTr(\Psi_{BS})} \Hot^b(\hTr(\mathbf{BSBim})) \xrightarrow{\HH_\bullet} \Hot^b(\gr\C\Vect)
\end{equation}
to the complex $\llbracket \widehat{\beta} \rrbracket$. We can now give the following.

\begin{proof}[Proof (of Theorem \ref{thm:HOMFLYPT}).]
In light of Theorem \ref{thm:foamObjDec}, it suffices to show that upon restricting 
equation \eqref{eq:HHfunctor} 
to $\vAFoam_\circ \subseteq \vAFoam \cong \hTr(\vFoam)$, 
the induced functor $\vAFoam_\circ \to \gr\C\Vect$ agrees with $\Upsilon_{\cal{V}_\infty,t}^\cal{S}$, 
up to isomorphism and grading shift.

To see this, we first compute $\HH_\bullet \circ \hTr(\Psi_{BS})$ on a $k$-labeled circle. 
By definition, this is the Hochschild homology of 
\[
R^{[k]} = \C[t_1,\ldots,t_k]^{\mathfrak{S}_k} = \C[e_1,\ldots,e_k]
\]
where $e_i$ denotes the $i^{th}$ elementary symmetric polynomial in the variables $t_1,\ldots,t_k$. 
The Hochschild homology of polynomial rings is well-known, see \eg \cite{Weibel}, and hence gives that 
\begin{equation}\label{eq:HOMFLYacircle}
\HH_\bullet(\C[e_1,\ldots,e_k]) \cong \C[e_1,\ldots,e_k] \otimes_\C \bV^\bullet(\C^k).
\end{equation}
We have $\deg_q(t_i)=2$ for all $i$, so $\deg_q(e_i) = 2i$.
In this presentation, the basis of $\C^k$ consists of vectors commonly denoted $\{de_i\}_{i=1}^k$
having Hochschild degree equal to one, and $q$-degree equal to $q^{2i-2}$. 
In our conventions, the $a$-degree equals twice the Hochschild degree, 
so $\deg_{q,a}(de_i) = (2i-2,2)$.
The computation
\begin{equation}\label{eq:FundThmSymDiff}
\Sym^k(\C[t,\theta]/\theta^2) \cong \left(\C[t_1,\ldots,t_k]\langle dt_1,\ldots, dt_k\rangle\right)^{\mathfrak{S}_k} 
\cong \C[e_1,\ldots,e_k]\langle de_1,\ldots,de_k\rangle
\end{equation}
which follows from the ``fundamental theorem of symmetric differential forms,''
shows that this agrees with the value of $\Upsilon_{\cal{V}_\infty,t}^\cal{S}$ on a $k$-labeled circle,
up to the required grading shift of $q^ka^{-k}$.

It remains to compute $\HH_\bullet \circ \hTr(\Psi_{BS})$ on the annular foams in equation \eqref{eq:FfoamEfoam}, 
and we only consider the first, as the second is completely analogous. 
As in Section \ref{sec:KhRsln}, 
it suffices to only check that this agrees with the image of these foams under $\Upsilon_{\cal{V}_\infty,t}^\cal{S}$ up to a sign. 
Moreover, we can factor each of the foams appearing in equation \eqref{eq:FfoamEfoam} into a composition of three annular foams, 
where the first foam splits off a $1$-labeled circle from one the circles, 
the second is the dotted cylinder endomorphism of the $1$-labeled circle, 
and the third merges the $1$-labeled circle with another (in each step, tensored with the identify maps on the un-involved circles).

To aid in this computation, we recall the computation of the Hochschild homology of polynomial rings via Koszul resolutions, 
and relate the resolution used to compute the value of $k$ concentric $1$-labeled circles
with one that computes 
equation \eqref{eq:HOMFLYacircle} above. 
For the former, we must compute the Hochschild homology of the polynomial ring $\C[t_1,\ldots,t_k]$.
In this case, the Koszul complex takes the form:
\begin{equation}\label{eq:Koszul}
\cal{P}_k := \C[x_1,\ldots,x_k] \otimes_\C \C[y_1,\ldots,y_k] \otimes_\C \bV^\bullet (\C^k)
\end{equation}
and, if we choose a basis $\{\zeta_1,\ldots,\zeta_k\}$ for $\C^k$, 
the differential is given by contraction with $\sum_{i=1}^k (y_i - x_i)\zeta_i^*$. 
Tensoring with $\C[t_1,\ldots,t_k]$, viewed as the diagonal $(\C[x_1,\ldots,x_k],\C[y_1,\ldots,y_k])$-bimodule, 
we obtain the complex $\C[t_1,\ldots,t_k] \otimes_\C \bV^\bullet (\C^k)$ with zero differential, 
which is hence the Hochschild homology of $\C[t_1,\ldots,t_k]$.
The symmetric group $\mathfrak{S}_k$ acts on the complex in \eqref{eq:Koszul}, 
and this action commutes with the differential. 
The terms in this complex are free 
$(\C[y_1,\ldots,y_k]^{\mathfrak{S}_k},\C[z_1,\ldots,z_k]^{\mathfrak{S}_k})$-bimodules, 
and taking $\mathfrak{S}_k$-invariants gives a free resolution $\cal{P}_k^{[k]}$ of $\C[t_1,\ldots,t_k]^{\mathfrak{S}_k}$ 
as a direct summand 
of \eqref{eq:Koszul}. 
Moreover, the map $\cal{P}_k \to \C[t_1,\ldots,t_k] \otimes_\C \bV^\bullet (\C^k)$ given by 
$x_i \mapsto t_i, y_i \mapsto t_i$ induces a surjective 
map\footnote{Here, and for the duration, the notation $A^e:= A \otimes_\C A$ denotes the enveloping algebra of a commutative $\C$-algebra $A$.
Recall that the Hochschild homology $\HH_\bullet(A,M)$ of an $(A,A)$-bimodule $M$ is computed by finding any projective resolution $P^\bullet \to M$, 
and taking the homology of $A \otimes_{A^e} P^\bullet$. We denote $\HH_\bullet(A,A) =: \HH_\bullet(A)$.} 
$R^{[k]} \otimes_{(R^{[k]})^e}\cal{P}_k^{[k]} \twoheadrightarrow \left( \C[t_1,\ldots,t_k] \otimes_\C \bV^\bullet (\C^k) \right)^{\mathfrak{S}_k}$,
and equation \eqref{eq:HOMFLYacircle} implies that this is a quasi-isomorphism 
(where we view the codomain as a complex with zero differential). 

We now compute the constituent pieces of the requisite foams. 
For the dotted cylinder endomorphism of a $1$-labeled circle, 
we must compute the map induced in Hochschild homology by the map
\[
R^{[1]} := \C[t] \xrightarrow{\multt} \C[t] =: R^{[1]}.
\]
The Koszul resolution of $\C[t]$ is given by $\C[x,y] \xrightarrow{x-y} \C[x,y]$, 
and the above map lifts to the map that is multiplication by $x$ on each term in the complex. 
After tensoring with $\C[t]$, this is precisely the map 
$\C[t,\theta]/\theta^2 \xrightarrow{\multt} \C[t,\theta]/\theta^2$, 
in agreement with $\Upsilon^\cal{S}_{\cal{V}_\infty,t}$.

Next, we consider the image under $\HH_\bullet \circ \hTr(\Psi_{BS})$ of the foam that splits a $k$-labeled circle into a
$(k-1)$- and $1$-labeled circle, \ie the foam:
\begin{equation}\label{eq:circlesplit}
\begin{tikzpicture}[anchorbase, fill opacity=0.2]
\path[fill=red] (.25,.5) to (.25,-.125) arc (180:360:1.25 and 0.25) to (2.75,.5) arc (360:180:1.25 and 0.25);
\path[fill=red] (.25,.5) to (.25,-.125) arc (180:0:1.25 and 0.25) to (2.75,.5) arc (0:180:1.25 and 0.25);
\path[fill=blue] (.75,1.25) to (.25,.5) arc (180:360:1.25 and 0.25) to (2.25,1.25) arc (360:180:.75 and 0.15);
\path[fill=blue] (.75,1.25) to (.25,.5) arc (180:0:1.25 and 0.25) to (2.25,1.25) arc (0:180:.75 and 0.15);
\path[fill=red] (-.25,1.25) to (.25,.5) arc (180:360:1.25 and 0.25) to (3.25,1.25) arc (360:180:1.75 and .3);
\path[fill=red] (-.25,1.25) to (.25,.5) arc (180:0:1.25 and 0.25) to (3.25,1.25) arc (0:180:1.75 and .3);
\draw[very thick, red] (2.75,.5) arc (0:180:1.25 and 0.25);
\draw[very thick] (.25,-.125) to (.25,.5);
\draw[very thick] (.25,.5) to (.75,1.25);
\draw[very thick] (.25,.5) to (-.25,1.25);
\draw[very thick] (2.75,-.125) to (2.75,.5);
\draw[very thick] (2.75,.5) to (2.25,1.25);
\draw[very thick] (2.75,.5) to (3.25,1.25);
\draw [very thick]  (1.5,-.125) ellipse (1.25 and .25);
\draw[very thick, red] (.25,.5) arc (180:360:1.25 and 0.25);
\draw[very thick] (1.5,1.25) ellipse (.75 and 0.15);
\draw [very thick]  (1.5,1.25) ellipse (1.75 and .3);
\node[red, opacity=1] at (.25, 1.25) {\scriptsize$k{-}1$};
\node[red, opacity=1] at (1.5, -.125) {\scriptsize$k$};
\node[blue, opacity=1] at (1.5, 1.25) {\scriptsize$1$};
\end{tikzpicture}
\end{equation}
This can be presented as the composition of three annular foams: the first is the annular closure of
the $l=1$ case of the second to last foam in equation \eqref{eq:foamgens}, 
the second is an isotopy sliding a trivalent vertex around the annulus, 
and the third is the annular closure of the $l=1$ case of the first foam on the second line 
in equation \eqref{eq:foamgens}.
After mapping to Soergel bimodules, 
the first is the map induced in Hochschild homology by the map of $(R^{[k]},R^{[k]})$-bimodules
\begin{equation}\label{eq:split1}
\begin{aligned}
R^{[k]} &\rightarrow R^{[k-1,1]} \\
1 &\mapsto 1.
\end{aligned}
\end{equation}
Note that $\cal{P}_k^{[k]}$ is a complex of projective, hence free, $(R^{[k]},R^{[k]})$-bimodules, 
and that $R^{[k-1,1]}$ is a free (left or right) $R^{[k]}$-module. 
It follows that $R^{[k-1,1]} \otimes_{R^{[k]}} \cal{P}_k^{[k]}$ is a projective resolution of 
$R^{[k-1,1]} \otimes_{R^{[k]}} R^{[k]} \cong R^{[k-1,1]}$, so we can lift the map in equation \eqref{eq:split1} 
to the map $\cal{P}_k^{[k]} \to R^{[k-1,1]} \otimes_{R^{[k]}} \cal{P}_k^{[k]}$ that sends $v \mapsto 1 \otimes v$.
The map on Hochschild homology is 
hence
induced by the map
\[
R^{[k]} \otimes_{(R^{[k]})^e} \cal{P}_k^{[k]}  \to R^{[k]} \otimes_{(R^{[k]})^e} \left( R^{[k-1,1]} \otimes_{R^{[k]}} \cal{P}_k^{[k]} \right) 
\]
that sends $1 \otimes v \mapsto 1 \otimes 1 \otimes v$. 
Next, the slide isotopy is the isomorphism 
\[
\HH_\bullet(R^{[k]}, R^{[k-1,1]}) \xrightarrow{\cong} \HH_\bullet(R^{[k-1,1]}, R^{[k-1,1]} \otimes_{R^{[k]}} R^{[k-1,1]}).
\]
To explicitly compute this, first note that $R^{[k-1,1]}\otimes_{R^{[k]}} \cal{P}_k^{[k]} \otimes_{R^{[k]}} R^{[k-1,1]}$ is a projective 
resolution of $R^{[k-1,1]} \otimes_{R^{[k]}} R^{[k-1,1]} \cong R^{[k-1,1]}\otimes_{R^{[k]}} R^{[k]} \otimes_{R^{[k]}} R^{[k-1,1]}$, 
as a $(R^{[k-1,1]},R^{[k-1,1]})$-bimodule. 
This follows since the terms of $\cal{P}_k^{[k]}$ are free $(R^{[k]},R^{[k]})$-bimodules, and
\[
R^{[k-1,1]} \otimes_{R^{[k]}} (R^{[k]} \otimes_\C R^{[k]}) \otimes_{R^{[k]}} R^{[k-1,1]} \cong R^{[k-1,1]} \otimes_\C R^{[k-1,1]}.
\]
Using this resolution, the slide map is induced by the map
\[
R^{[k]} \otimes_{(R^{[k]})^e} \left( R^{[k-1,1]} \otimes_{R^{[k]}} \cal{P}_k^{[k]} \right) \xrightarrow{\cong}
R^{[k-1,1]} \otimes_{(R^{[k-1,1]})^e} \left( R^{[k-1,1]}\otimes_{R^{[k]}} \cal{P}_k^{[k]} \otimes_{R^{[k]}} R^{[k-1,1]} \right)
\]
that sends $1 \otimes f \otimes v \mapsto 1 \otimes f \otimes v \otimes 1$.
Finally, the third map is the one induced on Hochschild homology by the multiplication map
\[
\begin{aligned}
R^{[k-1,1]} \otimes_{R^{[k]}} R^{[k-1,1]} &\rightarrow R^{[k-1,1]} \\
f \otimes g &\mapsto fg
\end{aligned}
\]
of $(R^{[k-1,1]},R^{[k-1,1]})$-bimodules. 
In order to lift this map to our projective resolution, it helps to note that, after using the isomorphism 
$R^{[k-1,1]} \otimes_{R^{[k]}} R^{[k-1,1]} \cong R^{[k-1,1]} \otimes_{R^{[k]}} R^{[k]} \otimes_{R^{[k]}} R^{[k-1,1]}$,
the above map is the same as the composition:
\[
R^{[k-1,1]} \otimes_{R^{[k]}} R^{[k]} \otimes_{R^{[k]}} R^{[k-1,1]}
\hookrightarrow R^{[k-1,1]} \otimes_{R^{[k]}} R^{[k-1,1]} \otimes_{R^{[k]}} R^{[k-1,1]} \rightarrow 
R^{[k-1,1]}
\]
where the second map is simply multiplication $f \otimes g \otimes h \mapsto fgh$.
We can lift this to the map
\[
R^{[k-1,1]}\otimes_{R^{[k]}} \cal{P}_k^{[k]} \otimes_{R^{[k]}} R^{[k-1,1]} \hookrightarrow 
R^{[k-1,1]}\otimes_{R^{[k]}} \cal{P}_k^{[k-1,1]} \otimes_{R^{[k]}} R^{[k-1,1]} \rightarrow
\cal{P}_k^{[k-1,1]}
\]
given by $f \otimes v \otimes g \mapsto fvg$. Our final map is the one induced by this on Hochschild homology.

Composing these three maps, 
we find that the foam in equation \eqref{eq:circlesplit} is sent to the map on homology induced by the map
\[
R^{[k]} \otimes_{(R^{[k]})^e} \cal{P}_k^{[k]} \to R^{[k-1,1]} \otimes_{(R^{[k-1,1]})^e} \cal{P}_k^{[k-1,1]}
\]
that sends $1 \otimes v \mapsto 1 \otimes v$, 
\ie on homology it is simply the inclusion 
\[
\left(\C[t_1,\ldots,t_k] \otimes_\C \bV^\bullet (\C^k)\right)^{\mathfrak{S}_k} 
\xhookrightarrow{\iota} \left(\C[t_1,\ldots,t_k] \otimes_\C \bV^\bullet (\C^k)\right)^{\mathfrak{S}_{a-1} \times \mathfrak{S}_1}
\]
as desired.

Finally, it suffices to compute the image under $\HH_\bullet \circ \hTr(\Psi_{BS})$ of the foam: 
\begin{equation}\label{eq:circlemerge}
\begin{tikzpicture}[anchorbase, fill opacity=0.2,yscale=-1]
\path[fill=red] (.25,.5) to (.25,-.125) arc (180:360:1.25 and 0.25) to (2.75,.5) arc (360:180:1.25 and 0.25);
\path[fill=red] (.25,.5) to (.25,-.125) arc (180:0:1.25 and 0.25) to (2.75,.5) arc (0:180:1.25 and 0.25);
\path[fill=red] (.75,1.25) to (.25,.5) arc (180:360:1.25 and 0.25) to (2.25,1.25) arc (360:180:.75 and 0.15);
\path[fill=red] (.75,1.25) to (.25,.5) arc (180:0:1.25 and 0.25) to (2.25,1.25) arc (0:180:.75 and 0.15);
\path[fill=blue] (-.25,1.25) to (.25,.5) arc (180:360:1.25 and 0.25) to (3.25,1.25) arc (360:180:1.75 and .3);
\path[fill=blue] (-.25,1.25) to (.25,.5) arc (180:0:1.25 and 0.25) to (3.25,1.25) arc (0:180:1.75 and .3);
\draw[very thick, red] (.25,.5) arc (180:360:1.25 and 0.25);
\draw[very thick] (.25,-.125) to (.25,.5);
\draw[very thick] (.25,.5) to (.75,1.25);
\draw[very thick] (.25,.5) to (-.25,1.25);
\draw[very thick] (2.75,-.125) to (2.75,.5);
\draw[very thick] (2.75,.5) to (2.25,1.25);
\draw[very thick] (2.75,.5) to (3.25,1.25);
\draw[very thick] (1.5,1.25) ellipse (.75 and 0.15);
\draw [very thick]  (1.5,1.25) ellipse (1.75 and .3);
\draw[very thick, red] (2.75,.5) arc (0:180:1.25 and 0.25);
\draw [very thick]  (1.5,-.125) ellipse (1.25 and .25);
\node[blue, opacity=1] at (.25, 1.25) {\scriptsize$1$};
\node[red, opacity=1] at (1.5, -.125) {\scriptsize$l$};
\node[red, opacity=1] at (1.5, 1.25) {\scriptsize$l-1$};
\end{tikzpicture}
\end{equation}
To do so, we use a similar argument as in Section \ref{sec:KhRsln}. 
Indeed, precomposing the map $\HH_\bullet \circ \hTr(\Psi_{BS})$ with the map $\U(\slnn{2}[t]) \xrightarrow{\vTr{\Phi_\infty}} \vAFoam$, 
we see that the image of the foam in equation \eqref{eq:circlemerge} is the action of the standard Chevalley generator $F \in \slnn{2}$ mapping from 
the second-lowest to the lowest weight 
space of an $\slnn{2}$ representation\footnote{Although this representation is infinite-dimensional, 
it is finite-dimensional in each $q$-degree and the $\slnn{2}$ action preserves this degree.}. 
It follows that on this weight space
we can write $F = \begin{pmatrix} F' & 0 \end{pmatrix}$ with $F'$ invertible, using the isotypic decomposition. 
On the lowest weight space, which in this case has weight $-l$, 
the Chevalley generator $E$ can similarly be written as $E = \begin{pmatrix} E' \\ 0 \end{pmatrix}$, and we have
\[
-l \cdot \id = EF - FE = 0 - \begin{pmatrix} F' & 0 \end{pmatrix} \begin{pmatrix} E' \\ 0 \end{pmatrix} = -F'E'
\]
thus $F' = l \cdot (E')^{-1}$.
Now, $E$ acts via the foam
\[
\begin{tikzpicture}[anchorbase, fill opacity=0.2]
\path[fill=red] (.25,.5) to (.25,-.125) arc (180:360:1.25 and 0.25) to (2.75,.5) arc (360:180:1.25 and 0.25);
\path[fill=red] (.25,.5) to (.25,-.125) arc (180:0:1.25 and 0.25) to (2.75,.5) arc (0:180:1.25 and 0.25);
\path[fill=red] (.75,1.25) to (.25,.5) arc (180:360:1.25 and 0.25) to (2.25,1.25) arc (360:180:.75 and 0.15);
\path[fill=red] (.75,1.25) to (.25,.5) arc (180:0:1.25 and 0.25) to (2.25,1.25) arc (0:180:.75 and 0.15);
\path[fill=blue] (-.25,1.25) to (.25,.5) arc (180:360:1.25 and 0.25) to (3.25,1.25) arc (360:180:1.75 and .3);
\path[fill=blue] (-.25,1.25) to (.25,.5) arc (180:0:1.25 and 0.25) to (3.25,1.25) arc (0:180:1.75 and .3);
\draw[very thick, red] (2.75,.5) arc (0:180:1.25 and 0.25);
\draw[very thick] (.25,-.125) to (.25,.5);
\draw[very thick] (.25,.5) to (.75,1.25);
\draw[very thick] (.25,.5) to (-.25,1.25);
\draw[very thick] (2.75,-.125) to (2.75,.5);
\draw[very thick] (2.75,.5) to (2.25,1.25);
\draw[very thick] (2.75,.5) to (3.25,1.25);
\draw [very thick]  (1.5,-.125) ellipse (1.25 and .25);
\draw[very thick, red] (.25,.5) arc (180:360:1.25 and 0.25);
\draw[very thick] (1.5,1.25) ellipse (.75 and 0.15);
\draw [very thick]  (1.5,1.25) ellipse (1.75 and .3);
\node[blue, opacity=1] at (.25, 1.25) {\scriptsize$1$};
\node[red, opacity=1] at (1.5, -.125) {\scriptsize$l$};
\node[red, opacity=1] at (1.5, 1.25) {\scriptsize$l{-}1$};
\end{tikzpicture}
\]
and a similar computation to the one used to compute the foam in equation \eqref{eq:circlesplit} 
shows that this is the inclusion 
\[
\left(\C[t_1,\ldots,t_l] \otimes_\C \bV^\bullet (\C^l)\right)^{\mathfrak{S}_l} 
\xhookrightarrow{\iota} \left(\C[t_1,\ldots,t_l] \otimes_\C \bV^\bullet (\C^l)\right)^{\mathfrak{S}_{1} \times \mathfrak{S}_{l-1}}
\]
hence the foam in equation \eqref{eq:circlemerge} is $l$ times the projection, as desired. 
\end{proof}

\begin{remark}\label{rem:HHHdecat}
Our construction of $\HHH(\cal{L})$ in this section precisely categorifies the description of the HOMFLYPT polynomial 
given in Proposition \ref{prop:SymPoly}, which is given as the $n \to \infty$ limit of the \textit{symmetric webs} presentation of the 
$\sln$ link polynomials. Indeed, the complex $\llbracket \widehat{\beta} \rrbracket_\circ$ decategorifies to exactly give $\{ \widehat{\beta} \}_\circ$, 
and the isomorphism in equation \eqref{eq:FundThmSymDiff} shows that we exactly have
\[
\dim_{q,a}\Sym^k(\cal{V}_\infty) = q^{k} a^{-k} \prod_{i=1}^k \frac{1-a^2q^{2i-2}}{1-q^{2i}} = \mathsf{c}_{\Sym^k}
\]
For this reason, we will see in Section \ref{sec:diffnegn} that the spectral sequence induced by the ``most obvious'' 
differential on this complex converges not to $\KhR_n(\cal{L})$, but rather to the $\slnegn$ link homology 
that is constructed in the next section (and is likewise built from symmetric powers).
\end{remark}

\subsection{$\slnegn$ link homology} \label{sec:SymHomology}

As detailed after Theorem \ref{thm:LinkHomologies}, 
$\overline{\cal{H}}_{\cal{V}_n,t}^\cal{S}( \widehat{\beta})$
is particularly interesting. 
In this section, we study this invariant, and prove Theorem \ref{thm:negativen}.

At the decategorified level, 
the $\sln$ Reshetikhin-Turaev polynomials~\cite{RT_ribbon} can be defined for links colored by arbitrary $U_q(\sln)$ representations; 
however, in the typical web-based approach to link polynomials 
and link homology, 
$k$-colored link components are understood as colored by the 
fundamental representations $\bV^k(\C^n_q)$.
On the other hand, the symmetric powers of the fundamental representation are the ones involved 
in the construction of 3-manifold 
invariants\footnote{This requires specializing $q$ to a root of unity \cite{RT_3mfd,Witten3}, 
although the results in \cite{CoopKrush2} suggest other means to recover the 3-manifold invariants.} and $(2+1)$-dimensional TQFTs.
Thus far, the typical method for categorifying such colored invariants has mostly followed the route of constructing categorical analogues of the 
highest weight projectors from the $k$-fold tensor product of the standard representation to the $k^{th}$ symmetric representation, 
see \eg~\cite{CoopKrush,SS,Roz,Rose,Cautis}.
This technique results in homology theories for colored knots and links that are manifestly infinite-dimensional, 
and thus cannot be directly used to build $(3+1)$-dimensional TQFTs. 
The following result is hence promising, 
and follows immediately from the definition of $\overline{\cal{H}}_{\cal{V},T}^\cal{S}( \widehat{\beta})$ in Theorem \ref{thm:LinkHomologies}, 
and the discussion in Section \ref{sec:Decat}, particularly Remark \ref{rem:Coloring}.

\begin{proposition} 
Let $\beta$ be a balanced, colored braid and $\cal{V}_n = q^{1-n}\C[t]/t^n$ with $\deg_q(t) =2$,
then $q^{n w_\beta + W_\beta - w_\beta} \overline{\cal{H}}_{\cal{V}_n,t}^\cal{S}( \widehat{\beta})$
is finite dimensional, and taking the alternating sum of graded dimensions 
recovers the $\sln$ Reshetikhin-Turaev invariant of the associated colored link $\cal{L}_\beta$, with components colored by the corresponding 
symmetric powers of the standard representation $\C_q^n$ of $U_q(\mathfrak{sl}_n)$.
\end{proposition}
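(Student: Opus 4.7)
The plan is to break the proposition into two claims: finite-dimensionality, and the identification of the decategorification. Both should follow essentially formally from the machinery already set up, with most of the work being a careful bookkeeping of gradings against the decategorified story in Section \ref{sec:Decat}.

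First I would establish finite-dimensionality. Since $\cal{V}_n = q^{1-n}\C[t]/t^n$ is an $n$-dimensional graded vector space, each symmetric power $\Sym^k(\cal{V}_n)$ is finite-dimensional, with Poincar\'e polynomial given by the quantum binomial coefficient ${n+k-1 \brack k}$ (a brief direct computation, using the symmetric basis of $\cal{V}_n^{\otimes k}$, confirms this and matches equation \eqref{eq:SymCircle}). By Theorem \ref{thm:foamObjDec}, the complex $\llbracket \widehat{\beta} \rrbracket_\circ$ is a bounded complex in $\vAFoam_\circ$ whose constituent objects are finite direct sums of essential, labeled, concentric circles. Applying the monoidal functor $\Upsilon_{\cal{V}_n,t}^\cal{S}$ from Proposition \ref{prop:Functors} sends each such object to a finite tensor product of finite-dimensional symmetric powers, so $\Upsilon_{\cal{V}_n,t}^\cal{S}(\llbracket \widehat{\beta} \rrbracket_\circ)$ is a bounded complex of finite-dimensional graded super vector spaces, and its homology $\overline{\cal{H}}_{\cal{V}_n,t}^\cal{S}(\widehat{\beta})$ is finite-dimensional.

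Next I would compute the graded Euler characteristic. The key observation is that the crossing formulae in equation \eqref{eq:slnCrossing} (and their colored analogues \eqref{eq:slnColoredCrossing}) decategorify, up to an overall sign and $q \leftrightarrow q^{-1}$ handled by the extra rescaling, precisely to the symmetric-webs bracket $\{-\}$ of equation \eqref{eq:DecatSymCrossing}; hence $\chi_q(\llbracket \widehat{\beta} \rrbracket_\circ)$ in the web skein module equals $\{\widehat{\beta}\}_\circ$. Under $\Upsilon_{\cal{V}_n,t}^\cal{S}$ a $k$-labeled essential circle maps to $\Sym^k(\cal{V}_n)$, whose graded super-dimension is exactly $\mathsf{c}_{\Sym^k}|_{a = q^n} = {n+k-1 \brack k}$. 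Therefore
\[
\chi_q\bigl(\Upsilon_{\cal{V}_n,t}^\cal{S}(\llbracket \widehat{\beta} \rrbracket_\circ)\bigr) \;=\; \{\widehat{\beta}\}_\circ \bigm|_{\mathsf{c}_{\Sym^k}\mapsto \left[{n+k-1 \atop k}\right]}.
\]

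Finally, I would apply Proposition \ref{prop:SymPoly} together with the colored rescaling discussed in Remark \ref{rem:Coloring}: multiplying by $a^{w_\beta} q^{W_\beta - w_\beta}$ and specializing $a = q^n$ (equivalently, multiplying by $q^{n w_\beta + W_\beta - w_\beta}$) turns the above expression into the $\Sym$-colored $\sln$ Reshetikhin-Turaev polynomial of $\cal{L}_\beta$, which is independent of the choice of braid presentation. This immediately gives the claimed decategorification, and also re-proves invariance of the graded dimension (though not yet of the homology itself) under Markov moves.

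The only real obstacle is verifying that the grading conventions line up: one must check that $\deg_q(t)=2$, the overall shift $q^{1-n}$ in the definition of $\cal{V}_n$, and the writhe corrections $q^{n w_\beta + W_\beta - w_\beta}$ combine correctly so that the circle values in $\chi_q \Upsilon_{\cal{V}_n,t}^\cal{S}$ agree on the nose with the evaluation $\mathsf{c}_{\Sym^k}|_{a=q^n}$ appearing in Proposition \ref{prop:SymPoly} and Remark \ref{rem:Coloring}. This is a direct calculation rather than a conceptual difficulty, and once performed the result follows.
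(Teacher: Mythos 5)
Your argument is correct and follows essentially the same route as the paper's (very terse) proof: finite-dimensionality comes from boundedness of the complex $\Upsilon^\cal{S}_{\cal{V}_n,t}(\llbracket \widehat{\beta}\rrbracket_\circ)$ whose terms are finite tensor products of the $\Sym^k(\cal{V}_n)$, and the Euler characteristic computation follows by matching the circle evaluation $\dim_q \Sym^k(\cal{V}_n) = {n+k-1 \brack k}$ against Proposition \ref{prop:SymPoly} and Remark \ref{rem:Coloring}. One small correction: the complexes in equations \eqref{eq:slnCrossing} and \eqref{eq:slnColoredCrossing} decategorify \emph{directly} to the symmetric-webs bracket of equation \eqref{eq:DecatSymCrossing}, with no sign or $q\leftrightarrow q^{-1}$ adjustment needed (those corrections are only required to relate $\llbracket\cdot\rrbracket$ to the antisymmetric bracket $\langle\cdot\rangle$ of equation \eqref{eq:DecatCrossing}), so the caveat in your second paragraph is unnecessary.
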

\begin{proof}
The invariant is finite-dimensional since it is the homology of a bounded complex of finite-dimensional graded vector spaces.
Comparing the construction of $\overline{\cal{H}}_{\cal{V}_n,t}^\cal{S}( \widehat{\beta})$ with the discussion in Section \ref{sec:Decat},
and noting that $\dim_q(\Sym^k(\cal{V}_n)) = {n+k-1 \brack k}$,
proves the second claim.
\end{proof}

From the definition, it is only clear \ap that $q^{n w_\beta + W_\beta - w_\beta} \overline{\cal{H}}_{\cal{V}_n,t}^\cal{S}( \widehat{\beta})$ is an invariant of the annular link $\widehat{\beta}$. 
We now show that this homology is indeed invariant under the second Markov move, 
and hence determines an invariant of links $\cal{L}_\beta \subset S^3$.
Recall that we denote the resulting homology theory $\cal{H}_{-n}(\cal{L}_\beta)$ and call it \textit{$\slnegn$ link homology}, 
since up to a shift in super-degree, it can equivalently be obtained as 
$q^{n w_\beta + W_\beta - w_\beta} \overline{\cal{H}}_{\Pi \cal{V}_n,t}^\wedge( \widehat{\beta})$,
and thus is the analogue of $\sln$ Khovanov-Rozansky homology for the Lie superalgebra $\slnegn = \slzn$
(compare to Theorem \ref{thm:sln}, and note that $\Pi \cal{V}_n \cong \C^{0|n}$). 
Nevertheless, we work with its formulation 
in terms of symmetric powers of $\cal{V}_n$
to avoid the use of super vector spaces.

To prove Markov II invariance, we observe that, surprisingly, this invariant has appeared before, although in a different guise, 
\ie without the use of our annular evaluation technology, and without the relation to $\slzn$.
Indeed, while this work was in progress,
the paper \cite{Cautis2} appeared. 
Therein, Cautis constructs a differential $\delta_n$ on the complex used to compute HOMFLYPT link homology,
\ie in our notation, on the complex $q^{i_\beta + W_\beta - w_\beta} a^{w_\beta-i_\beta} \HH_\bullet(\Psi_{BS}(\llbracket \beta \rrbracket))$. 
Denoting the ``usual'' differential on this complex by $\del$, 
\ie the one coming from equations \eqref{eq:slnCrossing} and \eqref{eq:slnColoredCrossing}, 
it is shown in \cite[Theorem 6.1]{Cautis2} that $\delta_n$ commutes with $\del$, and that the homology of the total complex 
$\Tot(q^{i_\beta + W_\beta - w_\beta} a^{w_\beta-i_\beta} \HH_\bullet(\Psi_{BS}(\llbracket \beta \rrbracket)), \del, \delta_n)$ 
is an invariant of $\cal{L}_\beta$.

In Section \ref{sec:diffnegn} below, we introduce a differential $d_{-n}$ on the complex 
\begin{equation}\label{eq:HOMFLYPTcomplex}
C_\circ(\widehat{\beta}) := a^{w_\beta-i_\beta} q^{i_\beta + W_\beta - w_\beta} \HH_\bullet(\hTr(\Psi_{BS}(\llbracket \widehat{\beta} \rrbracket_\circ))) \simeq 
a^{w_\beta-i_\beta} q^{i_\beta + W_\beta - w_\beta} \HH_\bullet(\Psi_{BS}(\llbracket \beta \rrbracket))
\end{equation}
that commutes with $\del$, and so that 
\[
\mathrm{H}^\bullet(\mathrm{H}^\bullet(C_\circ(\widehat{\beta}),d_{-n}),\del) \cong \cal{H}_{-n}(\cal{L}_\beta).
\]
Here $\mathrm{H}^\bullet$ denotes taking homology with respect to the indicated differentials.
For degree reasons, the spectral sequence 
$\mathrm{H}^\bullet(\mathrm{H}^\bullet(C_\circ(\widehat{\beta}),d_{-n}),\del) \Rightarrow \mathrm{H}^\bullet(\Tot(C_\circ(\widehat{\beta}),\del, d_{-n}))$
collapses at the second page, which gives that $\cal{H}_{-n}(\cal{L}_\beta) \cong \mathrm{H}^\bullet(\Tot(C_\circ(\widehat{\beta}),\del, d_{-n}))$.

Hence, to show that $\cal{H}_{-n}(\cal{L}_{\beta})$ is an invariant of links in $S^3$, 
it suffices\footnote{In general, given two double complexes $(C,\del_C,\delta_C)$ and $(D,\del_D,\delta_D)$ and a homotopy equivalence 
$f:(C,\del_C) \xrightarrow{\simeq}(D,\del_D)$ with homotopy inverse $g$, to deduce that $f$ and $g$ induce a homotopy equivalence between 
the total complexes, it suffices to check that $f$ and $g$ are chain maps with respect to the differentials $\delta_C$ and $\delta_D$, 
and that these differentials commute with the maps realizing the homotopies $\id_C \simeq gf$ and $\id_D \simeq fg$. 
In our case, all homotopy equivalences are Gaussian eliminations, hence are built from the entries of the ``horizontal'' differentials,
so the check that the ``vertical'' differentials commute with the relevant maps follows since these latter differentials are given summand-wise, 
and commute with the former.} 
to show that the differential $\delta_n$ agrees with $d_{-n}$ after applying the homotopy equivalence 
in equation \eqref{eq:HOMFLYPTcomplex}. 
To see this, we observe that, starting with the complex 
$q^{i_\beta + W_\beta - w_\beta} a^{w_\beta-i_\beta} \HH_\bullet(\Psi_{BS}(\llbracket \beta \rrbracket))$, 
we can proceed as in Theorem \ref{thm:foamObjDec}, 
simplifying the complex by replacing the terms in the complex, which are the Hochschild homology of bimodules assigned to 
the various webs in $\llbracket \beta \rrbracket$, with the Hochschild homology of simpler webs. 
It suffices to show that the differential $\delta_n$ is compatible with these simplifications, 
and then to identify it with $d_{-n}$ on the simplified complex 
$C_\circ(\widehat{\beta})$.
The differential $\delta_n$ acts summand-wise on $q^{i_\beta + W_\beta - w_\beta} a^{w_\beta-i_\beta} \HH_\bullet(\Psi_{BS}(\llbracket \beta \rrbracket))$:
each web in $\llbracket \beta \rrbracket$ contributes a term that is the Hochschild homology of an 
$(R^\mathbf{k},R^\mathbf{k})$-bimodule (where $\mathbf{k}$ corresponds to the labelings of the boundary of the web), 
and the differential is given on such a term by the action of an element $\gamma_\mathbf{k} \in \HH^1(R^\mathbf{k})$ 
on the Hochschild homology.

All of the homotopy equivalences used in Theorem \ref{thm:foamObjDec} to simplify the complex $\llbracket \widehat{\beta} \rrbracket$ 
take one of the following three forms:
\begin{enumerate}
\item a web $\cal{W}$ is replaced by a direct sum $\bigoplus_i \cal{W}_i$ of isomorphic webs,
\item a ``reverse'' Gaussian elimination homotopy equivalence is used to replace a web that is isomorphic to a summand of another web, or
\item an annular web is replaced by an isotopic one, using an isotopy that slides part of the web around the annulus.
\end{enumerate}
In our setting, compatibility with the analogue of the first type of equivalence follows since
the action of $\gamma_\mathbf{k}$ on $\HH_\bullet(\Psi_{BS}(\cal{W}))$ agrees with that on 
$\bigoplus_i \HH_\bullet(\Psi_{BS}(\cal{W}_i))$. 
This also confirms the second type, since the maps realizing the homotopy equivalence are built from the terms in the isomorphism 
and the differentials in the complex, and these all commute with the action of $\gamma_\mathbf{k}$.
Compatibility with the third type is equivalent to showing that if a web $\cal{W}$ with boundary $\mathbf{k}$ can be written as the 
(horizontal) composition of two webs $\cal{W} = \cal{W}_1 \cdot \cal{W}_2$ glued along boundary $\mathbf{p}$, 
then the action of $\gamma_\mathbf{k}$ on $\HH_\bullet(\Psi_{BS}(\cal{W}_1 \cdot \cal{W}_2))$ agrees with that of 
$\gamma_\mathbf{p}$ on $\HH_\bullet(\Psi_{BS}(\cal{W}_2 \cdot \cal{W}_1))$. 
This follows directly from \cite[Lemma 6.2]{Cautis2}. 
Finally, identifying $\delta_n$ on $\HH_\bullet(\hTr(\Psi_{BS}(\llbracket \widehat{\beta} \rrbracket_\circ)))$ amounts to computing 
the action of $\gamma_\mathbf{k}$ on $\HH_\bullet(R^{\mathbf{k}})$, which we do in Section \ref{sec:diffn} below, 
after we define our differential $d_{-n}$.

We now note a surprising result, 
which will complete the proof of Theorem \ref{thm:negativen}.
For an uncolored (\ie $1$-colored) braid $\beta$, 
both $\cal{H}_{-n}(\cal{L}_\beta)$ and $\KhR_n(\cal{L}_\beta)$ are invariants of the link $\cal{L}_\beta \subset S^3$, 
and both decategorify to the (uncolored) $\sln$ Reshetikhin-Turaev invariant of $\cal{L}_\beta$, since $\bV^1\C^n_q \cong \C^n_q \cong \Sym^1\C^n_q$. 
One might expect that (perhaps up to a re-grading) these invariants are isomorphic, but this is not the case.

\begin{proposition}
In general, $\cal{H}_{-n}(\cal{L}_\beta) \ncong \KhR_n(\cal{L}_\beta)$ for an uncolored link $\cal{L}_\beta$.
\end{proposition}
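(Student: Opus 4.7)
The plan is to exhibit a concrete uncolored link for which $\cal{H}_{-n}(\cal{L}_\beta)$ and $\KhR_n(\cal{L}_\beta)$ differ, even up to regrading, and the natural candidate is the Hopf link $\cal{L}_{\sigma_1^2}$ in the simplest nontrivial case $n=2$. The key structural input is that $\Upsilon^{\cal{S}}_{\cal{V}_n,t}$ assigns $\Sym^k(\cal{V}_n)$ to a $k$-labeled essential circle, of graded dimension ${n+k-1 \brack k}$, while the functor computing $\KhR_n$ assigns $\bV^k(\cal{V}_n)$, of graded dimension ${n \brack k}$. These first disagree at $k=2$, so one needs a braid whose annular simplification $\llbracket \widehat{\beta}\rrbracket_\circ$ actually contains a $2$-labeled concentric circle; the Hopf braid $\sigma_1^2\in B_2$ is the cleanest such example.

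First I would compute $\llbracket \widehat{\sigma_1^2}\rrbracket_\circ \in \Hot^b(\vAFoam_\circ)$ by resolving the two crossings via equation \eqref{eq:slnCrossing} and then applying the simplification procedure in the proof of Theorem \ref{thm:foamObjDec}. The resulting complex is concentrated in three homological degrees and its terms are direct sums of pairs of concentric circles carrying labels $(1,1)$ and $(0,2)$, with differentials built from the annular foams $\mathsf{E}_r,\mathsf{F}_r$ of equation \eqref{eq:FfoamEfoam}.

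Second, I would apply $\Upsilon^{\cal{S}}_{\cal{V}_2,t}$ to this simplified complex, compute the homology, and include the writhe-dependent shift $q^{n w_\beta + W_\beta - w_\beta}$ from Theorem \ref{thm:negativen}, obtaining the graded Poincar\'e polynomial of $\cal{H}_{-2}(\cal{L}_{\sigma_1^2})$. The crucial point is that the $2$-labeled circle in $\llbracket \widehat{\sigma_1^2}\rrbracket_\circ$ contributes $\Sym^2(\cal{V}_2)$, which is three-dimensional, rather than $\bV^2(\cal{V}_2) \cong \C$ as in the Khovanov-Rozansky setting. Because the differentials in the complex are only the maps of Proposition \ref{prop:Functors}, which on a single summand are homogeneous and injective or surjective rather than isomorphisms, this extra $q$-graded dimension survives to homology. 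I would then compare with the well-known Khovanov homology of the Hopf link, whose total graded dimension at $(q,h)=(1,1)$ is $4$, and observe that the symmetric computation yields a strictly larger total dimension, so no regrading can produce an isomorphism.

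The main obstacle is the careful combinatorial bookkeeping in simplifying $\llbracket \widehat{\sigma_1^2}\rrbracket_\circ$ and tracking the images of the differentials under $\Upsilon^{\cal{S}}_{\cal{V}_2,t}$; however, since the braid lives in $B_2$, only a small number of annular webs appear, and the entire argument reduces to finite-dimensional linear algebra. An alternative, essentially equivalent route is to read off both $\cal{H}_{-2}$ and $\KhR_2$ as distinct pages of the spectral sequence from $\HHH$ constructed in Section \ref{sec:Diff}, and to check on the Hopf link that these pages have different total $q$-dimension; this shortcut avoids a direct foam computation but relies on the same underlying reason.
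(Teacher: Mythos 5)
Your proposal is correct and follows essentially the same strategy as the paper: produce an explicit braid closure for which the two homologies have different total (ungraded) dimension, so no regrading can possibly identify them. The paper's formal proof cites the right-handed trefoil $\sigma_1^3$ (Section \ref{sec:Trefoil}, total dimensions $6$ versus $4$), but it also carries out the Hopf link $\sigma_1^2$ computation in Section \ref{sec:Hopf}, finding exactly the $8$ versus $4$ discrepancy you describe, so your choice of example is likewise already verified in the paper.
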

\begin{proof}
It suffices to exhibit an example. 
Let $\cal{T}$ be the right-handed trefoil knot.
In Section \ref{sec:Trefoil} we show that $\dim_{q,h}(\KhR^{\vee}_2(\cal{T})) = (q+q^3)+h^2q^5+h^3q^9$, 
while $\dim_{q,h}(\cal{H}_{-2}(\cal{T})) = h^{-2}(q^7+q^5) + h^{-1}(q^9+q^7) + (q^3+q)$.
Hence, these invariants are not isomorphic, even as non-graded vector spaces 
(but do decategorify to give the same Laurent polynomial).
\end{proof}
Moreover, $\cal{H}_{-2}(\cal{L}_\beta)$ is also distinct from odd Khovanov homology \cite{ORS};
see Section \ref{sec:Hopf}.
We also stress that even in the most trivial case of $n=1$, the two homologies $\cal{H}_{-n}(\cal{L}_\beta)$ and $\KhR_n(\cal{L}_\beta)$ differ; 
the latter is always $1$-dimensional, so it appears that the former contains more information. 

\begin{remark}
It is predicted in \cite{DGR} that there should exist a \textit{reduced} $\slnegn$ link homology, 
and that this theory should be isomorphic to reduced $\sln$ Khovanov-Rozansky homology. 
It is an interesting open question to relate such a theory to our invariant $\cal{H}_{-n}(\cal{L}_\beta)$.
\end{remark}

\subsection{Other link homology theories}

In this section, we discuss a number of other link homologies (some old, some new), 
which are given by our construction for choices of $\cal{V}$ and $T$ not-yet discussed. 
Our exposition will be brief, 
as all results follow from arguments analogous to those in Sections \ref{sec:KhRsln}~--~\ref{sec:SymHomology}, 
and in \cite{QR2}.

\subsubsection{Annular theories} 

Taking $\cal{V} = \C^n, T=0$, 
the definition of $\cal{H}_{\C^n,0}^\wedge(\widehat{\beta})$ 
precisely coincides with that for annular $\sln$ Khovanov-Rozansky link homology $\cal{A}\KhR_n(\widehat{\beta})$. 
This invariant of annular links $\widehat{\beta}$ was defined by the first two named authors in \cite{QR2} (extending the $n=2$ case from \cite{APS}), 
and we recall here its pertinent properties. 
The differentials in $\llbracket \widehat{\beta} \rrbracket$ are sent by $\Upsilon_{\C^n,0}^\wedge$ to compositions of 
the maps ${_k \iota_l}$ and ${_k \pi_l}$ from \eqref{eq:SplitMerge}, which commute with the action of $\sln$ on the vector spaces 
$\Upsilon_{\C^n,0}^\wedge(\mathbf{k}) = \bV^{k_1}\C^n \otimes \cdots \otimes \bV^{k_m}\C^n$ induced via the canonical action of $\sln$ on $\C^n$. 
It follows that $\cal{A}\KhR_n(\widehat{\beta})$ carries an action of $\sln$, 
generalizing the $\slnn{2}$ action on sutured annular Khovanov homology from \cite{GLW}.
Simple computations show that this does not determine an invariant of the corresponding link $\cal{L}_\beta$ 
(\ie is not invariant under the second Markov move); 
however, there does exists a spectral sequence 
$\cal{A}\KhR_n(\widehat{\beta}) \Rightarrow \KhR_n( \cal{L}_\beta)$,
arising from the filtration by (relative) current algebra degree on the complex used to compute the latter. 

Passing from skew-symmetric to symmetric tensors, we consider $\cal{H}_{\C^n,0}^{\cal{S}}(\widehat{\beta})$, 
which is an invariant of annular links $\widehat{\beta}$ built from the $\sln$ representations $\Sym^k(\C^n)$. 
As in the non-annular case, up to an overall super-degree shift we can also obtain this invariant as
$\cal{H}_{\C^{0|n},0}^\wedge(\widehat{\beta})$, 
so we call this invariant \textit{annular $\slnegn$ link homology}, and denote it by $\cal{AH}_{-n}(\widehat{\beta})$.
Simple computations show that $\cal{AH}_{-n}(\widehat{\beta}) \ncong \cal{A}\KhR_n(\widehat{\beta})$.
Repeating the above arguments in this setting, we have the following result.

\begin{proposition}
Annular $\slnegn$ link homology is an invariant of the annular link $\widehat{\beta}$, carries an action of
$\sln$, and admits a spectral sequence $\cal{AH}_{-n}(\widehat{\beta}) \Rightarrow \cal{H}_{-n}(\cal{L}_\beta)$. 
\end{proposition}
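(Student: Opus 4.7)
The plan is to mirror the arguments just given for annular $\sln$ Khovanov--Rozansky homology $\cal{A}\KhR_n(\widehat{\beta})$, systematically replacing skew-symmetric with symmetric tensors. First, annular invariance is immediate from Theorem~\ref{thm:LinkHomologies} applied to $\cal{V} = \C^n$, $T = 0$, and $\square = \cal{S}$: the homology $\overline{\cal{H}}_{\C^n,0}^{\cal{S}}(\widehat{\beta})$, which up to a grading shift equals $\cal{AH}_{-n}(\widehat{\beta})$, is already an invariant of the colored annular link $\widehat{\beta}$.

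For the $\sln$-action, I would observe that $\Upsilon_{\C^n,0}^{\cal{S}}$ refines to a functor into complexes of $\sln$-representations. The standard $\sln$-action on $\C^n$ induces an action on each $\Sym^{k_i}(\C^n)$, hence on every $\Upsilon_{\C^n,0}^{\cal{S}}(\mathbf{k}) = \Sym^{k_1}(\C^n)\otimes \cdots \otimes \Sym^{k_m}(\C^n)$. Since $T = 0$, Proposition~\ref{prop:Functors} shows that $\Upsilon_{\C^n,0}^{\cal{S}}(\mathsf{F}_r)$ and $\Upsilon_{\C^n,0}^{\cal{S}}(\mathsf{E}_r)$ vanish for $r \geq 1$, while for $r = 0$ (using the convention $T^0 = \id$) they are compositions of the canonical inclusion $\iota$ and projection $\pi$ maps between symmetric powers, both of which are manifestly $\sln$-equivariant. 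Every differential in $\Upsilon_{\C^n,0}^{\cal{S}}(\llbracket \widehat{\beta}\rrbracket_\circ)$ is therefore $\sln$-equivariant, so the action descends to $\cal{AH}_{-n}(\widehat{\beta})$.

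For the spectral sequence, my plan is to filter the complex computing $\cal{H}_{-n}(\cal{L}_\beta)$ by total $t$-degree. The $t$-adic filtration $F^p\cal{V}_n := t^p\cal{V}_n$ on $\cal{V}_n = q^{1-n}\C[t]/t^n$ induces a decreasing filtration on each $\Sym^k(\cal{V}_n)$, and hence, summand by summand, on the complex $C := \Upsilon_{\cal{V}_n,t}^{\cal{S}}(\llbracket \widehat{\beta}\rrbracket_\circ)$. By Proposition~\ref{prop:Functors}, all morphisms preserve this filtration: $\iota$ and $\pi$ do so strictly, while each occurrence of $T = t$ strictly increases the $t$-degree by one; moreover, since $t^n = 0$ in $\cal{V}_n$, the filtration is bounded. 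On the associated graded, the only surviving part of the differential consists of terms with no factors of $t$, i.e.\ the images of $\mathsf{F}_0$ and $\mathsf{E}_0$; under the identification $\cal{V}_n \cong \C^n$ as graded vector spaces (both have graded dimension $[n]$), this shows that the $E_1$ page is precisely $\Upsilon_{\C^n,0}^{\cal{S}}(\llbracket \widehat{\beta}\rrbracket_\circ)$, whose homology is $\cal{AH}_{-n}(\widehat{\beta})$. Boundedness of the filtration then gives convergence to $\cal{H}_{-n}(\cal{L}_\beta)$. The main technical point will be careful bookkeeping of gradings and shifts to confirm that the $r=0$ components of the morphisms from Proposition~\ref{prop:Functors}, under the identification $\cal{V}_n \cong \C^n$, really match those appearing in $\Upsilon_{\C^n,0}^{\cal{S}}$; this essentially amounts to unwinding the explicit formulas, but must be done attentively to get signs and shifts correct.
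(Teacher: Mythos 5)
Your argument is correct and follows the same route the paper takes: the paper's own ``proof'' simply says to repeat the annular $\sln$ arguments verbatim, and its ``filtration by (relative) current algebra degree'' is precisely the $t$-adic filtration you describe, with the $T=0$ observation forcing the differentials to be $\sln$-equivariant compositions of $\iota$ and $\pi$. One small quibble: under the standard convention the associated graded complex is the $E_0$ page, so its homology is already $E_1 \cong \cal{AH}_{-n}(\widehat{\beta})$ (you call this complex the $E_1$ page and its homology the $E_2$ page), but this off-by-one in labeling does not affect the argument.
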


More generally, we can consider $\cal{H}_{\cal{V},0}^\square(\widehat{\beta})$ for any choice of $\cal{V}$
(and for $\square = \cal{S}$ or $\wedge$).
Taking $\cal{V} = \C[t,\theta]/\theta^2$ produces the \textit{annular HOMFLYPT homology} $\cal{A}\HHH(\widehat{\beta})$
conjectured to exist in \cite{GNSSS}. 
As in the $\sln$ and $\slnegn$ cases, filtering the complex $C_\circ(\widehat{\beta})$ from equation \eqref{eq:HOMFLYPTcomplex} 
by current algebra degree produces a spectral sequence from annular to non-annular HOMFLYPT homology. 
Additionally, in this case we can also repeat the constructions in Section \ref{sec:Diff} below to obtain spectral sequences from 
annular HOMFLYPT homology to the annular $\sln$ and $\slnegn$ homologies. In summary, we obtain the following result.

\begin{proposition}
Annular HOMFLYPT homology is an invariant of the annular link $\widehat{\beta}$, and admits spectral sequences 
converging to each of $\HHH(\cal{L}_\beta)$, $\cal{A}\KhR_n(\widehat{\beta})$, and $\cal{AH}_{-n}(\widehat{\beta})$.
\end{proposition}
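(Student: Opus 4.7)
The plan proceeds in three steps, paralleling the preceding paragraphs on annular $\sln$ and $\slnegn$ link homology and leveraging the constructions of Section \ref{sec:TriplyGraded} together with the differentials to be built in Section \ref{sec:Diff}.

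First, invariance of $\cal{A}\HHH(\widehat{\beta})$ under annular link isotopy is immediate from the general framework. Setting $\cal{V} = \C[t,\theta]/\theta^2$ and $T = 0$, Proposition \ref{prop:Functors} yields a graded monoidal functor $\Upsilon_{\cal{V},0}^{\cal{S}} : \vAFoam_\circ \to \gr\C\Vect^{\Z/2}$. Applying this to the universal complex $\llbracket\widehat{\beta}\rrbracket_\circ$ from Theorem \ref{thm:UnivAnnularInv} and invoking Theorem \ref{thm:Meta} (together with the usual writhe rescalings needed for colored Markov~I invariance) produces a chain homotopy class whose homology depends only on $\widehat{\beta}$.

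Second, the spectral sequence $\cal{A}\HHH(\widehat{\beta}) \Rightarrow \HHH(\cal{L}_\beta)$ will come from a filtration by current algebra degree, in direct analogy with the $\cal{A}\KhR_n \Rightarrow \KhR_n$ spectral sequence stated earlier in this subsection. On $\cal{V}_\infty = qa^{-1}\C[t,\theta]/\theta^2$, the decreasing filtration $F^p\cal{V}_\infty := t^p \cal{V}_\infty$ extends to each symmetric power and, by the explicit formulas of Proposition \ref{prop:Functors}, is preserved by the images of the generating annular foams $\mathsf{F}_r$ and $\mathsf{E}_r$ under $\Upsilon_{\cal{V}_\infty,t}^{\cal{S}}$, since $t$ only enters through the central factor $T^r = t^r$. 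The associated graded coincides with the complex obtained by setting $T = 0$, which is precisely (up to the rescalings) $\cal{A}\HHH(\widehat{\beta})$. Convergence to $\HHH(\cal{L}_\beta)$ is then a consequence of Theorem \ref{thm:HOMFLYPT}.

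Third, the spectral sequences $\cal{A}\HHH(\widehat{\beta}) \Rightarrow \cal{A}\KhR_n(\widehat{\beta})$ and $\cal{A}\HHH(\widehat{\beta}) \Rightarrow \cal{AH}_{-n}(\widehat{\beta})$ will be obtained by transporting the commuting differentials $d_n$ and $d_{-n}$ from Section \ref{sec:Diff} to the annular, $T = 0$ setting. As sketched in Section \ref{sec:SymHomology}, each of these differentials acts summand-wise via the action of a distinguished class $\gamma_\mathbf{k} \in \HH^1(R^\mathbf{k})$ on the Hochschild homology term attached to each concentric circle configuration $\mathbf{k}$. Since the underlying summands of $\cal{A}\HHH(\widehat{\beta})$ are the same as those of $C_\circ(\widehat{\beta})$ (only the horizontal differential $\del$ is altered by setting $T = 0$), the same formulas define differentials on $\cal{A}\HHH(\widehat{\beta})$, and their commutativity with $\del$ is checked on each generating annular foam exactly as in the proof of Theorem \ref{thm:negativen}. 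A degree argument identical to the one given for $\HHH \Rightarrow \cal{H}_{-n}$ then shows that the relevant spectral sequences collapse on the second page, identifying their targets as $\cal{A}\KhR_n(\widehat{\beta})$ and $\cal{AH}_{-n}(\widehat{\beta})$.

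The main obstacle is the third step: one must verify that the classes $\gamma_\mathbf{k}$ governing $d_n$ and $d_{-n}$ are compatible with the $T = 0$ specialization, i.e.\ that they are intrinsic to the Hochschild (co)homology of $R^\mathbf{k}$ and do not secretly incorporate the current algebra action of $t$ in a way that would prevent them from surviving to the associated graded. This should follow from the explicit Koszul-type formulas to be given in Section \ref{sec:Diff}, where $d_n$ and $d_{-n}$ arise from pairings with the odd generators $de_i$ appearing in \eqref{eq:FundThmSymDiff} rather than from any current algebra datum.
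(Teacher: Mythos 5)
Your proposal is correct and matches the paper's intended argument, which is sketched in the sentence preceding the proposition: invariance comes from Theorem \ref{thm:Meta}, the spectral sequence to $\HHH(\cal{L}_\beta)$ comes from the $t$-degree (current algebra) filtration on $C_\circ(\widehat{\beta})$, and the other two come from repeating the differential constructions of Section \ref{sec:Diff} in the $T=0$ setting.

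One small simplification: your third step and closing ``obstacle'' paragraph route through Cautis's Hochschild-cohomology classes $\gamma_\mathbf{k}$, and you then worry about whether these classes secretly depend on the current algebra datum $T$. This worry is already resolved if you instead use the paper's direct definition from Section \ref{sec:diffnegn}: $d_{-n}$ is a DG algebra differential on $\C[t,\theta]/\theta^2$ determined by $d_{-n}(t)=0$, $d_{-n}(\theta)=t^n$, defined purely from the super algebra and making no reference to $T$. It therefore acts identically on the underlying summands $\Sym^{k_1}\cal{V}\otimes\cdots\otimes\Sym^{k_m}\cal{V}$ whether $T=t$ or $T=0$, and commutes with the $T=0$ chain differential $\del$ trivially (since $\del$ is built from $\iota$, $\pi$, and $T^r$, with $T^r=0$ for $r>0$). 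The $\gamma_\mathbf{k}$ description is only invoked in the paper to match $d_{-n}$ with Cautis's $\delta_n$, which is needed for Markov II invariance of $\cal{H}_{-n}$ but not for the present annular proposition. Similarly for $d_n$, one should be explicit that one conjugates by the mirror isomorphism $\cal{M}$ of Proposition \ref{prop:Mirror} as in the Corollary following it, but this too transports verbatim to the $T=0$ setting. Finally, your ``collapse on the second page'' phrasing is slightly off: it is the \emph{opposite} filtration's spectral sequence (taking $\mathrm{H}^\bullet(-,d_{\pm n})$ first) whose $E_1$ page sits in a single $a$-degree and hence collapses, thereby identifying the total complex homology with $\cal{AH}_{-n}(\widehat{\beta})$ (resp.\ $\cal{A}\KhR_n(\widehat{\beta})$); the spectral sequence starting from $\cal{A}\HHH(\widehat{\beta})$ itself is merely asserted to converge.
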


\begin{remark}
Taking $\cal{V} = \C^{m|n}$ for any choice of $m,n \in \N$, 
we obtain an annular link homology theory corresponding to $\glmn$. 
The super Howe duality from \cite{QS2} shows that this invariant similarly carries an action of $\glmn$.
Unfortunately, for general $m,n$, 
it is not clear how (or if) this should be related to the conjectural $\glmn$ homology \cite{GGS} of colored links in $S^3$. 
\end{remark}

\subsubsection{Deformations}

In \cite{RW}, the second named author and Wedrich showed that deformed, colored $\sln$ link homology 
(see \cite{Wu2} and references therein)
can be defined in the context of a deformation of $\nFoam$, in which the relation \eqref{eq:ndot} is replaced by the relation 
\[
p\left( \;
\xy
(0,0)*{
\begin{tikzpicture} [fill opacity=0.2, scale=.5]
	\draw [very thick, fill=red] (1,1) -- (-1,2) -- (-1,-1) -- (1,-2) -- cycle;
	\node [opacity=1] at (0,0) {$\bullet$};
	\node [red, opacity=1] at (-.5,1.25){\tiny$1$};
\end{tikzpicture}}; 
\endxy
\;
\right)
= 0
\]
where $p(t) \in \C[t]$ is any (monic) polynomial of degree $n$. 
Repeating the arguments in Section \ref{sec:KhRsln} in this context, 
and noting that \cite[Lemma 15]{RW} identifies the deformed $\sln$ homology of the $k$-colored unknot with $\bV^k(\C[t]/p(t))$, 
we arrive at the following.

\begin{proposition}
Let $\cal{V}_p = \C[t]/p(t)$, then for any balanced, colored braid $\beta$, $\cal{H}_{\cal{V}_p,t}^{\wedge}(\cal{L}_\beta)$ 
agrees with the deformed, colored $\sln$ link homology of $\cal{L}_\beta$.
\end{proposition}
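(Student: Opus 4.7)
The plan is to mimic, step by step, the proof of Theorem \ref{thm:sln} in Section \ref{sec:KhRsln}, replacing the undeformed $\nFoam$ throughout with its deformation $\nFoam^{p}$ from \cite{RW}, in which the relation \eqref{eq:ndot} is replaced by $p(\bullet)=0$ on a $1$-labeled facet. First I would recall from \cite{RW} that deformed colored $\sln$ Khovanov-Rozansky link homology $\mathsf{KhR}_n^{p}(\cal{L}_\beta)$ is computed, up to the writhe shifts $h^{w_\beta}q^{W_\beta - w_\beta - nw_\beta}$, as the homology of the complex
\[
\mathrm{C}_n^{p}(\beta) := q^{-s_n(\mathbf{k})} \twoHOM_{\nFoam^{p}}\bigl( I_\mathbf{k}, \llbracket \beta \rrbracket \bigr),
\]
with the $2$-functor $\Phi_\infty$ still available (by construction of the deformed foam category, which is a quotient of $\vFoam$).

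Next, I would carry out the annular evaluation reductions of Theorem \ref{thm:foamObjDec} at the level of $\mathrm{C}_n^{p}(\beta)$ exactly as in the proof of Theorem \ref{thm:sln}: the simplifications via web isomorphisms, Gaussian elimination (equations \eqref{eq:RGI1} and \eqref{eq:RGI2}) and the annular sliding isomorphism \eqref{eq:slideshift} all use only those foam relations that persist in $\nFoam^{p}$. This reduces the complex to the image of $\llbracket \widehat{\beta}\rrbracket_\circ$ under the functor $\vAFoam_\circ \to \gr\C\Vect$ induced from the current algebra action on the center of objects of $\nFoam^{p}$, via the diagram \eqref{eq:MainDiag}.

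The heart of the argument is then to identify this induced functor with $\Upsilon^\wedge_{\cal{V}_p,t}$. On objects, \cite[Lemma 15]{RW} provides the crucial deformed analogue of the identification \eqref{eq:GrassToWedge}, namely a canonical isomorphism $\twoEND_{\nFoam^{p}}(I_{[k]}) \cong \bV^k(\C[t]/p(t)) = \bV^k \cal{V}_p$, which upgrades to a tensor product decomposition identical to \eqref{eq:skewHowe}. On morphisms, I would rerun the three-step factorization \eqref{eq:slnComp} of the action of $x_{i,r}^+\1_\mathbf{k}$ (and symmetrically of $x_{i,r}^-\1_\mathbf{k}$) through the splitting off of a $1$-labeled sheet, an $r$-dotted $1$-labeled cylinder, and a re-merging: the first and third maps are determined, up to sign, by the blister relations (which are preserved in $\nFoam^{p}$, as they are $n$-independent), while the middle map is $\id \otimes t^r \otimes \id$ by the same local calculation. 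The $\slnn{2}$-isotypic rigidity argument used at the end of the proof of Theorem \ref{thm:sln} then fixes the remaining scalar and shows that the induced functor agrees with $\widehat{\Upsilon}^\wedge_{\cal{V}_p,t}$.

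The main obstacle I anticipate is ensuring that the sign conventions and normalizations in \cite{RW} match those used in Section \ref{sec:KhRsln}, since the deformed blister/bubble moves differ from their undeformed counterparts by terms in the lower-order symmetric functions in the roots of $p$; however, because both sides of the claimed identification depend only polynomially on these coefficients and agree in the specialization $p(t) = t^n$ by Theorem \ref{thm:sln}, a continuity/specialization argument (or alternatively, a direct check in the generic case using \cite[Theorem 39]{RW}) should suffice to conclude that $\cal{H}_{\cal{V}_p,t}^\wedge(\cal{L}_\beta) \cong \mathsf{KhR}_n^{p}(\cal{L}_\beta)$ as invariants of the colored link $\cal{L}_\beta \subset S^3$.
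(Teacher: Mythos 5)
Your proposal is essentially the same as the paper's: the paper's proof amounts to the single sentence ``Repeating the arguments in Section~\ref{sec:KhRsln} in this context, and noting that \cite[Lemma 15]{RW} identifies the deformed $\sln$ homology of the $k$-colored unknot with $\bV^k(\C[t]/p(t))$, we arrive at the following,'' which is exactly the strategy you carry out (and flesh out) step by step. Your additional caution about sign and normalization mismatches, resolved via a specialization or generic-deformation argument, is a reasonable refinement but is not something the paper addresses explicitly.
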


Similarly, we can define \textit{deformed, colored $\slnegn$ link homology} by considering $\cal{H}_{\cal{V}_p,t}^{\cal{S}}(\cal{L}_\beta)$.

\section{Differentials and mirror symmetry}\label{sec:Diff}

In this section, we further study HOMFLYPT link homology. 
We construct two families of differentials that induce spectral sequences 
$\HHH(\cal{L}_\beta) \Rightarrow  \cal{H}_{-n}( \cal{L}_\beta)$ and $\HHH(\cal{L}_\beta) \Rightarrow  \KhR_n( \cal{L}_\beta)$.
In order to define the latter, we give a categorical interpretation of the ``mirror symmetry'' for HOMFLYPT polynomials detailed 
in Remark \ref{rem:Mirror}.

\subsection{The differential $d_{-n}$}\label{sec:diffnegn}

The super vector space $\C[t,\theta]/\theta^2$ admits a differential\footnote{It may seem strange notation 
to call this differential $d_{-n}$, as opposed to $d_n$, but we take this choice because the corresponding spectral sequence 
will converge to $\cal{H}_{-n}( \cal{L}_\beta)$, while $d_n$ should give a spectral sequence to $\sln$ link homology. 
From the perspective of $\HHH(\cal{L}_\beta)$, it would make more sense to call Khovanov-Rozansky homology ``$\slnegn$ link homology,''
but we've chosen not to fight history in this paper.}
$d_{-n}$ determined by
\begin{equation}\label{eq:dnegn}
d_{-n}(t) = 0 \;\; , \;\; d_{-n}(\theta) = t^n
\end{equation}
that extends to all of $\C[t,\theta]/\theta^2$ by insisting that it is a DG algebra. 
The differential $d_{-n}$ is a map of $(q,a)$-degree $(2n,-2)$, 
so the degree giving the DG structure is one-half the $a$-degree. 
We denote this degree by $\deg_D$ for the remainder of this section.
Equation \eqref{eq:dnegn} also specifies a differential on 
the super vector space $\cal{V}_\infty = qa^{-1} \C[t,\theta]/\theta^2$ giving the HOMFLYPT homology of the unknot,  
by insisting that the latter is a DG module over $(\C[t,\theta]/\theta^2,d_{-n})$.
Here, our shifts do not affect the $D$-degree.
The differential $d_{-n}$ preserves $\deg_{Q_n} := \deg_q + n\deg_a$, 
and we find that
$\mathrm{H}^\bullet(\cal{V}_\infty, d_{-n}) \cong \cal{V}_n = q^{1-n} \C[t]/t^n$, 
provided we identify the $q$-degree on the latter with the $Q_n$-degree on the former.

\begin{proposition}\label{prop:negnss}
Let $\beta$ be a balanced, colored braid. 
The differential $d_{-n}$ induces a spectral sequence with first page $\HHH(\cal{L}_\beta)$ that converges to $\cal{H}_{-n}(\cal{L}_\beta)$.
\end{proposition}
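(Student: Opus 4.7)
The plan is to promote $d_{-n}$ to a chain endomorphism of the complex $C_\circ(\widehat{\beta}) := \Upsilon_{\cal{V}_\infty,t}^{\cal{S}}(\llbracket \widehat{\beta} \rrbracket_\circ)$, which computes $\HHH(\cal{L}_\beta)$ by Theorem \ref{thm:HOMFLYPT}, and then to realize the desired spectral sequence as one of the two spectral sequences associated to the resulting double complex. The identification $\mathrm{H}^\bullet(\Tot(C_\circ(\widehat{\beta}),\del,d_{-n})) \cong \cal{H}_{-n}(\cal{L}_\beta)$ was already recorded in Section \ref{sec:SymHomology}, so it suffices to exhibit a suitable differential $d_{-n}$ on $C_\circ(\widehat{\beta})$ and to read off the two filtrations.

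First I would extend $d_{-n}$ from $\cal{V}_\infty$ to each term $\Upsilon_{\cal{V}_\infty,t}^{\cal{S}}(\mathbf{k}) = \Sym^{k_1}\cal{V}_\infty \otimes \cdots \otimes \Sym^{k_m}\cal{V}_\infty$ by the graded Leibniz rule, giving each term the structure of a DG module over $(\C[t,\theta]/\theta^2,d_{-n})$. The key compatibility is that $d_{-n}$ commutes with the chain differential $\del$ induced by the Rickard complexes of equations \eqref{eq:slnCrossing} and \eqref{eq:slnColoredCrossing}. By Theorem \ref{thm:foamObjDec} together with the description of $\Upsilon_{\cal{V}_\infty,t}^{\cal{S}}$ in equation \eqref{eq:SplitMerge}, it suffices to check this compatibility with the three generating maps ${_k}\iota_l$, ${_k}\pi_l$, and multiplication by $t^r$; the first two are maps of DG $(\C[t,\theta]/\theta^2,d_{-n})$-modules by the very definition of $d_{-n}$ on symmetric tensors, and the third is a chain map because $d_{-n}(t)=0$.

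With $d_{-n}$ installed as a chain endomorphism, $(C_\circ(\widehat{\beta}),\del,d_{-n})$ becomes a double complex that is bounded in the $\del$-direction (since $\beta$ has finitely many crossings) and bounded in the $d_{-n}$-direction on each term (since each $\Sym^k\cal{V}_\infty$ is concentrated in $D$-degrees $0$ through $k$). Both standard spectral sequences therefore converge to the total homology, which equals $\cal{H}_{-n}(\cal{L}_\beta)$ by the $E_2$-collapse argument of Section \ref{sec:SymHomology} (which uses the super-Künneth-type identification $\mathrm{H}^\bullet(\Sym^k\cal{V}_\infty,d_{-n}) \cong \Sym^k\cal{V}_n$, recovering the unknot value). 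Filtering by homological degree in the opposite direction from that of Section \ref{sec:SymHomology}, one has $E_0 = C_\circ(\widehat{\beta})$ with $E_0$-differential $\del$, so that $E_1 = \mathrm{H}^\bullet(C_\circ(\widehat{\beta}),\del) = \HHH(\cal{L}_\beta)$ with $E_1$-differential induced by $d_{-n}$, and the spectral sequence converges to $\cal{H}_{-n}(\cal{L}_\beta)$ as desired.

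The main obstacle is the verification that $d_{-n}$ commutes with $\del$ at the chain level, which reduces to the elementary checks above once one exploits the structural decomposition of morphisms in $\vAFoam_\circ$ supplied by Theorem \ref{thm:foamObjDec}. A secondary point is that the spectral sequence should be independent of the homotopy representative of $\llbracket \widehat{\beta} \rrbracket_\circ$ used to form $C_\circ(\widehat{\beta})$; this follows from the general principle that a homotopy equivalence of chain complexes compatible with a second commuting differential (all of the Gaussian eliminations used in Theorem \ref{thm:foamObjDec} are built from summands of $\del$, which commute with $d_{-n}$ by the same check) induces an isomorphism of associated spectral sequences from the $E_1$-page onward.
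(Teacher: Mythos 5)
Your proposal is correct and takes essentially the same route as the paper: install $d_{-n}$ on each term via the Leibniz rule, check commutation with $\del$ by reducing to the generating maps ${_k}\iota_l$, ${_k}\pi_l$, and $t^r$ from equation \eqref{eq:SplitMerge}, and then run the two spectral sequences of the resulting double complex, using the Künneth-type computation $\mathrm{H}^\bullet(\Sym^k\cal{V}_\infty,d_{-n})\cong\Sym^k\cal{V}_n$ on the second one. One framing issue worth fixing: you cite Section \ref{sec:SymHomology} as having "already recorded" the identification $\mathrm{H}^\bullet(\Tot(C_\circ(\widehat{\beta}),\del,d_{-n}))\cong\cal{H}_{-n}(\cal{L}_\beta)$, but that section is itself forward-referencing the present proposition for the proof, so invoking it here is circular; the identification should be argued directly from the second filtration, as you in fact sketch parenthetically. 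When you do so, be sure to also identify the induced $E_1$-differential with the $\slnegn$ differential (this follows from the compatibility checks you already made, since the induced maps on $\mathrm{H}^\bullet(-,d_{-n})$ are exactly $\iota$, $\pi$, and $t$ on $\Sym^\bullet\cal{V}_n$), and to note that the higher differentials vanish because the $E_1$-page is concentrated in a single $a$-degree while those differentials shift $a$-degree.
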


\begin{proof}
The differential $d_{-n}$ induces a differential on $\cal{V}_\infty^{\otimes k}$ that we also denote by $d_{-n}$.
The symmetric group $\mathfrak{S}_k$ acts on the super vector space $\cal{V}_\infty^{\otimes k}$,
and this action commutes\footnote{To see this, note that the super-degree $\deg_\Pi$ equals $\deg_D (\mathrm{mod} 2)$, 
and recall that $\sigma_i \in \mathfrak{S}_k$ acts via 
$\sigma_i(v_1 \otimes \cdots \otimes v_i \otimes v_{i+1} \otimes \cdots \otimes v_k) = 
(-1)^{\deg_\Pi(v_i)\deg_\Pi(v_{i+1})} v_1 \otimes \cdots \otimes v_{i+1} \otimes v_i \otimes \cdots \otimes v_k$ on 
a tensor product of super vector spaces.}
with $d_{-n}$.
It follows that $(\Sym^k\cal{V}_\infty,d_{-n})$ is a subcomplex and the inclusion and projection maps 
\begin{equation}\label{eq:inclproj}
(\Sym^k\cal{V}_\infty,d_{-n}) \hookrightarrow (\cal{V}_\infty^{\otimes k}, d_{-n}) \text{  and  } (\cal{V}_\infty^{\otimes k}, d_{-n}) \twoheadrightarrow (\Sym^k\cal{V}_\infty,d_{-n})
\end{equation}
are chain maps. 
Moreover, it is easy to see that the endomorphisms $\id^{\otimes j-1} \otimes \multt \otimes \id^{\otimes k - j }: \cal{V}_\infty^{\otimes k} \rightarrow \cal{V}_\infty^{\otimes k}$ 
also commute with $d_{-n}$, for $j=1,\ldots,k$.

It follows that $d_{-n}$ determines a differential on the complex
\[
C_\circ(\widehat{\beta}) = a^{w_\beta-i_\beta} q^{i_\beta +W_\beta - w_\beta} \HH_\bullet(\hTr(\Psi_{BS}(\llbracket \widehat{\beta} \rrbracket_\circ))) \cong 
a^{w_\beta} q^{W_\beta - w_\beta} \Upsilon^\cal{S}_{\cal{V}_\infty,t}(\llbracket \widehat{\beta} \rrbracket_\circ)
\]
that commutes with the differential whose homology gives $\HHH(\cal{L}_\beta)$; we denote this latter differential by $\del$. 
Passing to the associated double complex, we obtain two spectral sequences 
by considering the ``horizontal'' and ``vertical'' filtrations. 

Viewing $\del$ as the horizontal differential, and $d_{-n}$ are the vertical one, 
the first page of the latter spectral sequence is the homology of $C_\circ(\widehat{\beta})$ with respect 
to $\del$, hence is $\HHH(\cal{L}_\beta)$. 
The differentials on this page of the spectral sequence are those induced by $d_{-n}$, 
and since $\HHH(\cal{L}_\beta)$ is supported in only finitely many $h$- and $a$-degrees,
the spectral sequence converges to 
the homology of the total complex $\Tot(C_\circ(\widehat{\beta}),\del, d_{-n})$. 

It remains to identify the homology of the total complex with $\cal{H}_{-n}(\cal{L}_\beta)$.
Considering the horizontal filtration, we obtain a spectral sequence whose first page is the 
homology of $C_\circ(\widehat{\beta})$ with respect to $d_{-n}$. 
To compute this first page, we first note that 
$\mathrm{H}^\bullet(\cal{V}_\infty^{\otimes k}, d_{-n}) \cong \cal{V}_n^{\otimes k}$ by the K\"{u}nneth theorem.
Further, it is easy to see that $\mathrm{H}^\bullet(\Sym^k\cal{V}_\infty,d_{-n}) \cong \Sym^k\cal{V}_n$, 
and more generally the K\"{u}nneth theorem shows that 
$\mathrm{H}^\bullet(\Sym^{k_1}\cal{V}_\infty \otimes \cdots \otimes \Sym^{k_m}\cal{V}_\infty,d_{-n}) \cong \Sym^{k_1}\cal{V}_n \otimes \cdots \otimes \Sym^{k_m}\cal{V}_n$.
It follows that the terms in $\mathrm{H}^\bullet(C_\circ(\widehat{\beta}),d_{-n})$ are precisely the terms in the complex 
$q^{n w_\beta +W_\beta - w_\beta} \Upsilon^\cal{S}_{\cal{V}_n,t}(\llbracket \widehat{\beta} \rrbracket_\circ)$. 

The maps induced on $\mathrm{H}^\bullet(-, d_{-n})$ by the inclusion and projection maps in equation \eqref{eq:inclproj} 
are precisely the inclusion and projection maps $\Sym^k\cal{V}_n \hookrightarrow \cal{V}_n$ and $\cal{V}_n \twoheadrightarrow \Sym^k\cal{V}_n$, 
and, the map induced by $\multt: \cal{V}_\infty \to \cal{V}_\infty$ on $(\cal{V}_\infty, d_{-n})$ is $\multt: \cal{V}_n \to \cal{V}_n$.
This implies that the differential on the first page, which is induced on $\mathrm{H}^\bullet(C_\circ(\widehat{\beta}),d_{-n})$ by $\del$, 
is the differential on the complex $q^{n w_\beta +W_\beta - w_\beta} \Upsilon^\cal{S}_{\cal{V}_n,t}(\llbracket \widehat{\beta} \rrbracket_\circ)$. 
It follows that the second page of this spectral sequence is precisely $\cal{H}_{-n}(\cal{L}_\beta)$.
Moreover, the differentials on all pages of the spectral sequence after the first are zero, as they don't preserve the $a$-degree, 
and the terms on the first page are all concentrated in a single $a$-degree. 
It follows that $\Tot(C_\circ(\widehat{\beta}),\del, d_{-n}) \cong \cal{H}_{-n}(\cal{L}_\beta)$.
\end{proof}

\begin{remark}
In the setting of reduced $\sln$ homology, 
there is a similar spectral sequence, due to Rasmussen \cite{Ras2}. 
Recent work of Naisse-Vaz \cite{NaVaz} gives a new construction of this spectral sequence, 
and shows that it converges at the second page, 
\ie that there are differentials $d_n$ on reduced $\HHH(\cal{L})$ whose homology give reduced $\KhR_n(\cal{L})$.
We will construct such a spectral sequence in the unreduced, colored setting in Section \ref{sec:diffn} below, 
and this construction mirrors our construction in Proposition \ref{prop:negnss}. 
We hence conjecture that our $\slnegn$ spectral sequence also converges at the second page.
\end{remark}

We close this section by verifying the only outstanding part of our proof of Theorem \ref{thm:negativen} from 
Section \ref{sec:SymHomology}, that Cautis's differential $\delta_n$ agrees with $d_{-n}$ on the HOMFLYPT 
homology of concentric, labeled circles.
As detailed in Section \ref{sec:SymHomology}, in Cautis's setup the invariant of circles labeled $k_1,\ldots k_m$  is 
$\HH_\bullet(R^{[k_1,\ldots,k_m]})$.
After identifying
\[
\HH_\bullet(R^{[1,\ldots,1]}) \cong \Omega^\bullet(\C^k) = 
\bigoplus_{0 \leq l \leq k} \C[t_1,\ldots,t_k] \cdot dt_{i_1} \wedge \cdots \wedge dt_{i_l},
\]
the differential $\delta_n$ is given 
by the action of the element 
\[
\gamma = \sum_{i=1}^k t_i^n \frac{\del}{\del t_i} \in \oplus_{i=1}^k \C[t_1,\ldots,t_k] \cdot \frac{\del}{\del t_i} \cong \HH^1(R^{[1,\ldots,1]}).
\]
More generally, since $\gamma$ is symmetric, 
it determines an element $\gamma_\mathbf{k} \in \HH^1(R^{[k_1,\ldots,k_m]})$ for every $\mathbf{k} = [k_1,\ldots,k_m]$, 
and the differential $\delta_n$ is given by the action of 
$\gamma_\mathbf{k}$ on $\HH_\bullet(R^{[k_1,\ldots,k_m]}) \subseteq \HH_\bullet(R^{[1,\ldots,1]})$. 
It thus suffices to check that $\delta_n$ agrees with $d_{-n}$ on $\HH_\bullet(R^{[1,\ldots,1]})$. 
We can identify $\Omega^\bullet(\C^k)$ with the super vector space $\cal{V}_\infty^{\otimes k}$ 
by identifying basis vectors
$t_1^{p_1} \cdots t_k^{p_k} \cdot dt_{i_1} \wedge \cdots \wedge dt_{i_l}$ (with $i_r < i_{r+1}$)
with the vectors $\theta^{j_1} t^{p_1} \otimes \cdots \otimes \theta^{j_k} t^{p_k}$, where $j_i = 1$ if $i \in \{i_1,\ldots,i_l \}$, 
and is zero otherwise. After doing so, it is clear that $\delta_n$ and $d_{-n}$ agree.

\subsection{The differential $d_{n}$}\label{sec:diffn}

As mentioned in Remark \ref{rem:HHHdecat}, our construction of $\HHH(\cal{L})$ naturally categorifies the symmetric webs presentation 
of the HOMFLYPT polynomial in Proposition \ref{prop:SymPoly}, so the spectral sequence from Section \ref{sec:diffnegn} can be viewed as 
a categorical analogue of specializing $a=q^n$ in that setting. 
This begs the question: can we also realize the categorical version of the specialization in Propositions \ref{prop:WedgePoly} and \ref{prop:WedgePoly2} 
using our framework?

Indeed, we can. 
To do so, we first give a variant of the construction of $\HHH(\cal{L})$ from Section \ref{sec:TriplyGraded} that naturally categorifies the 
description of the HOMFLYPT polynomial in Proposition \ref{prop:WedgePoly2}. We then introduce a differential $d_n$ that agrees with 
$d_{-n}$ on $1$-colored circles, but differs on circles of higher color. 
This differential induces a spectral sequence from HOMFLYPT link homology to $\sln$ Khovanov-Rozansky homology, 
and in the colored case between the $\bV$-colored variants of these theories.

Recall from Remark \ref{rem:KhRonthenose} that the version of $\sln$ Khovanov-Rozansky that categorifies the 
$\sln$ link polynomial, as formulated in Section \ref{sec:Decat}, is $\KhR_n^\vee(\cal{L})$. 
We begin by defining an analogous version of HOMFLYPT link homology, which we denote $\HHH^\vee(\cal{L})$. 
Let $\vFoam^{\vee}$ denote the version of the $2$-category $\vFoam$ in which the $q$-degree has been negated.
Similarly, given a colored braid $\beta$, 
let $\llbracket \beta \rrbracket^{\vee} \in \Hot^b(\vFoam^{\vee})$ be given by negating the grading shifts in equations \eqref{eq:slnCrossing} and \eqref{eq:slnColoredCrossing}, 
\ie by replacing every instance of $q$ by $q^{-1}$. 
Repeating the arguments in Sections \ref{sec:Foams} and \ref{sec:CatUqTrace}, with all $q$-degrees negated,
shows that the homology of the complex $\Upsilon^{\square}_{\cal{V},T}(\llbracket \widehat{\beta} \rrbracket_\circ^{\vee})$ 
is an invariant of the annular link $\widehat{\beta}$ for any graded super vector space $\cal{V}$ and endomorphism $T$ of degree $-2$.

Set $\cal{V}_\infty^\vee := q^{-1}a \C[\mathtt{t},\vartheta]/\vartheta^2$,  
where $\mathtt{t}$ is an even variable with $\deg_{q,a}(\mathtt{t}) = (-2,0)$ 
and $\vartheta$ is an odd variable with $\deg_{q,a}(\vartheta) = (0,-2)$. 
Given a balanced, colored braid $\beta$, define
\[
\HHH^\vee(\cal{L}_\beta) := 
\mathrm{H}^\bullet(h^{w_\beta} a^{w_\beta} q^{w_\beta - W_\beta} \Upsilon^{\wedge}_{\cal{V}_\infty^{\vee},\mathtt{t}}(\llbracket \widehat{\beta} \rrbracket_\circ^{\vee})).
\]

\begin{proposition}
Given a balanced, colored braid $\beta$, 
$\chi\big(\HHH^\vee(\cal{L}_\beta)\big) = P_\infty(a,q;\cal{L}_\beta^\wedge)$, 
where the latter denotes the $\bV$-colored HOMFLYPT polynomial.
\end{proposition}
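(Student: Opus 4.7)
The plan is to compute $\chi\big(\HHH^\vee(\cal{L}_\beta)\big)$ directly from the definition and match it with the explicit formula
$P_\infty(a,q;\cal{L}_\beta^\wedge) = a^{w_\beta}(-q)^{w_\beta - W_\beta}\langle\widehat{\beta}\rangle_\circ\big|_{\mathsf{c}_{\wedge^\bullet}}$
from Remark \ref{rem:Coloring}. Since Euler characteristic commutes with passage to homology, and the homological shift $h^{w_\beta}$ contributes a sign $(-1)^{w_\beta}$, it suffices to compute the Euler characteristic of the complex of graded super vector spaces $\Upsilon^{\wedge}_{\cal{V}_\infty^\vee,\mathtt{t}}(\llbracket\widehat{\beta}\rrbracket^\vee_\circ)$.

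The key computation is the identification $\chi(\bV^k\cal{V}_\infty^\vee) = \mathsf{c}_{\wedge^k}$. The crucial observation is that, as graded super vector spaces, $\Pi\cal{V}_\infty^\vee \cong \cal{V}_\infty\big|_{q \to q^{-1}}$: the isomorphism matches the basis element $\mathtt{t}^i\vartheta$ (now even in $\Pi\cal{V}_\infty^\vee$, of bidegree $q^{-1-2i}a^{-1}$) with $t^i$ (even, same bidegree after regrading), and $\mathtt{t}^i$ (now odd, bidegree $q^{-1-2i}a$) with $t^i\theta$ (odd, same bidegree). Combining this with Remark \ref{rem:FunctorsWedge} gives
\[
\chi(\bV^k\cal{V}_\infty^\vee) = (-1)^k\chi\big(\Sym^k\Pi\cal{V}_\infty^\vee\big) = (-1)^k\chi\big(\Sym^k\cal{V}_\infty\big)\big|_{q \to q^{-1}} = (-1)^k\mathsf{c}_{\Sym^k}\big|_{q \to q^{-1}},
\]
where the last equality uses the identification $\chi(\Sym^k\cal{V}_\infty) = \mathsf{c}_{\Sym^k}$ established in the proof of Theorem \ref{thm:HOMFLYPT} (see also Remark \ref{rem:HHHdecat}). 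A direct manipulation, or comparison with the alternate form of $\mathsf{c}_{\wedge^k}$ displayed in Remark \ref{rem:Mirror}, then identifies this quantity with $\mathsf{c}_{\wedge^k}$.

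The remainder is bookkeeping. By construction, $\llbracket\widehat{\beta}\rrbracket^\vee_\circ$ is obtained from $\llbracket\widehat{\beta}\rrbracket_\circ$ by negating all $q$-grading shifts, so its graded Euler characteristic (as a $\Z[q,q^{-1}]$-linear combination of essential, labeled concentric circles) is $\{\widehat{\beta}\}_\circ\big|_{q \to q^{-1}}$, which by equation \eqref{eq:WedgeToSym} equals $(-1)^{w_\beta}\langle\widehat{\beta}\rangle_\circ$. Applying $\chi \circ \Upsilon^{\wedge}_{\cal{V}_\infty^\vee,\mathtt{t}}$ (which evaluates each $k$-labeled circle at $\mathsf{c}_{\wedge^k}$) and multiplying by the shifts $(-1)^{w_\beta}a^{w_\beta}q^{w_\beta - W_\beta}$ from the definition of $\HHH^\vee$ yields
\[
\chi\big(\HHH^\vee(\cal{L}_\beta)\big) = a^{w_\beta}q^{w_\beta - W_\beta}\,\langle\widehat{\beta}\rangle_\circ\big|_{\mathsf{c}_{\wedge^\bullet}}.
\]
Comparing with $P_\infty(a,q;\cal{L}_\beta^\wedge)$, the only remaining discrepancy is the sign $(-1)^{w_\beta - W_\beta}$. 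This is trivial: each crossing on $k$-colored strands contributes $\pm(k - k^2) = \mp k(k-1)$ to $w_\beta - W_\beta$, and $k(k-1)$ is always even.

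The main technical point, and therefore the main obstacle, is the super-vector-space identification $\chi(\bV^k\cal{V}_\infty^\vee) = \mathsf{c}_{\wedge^k}$; the isomorphism $\Pi\cal{V}_\infty^\vee \cong \cal{V}_\infty\big|_{q \to q^{-1}}$ makes this transparent, reducing it to the known formula for $\chi(\Sym^k\cal{V}_\infty)$. Everything else is careful tracking of grading shifts and signs.
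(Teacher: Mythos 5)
Your identification $\chi(\bV^k\cal{V}_\infty^\vee) = \mathsf{c}_{\wedge^k}$ via the explicit degree-preserving isomorphism $\Pi\cal{V}_\infty^\vee \cong \cal{V}_\infty\big|_{q\to q^{-1}}$ is correct and arguably more transparent than the paper's invocation of ``a variant of the isomorphism in equation \eqref{eq:FundThmSymDiff}.'' That part is a nice touch.

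The gap is in the bookkeeping paragraph. You cite equation \eqref{eq:WedgeToSym} to conclude $\{\widehat{\beta}\}_\circ\big|_{q\to q^{-1}} = (-1)^{w_\beta}\langle\widehat{\beta}\rangle_\circ$, but that equation is only stated for uncolored braids (it is derived from the uncolored crossing formulae \eqref{eq:DecatCrossing} and \eqref{eq:DecatSymCrossing}). The colored generalization, stated in the footnote of Remark \ref{rem:Coloring}, multiplies by $(-1)^{k_c l_c}$ at each crossing $c$ whose strands carry labels $k_c$ and $l_c$, giving $\{\widehat{\beta}\}_\circ\big|_{q\to q^{-1}} = (-1)^{\sum_c k_c l_c}\langle\widehat{\beta}\rangle_\circ$. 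This is not the same as $(-1)^{w_\beta}$ in general: $w_\beta$ only weights the same-color crossings (by $k$, not $k^2$), and crossings between distinctly colored strands contribute nothing to $w_\beta$ but do contribute to $\sum_c k_c l_c$. To recover your claimed sign you need two additional observations that the paper makes explicitly: first, writing $\sum_c k_c l_c = \sum_d k_d l_d + \sum_{c:\,k_c=l_c} k_c^2$ where $d$ ranges over crossings with distinct colors, the first sum is even because $\beta$ is balanced; second, the remaining sum is congruent mod $2$ to $W_\beta$, and $W_\beta \equiv w_\beta \pmod 2$ (which is the same parity fact you invoke at the end for the ``discrepancy''). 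So your final formula is correct, but only because the balanced hypothesis saves you — the step as written does not follow from \eqref{eq:WedgeToSym} and the hypothesis that $\beta$ is balanced is used nowhere in your argument, whereas it is essential.
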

\begin{proof}
We first note that
\[
\dim_{q,a}\big( \bV^k \cal{V}_\infty^\vee \big) = {\displaystyle q^{-k} a^k \prod_{i=1}^k \frac{q^{2-2i} - a^{-2}}{1-q^{-2i}} = (-1)^k q^{-k} a^{-k} \prod_{i=1}^k \frac{1-a^2q^{2-2i}}{1-q^{-2i}}
= \mathsf{c}_{\wedge^k} }
\]
which follows using Remark \ref{rem:FunctorsWedge} to obtain a variant of the isomorphism in equation \eqref{eq:FundThmSymDiff}.
We then compute that
\[
\begin{aligned}
\chi\big( \HHH^\vee(\cal{L}_\beta) \big) &= 
(-1)^{w_\beta} a^{w_\beta} q^{w_\beta - W_\beta} (-1)^{\sum_{c \in \beta} k_c l_c} \langle \widehat{\beta} \rangle_\circ \big|_{\mathsf{c}_{\wedge^\bullet}} \\
&= (-1)^{\sum_{d \in \beta} k_d l_d} a^{w_\beta} (-q)^{w_\beta - W_\beta} \langle \widehat{\beta} \rangle_\circ \big|_{\mathsf{c}_{\wedge^\bullet}}
\end{aligned}
\]
Here $\sum_{c \in \beta} k_c l_c$ denotes the sum over the crossings $c$ in $\beta$ of the labels $k_c$ and $l_c$ on the involved strands, 
and $\sum_{d \in \beta} k_d l_d$ is similarly defined, but only taking the sum over crossings $d$ in which two distinct colors are present.
Since $\beta$ is balanced, the latter must be even. The result then follows from Remark \ref{rem:Coloring}.
\end{proof}

In Proposition \ref{prop:Mirror} below, we show that $\HHH^\vee(\cal{L}_\beta)$ is an invariant of the colored link $\cal{L}_\beta \subset S^3$
(up to degree shifts). Before doing so, we construct our spectral sequence to Khovanov-Rozansky homology.

\begin{proposition}\label{prop:nss}
Let $\beta$ be a balanced, colored braid. 
There exists a differential $d_{n}$ on the complex $\Upsilon^{\wedge}_{\cal{V}_\infty^{\vee},\mathtt{t}}(\llbracket \widehat{\beta} \rrbracket_\circ^{\vee})$ that
determines a spectral sequence with first page $\HHH^\vee(\cal{L}_\beta)$ and converging to $\KhR_n^\vee(\cal{L}_\beta)$.
\end{proposition}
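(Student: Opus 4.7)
The plan is to mirror the proof of Proposition \ref{prop:negnss}, but working in the skew-symmetric and regraded setting of $\cal{V}_\infty^\vee$. Define $d_n \colon \cal{V}_\infty^\vee \to \cal{V}_\infty^\vee$ by $d_n(\mathtt{t}) = 0$ and $d_n(\vartheta) = \mathtt{t}^n$, and extend it to the super algebra $\C[\mathtt{t},\vartheta]/\vartheta^2$ as a derivation. The resulting map has $(q,a)$-bidegree $(-2n,2)$ and $D$-degree $+1$, where $\deg_D$ equals minus half the $a$-degree (so $\deg_D \equiv \deg_\Pi \pmod 2$); it preserves $\deg_q + n \deg_a$, and a direct computation gives $\mathrm{H}^\bullet(\cal{V}_\infty^\vee,d_n) \cong \cal{V}_n^\vee$ after identifying $a = q^n$.

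Next, I would extend $d_n$ to a superderivation on $(\cal{V}_\infty^\vee)^{\otimes k}$ and verify that it descends to $\bV^k\cal{V}_\infty^\vee$; equivalently, via the isomorphism $\bV^k\cal{V}_\infty^\vee \cong \Pi^\epsilon \Sym^k(\Pi\cal{V}_\infty^\vee)$ of Remark \ref{rem:FunctorsWedge}, this becomes the analogue of the derivation in Proposition \ref{prop:negnss} applied to the symmetric tensors of the parity-shifted module. I would then check that this differential commutes with each of the generating maps of $\Upsilon^\wedge_{\cal{V}_\infty^\vee,\mathtt{t}}$ on the foams $\mathsf{F}_r, \mathsf{E}_r$ of \eqref{eq:FfoamEfoam}: commutation with multiplication by $\mathtt{t}$ is immediate from $d_n(\mathtt{t}) = 0$, while commutation with the (anti)symmetrized inclusions ${}_k\iota_l$ and projections ${}_k\pi_l$ follows from the fact that $d_n$ is a superderivation and the (anti)symmetrizer is built out of super-swaps of the appropriate parity. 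This produces a differential, still denoted $d_n$, on the complex $C_\circ^\vee(\widehat{\beta}) := h^{w_\beta}a^{w_\beta}q^{w_\beta - W_\beta}\Upsilon^\wedge_{\cal{V}_\infty^\vee,\mathtt{t}}(\llbracket \widehat{\beta} \rrbracket_\circ^\vee)$ that commutes with the Khovanov--Rozansky-type differential $\del$, turning it into a double complex.

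The spectral sequence associated to the filtration in which $\del$ is horizontal has $\HHH^\vee(\cal{L}_\beta)$ as its first page by definition, and finiteness of $\HHH^\vee$ in $(h,a)$-bidegree guarantees convergence to $\mathrm{H}^\bullet(\Tot(C_\circ^\vee(\widehat{\beta}),\del,d_n))$. To identify this total homology with $\KhR_n^\vee(\cal{L}_\beta)$, I would consider the opposite filtration: the super K\"{u}nneth theorem gives $\mathrm{H}^\bullet(\bV^{k_1}\cal{V}_\infty^\vee \otimes \cdots \otimes \bV^{k_m}\cal{V}_\infty^\vee, d_n) \cong \bV^{k_1}\cal{V}_n^\vee \otimes \cdots \otimes \bV^{k_m}\cal{V}_n^\vee$, so page one is (up to the overall shift) the complex $\Upsilon^\wedge_{\cal{V}_n^\vee,\mathtt{t}}(\llbracket \widehat{\beta} \rrbracket_\circ^\vee)$, and the induced $\del$-differential agrees with the one whose homology is $\KhR_n^\vee(\cal{L}_\beta)$ by (the negated-$q$ variant of) Theorem \ref{thm:sln}. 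Since the second page is concentrated in a single $a$-degree, all subsequent differentials vanish and the total complex has homology $\KhR_n^\vee(\cal{L}_\beta)$, completing the construction.

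The main obstacle is the compatibility check in the second paragraph: working with $\bV^\bullet$ in a genuinely super setting, one must track Koszul signs carefully to confirm that $d_n$ is well-defined on skew-symmetric tensors and commutes with all the generating foam morphisms, not just on 1-labeled circles. Once this compatibility is in hand, the remainder of the argument is essentially a formal translation of Proposition \ref{prop:negnss} through the super-parity shift relating $\bV$- and $\Sym$-constructions and the $q \leftrightarrow q^{-1}$, $a \leftrightarrow a^{-1}$ symmetry defining $\cal{V}_\infty^\vee$.
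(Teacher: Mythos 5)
Your proposal is correct and follows essentially the same route as the paper's proof, which itself is brief and simply says the construction parallels Proposition~\ref{prop:negnss} with $\Sym^k$ replaced by $\bV^k$: define $d_n$ by $d_n(\mathtt{t})=0$, $d_n(\vartheta)=\mathtt{t}^n$, observe it commutes with the $\mathfrak{S}_k$-action on $(\cal{V}_\infty^\vee)^{\otimes k}$, hence descends to $\bV^k\cal{V}_\infty^\vee$, and commutes with $\mathtt{t}$ and the inclusion/projection maps, then form the double complex with $\del$, and identify the two spectral sequence filtrations. Your fourth paragraph's ``main obstacle'' (tracking Koszul signs through the skew-symmetrizers) is exactly what the footnote in Proposition~\ref{prop:negnss} handles — the super-degree $\deg_\Pi$ agrees with $\deg_D \bmod 2$, so the $\mathfrak{S}_k$-action commutes with $d_n$ — and it transfers verbatim. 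One small slip: you write ``$\deg_D$ equals minus half the $a$-degree,'' but with the paper's convention ($\deg_D = \tfrac12\deg_a$ on $\C[\mathtt{t},\vartheta]/\vartheta^2$ before the $q^{-1}a$ shift) one has $\deg_D(\vartheta)=-1$ and $d_n$ raises $D$-degree by $1$, consistent with the rest of your sentence; the extra minus sign is inconsistent with your own ``$D$-degree $+1$'' claim, though it does not affect the argument.
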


\begin{proof}
The construction of this spectral sequence exactly parallels that in Proposition \ref{prop:negnss}, simply trading $\Sym^k$ for $\bV^k$ throughout.

First, define the differential $d_n$ on $\C[\mathtt{t},\vartheta]/\vartheta^2$ by taking $d_n(\mathtt{t}) = 0, d_n(\vartheta) = \mathtt{t}^n$, 
and extend to $\C[\mathtt{t},\vartheta]/\vartheta^2$ by demanding that it be a DG algebra. 
We then obtain a differential on $\cal{V}_\infty^\vee$ by requiring that it is a DG module. 
This differential raises the $D$-degree (which again is half the $a$-degree, before imposing the shifts defining $\cal{V}_\infty^\vee$), 
and preserves the $Q_n$-degree, which again is defined as $\deg_{Q_n} := \deg_q + n\deg_a$.

As above, there is an induced differential on the tensor product $(\cal{V}_\infty^\vee)^{\otimes k}$ commuting with the symmetric group action, 
so $(\bV^k\cal{V}_\infty^\vee,d_n)$ is a subcomplex, and the inclusion and projection maps 
\[
(\bV^k\cal{V}_\infty^\vee,d_n) \hookrightarrow ((\cal{V}_\infty^\vee)^{\otimes k}, d_n) \text{  and  } ((\cal{V}_\infty^\vee)^{\otimes k}, d_n) \twoheadrightarrow (\bV^k\cal{V}_\infty^\vee,d_n)
\]
are chain maps. Similarly, the endomorphisms 
$\id^{\otimes j-1} \otimes \mathtt{t} \otimes \id^{\otimes k - j }: (\cal{V}_\infty^\vee)^{\otimes k} \rightarrow (\cal{V}_\infty^\vee)^{\otimes k}$ 
again commute with $d_n$, so the latter determines a differential on the complex
\[
C_\circ^\vee(\widehat{\beta}) := h^{w_\beta} a^{w_\beta} q^{w_\beta - W_\beta} \Upsilon^{\wedge}_{\cal{V}_\infty^{\vee},\mathtt{t}}(\llbracket \widehat{\beta} \rrbracket_\circ^{\vee})
\]
that commutes with the differential $\del$ coming from $\llbracket \widehat{\beta} \rrbracket_\circ^{\vee}$.

Repeating the arguments in Proposition \ref{prop:negnss}, 
one of the filtrations on this double complex produces 
a spectral sequence with first page $\HHH^\vee(\cal{L}_\beta)$,
and converging to the homology of the total complex $\Tot(C^\vee_\circ(\widehat{\beta}),\del, d_n)$. 
Defining $\cal{V}_n^{\vee} = q^{n-1} \C[\mathtt{t}]/\mathtt{t}^n$, we can proceed as above and 
use the spectral sequence arising from the other filtration to identify the homology of the total complex with the homology of the complex 
$h^{w_\beta} q^{nw_\beta + w_\beta - W_\beta} \Upsilon^{\wedge}_{\cal{V}_n^{\vee},\mathtt{t}}(\llbracket \widehat{\beta} \rrbracket_\circ^{\vee})$, 
after identifying the $q$-degree on the latter with the $Q_n$-degree on the former. 
The homology of this latter complex is precisely $\KhR_n^\vee(\cal{L}_\beta)$.
\end{proof}

\begin{remark}
Our spectral sequence can be viewed as an unreduced, $\bV$-colored analogue of the Rasmussen spectral sequence.
Such a spectral sequence was also constructed in \cite{Wedrich}, using the theory of matrix factorizations.
\end{remark}

\subsection{Mirror symmetry}\label{sec:Mirror}

In the previous section, we assigned a triply-graded vector space $\HHH^\vee(\cal{L}_\beta)$ to a balanced, colored braid $\beta$ 
that decategorifies to the $\bV$-colored HOMFLYPT polynomial of the closure $\cal{L}_\beta \subset S^3$. 
In this section, we show that, up to a grading shift, this invariant is isomorphic to $\HHH(\cal{L}_\beta)$. 
This allows us to produce a spectral sequence $\HHH(\cal{L}_\beta) \Rightarrow \KhR_n(\cal{L}_\beta)$, 
and provides a categorical interpretation of the mirror symmetry property of the colored HOMFLYPT polynomial.

Recall that the HOMFLYPT polynomial
for (uncolored) links $\cal{L} \subset S^3$ 
can be defined as the unique link invariant $P_\infty(\cal{L})$ satisfying the relations:
\begin{equation}\label{eq:HOMFLYPTskein}
a^{-1}
P_\infty\left( \;
\xy
(0,0)*{
\begin{tikzpicture}[scale=.5,rotate=270]
	\draw[very thick, <-] (0,1) to [out=0,in=180] (2,0);
	\draw[very thick, <-] (0,0) to [out=0,in=225] (.9,.4);
	\draw[very thick] (1.1,.6) to [out=45,in=180] (2,1);
\end{tikzpicture}
};
\endxy \;
\right)
-
a
P_\infty\left( \;
\xy
(0,0)*{
\begin{tikzpicture}[scale=.5, rotate=270]
	\draw[very thick, <-] (0,0) to [out=0,in=180] (2,1);
	\draw[very thick, <-] (0,1) to [out=0,in=135] (.9,.6);
	\draw[very thick] (1.1,.4) to [out=315,in=180] (2,0);
\end{tikzpicture}
};
\endxy \;
\right) =
(q^{-1} - q)
P_\infty\left( \;
\xy
(0,0)*{
\begin{tikzpicture}[scale=.5, rotate=270]
	\draw[very thick, <-] (0,1) to [out=340,in=200] (2,1);
	\draw[very thick, <-] (0,0) to [out=20,in=160] (2,0);
\end{tikzpicture}
};
\endxy \;
\right)
\;\; , \;\;
P_\infty
\left(
\xy
(0,0)*{
\begin{tikzpicture}[scale=.25]
	\draw[very thick] (0,0) circle (1);
	\draw[very thick,->] (1,0) to (1,-.1);
\end{tikzpicture}
};
\endxy
\right)
= \frac{a-a^{-1}}{q-q^{-1}}
\end{equation}
In this section, 
we use the notation $P_\infty(a,q;\cal{L})$ to emphasize the variables.
Since both relations are invariant under the substitution $q \leftrightarrow -q^{-1}$, 
we have the so-called ``mirror symmetry'' 
\begin{equation}\label{eq:uncoloredMirror}
P_\infty(a,q;\cal{L})=P_\infty(a,-q^{-1};\cal{L})
\end{equation}
for the HOMFLYPT polynomial.
More generally, for a 
link $\cal{L}^\mathbf{\lambda}$ with $\# \cal{L}$ components colored by partitions $\Lambda=(\lambda_1,\ldots,\lambda_{\# \cal{L}})$,
this symmetry takes the form
\begin{equation}\label{eq:mirror}
P_\infty \left(a,q;\cal{L}^{\Lambda}\right)=P_\infty \left(a,-q^{-1};\cal{L}^{\Lambda^{\mathsf{T}}}\right)
\end{equation}
where $\Lambda^\mathsf{T}=(\lambda_1^\mathsf{T},\ldots,\lambda_{\# \cal{L}}^\mathsf{T})$ consists of the conjugates of the partitions in $\Lambda$.

The skein relation in equation \eqref{eq:HOMFLYPTskein} implies that $P_\infty(\cal{L})$ 
is an odd (respectively even) function of $q$ 
when the (uncolored) link $\cal{L}$ has an odd (resp. even) number of components $\#\cal{L}$. 
It follows that the (uncolored) mirror symmetry \eqref{eq:uncoloredMirror} can equivalently be written as
\[
P_\infty(a,q;\cal{L})= (-1)^{\# \cal{L}}P_\infty(a,q^{-1};\cal{L})
\]
which is precisely equation \eqref{eq:ourMirror} from Remark \ref{rem:Mirror} in the introduction. 
For a link with components colored by $\Lambda=(\lambda_1,\ldots,\lambda_{\# \cal{L}})$, 
equation \eqref{eq:mirror} can be re-written as 
\begin{equation}\label{eq:ourMirror2}
P_\infty(a,q;\cal{L}^\Lambda) = (-1)^{| \Lambda |}P_\infty(a,q^{-1};\cal{L}^{\Lambda^\mathsf{T}})
\end{equation}
for $| \Lambda| = \sum_{i=1}^{\#\cal{L}} | \lambda_i |$. 
This re-formulation is well-known, see \eg \cite{TVW}, and, 
in the case that $\Lambda$ consists only of $1$-row partitions, 
it follows by decategorifying our next result.

\begin{proposition}\label{prop:Mirror}
There is an isomorphism $\HHH(\cal{L}_\beta) \cong h^{-w_\beta} \Pi^{i_\beta} \HHH^\vee(\cal{L}_\beta)$ 
that preserves $a$- and $h$-degree, and negates $q$-degree. 
This isomorphism is a categorical manifestation of mirror symmetry.
\end{proposition}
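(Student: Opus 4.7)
The plan is to lift the mirror symmetry from the decategorified level to the categorical level by directly comparing the functors $\Upsilon^{\cal{S}}_{\cal{V}_\infty,t}$ and $\Upsilon^{\wedge}_{\cal{V}_\infty^\vee,\mathtt{t}}$ via Remark~\ref{rem:FunctorsWedge}. The key super-linear-algebra input is a $\C[t]$-linear isomorphism of graded super vector spaces
\[
\Pi\cal{V}_\infty^\vee\big|_{q\mapsto q^{-1}} \;\xrightarrow{\;\cong\;}\; \cal{V}_\infty
\]
intertwining $\mathtt{t}$ with $t$, which I would establish by a direct bi-degree check: the assignments $\mathtt{t}^i\vartheta \mapsto t^i$ and $\mathtt{t}^i \mapsto t^i\theta$ match the $(q,a)$-bi-degrees on the nose (after $\Pi$ flips the parity of $\vartheta$ to even and of $\mathtt{t}$ to odd, and after $q\mapsto q^{-1}$) and are manifestly $\mathtt{t}$-equivariant.

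Second, Remark~\ref{rem:FunctorsWedge} provides a canonical identification $\bV^k\cal{V}\cong\Pi^k\Sym^k(\Pi\cal{V})$ of graded super vector spaces. Combining this with the previous isomorphism and tensoring yields
\[
\bV^{k_1}\cal{V}_\infty^\vee \otimes \cdots \otimes \bV^{k_m}\cal{V}_\infty^\vee\big|_{q\mapsto q^{-1}} \;\cong\; \Pi^{k_1+\cdots+k_m}\,\Sym^{k_1}\cal{V}_\infty \otimes \cdots \otimes \Sym^{k_m}\cal{V}_\infty.
\]
By the proof of Theorem~\ref{thm:foamObjDec}, the label-sum $\mathsf{s}_\cal{W}=\sum k_i$ is invariant under all the moves used to pass from $\llbracket\widehat{\beta}\rrbracket$ to $\llbracket\widehat{\beta}\rrbracket_\circ$, so every web appearing in $\llbracket\widehat{\beta}\rrbracket_\circ$ has circle labels summing to $i_\beta$, making the $\Pi$-shift uniform across the complex. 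I would then promote this pointwise identification to a natural isomorphism of functors
\[
\Upsilon^{\wedge}_{\cal{V}_\infty^\vee,\mathtt{t}}\big|_{q\mapsto q^{-1}} \;\cong\; \Pi^{i_\beta}\circ\Upsilon^{\cal{S}}_{\cal{V}_\infty,t}
\]
on the full subcategory of $\vAFoam_\circ$ whose objects have circle labels summing to $i_\beta$, by verifying that the images of the generating foams $\mathsf{E}_r,\mathsf{F}_r$ — which by \eqref{eq:SplitMerge} reduce to canonical inclusion and projection maps together with $T^r$ — are compatible with $\bV^k\cal{V}\cong\Pi^k\Sym^k(\Pi\cal{V})$.

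Applying this functorial isomorphism to $\llbracket\widehat{\beta}\rrbracket_\circ^\vee$, whose underlying ungraded complex coincides with that of $\llbracket\widehat{\beta}\rrbracket_\circ$ and whose $q$-grading is obtained by negation, and then incorporating the external shifts in the definitions of $\HHH$ and $\HHH^\vee$, gives
\[
\HHH^\vee(\cal{L}_\beta) \;\cong\; h^{w_\beta}\,\Pi^{i_\beta}\,\HHH(\cal{L}_\beta)\big|_{q\mapsto q^{-1}},
\]
which (using $\Pi^2=\id$) rearranges to the claimed $\HHH(\cal{L}_\beta)\cong h^{-w_\beta}\Pi^{i_\beta}\HHH^\vee(\cal{L}_\beta)$ with the stated grading behavior. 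The main obstacle will be the functorial-comparison step: the (skew-)symmetrization inclusion and projection maps are defined via the super tensor product, and the identification $\bV^k\cal{V}\cong\Pi^k\Sym^k(\Pi\cal{V})$ introduces Koszul signs whose careful bookkeeping is needed to obtain a strict natural isomorphism rather than one twisted by an uncontrolled sign character. Once this is in hand, the interpretation as a categorical mirror symmetry is transparent: decategorifying under $h\mapsto -1$ and $\Pi\mapsto -1$ produces the factor $(-1)^{w_\beta+i_\beta}$, which by the parity computation in Remark~\ref{rem:Mirror} (and its colored refinement in Remark~\ref{rem:Coloring}) recovers equation~\eqref{eq:ourMirror2}.
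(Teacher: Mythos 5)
Your proposal is correct and takes essentially the same route as the paper: you construct the graded super vector space isomorphism $\cal{V}_\infty \cong \Pi\cal{V}_\infty^\vee$ (yours is the inverse of the paper's $\cal{M}\colon t^k \mapsto \vartheta\mathtt{t}^k$, $\theta t^k \mapsto \mathtt{t}^k$, phrased via $q\mapsto q^{-1}$ rather than as a $q$-degree-negating map), invoke $\Sym^k(\Pi\cal{V}) \cong \Pi^k\bV^k\cal{V}$, and apply the resulting chain isomorphism to $\llbracket\widehat{\beta}\rrbracket_\circ$ before taking homology. The sign-bookkeeping obstacle you flag is dispatched automatically in the paper's packaging: since $\cal{M}\otimes\id_{\C^m}$ is $\U(\glm[t])$-equivariant by $\C[t]$-linearity, and since $\widehat{\Upsilon}^{\cal{S}}_{\cal{V}_\infty,t}$ and $\widehat{\Upsilon}^{\wedge}_{\cal{V}_\infty^\vee,\mathtt{t}}$ are both defined via the induced representations on $\Sym^\bullet(\cal{V}_\infty\otimes\C^m)$ and $\bV^\bullet(\cal{V}_\infty^\vee\otimes\C^m)$, equivariance alone yields the natural isomorphism of functors with no generator-by-generator verification of compatibility with $\mathsf{E}_r,\mathsf{F}_r$.
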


\begin{proof}
Consider the isomorphism $\cal{M}:\cal{V}_\infty \to \Pi \cal{V}_\infty^\vee$ 
defined by sending $t^k \mapsto \vartheta \mathtt{t}^k$ and $\theta t^k \to \mathtt{t}^k$. 
This gives an isomorphism $\cal{V}_\infty \otimes \C^m \to \Pi \cal{V}_\infty^\vee \otimes \C^m$ commuting with the 
action of $\U(\glm[t])$,
and hence induces an isomorphism 
\[
\cal{M}: \Sym^k(\cal{V}_\infty \otimes \C^m) \xrightarrow{\cong} \Sym^k(\Pi \cal{V}_\infty^\vee \otimes \C^m) \cong \Pi^k  \bV^k(\cal{V}_\infty^\vee \otimes \C^m)
\]
of $\U(\glm[t])$-modules, for each $k$. This isomorphism preserves $a$-degree, but negates the $q$-degree.
Given a balanced, colored braid $\beta$, this gives a $q$-degree negating chain isomorphism
\begin{equation}\label{eq:Miso}
\cal{M}:
\Upsilon^{\cal{S}}_{\cal{V}_\infty,t} (a^{w_\beta} q^{W_\beta - w_\beta} \llbracket \widehat{\beta} \rrbracket_\circ ) \xrightarrow{\cong} 
\Pi^{i_\beta} \Upsilon^{\wedge}_{\cal{V}_\infty^\vee,\mathtt{t}} (a^{w_\beta} q^{w_\beta - W_\beta} \llbracket \widehat{\beta} \rrbracket^\vee_\circ )
\end{equation}
since the differentials in both complexes are given by the current algebra action.
Taking homology now gives the isomorphism 
\begin{equation}\label{eq:Miso2}
\HHH(\cal{L}_\beta) \xrightarrow{\cong} h^{-w_\beta} \Pi^{i_\beta} \HHH^\vee(\cal{L}_\beta).
\end{equation}

To see that this categorifies equation \eqref{eq:ourMirror2}, 
note that for a balanced, colored braid $\beta$, $\chi\big( \HHH(\cal{L}_\beta) \big) = P_\infty(a,q;\cal{L}^\Lambda)$, 
where $\Lambda$ consists of the $1$-row partitions corresponding to the colors of the strands in $\beta$. 
Similarly, $\chi\big( \HHH^\vee(\cal{L}_\beta) \big) = P_\infty(a,q;\cal{L}^{\Lambda^\mathsf{T}})$.
The result then holds since the parity of $i_\beta - w_\beta$ equals the parity of $\# \cal{L}_\beta$, 
and because the isomorphism in equation \eqref{eq:Miso2} negates $q$-degree.
\end{proof}

\begin{corollary}
Given a balanced, colored braid $\beta$, 
there is a spectral sequence $\HHH(\cal{L}_\beta) \Rightarrow \KhR_n(\cal{L}_\beta)$.
\end{corollary}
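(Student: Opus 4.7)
The plan is to combine the spectral sequence of Proposition \ref{prop:nss} with the mirror symmetry isomorphism of Proposition \ref{prop:Mirror}. Concretely, Proposition \ref{prop:nss} constructs a differential $d_n$ on the complex $C_\circ^\vee(\widehat{\beta}) = h^{w_\beta} a^{w_\beta} q^{w_\beta - W_\beta} \Upsilon^{\wedge}_{\cal{V}_\infty^\vee,\mathtt{t}}(\llbracket \widehat{\beta} \rrbracket_\circ^\vee)$ which commutes with the Rickard-type differential $\del$, and so that the resulting spectral sequence has first page $\HHH^\vee(\cal{L}_\beta)$ and converges to $\KhR_n^\vee(\cal{L}_\beta)$. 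The proof of Proposition \ref{prop:Mirror} produces a $q$-degree-negating chain isomorphism $\cal{M}$ (see equation \eqref{eq:Miso}) identifying, up to the shifts $h^{-w_\beta}\Pi^{i_\beta}$, the complex $\Upsilon^{\cal{S}}_{\cal{V}_\infty,t}(a^{w_\beta} q^{W_\beta - w_\beta} \llbracket \widehat{\beta} \rrbracket_\circ)$ with $C_\circ^\vee(\widehat{\beta})$.

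First, I would transport $d_n$ across $\cal{M}$ to obtain a differential $\tilde{d}_n := \cal{M}^{-1} d_n \cal{M}$ on $C_\circ(\widehat{\beta})$. Since $\cal{M}$ is a chain isomorphism with respect to $\del$, the transported differential $\tilde{d}_n$ automatically commutes with $\del$. Applying the horizontal/vertical filtration argument from Proposition \ref{prop:negnss} to the double complex $(C_\circ(\widehat{\beta}), \del, \tilde{d}_n)$, the spectral sequence of the $\tilde{d}_n$-then-$\del$ filtration has first page $\mathrm{H}^\bullet(C_\circ(\widehat{\beta}), \del) = \HHH(\cal{L}_\beta)$ and converges to the homology of the total complex.

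Next, I would identify the total homology with $\KhR_n(\cal{L}_\beta)$. The transported complex $(C_\circ(\widehat{\beta}), \del, \tilde{d}_n)$ is, by construction of $\cal{M}$, isomorphic to $(C_\circ^\vee(\widehat{\beta}), \del, d_n)$ up to the shift $h^{-w_\beta}\Pi^{i_\beta}$ and negation of the $q$-grading. Hence its total homology is $\KhR_n^\vee(\cal{L}_\beta)$ up to the same shifts, which by Remark \ref{rem:KhRonthenose} is precisely $\KhR_n(\cal{L}_\beta)$ (again up to these grading conventions). Collapsing the other spectral sequence as in Proposition \ref{prop:negnss} -- on the first page all terms are concentrated in a single $a$-degree, so higher differentials vanish -- confirms that the total complex computes this regraded $\KhR_n(\cal{L}_\beta)$.

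The main subtlety is purely bookkeeping: tracking how the shifts $h^{-w_\beta}$, $\Pi^{i_\beta}$, and the $q \leftrightarrow q^{-1}$ substitution induced by $\cal{M}$ interact with the shifts $a^{w_\beta} q^{W_\beta - w_\beta}$ and $q^{n w_\beta}$ that convert the bare annular complex into the genuine link invariants $\HHH$ and $\KhR_n$. The structural input -- existence of a $\del$-commuting differential giving the claimed spectral sequence -- is immediate from transporting $d_n$ across the chain isomorphism $\cal{M}$, so no new homological-algebra content is required beyond what is already in Propositions \ref{prop:nss} and \ref{prop:Mirror}.
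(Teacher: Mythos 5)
Your proposal is correct and follows essentially the same route as the paper's own (very terse) proof: conjugate $d_n$ by the mirror isomorphism $\cal{M}$ to obtain $\cal{M}^{-1}\circ d_n\circ \cal{M}$ on $C_\circ(\widehat{\beta})$, then invoke the double-complex argument from Proposition \ref{prop:nss}. The extra details you supply -- verifying that the transported differential commutes with $\del$ because $\cal{M}$ is a $\del$-chain isomorphism, running the two filtrations as in Proposition \ref{prop:negnss}, and noting that the total homology is $\KhR_n^\vee$ up to shifts (hence $\KhR_n$ after undoing the $q\leftrightarrow q^{-1}$ regrading per Remark \ref{rem:KhRonthenose}) -- are precisely what the paper leaves implicit by writing ``the result then follows from Proposition \ref{prop:nss}.''
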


\begin{proof}
Conjugating the differential $d_n$ by the isomorphism in equation \eqref{eq:Miso} 
defines the relevant differential $\cal{M}^{-1} \circ d_n \circ \cal{M}$ on the complex $C_\circ(\widehat{\beta})$. 
The result then follows from Proposition \ref{prop:nss}.
\end{proof}

\begin{remark}
There is a much more interesting symmetry of HOMFLYPT link homology conjectured in \cite{GGS}, 
which categorifies the mirror symmetry of the reduced HOMFLYPT polynomial.
The latter is defined using the skein relation in equation \eqref{eq:HOMFLYPTskein}, 
but replacing the value of the unknot by $1$. 
This invariant is categorified by viewing $\HHH(\cal{L})$ as a module over the unknot, 
and tensoring over this with $\C$.
We do not address this more-sophisticated result in our present work, 
but hope that our perspective on unreduced mirror symmetry will be useful in future considerations.
\end{remark}

\section{Some computations}\label{sec:examples}

In this section, we compute a few simple examples 
that suffice to confirm various assertions about our link invariants.

\subsection{The once-stabilized unknot}

The annular closure of a crossing gives the simplest non-trivial annular knot. 
In the case of a positive crossing $\sigma_+$, we compute
\begin{equation}\label{eq:StabUnknot}
\left \llbracket
\begin{tikzpicture}[scale=.3,rotate=90, anchorbase]
\draw[gray] (1,0) circle (2.5 and 3.5);
\draw[gray] (1,0) circle (.15 and .2);
	\draw[very thick] (-.5,-1) to [out=90,in=270] (.5,1);
	\draw[very thick] (.5,-1) to [out=90,in=315] (.2,-.2);
	\draw[very thick] (-.2,.2) to [out=135,in=270] (-.5,1);
\draw[very thick, directed=.575] (-.5,1) to (-.5,1.5) to [out=90,in=180] (1,3) to [out=0,in=90] (2.5,1.5) to (2.5,-1.5) to [out=270,in=0] (1,-3) to [out=180,in=270] (-.5,-1.5) to (-.5,-1);
\draw[very thick, directed=.65] (.5,1) to (.5,1.5) to [out=90,in=180] (1,2) to [out=0,in=90] (1.5,1.5) to (1.5,-1.5) to [out=270,in=0] (1,-2) to [out=180,in=270] (.5,-1.5) to (.5,-1);
\end{tikzpicture}
\right \rrbracket
\cong q \;
\begin{tikzpicture}[scale=.25,rotate=90, anchorbase]
\draw[gray] (1,0) circle (2 and 2.5);
\draw[gray] (1,0) circle (.15 and .2);
\draw[very thick] (1,0) circle (1.5 and 2);
\draw[very thick] (1,0) circle (.75 and 1);
\end{tikzpicture}
\xrightarrow{
\begin{pmatrix} 
\xy
(0,0)*{
\begin{tikzpicture}[scale=.125]
	\draw [thick] (-1.5,0) to (0,1);
	\draw [thick] (1.5,0) to (0,1);
	\draw [thick] (0,1) to (0,3);
\end{tikzpicture}
};
\endxy 
\\ 
\xy
(0,0)*{
\begin{tikzpicture}[scale=.125]
	\draw [thick] (-1.5,0) to (0,1);
	\draw [thick] (1.5,0) to (0,1);
	\draw [thick] (0,1) to (0,3);
	\node at (-.65,.5) {\tiny$\bullet$};
\end{tikzpicture}
};
\endxy 
\end{pmatrix}
}
\uwave{
q \;
\begin{tikzpicture}[scale=.25,rotate=90, anchorbase]
\draw[gray] (1,0) circle (2 and 2.5);
\draw[gray] (1,0) circle (.15 and .2);
\draw[very thick] (1,0) circle (1 and 1.33);
\node at (0,-1.5) {\tiny$2$};
\end{tikzpicture}
\oplus
q^{-1} \;
\begin{tikzpicture}[scale=.25,rotate=90, anchorbase]
\draw[gray] (1,0) circle (2 and 2.5);
\draw[gray] (1,0) circle (.15 and .2);
\draw[very thick] (1,0) circle (1 and 1.33);
\node at (0,-1.5) {\tiny$2$};
\end{tikzpicture}
}
\end{equation}
Here, and in the following examples, we use the notation from 
Section \ref{sec:dotted} in depicting our annular foams.
Applying the functor $\Upsilon_{\C^n,0}^\cal{S}$ and computing homology, 
we find that
\[
\cal{AH}^i_{-n}(\widehat{\sigma_+}) = 
	\begin{cases}
		q {\bV^2 \C^n} &\text{if } i=-1 \\
		q^{-1} \Sym^2 \C^n &\text{if } i=0 \\
		0 &\text{else }
	\end{cases}
\]
If we instead use $\Upsilon_{\C^n,0}^\wedge$, we find that 
\[
\cal{A}\KhR^i_{n}(\widehat{\sigma_+}) = 
	\begin{cases}
		q \Sym^2 \C^n &\text{if } i=0 \\
		q^{-1} {\bV^2 \C^n} &\text{if } i=1 \\
		0 &\text{else }
	\end{cases}
\]
This confirms that neither $\cal{AH}_{-n}$ nor $\cal{A}\KhR_{n}$ is an invariant of links in $S^3$. 
Indeed, $\cal{L}_{\sigma_+}$ is an unknot, and applying either $\cal{AH}_{-n}$ or $\cal{A}\KhR_{n}$ to the annular closure of the $1$-strand braid 
(which also closes to an unknot in $S^3$) gives $\C^n$. 
It also suggests that $\cal{AH}_{-n} \ncong \cal{A}\KhR_{n}$ (although in this case they are isomorphic as non-graded vector spaces -- 
we will see an example where this is not the case below).

We quickly note that if we instead apply $\Upsilon_{\cal{V}_n,t}^\cal{S}$ to \eqref{eq:StabUnknot}, together with the relevant shifts from Theorem \ref{thm:negativen}, 
then $\cal{H}_{-n}(\cal{L}_{\sigma_+})$ is the homology of a complex that is homotopy equivalent to the complex 
\[
q^{n+1} {\bV^2\cal{V}_n} \to \uwave{q^{n-1} \Sym^2\cal{V}_n}
\]
in which the differential is injective. If we instead apply $\Upsilon_{\cal{V}_n,t}^\wedge$ and the shifts from Theorem \ref{thm:sln}, 
we find that $\KhR_n(\cal{L}_{\sigma_+})$ is the homology of a complex that is homotopy equivalent to the complex
\[
\uwave{q^{1-n} \Sym^2\cal{V}_n}  \to q^{-1-n} {\bV^2\cal{V}_n}
\]
with surjective differential. 
In both cases, 
the homology is precisely isomorphic to $\cal{V}_n$ concentrated in homological degree zero, as expected.

\subsection{The Hopf link}\label{sec:Hopf}

The Hopf link is the closure of the braid $\sigma_+^2$, and we compute
\[
\left \llbracket
\begin{tikzpicture}[scale=.3,rotate=90, anchorbase]
\draw[gray] (1,0) circle (2.5 and 3.5);
\draw[gray] (1,0) circle (.15 and .2);
	\draw[very thick] (-.5,-1.5) to [out=90,in=270] (.5,0);
	\draw[very thick] (.5,-1.5) to [out=90,in=315] (.2,-.95);
	\draw[very thick] (-.2,-.55) to [out=135,in=270] (-.5,0);
	\draw[very thick] (-.5,0) to [out=90,in=270] (.5,1.5);
	\draw[very thick] (.5,0) to [out=90,in=315] (.2,.55);
	\draw[very thick] (-.2,.95) to [out=135,in=270] (-.5,1.5);
\draw[very thick, directed=.575] (-.5,1.5) to [out=90,in=180] (1,3) to [out=0,in=90] (2.5,1.5) to (2.5,-1.5) to [out=270,in=0] (1,-3) to [out=180,in=270] (-.5,-1.5);
\draw[very thick, directed=.65] (.5,1.5) to [out=90,in=180] (1,2) to [out=0,in=90] (1.5,1.5) to (1.5,-1.5) to [out=270,in=0] (1,-2) to [out=180,in=270] (.5,-1.5);
\end{tikzpicture}
\right \rrbracket
\simeq
q^2 \;
\begin{tikzpicture}[scale=.25,rotate=90, anchorbase]
\draw[gray] (1,0) circle (2 and 2.5);
\draw[gray] (1,0) circle (.15 and .2);
\draw[very thick] (1,0) circle (1.5 and 2);
\draw[very thick] (1,0) circle (.75 and 1);
\end{tikzpicture}
\xrightarrow{\begin{pmatrix} 
\xy
(0,0)*{
\begin{tikzpicture}[scale=.125]
	\draw [thick] (-1.5,0) to (0,1);
	\draw [thick] (1.5,0) to (0,1);
	\draw [thick] (0,1) to (0,3);
\end{tikzpicture}
};
\endxy 
\\ 
\xy
(0,0)*{
\begin{tikzpicture}[scale=.125]
	\draw [thick] (-1.5,0) to (0,1);
	\draw [thick] (1.5,0) to (0,1);
	\draw [thick] (0,1) to (0,3);
	\node at (-.65,.5) {\tiny$\bullet$};
\end{tikzpicture}
};
\endxy 
\end{pmatrix}}
q^2 \;
\begin{tikzpicture}[scale=.25,rotate=90, anchorbase]
\draw[gray] (1,0) circle (2 and 2.5);
\draw[gray] (1,0) circle (.15 and .2);
\draw[very thick] (1,0) circle (1 and 1.33);
\node at (0,-1.5) {\tiny$2$};
\end{tikzpicture}
\oplus
\begin{tikzpicture}[scale=.25,rotate=90, anchorbase]
\draw[gray] (1,0) circle (2 and 2.5);
\draw[gray] (1,0) circle (.15 and .2);
\draw[very thick] (1,0) circle (1 and 1.33);
\node at (0,-1.5) {\tiny$2$};
\end{tikzpicture}
\xrightarrow{\begin{pmatrix} 0 & 0 \\ 0 & 0 \end{pmatrix}}
\uwave{
\begin{tikzpicture}[scale=.25,rotate=90, anchorbase]
\draw[gray] (1,0) circle (2 and 2.5);
\draw[gray] (1,0) circle (.15 and .2);
\draw[very thick] (1,0) circle (1 and 1.33);
\node at (0,-1.5) {\tiny$2$};
\end{tikzpicture}
\oplus
q^{-2} \;
\begin{tikzpicture}[scale=.25,rotate=90, anchorbase]
\draw[gray] (1,0) circle (2 and 2.5);
\draw[gray] (1,0) circle (.15 and .2);
\draw[very thick] (1,0) circle (1 and 1.33);
\node at (0,-1.5) {\tiny$2$};
\end{tikzpicture}
}
\]
Our results above imply that
\[
\cal{AH}^i_{-n}(\widehat{\sigma^2_+}) = 
	\begin{cases}
		q^2 {\bV^2 \C^n} &\text{if } i=-2 \\
		\Sym^2 \C^n &\text{if } i=-1 \\
		\Sym^2 \C^n \oplus q^{-2} \Sym^2 \C^n &\text{if } i=0 \\
		0 &\text{else }
	\end{cases}
\; , \;
\cal{A}\KhR^i_{n}(\widehat{\sigma^2_+}) = 
	\begin{cases}
		q^2 \Sym^2 \C^n &\text{if } i=0 \\
		{\bV^2 \C^n} &\text{if } i=1 \\
		{\bV^2 \C^n} \oplus q^{-2} {\bV^2 \C^n} &\text{if } i=2 \\
		0 &\text{else }
	\end{cases}
\]
and this confirms that, in general, $\cal{AH}_{-n}(\widehat{\beta}) \ncong \cal{A}\KhR_{n}(\widehat{\beta})$.

Similarly, the $\slnegn$ and $\sln$ homology of the Hopf link 
$\cal{L}_{\sigma^2_+} \subset S^3$ are:
\[
\cal{H}^i_{-n}(\cal{L}_{\sigma^2_+}) = 
	\begin{cases}
		q^{1+n} \cal{V}_n &\text{if } i=-1 \\
		q^{2n} \Sym^2 \cal{V}_n \oplus q^{2n-2} \Sym^2 \cal{V}_n &\text{if } i=0 \\
		0 &\text{else }
	\end{cases}
\]
and
\[
\KhR^i_{n}(\cal{L}_{\sigma^2_+}) = 
	\begin{cases}
		q^{1-n} \cal{V}_n &\text{if } i=0 \\
		q^{-2n} {\bV^2 \cal{V}_n} \oplus q^{-2-2n} {\bV^2 \cal{V}_n} &\text{if } i=2 \\
		0 &\text{else }
	\end{cases}
\]
This shows that, in general, for links $\cal{L} \subset S^3$ we have $\cal{H}_{-n}(\cal{L}) \ncong \KhR_{n}(\cal{L})$. 
Moreover, this also shows that $\cal{H}_{-2}(\cal{L})$ is generally distinct from odd Khovanov homology, 
since for the Hopf link the former has rank eight, while the latter has rank four.

\subsection{The right-handed trefoil}\label{sec:Trefoil}

We conclude by computing the Khovanov homology $\KhR_2$ and the $\glnn{-2}$ homology of 
the right-handed trefoil $\cal{T} \subset S^3$, confirming that these invariants are distinct for the simplest non-trivial knot.
This trefoil can be presented as the closure of the braid $\sigma^3_+$. 
Recall from~\cite{QR2} that the complex of annular foams assigned to 
\[
\widehat{\sigma^3_+} = 
\begin{tikzpicture}[scale=.3, rotate=90, anchorbase]
\draw[gray] (1,0) circle (2.5 and 3.5);
\draw[gray] (1,0) circle (.15 and .2);
\draw [very thick] (.5,-1.5) .. controls (.5,-1) and (-.5,-1) .. (-.5,-.5);
\draw [draw =white, double=black, very thick, double distance=1.25pt] (-.5,-1.5) .. controls (-.5,-1) and (.5,-1) .. (.5,-.5);
\draw [very thick] (.5,-.5) .. controls (.5,0) and (-.5,0) .. (-.5,.5);
\draw [draw =white, double=black, very thick, double distance=1.25pt] (-.5,-.5) .. controls (-.5,0) and (.5,0) .. (.5,.5);
\draw [very thick] (.5,.5) .. controls (.5,1) and (-.5,1) .. (-.5,1.5);
\draw [draw =white, double=black, very thick, double distance=1.25pt] (-.5,.5) .. controls (-.5,1) and (.5,1) .. (.5,1.5);
\draw[very thick, directed=.575] (-.5,1.5) to [out=90,in=180] (1,3) to [out=0,in=90] (2.5,1.5) to (2.5,-1.5) to [out=270,in=0] (1,-3) to [out=180,in=270] (-.5,-1.5);
\draw[very thick, directed=.65] (.5,1.5) to [out=90,in=180] (1,2) to [out=0,in=90] (1.5,1.5) to (1.5,-1.5) to [out=270,in=0] (1,-2) to [out=180,in=270] (.5,-1.5);
\end{tikzpicture}
\]
is given (using our present normalizations and notation) by: 
\[
\llbracket \widehat{\sigma^3_+} \rrbracket
\simeq
q^3 \;
\begin{tikzpicture}[scale=.25,rotate=90, anchorbase]
\draw[gray] (1,0) circle (2 and 2.5);
\draw[gray] (1,0) circle (.15 and .2);
\draw[very thick] (1,0) circle (1.5 and 2);
\draw[very thick] (1,0) circle (.75 and 1);
\end{tikzpicture}
\xrightarrow{A}
q^3 \;
\begin{tikzpicture}[scale=.2,rotate=90, anchorbase]
\draw[gray] (1,0) circle (2 and 2.5);
\draw[gray] (1,0) circle (.15 and .2);
\draw[very thick] (1,0) circle (1 and 1.33);
\node at (0,-1.5) {\tiny$2$};
\end{tikzpicture}
\oplus
q \;
\begin{tikzpicture}[scale=.2,rotate=90, anchorbase]
\draw[gray] (1,0) circle (2 and 2.5);
\draw[gray] (1,0) circle (.15 and .2);
\draw[very thick] (1,0) circle (1 and 1.33);
\node at (0,-1.5) {\tiny$2$};
\end{tikzpicture}
\xrightarrow{B}
q \;
\begin{tikzpicture}[scale=.2,rotate=90, anchorbase]
\draw[gray] (1,0) circle (2 and 2.5);
\draw[gray] (1,0) circle (.15 and .2);
\draw[very thick] (1,0) circle (1 and 1.33);
\node at (0,-1.5) {\tiny$2$};
\end{tikzpicture}
\oplus
q^{-1} \;
\begin{tikzpicture}[scale=.2,rotate=90, anchorbase]
\draw[gray] (1,0) circle (2 and 2.5);
\draw[gray] (1,0) circle (.15 and .2);
\draw[very thick] (1,0) circle (1 and 1.33);
\node at (0,-1.5) {\tiny$2$};
\end{tikzpicture}
\xrightarrow{C}
\uwave{
q^{-1} \;
\begin{tikzpicture}[scale=.2,rotate=90, anchorbase]
\draw[gray] (1,0) circle (2 and 2.5);
\draw[gray] (1,0) circle (.15 and .2);
\draw[very thick] (1,0) circle (1 and 1.33);
\node at (0,-1.5) {\tiny$2$};
\end{tikzpicture}
\oplus
q^{-3} \;
\begin{tikzpicture}[scale=.2,rotate=90, anchorbase]
\draw[gray] (1,0) circle (2 and 2.5);
\draw[gray] (1,0) circle (.15 and .2);
\draw[very thick] (1,0) circle (1 and 1.33);
\node at (0,-1.5) {\tiny$2$};
\end{tikzpicture}
}
\]
where the matrices specifying the differential are as follows:
\[
A = \begin{pmatrix} 
\xy
(0,0)*{
\begin{tikzpicture}[scale=.125]
	\draw [thick] (-1.5,0) to (0,1);
	\draw [thick] (1.5,0) to (0,1);
	\draw [thick] (0,1) to (0,3);
\end{tikzpicture}
};
\endxy 
\\ 
\xy
(0,0)*{
\begin{tikzpicture}[scale=.125]
	\draw [thick] (-1.5,0) to (0,1);
	\draw [thick] (1.5,0) to (0,1);
	\draw [thick] (0,1) to (0,3);
	\node at (-.65,.5) {\tiny$\bullet$};
\end{tikzpicture}
};
\endxy 
\end{pmatrix}
\;\; , \;\;
B = \begin{pmatrix} 0 & 0 \\ 0 & 0 \end{pmatrix}
\;\; , \;\;
C = \begin{pmatrix} 
{-}
\xy
(0,0)*{
\begin{tikzpicture}[scale=.2, anchorbase]
	\draw [thick] (0,0) to (0,1);
	\draw [thick] (0,1) to [out=150,in=210] (0,3);
	\draw [thick] (0,1) to [out=30,in=330] (0,3);	
	\draw [thick] (0,3) to (0,4);
	\node at (.5,2) {\tiny$\bullet$};
\end{tikzpicture}
};
\endxy 
&
\xy
(0,0)*{
\begin{tikzpicture}[scale=.2, anchorbase]
	\draw [thick] (0,0) to (0,1);
	\draw [thick] (0,1) to [out=150,in=210] (0,3);
	\draw [thick] (0,1) to [out=30,in=330] (0,3);	
	\draw [thick] (0,3) to (0,4);
\end{tikzpicture}
};
\endxy 
\\ 
{-}
\xy
(0,0)*{
\begin{tikzpicture}[scale=.2, anchorbase]
	\draw [thick] (0,0) to (0,1);
	\draw [thick] (0,1) to [out=150,in=210] (0,3);
	\draw [thick] (0,1) to [out=30,in=330] (0,3);	
	\draw [thick] (0,3) to (0,4);
	\node at (-.5,2) {\tiny$\bullet$};
	\node at (.5,2) {\tiny$\bullet$};
\end{tikzpicture}
};
\endxy 
&
\xy
(0,0)*{
\begin{tikzpicture}[scale=.2, anchorbase]
	\draw [thick] (0,0) to (0,1);
	\draw [thick] (0,1) to [out=150,in=210] (0,3);
	\draw [thick] (0,1) to [out=30,in=330] (0,3);	
	\draw [thick] (0,3) to (0,4);
	\node at (-.5,2) {\tiny$\bullet$};
\end{tikzpicture}
};
\endxy 
\end{pmatrix}
\]
Since the second entry in the first row of $C$ is equal to two times the identity,
this complex is homotopy equivalent to 
\begin{equation}\label{eq:UnivTrefoil}
q^3 \;
\begin{tikzpicture}[scale=.25,rotate=90, anchorbase]
\draw[gray] (1,0) circle (2 and 2.5);
\draw[gray] (1,0) circle (.15 and .2);
\draw[very thick] (1,0) circle (1.5 and 2);
\draw[very thick] (1,0) circle (.75 and 1);
\end{tikzpicture}
\xrightarrow{A}
q^3 \;
\begin{tikzpicture}[scale=.2,rotate=90, anchorbase]
\draw[gray] (1,0) circle (2 and 2.5);
\draw[gray] (1,0) circle (.15 and .2);
\draw[very thick] (1,0) circle (1 and 1.33);
\node at (0,-1.5) {\tiny$2$};
\end{tikzpicture}
\oplus
q \;
\begin{tikzpicture}[scale=.2,rotate=90, anchorbase]
\draw[gray] (1,0) circle (2 and 2.5);
\draw[gray] (1,0) circle (.15 and .2);
\draw[very thick] (1,0) circle (1 and 1.33);
\node at (0,-1.5) {\tiny$2$};
\end{tikzpicture}
\xrightarrow{B'}
q \;
\begin{tikzpicture}[scale=.2,rotate=90, anchorbase]
\draw[gray] (1,0) circle (2 and 2.5);
\draw[gray] (1,0) circle (.15 and .2);
\draw[very thick] (1,0) circle (1 and 1.33);
\node at (0,-1.5) {\tiny$2$};
\end{tikzpicture}
\xrightarrow{C'}
\uwave{
q^{-3} \;
\begin{tikzpicture}[scale=.2,rotate=90, anchorbase]
\draw[gray] (1,0) circle (2 and 2.5);
\draw[gray] (1,0) circle (.15 and .2);
\draw[very thick] (1,0) circle (1 and 1.33);
\node at (0,-1.5) {\tiny$2$};
\end{tikzpicture}
}
\end{equation}
where $A$ is as above, $B' = \begin{pmatrix} 0 & 0  \end{pmatrix}$, and 
\[
C' = 
\frac{1}{2}
\xy
(0,0)*{
\begin{tikzpicture}[scale=.2, anchorbase]
	\draw [thick] (0,0) to (0,1);
	\draw [thick] (0,1) to [out=150,in=210] (0,3);
	\draw [thick] (0,1) to [out=30,in=330] (0,3);	
	\draw [thick] (0,3) to (0,4);
	\node at (.5,2) {\tiny$\bullet$};
	\draw [thick] (0,4) to [out=150,in=210] (0,6);
	\draw [thick] (0,4) to [out=30,in=330] (0,6);	
	\draw [thick] (0,6) to (0,7);
	\node at (-.5,5) {\tiny$\bullet$};
\end{tikzpicture}
};
\endxy 
-
\xy
(0,0)*{
\begin{tikzpicture}[scale=.25, anchorbase]
	\draw [thick] (0,0) to (0,1);
	\draw [thick] (0,1) to [out=150,in=210] (0,3);
	\draw [thick] (0,1) to [out=30,in=330] (0,3);	
	\draw [thick] (0,3) to (0,4);
	\node at (-.5,2) {\tiny$\bullet$};
	\node at (.5,2) {\tiny$\bullet$};
\end{tikzpicture}
};
\endxy 
\]
Applying $\Upsilon^{\wedge}_{\cal{V}_2,t}$
to \eqref{eq:UnivTrefoil}, 
as well as the requisite shifts from Theorem \ref{thm:LinkHomologies},
we find that $\KhR_2(\cal{T})$ is the homology of the following complex:
\[
\uwave{ q^{-3} \cal{V}_2 \otimes \cal{V}_2 } \xrightarrow{\pi \circ \left(\begin{smallmatrix} \id \otimes \id \\ t \otimes \id \end{smallmatrix}\right)}
 q^{-3} \bV^2(\cal{V}_2) \oplus q^{-5} \bV^2(\cal{V}_2) \xrightarrow{0}
 q^{-5} \bV^2(\cal{V}_2) \xrightarrow{0}
 q^{-9} \bV^2(\cal{V}_2) .
\]
This complex is homotopy equivalent to the complex
\[
\uwave{ q^{-3} \Sym^2( \cal{V}_2 ) } \xrightarrow{}
q^{-5} \bV^2(\cal{V}_2) \xrightarrow{0}
q^{-5} \bV^2(\cal{V}_2) \xrightarrow{0}
q^{-9} \bV^2(\cal{V}_2) 
\]
in which the first map is surjective, so we find that 
the Poincare polynomial of the Khovanov homology of $\cal{T}$ is
\[
\dim_{q,h}(\KhR_2(\cal{T})) = (q^{-1}+q^{-3})+h^2q^{-5}+h^3q^{-9}
\]
and, after negating $q$-degree (as indicated in Remark \ref{rem:KhRonthenose}), this gives
\begin{equation}\label{eq:KhR2Trefoil}
\dim_{q,h}(\KhR^{\vee}_2(\cal{T})) = (q+q^{3})+h^2q^{5}+h^3q^{9}.
\end{equation}

If we instead apply $\Upsilon^{\cal{S}}_{\cal{V}_2,t}$ to \eqref{eq:UnivTrefoil}, 
as well as the shifts from Theorem \ref{thm:negativen},
we find that $\cal{H}_{-2}(\cal{T})$ is the homology of the following complex:
\[
q^9 \cal{V}_2 \otimes \cal{V}_2 \xrightarrow{\pi \circ \left(\begin{smallmatrix} \id \otimes \id \\ t \otimes \id \end{smallmatrix}\right)}
q^9 \Sym^2(\cal{V}_2) \oplus q^7 \Sym^2(\cal{V}_2) \xrightarrow{0}
q^7 \Sym^2(\cal{V}_2) \xrightarrow{-\frac{1}{2} \pi \circ (t \otimes t) \circ \iota}
\uwave{ q^3 \Sym^2(\cal{V}_2) }
\]
This is homotopy equivalent to the complex
\[
\xymatrix{
q^9 \bV^2(\cal{V}_2) \ar[r]
& q^7 \Sym^2(\cal{V}_2) \ar[r]^-{0}
& q^7 \Sym^2(\cal{V}_2) \ar[r]^-{}
&  \uwave{ q^3 \Sym^2(\cal{V}_2) }
}
\]
in which the first map is injective and the final map is non-zero (in the only possible degree). 
Thus,
\begin{equation}\label{eq:H-2Trefoil}
\dim_{q,h}(\cal{H}_{-2}(\cal{T})) = h^{-2}(q^7+q^5) + h^{-1}(q^9+q^7) + (q^3+q)
\end{equation}
Both \eqref{eq:KhR2Trefoil} and \eqref{eq:H-2Trefoil} decategorify to $q+q^3+q^5-q^9$, 
the unreduced Jones polynomial of the trefoil, as expected.

\normalem
\bibliographystyle{plain}
\bibliography{bib-skew}

%

%
\end{document}